\documentclass{amsart}

\usepackage{amsmath,amssymb,amsthm,mathtools}
\usepackage{aliascnt}
\usepackage{amsrefs}
\usepackage{enumitem}
\usepackage{microtype}
\usepackage[dvipsnames]{xcolor}
\usepackage{tikz-cd}
\usepackage[colorlinks=true,linkcolor=blue!55!black,citecolor=green!45!black,urlcolor=red!60!black]{hyperref}
\numberwithin{equation}{section}

\usepackage{fourier}
\usepackage[margin=1.3in]{geometry}

\newtheorem{theorem}{Theorem}[section]
\newaliascnt{proposition}{theorem}
\newtheorem{proposition}[proposition]{Proposition}
\aliascntresetthe{proposition}
\newaliascnt{lemma}{theorem}
\newtheorem{lemma}[lemma]{Lemma}
\aliascntresetthe{lemma}
\newaliascnt{corollary}{theorem}
\newtheorem{corollary}[corollary]{Corollary}
\aliascntresetthe{corollary}
\newaliascnt{assumption}{theorem}
\newtheorem{assumption}[assumption]{Assumption}
\aliascntresetthe{assumption}
\theoremstyle{definition}
\newaliascnt{definition}{theorem}
\newtheorem{definition}[definition]{Definition}
\aliascntresetthe{definition}
\newaliascnt{remark}{theorem}
\newtheorem{remark}[remark]{Remark}
\aliascntresetthe{remark}
\usepackage[capitalise,noabbrev]{cleveref}
\crefname{theorem}{Theorem}{Theorems}
\Crefname{theorem}{Theorem}{Theorems}
\crefname{proposition}{Proposition}{Propositions}
\Crefname{proposition}{Proposition}{Propositions}
\crefname{lemma}{Lemma}{Lemmas}
\Crefname{lemma}{Lemma}{Lemmas}
\crefname{corollary}{Corollary}{Corollaries}
\Crefname{corollary}{Corollary}{Corollaries}
\crefname{assumption}{Assumption}{Assumptions}
\Crefname{assumption}{Assumption}{Assumptions}
\crefname{definition}{Definition}{Definitions}
\Crefname{definition}{Definition}{Definitions}
\crefname{remark}{Remark}{Remarks}
\Crefname{remark}{Remark}{Remarks}
\crefname{appendix}{Appendix}{Appendices}
\Crefname{appendix}{Appendix}{Appendices}

\DeclareMathOperator{\Ent}{Ent}

\newcommand{\ud}{\,\mathrm{d}}
\newcommand{\RR}{\mathbb{R}}
\newcommand{\e}{\mathrm{e}}
\newcommand{\veps}{\varepsilon}

\newcommand{\La}{\mathcal L_a}
\newcommand{\Ls}{\mathcal L_s}
\newcommand{\Pt}{\mathcal P}
\newcommand{\Piv}{\Pi_v}
\newcommand{\Entv}{\Ent_v}
\newcommand{\Iv}{I_v}
\newcommand{\COT}{\mathcal C_{\rm OT}}
\newcommand{\muT}{\bar\mu_T}
\newcommand{\mux}{\mu_x}
\newcommand{\kv}{\kappa}
\newcommand{\abs}[1]{\lvert#1\rvert}
\newcommand{\Abs}[1]{\left\lvert#1\right\rvert}
\newcommand{\norm}[1]{\lVert#1\rVert}
\newcommand{\Ren}{\operatorname R}

\DeclareMathOperator{\divg}{div}
\newcommand{\dz}{\ud x\ud v}

\def \mc{\mathcal}

\title[Space-Time LSI and Hypercontractivity for Underdamped Langevin Dynamics]{Space-Time Log-Sobolev Inequality and Hypocoercive Hypercontractivity for Underdamped Langevin Dynamics}
\thanks{B.L. is supported by the National Key R$\&$D Program of China, Grant No. 2024YFA1016000, and the start-up fund 7200843 from City University of Hong Kong. The work of J.L. is supported in part by the National Science Foundation under grant DMS-2309378. The authors thank Jason Altschuler, Yuansi Chen, Sinho Chewi, Lihan Wang, and Yuliang Wang for helpful discussions.}
\author{Bowen Li}
\address{Department of Mathematics, City University of Hong Kong}
\email{boweli4@cityu.edu.hk}

\author{Jianfeng Lu}
\address{Department of Mathematics, Department of Physics, and Department of Chemistry, Duke University}
\email{jianfeng@math.duke.edu}
\date{}

\begin{document}

\begin{abstract}
We study hypercontractivity for underdamped Langevin dynamics with a convex confining potential whose spatial Gibbs marginal satisfies a logarithmic Sobolev inequality (LSI). Unlike in the overdamped case, the noise acts only on the velocity variable, so the usual LSI-based argument does not apply. Nevertheless, under an additional tame Hessian-growth assumption on the potential, we prove that, when the spatial LSI constant is $\rho$ and the friction parameter is of order $\sqrt{\rho}$, the semigroup satisfies a Gross-type $L^p$-to-$L^q$ estimate in which the integrability exponent grows exponentially on the kinetic time scale $t \sim \rho^{-1/2}$.

The central ingredient, of independent interest, is a space-time LSI, valid for controlled kinetic paths of finite action, which quantifies how dissipation in the velocity variable is transferred to the position variable. We derive it from a controlled version of the hypocoercive entropy-decay estimate, and we then restore the Gross hypercontractivity mechanism through a duality argument built on a non-reversible forward/backward interpolation of the underdamped Langevin semigroup. As a corollary, R\'{e}nyi divergences of every order decay exponentially at the sharp hypocoercive rate $\mathcal{O}(\sqrt{\rho})$.

\end{abstract}

\maketitle

\section{Introduction}

The notion of hypercontractivity, \textup{i.e.}, $L^p$-to-$L^q$ $(q > p)$ contractive estimates for a semigroup, originated in the study of quantum field theory~\cites{glimm1968boson,simon1972hypercontractive} and was developed in the seminal works of Nelson~\cites{nelson1966quartic,Nelson1973} for the Ornstein--Uhlenbeck semigroup. It was Gross~\cite{Gross1975}, however, who identified the logarithmic Sobolev inequality (LSI) as the underlying mechanism, establishing the equivalence between LSI and hypercontractivity. Specifically, if $(\Pt_t)_{t\ge 0}$ is an overdamped Langevin semigroup with invariant measure $\mu$, then $\mu$ satisfies an LSI with constant $\rho$:
\begin{equation}  \label{eq:intro-classical-lsi}
    \Ent_{\mu}(\varphi) \le \frac{1}{2\rho}\int \frac{\abs{\nabla \varphi}^2}{\varphi}\ud\mu
    \qquad \text{for all } \varphi \ge 0 \text{ with } \int \varphi\ud\mu = 1,
\end{equation}
if and only if for $L^p$ integrable $f$,
\begin{equation}\label{eq:Gross}
    \norm{\Pt_t f}_{L^{q(t)}(\mu)}
    \le
    \norm{f}_{L^p(\mu)},
    \qquad
    q(t)-1 = \e^{2\rho t}(p-1),
    \qquad \forall\, p>1.
\end{equation}
That is, the semigroup improves integrability over time, with the admissible exponent growing exponentially at a rate given by the LSI constant. Bakry and \'Emery~\cite{BakryEmery1985} subsequently developed the $\Gamma_2$ calculus approach to hypercontractivity for diffusion semigroups, based on a symmetric dual formulation \cite{Neveu1976}; we refer to~\cite{BakryGentilLedoux2014} for a systematic exposition. Beyond their intrinsic interest, LSI and the associated hypercontractive estimates are now standard tools with deep connections to many areas, including concentration of measure, transportation cost inequalities, isoperimetric inequalities, and mixing-time bounds for Markov chains~\cites{guionnet2004lectures,ledoux2006concentration,BCR2006}.

In this work, we investigate a quantitative analogue of this connection between LSI and hypercontractivity for a prototypical hypoelliptic and non-reversible dynamics. We consider the underdamped Langevin equation:
\begin{equation}\label{eq:underdamped-Langevin}
        \begin{aligned}
        \ud X_t&=V_t\ud t,\\
        \ud V_t&=-\nabla U(X_t)\ud t-\gamma V_t\ud t+\sqrt{2\gamma}\ud B_t,
        \end{aligned}
\end{equation}
where $(X_t,V_t)\in\RR^d\times\RR^d$ are the position and velocity, $U:\RR^d\to\RR$ is a convex confining potential, $\gamma>0$ is the friction parameter, and $(B_t)_{t\ge0}$ is a standard Brownian motion in $\RR^d$. Its invariant law is the Gibbs measure $
 \mu(\ud x\ud v) = \mux(\ud x)\,\kv(\ud v) \propto \e^{-U(x)-\abs v^2/2}\ud x\ud v$
 and the associated density semigroup is
\begin{equation*}
\Pt_t = \e^{t(-\La+\gamma\Ls)},\qquad \La = v\cdot\nabla_x-\nabla U\cdot\nabla_v,  \qquad \Ls = \Delta_v-v\cdot\nabla_v.
\end{equation*}
The Hamiltonian transport $\La$ is skew-adjoint in $L^2(\mu)$, whereas the velocity Ornstein--Uhlenbeck operator $\Ls=-\nabla_v^*\nabla_v$ is symmetric and non-positive.
The underdamped Langevin dynamics is more delicate to analyze than the overdamped case: the diffusion is degenerate and acts only on the velocity variable, so the regularizing effect of the noise must be transferred to the position variable through the Hamiltonian transport. The understanding of this mechanism dates back to Kolmogorov~\cite{Kolmogorov1934} and H\"ormander~\cite{Hormander1967} within the framework of hypoellipticity; see, \textup{e.g.}, \cites{DesvillettesVillani2001, HerauNier2004, HelfferNier2005, Herau2007, GolseImbertMouhotVasseur2019} for subsequent developments. To quantify this indirect dissipation and to obtain explicit decay rates, Villani~\cite{Villani2009} introduced the hypocoercivity framework, and the resulting quantitative theory has been substantially developed in recent years; see, \textup{e.g.}, \cites{DolbeaultMouhotSchmeiser2015, Baudoin2017, BernardFathiLevittStoltz2022, BrigatiStoltz2025, CaoLuWang2023, AlbrittonArmstrongMourratNovack2024, FanLiLu2026, Lu2026}. In particular, recent works have established sharp hypocoercive decay estimates for the underdamped Langevin dynamics, both in the $L^2$ norm~\cites{CaoLuWang2023, FanLiLu2026} and in relative entropy~\cites{Lu2026}.

The degeneracy of the noise creates a structural obstruction to the classical Gross argument. Specifically, along the underdamped Langevin semigroup $\Pt_t$, we only have
\begin{equation}
\label{eq:intro-dissipation}
    -\frac{\ud}{\ud t}\Ent_\mu(\Pt_t f)
    =\gamma\Iv(\Pt_t f),
    \qquad
    \Iv(g):=\int\frac{\abs{\nabla_v g}^2}{g}\ud\mu.
\end{equation}
Consequently, no instantaneous inequality of the form $\Ent_\mu(g)\le C\Iv(g)$ can hold uniformly over probability densities on phase space. It is worth noting that this is not due to the absence of an LSI for the Gibbs measure $\mu$. In fact, by tensorization, the product measure $\mu=\mux\otimes\kv$ satisfies a phase-space LSI with constant $\min\{\rho,1\}$ if $\mu_x \propto e^{- U(x)}$ has an LSI constant $\rho$. That inequality, however, controls the entropy by the full Fisher information $\int (\abs{\nabla_x g}^2+\abs{\nabla_v g}^2)/g\ud\mu$,
whereas the dynamics dissipates only its velocity component, as shown in \eqref{eq:intro-dissipation}. Thus, the missing spatial coercivity has to be produced dynamically, and quantifying that production is one of the tasks of this work.

Hypercontractivity, a stronger notion than the $L^2$ and entropy convergence, for kinetic Fokker-Planck equations was previously studied by Wang~\cite{Wang2017}, building on a coupling and dimension-free Harnack inequality argument~\cite{Wang1997}. Specifically, Wang's argument combines a fixed-time Harnack inequality, obtained by coupling by change of measure, with a separate long-time contractive coupling assumption, and yields eventual $L^2$-to-$L^4$ hypercontractivity in the sense of Nelson~\cite{Nelson1973}. This approach was recently refined and applied to sampling algorithms by Altschuler, Chewi, and Zhang~\cites{AltschulerChewiZhang2025, ZhangAltschulerChewi2026}. Its long-time component, however, relies on a pathwise contraction estimate. When specialized to the underdamped Langevin dynamics~\cite{Wang2017}*{Section 5}, the synchronous coupling with a twisted metric used to verify it applies only in the high-friction regime: after normalizing the friction, one needs the effective curvature $\|\nabla^2 U\|/\gamma^2$ to be small, which for a $\beta$-smooth potential ($\nabla^2 U\preceq\beta\,\mathrm{Id}$) means $\gamma\gtrsim\sqrt{\beta}$. Hence, these Harnack-coupling estimates can neither reach the low-friction regime $\gamma=\Theta(\sqrt\rho)$ nor yield the Gross-type $L^p$-to-$L^q$ hypercontractive growth of the form $q(t)-1\simeq\e^{c\sqrt\rho\,t}(p-1)$ on the kinetic time scale $t\sim\rho^{-1/2}$.

\subsection*{Main results}
Throughout the work, we assume that $U$ is smooth and convex, that the spatial Gibbs marginal $\mux(\ud x)\propto\e^{-U(x)}\ud x$ satisfies the LSI \eqref{eq:intro-classical-lsi} with constant $\rho>0$, and that $U$ obeys the tame Hessian-growth bound $\abs{\nabla^2U} \lesssim 1+\abs{\nabla U}$; see \cref{gm:ass:lsi,gm:ass:standing} for the precise statements. We work in the kinetic scaling of the friction, where the dimensionless parameter $\Gamma>0$ measures the friction relative to the spatial LSI scale:
\begin{equation}
\label{eq:intro-kinetic-scaling}
    \gamma = \Gamma\sqrt\rho, \qquad  \Gamma>0.
\end{equation}
The main contribution of this work is a space-time logarithmic Sobolev inequality, valid for every admissible controlled kinetic path of finite action, which replaces the missing instantaneous LSI. We then show that this inequality restores the Gross hypercontractivity mechanism through a non-reversible forward/backward interpolation. As a further application, we establish the sharp hypocoercive R\'enyi divergence decay, pointing toward further applications to sampling algorithms.

\medskip

\noindent (1) \textbf{Space-time logarithmic Sobolev inequality.} Our first main result is the space-time LSI. The idea draws on a recent thread of quantitative hypocoercivity, in which one replaces the missing phase-space inequality by a functional inequality on space-time. In the $L^2$ theory, space-time Poincar\'e inequalities were introduced by Albritton, Armstrong, Mourrat, and Novack~\cite{AlbrittonArmstrongMourratNovack2024} and used quantitatively by Cao, Lu, and Wang~\cite{CaoLuWang2023} to obtain a sharp hypocoercive $L^2$ estimate for the underdamped Langevin dynamics.

Let $T>0$, and define the time-augmented probability measure $\ud\muT=\frac1T\ud t\otimes\ud\mu$ on $(0,T)\times\RR^{2d}$. For a path of probability densities $g=\{g_t\}_{t\in(0,T)}$ with respect to $\mu$, introduce
\begin{equation*}
    \Ent_{\muT}(g) = \frac1T \int_0^T \Ent_\mu(g_t)\ud t,
    \qquad
    I_{v,T}(g) = \frac1T \int_0^T \Iv(g_t)\ud t.
\end{equation*}
We say that a measurable velocity current $W:(0,T)\times\RR^{2d}\to\RR^d$ drives the path $g$ if
\begin{equation}
\label{eq:intro-controlled-path}
    (\partial_t+\La)g_t = \nabla_v^*W_t
    \qquad\text{in }\mathcal D'((0,T)\times\RR^{2d}),
\end{equation}
so that the departure of $g$ from the Hamiltonian transport is an exact velocity divergence. The quadratic action of the pair $(g,W)$ is $J_T(g,W) := \int \abs{W}^2/g \ud\muT$, and minimizing over all currents driving $g$ gives the intrinsic action of the path,
\begin{equation} \label{eq:repretA}
    \mathcal A_T(g) := \inf\bigl\{J_T(g,W)\ :\ W \text{ drives } g\bigr\} =  \frac{1}{T} \int_0^T \norm{(\partial_t+\La)g_t}_{-1,g_t}^2\ud t,
\end{equation}
which compensates for the missing spatial Fisher information in \eqref{eq:intro-dissipation}. Then, subject only to mild boundedness conditions on the path $g$, we prove the following space-time LSI: there exist constants $\tau_{\rm ST}(\Gamma)$ and $C_{\rm ST}(\Gamma)$, depending only on $\Gamma$, such that
\begin{equation}
\label{eq:intro-stlsi}
    \Ent_{\muT}(g) \le  C_{\rm ST}(\Gamma)
    \left[ I_{v,T}(g) + \frac1\rho\mathcal A_T(g)
    \right],
\end{equation}
whenever $T\ge\tau_{\rm ST}(\Gamma)\,\rho^{-1/2}$. The precise regular-path formulation is given in \cref{thm:stlsi}, and its finite-action extensions in \cref{gm:cor:stlsi,gm:cor:stlsi-action}.

We emphasize three features of \eqref{eq:intro-stlsi}. First, the right-hand side contains no spatial Fisher information: the spatial coercivity is produced by the space-time operator $\partial_t+\La$. Second, the space-time LSI \eqref{eq:intro-stlsi} is a pathwise functional inequality, not merely an estimate along solutions of the Langevin equation. It applies to every controlled path satisfying \eqref{eq:intro-controlled-path}; the forcing may be rough, provided its action is finite and the mild moment conditions in \cref{gm:ass:path} hold.
Third, this inequality contains the spatial LSI for $\mu_x$ as its stationary case. Indeed, let $q$ be a spatial probability density with finite entropy and finite spatial Fisher information $I_x(q):=\int\abs{\nabla_xq}^2/q\ud\mux$, and consider the constant-in-time, velocity-independent path $g_t\equiv q$, satisfying
\begin{equation*}
    (\partial_t+\La)g_t =  v\cdot\nabla_xq  = \nabla_v^*W,  \qquad  W:=\nabla_xq.
\end{equation*}
It follows that for such paths, $I_{v,T}(g)=0$, $J_T(g,W)=I_x(q)$, and $\Ent_{\muT}(g)=\Ent_{\mux}(q)$, and the inequality \eqref{eq:intro-stlsi} collapses to the spatial LSI \eqref{eq:intro-classical-lsi}, up to a constant,
\begin{equation*}
    \Ent_{\mux}(q) \le \frac{C_{\rm ST}(\Gamma)}{\rho}\,  I_x(q).
\end{equation*}
This also suggests that the weight $\rho^{-1}$ in front of the action $\mc{A}_T(g)$ in \eqref{eq:intro-stlsi} cannot be improved.

To discuss the main idea of the proof, let us briefly recall the hypocoercive entropy decay established by one of us in~\cite{Lu2026}: the underdamped Langevin semigroup $\Pt_t$ with friction $\gamma=\Gamma\sqrt\rho$, $\Gamma>0$, satisfies
\begin{equation*}
        \Ent_\mu(\Pt_tf)
        \le
        C_\Gamma\,\e^{-\lambda_\Gamma\sqrt\rho\,t}\,\Ent_\mu(f),
        \qquad t\ge 0,
\end{equation*}
for every probability density $f$, with constants $C_\Gamma,\lambda_\Gamma>0$ depending only on $\Gamma$. The decay rate $\sqrt\rho$ is sharp and occurs on the ballistic time scale $t\sim\rho^{-1/2}$. In the overdamped case, exponential entropy decay of this kind follows directly from the LSI through the gradient-flow viewpoint of Otto calculus~\cites{JordanKinderlehrerOtto1998, Otto2001, OttoVillani2000}. For the hypocoercive underdamped dynamics, however, no LSI of this type is available, owing to the degeneracy of the diffusion. The analysis of~\cite{Lu2026} instead relies on a modified entropic Lyapunov functional \`{a} la \cites{DolbeaultMouhotSchmeiser2015,FanLiLu2026} with a Wasserstein entropy-current corrector:
\begin{equation}
\label{eq:intro-modified-entropy}
\mathcal H_\theta(g) = \Ent_\mu(g) +
    \theta\sqrt\rho\,\COT(g).
\end{equation}
Here, writing $q=\Piv g$ and $j=\Piv(vg)$ for the spatial marginal and the velocity current, and $T_q$ for the Brenier map transporting $q\mux$ to $\mux$, the corrector is given by
\begin{equation*}
\COT(g) = \int j(x)\cdot\bigl(x-T_q(x)\bigr)\ud\mux(x),
\end{equation*}
which couples the spatial displacement from equilibrium to the velocity current and measures the transfer of dissipation from $v$ to $x$. This modified entropy \eqref{eq:intro-modified-entropy} is equivalent to $\Ent_\mu$ and obeys a Gr\"onwall-type differential inequality, $\tfrac{\ud}{\ud t}\mathcal H_\theta(g_t)\lesssim - \sqrt\rho\,\mathcal H_\theta(g_t)$, which thereby yields the exponential decay of the relative entropy.

The space-time LSI \eqref{eq:intro-stlsi} is an entropic counterpart of the space-time Poincar\'e inequality. The starting point of its proof is the modified entropy functional introduced in \eqref{eq:intro-modified-entropy}. We consider the general controlled kinetic Fokker--Planck equation
\begin{equation}
\label{eq:intro-controlled-langevin}
    (\partial_t+\La)g_t  =  \gamma\Ls g_t
    +  \nabla_v^*(g_tu_t),
\end{equation}
in which an arbitrary control field $u_t$ perturbs the Langevin dynamics in the velocity directions. The central estimate of the proof is the controlled differential inequality along \eqref{eq:intro-controlled-langevin}:
\begin{equation}
\label{eq:intro-controlled-modified-entropy}
    \frac{\ud}{\ud t}
    \mathcal H_\theta(g_t) \le
    -c_\Gamma\sqrt\rho\,
    \Ent_\mu(g_t) +
    \frac{C_\Gamma}{\sqrt\rho} \int g_t\abs{u_t}^2\ud\mu,
\end{equation}
with $c_\Gamma,C_\Gamma>0$ depending only on $\Gamma$; see \cref{prop:controlled}. For $u\equiv0$, this is the hypocoercive entropy-decay estimate of~\cite{Lu2026}. The inequality \eqref{eq:intro-controlled-modified-entropy} quantifies the entropy cost of \emph{every} velocity perturbation of the Hamiltonian flow, and hence contains strictly more information than decay along the semigroup. Two structural estimates enter its proof: a weak Wasserstein acceleration inequality, controlling the second time derivative of the transport distance of the spatial marginal in the distributional sense (\cref{lem:wasserstein-acceleration}, proved in \cref{app:wasserstein-acceleration}), and a localized stress estimate for the centered conditional velocity covariance (\cref{lem:localized-stress}). From the controlled estimate \eqref{eq:intro-controlled-modified-entropy} to the space-time LSI, it suffices to choose a near-minimizer $W = g A$ of $\mc{A}_T(g)$ in \eqref{eq:repretA}, i.e.,
\begin{equation*}
    (\partial_t+\La)g_t  =
    \nabla_v^*(g_tA_t),
    \qquad
    \frac1T  \int_0^T\int  g_t\abs{A_t}^2\ud\mu\ud t  \approx
    \mathcal A_T(g),
\end{equation*}
and, by setting $u_t=A_t+\gamma\nabla_v\log g_t$, integrate \eqref{eq:intro-controlled-modified-entropy} over a time window $T\gtrsim\rho^{-1/2}$ (the time needed for the dissipated entropy to dominate the corrector) and absorb the boundary terms into the time average. This yields \eqref{eq:intro-stlsi} for regular paths (\cref{thm:stlsi}); the extension to finite-action paths (\cref{gm:cor:stlsi,gm:cor:stlsi-action}) is based on a technical approximation result in \cref{gm:thm:recovery}.

\medskip

\noindent (2) \textbf{Hypocoercive hypercontractivity.}
Equipped with the space-time LSI, we obtain our second main result: a kinetic analogue of the Gross estimate \eqref{eq:Gross} for the underdamped Langevin semigroup $\Pt_t$ with $\gamma=\Gamma\sqrt\rho$. We consider the time window $T=\tau\rho^{-1/2}$ with $\tau\ge\tau_{\rm ST}(\Gamma)$ as in \eqref{eq:intro-stlsi}. Then there exists an explicit constant $\alpha_{\Gamma,\tau}>1$, depending only on $(\Gamma,\tau)$, such that for every $p>1$ and $f\in L^p(\mu)$,
\begin{equation}
\label{eq:intro-one-step-hypercontractivity}
    \norm{\Pt_Tf}_{
        L^{1+\alpha_{\Gamma,\tau}(p-1)}(\mu)}
    \le \norm{f}_{L^p(\mu)}.
\end{equation}
This is the one-step hypercontractive estimate of \cref{thm:one-slab}. Iterating \eqref{eq:intro-one-step-hypercontractivity} over consecutive windows yields \cref{cor:renyi}: for all $t\ge0$,
\begin{equation}
\label{eq:intro-iterated-hypercontractivity}
    \norm{\Pt_t f}_{L^{q_{\Gamma,\tau}(t)}(\mu)} \le
    \norm{f}_{L^p(\mu)},  \qquad
    q_{\Gamma,\tau}(t)-1 =  \alpha_{\Gamma,\tau}^{
        \lfloor\sqrt\rho\,t/\tau\rfloor}(p-1).
\end{equation}
Letting $\lambda_{\Gamma,\tau}:=\tau^{-1}\log\alpha_{\Gamma,\tau}$, the exponent satisfies
\begin{equation*}
    q_{\Gamma,\tau}(t)-1 \ge \alpha_{\Gamma,\tau}^{-1} (p-1) \e^{\lambda_{\Gamma,\tau}\sqrt\rho\,t},
\end{equation*}
in analogy to the classical Gross theorem. Thus, the integrability exponent improves on the ballistic (kinetic) time scale $t\sim\rho^{-1/2}$, matching the sharp $\mathcal{O}(\sqrt\rho)$ hypocoercive decay rate of~\cites{CaoLuWang2023,FanLiLu2026,Lu2026}.

The proof proceeds through a non-reversible Gross interpolation. Specifically, to establish the $L^p$-to-$L^q$ contraction of $\Pt_T$, it suffices to show
\begin{equation*}
    \sup\Bigl\{\textstyle\int \Pt_T\varphi\cdot\psi\,\ud\mu\ :\ \norm{\varphi}_{L^{p}(\mu)} = \norm{\psi}_{L^{q'}(\mu)} = 1\Bigr\} \le 1.
\end{equation*}
For this, because the underdamped Langevin semigroup is non-reversible, the interpolation pairs the forward semigroup with its $L^2(\mu)$-adjoint, namely the time-reversed (backward) semigroup. For positive test functions $\varphi,\psi$, we consider the forward-backward interpolation path:
\begin{equation}
\label{eq:intro-interpolation}
    G_s  =\frac{\varphi_s\psi_s}{Z},
    \qquad
    \varphi_s =  \Pt_s\varphi,
    \qquad
    \psi_s  = \Pt_{T-s}^*\psi,
    \qquad
    Z = \int \Pt_T\varphi\cdot\psi\ud\mu.
\end{equation}
In the reversible case $\Pt^*=\Pt$, and \eqref{eq:intro-interpolation} reduces to the classical two-sided semigroup interpolation of the Bakry--\'Emery method~\cites{Neveu1976,BakryEmery1985}. Although $G_s$ is not a trajectory of the semigroup $\Pt_t$, its defect from the Hamiltonian transport is purely controlled by a velocity current:
\begin{equation}
\label{eq:intro-interpolation-current}
    (\partial_s+\La)G_s = \nabla_v^*(G_sA_s),
    \qquad
    A_s  = -\gamma \left( \nabla_v\log\varphi_s - \nabla_v\log\psi_s
    \right).
\end{equation}
Thus the space-time LSI \eqref{eq:intro-stlsi} applies and bounds the time-averaged entropy of $G$, which propagates to its endpoints:
\begin{equation}
\label{eq:intro-endpoint-entropy}
    \Ent_\mu(G_0)+\Ent_\mu(G_T) \lesssim \mathcal D_T(\varphi,\psi),
    \qquad
    \mathcal D_T(\varphi,\psi) = \int
    \left(
        \abs{\nabla_v\log\varphi_s}^2  + \abs{\nabla_v\log\psi_s}^2
    \right)  G\ud\muT.
\end{equation}
On the other hand, differentiating the pairings $\int G_s\log\varphi_s\ud\mu$ and $\int G_s\log\psi_s\ud\mu$, and using the variational formula for the entropy yields
\begin{equation}
\label{eq:intro-closure}
    2\log Z \le
    \left(\frac2p-1\right)\Ent_\mu(G_0)  +
    \left(1-\frac2q\right)\Ent_\mu(G_T)
    - \gamma T\,\mathcal D_T(\varphi,\psi)
\end{equation}
for $\varphi,\psi$ normalized in $L^p(\mu)$ and $L^{q'}(\mu)$. This allows us to choose a critical $p_c < q_c$ such that $\log Z \le 0$, i.e., $Z \le 1$. Duality and Riesz--Thorin interpolation with the Markov $L^1$- and $L^\infty$-contractions then yield the full one-step estimate \eqref{eq:intro-one-step-hypercontractivity}. These arguments are developed in \cref{sec:interpolation,sec:hypercontractivity}.

\medskip

\noindent (3) \textbf{R\'enyi divergence decay.} Finally, before closing this section, let us mention that a closely related R\'enyi form of kinetic regularization, termed \emph{hyperequilibration} by Altschuler and Chewi~\cite{AltschulerChewi2024}, was used in the analysis of sampling algorithms.  If $\nu_t$ denotes the law at time $t$, one asks for exponential relaxation of the R\'enyi divergence
\begin{equation*}
        \Ren_q(\nu_t\mid\mu)\lesssim \e^{-\lambda t}\Ren_q(\nu_0\mid\mu),
        \qquad q>1.
\end{equation*}
For overdamped Langevin dynamics, this type of R\'enyi decay was established earlier by Cao, Lu, and Lu~\cite{CaoLuLu2019}  and by Vempala and Wibisono~\cite{VempalaWibisono2019}.
In the analysis of sampling algorithms, in particular the Hamiltonian Monte Carlo (HMC) algorithm, such R\'enyi estimates are important for establishing warm starts and obtaining optimal complexity bounds~\cite{ChenGatmiry2023}. In particular, Zhang, Altschuler, and Chewi~\cite{ZhangAltschulerChewi2026} prove that a non-Metropolized HMC scheme, which is essentially a time-splitting discretization of the underdamped Langevin dynamics, can produce the required R\'enyi warm start using a Harnack inequality and shifted-composition estimates. As noted earlier, the coupling-based technique of~\cites{Wang2017, ZhangAltschulerChewi2026} only treats the high-friction regime and thus does not yield the kinetic hypercontractive rate.

As a consequence of the hypocoercive hypercontractivity, \cref{cor:renyi-decay} gives the desired hyperequilibration estimate
\begin{equation*}
        \Ren_q(\nu_t\mid\mu)
        \lesssim
        \e^{-\lambda_{\Gamma,\tau}\sqrt\rho\,t}
        \Ren_q(\nu_0\mid\mu), \qquad q>1,
\end{equation*}
with the sharp hypocoercive rate $\mathcal{O}(\sqrt\rho)$, matching the optimal $L^2$ and entropy decay estimates of~\cites{CaoLuWang2023,FanLiLu2026,Lu2026}. This improves the R\'enyi estimate used in~\cite{ZhangAltschulerChewi2026} and may lead to sharper complexity bounds for the HMC algorithm; we leave this to future work.

\subsection*{Implications and discussion} As elaborated above, in the reversible diffusion case, the LSI is equivalent to hypercontractivity and drives the exponential entropy decay. In the kinetic setting, no instantaneous LSI is available, and we show that the space-time LSI holds and can be a useful substitute. More precisely, the results above give the following diagram of implications:
\begin{equation*}
\begin{tikzcd}[column sep=large, row sep=large]
    \text{Space-time LSI \eqref{eq:intro-stlsi}}
    \arrow[r, Rightarrow]
    \arrow[dr, Rightarrow]
    & \text{Flow LSI \eqref{eq:intro-trajectory-stlsi}}
    \arrow[r, Leftrightarrow]
    & \text{Entropy decay of \cite{Lu2026}}
    \\
    & \text{Hypercontractivity \eqref{eq:intro-one-step-hypercontractivity}}
    \arrow[u, Rightarrow]
    &
\end{tikzcd}
\end{equation*}

The implication from \eqref{eq:intro-stlsi} to \eqref{eq:intro-trajectory-stlsi} is simply a trajectory reduction of the space-time LSI (we thus refer to \eqref{eq:intro-trajectory-stlsi} as the \emph{flow LSI}). Along an underdamped Langevin semigroup trajectory $g_t=\Pt_tf$, the driving current is explicit
\begin{equation*}
    (\partial_t+\La)g_t = \gamma\Ls g_t =
    \nabla_v^*\bigl(g_t\cdot(-\gamma\nabla_v\log g_t)\bigr),
\end{equation*}
and $\mathcal A_T(g) = \gamma^2I_{v,T}(g)$; see \cref{rem:Ls-negative-norm}. Since $\gamma^2/\rho=\Gamma^2$, the inequality \eqref{eq:intro-stlsi} collapses to
\begin{equation}
\label{eq:intro-trajectory-stlsi}
    \Ent_{\muT}(g) \le C_{\rm ST}(\Gamma)\,(1+\Gamma^2)\,  I_{v,T}(g).
\end{equation}
By the dissipation identity \eqref{eq:intro-dissipation}, which gives $\gamma T\,I_{v,T}(g)=\Ent_\mu(f)-\Ent_\mu(\Pt_Tf)$, together with the monotonicity of $t\mapsto\Ent_\mu(g_t)$, the bound \eqref{eq:intro-trajectory-stlsi} is equivalent, up to constants depending only on $(\Gamma,\tau)$, to the one-window entropy contraction
\begin{equation}
\label{eq:intro-one-window}
    \Ent_\mu(\Pt_Tf) \le c\,\Ent_\mu(f), \qquad c = \frac{C_{\rm ST}(\Gamma)(1+\Gamma^2)}{\Gamma\tau+C_{\rm ST}(\Gamma)(1+\Gamma^2)}\in(0,1).
\end{equation}
Moreover, on the sliding window $[s,s+T]$, \eqref{eq:intro-trajectory-stlsi} is equivalent to the Gr\"onwall inequality
\begin{equation*}
    \frac{\ud}{\ud s}F(s) \le - \frac{\gamma}{C_{\rm ST}(\Gamma)(1+\Gamma^2)}\,F(s), \qquad F(s):=\int_s^{s+T}\Ent_\mu(g_t)\ud t.
\end{equation*}
Therefore, the windowed entropy $F$ decays exponentially at the sharp rate $\mathcal O(\sqrt\rho)$, and the monotonicity sandwich $T\,\Ent_\mu(g_{s+T})\le F(s)\le T\,\Ent_\mu(g_s)$ transfers this decay to the entropy itself.

On the other hand, hypercontractivity  \eqref{eq:intro-one-step-hypercontractivity} contains more information than the flow LSI \eqref{eq:intro-trajectory-stlsi}. Indeed, letting $p\downarrow1$ in the R\'enyi form of \eqref{eq:intro-one-step-hypercontractivity} yields the one-window entropy contraction
\eqref{eq:intro-one-window} with $c=\alpha_{\Gamma,\tau}^{-1}$. In contrast, the flow LSI \eqref{eq:intro-trajectory-stlsi} alone does not furnish the $L^p$-to-$L^q$ estimate in our argument. Thus, unlike in classical Gross's theorem, the implication established here is one-way:
 from space-time LSI to hypocoercive hypercontractivity.
The space-time LSI is an a priori stronger pathwise principle: it controls every admissible controlled path, whereas hypercontractivity is a property of the semigroup.  Our proof uses the space-time LSI only on the particular forward/backward interpolation \eqref{eq:intro-interpolation}. We do not establish a converse implication from a potentially stronger notion of hypercontractivity; this is an interesting open direction.

\section{Setting and preliminaries} \label{sec:setting}

Throughout, we work on the phase space $\RR^{2d}$ equipped with the reference Gibbs measure:
\begin{equation*}
    \mu(\ud x\ud v) = \mux(\ud x)\,\kv(\ud v),
\end{equation*}
where
\begin{equation*}
    \mux(\ud x) = Z_x^{-1}\e^{-U(x)}\ud x,
    \qquad
    \kv(\ud v) = (2\pi)^{-d/2}\e^{-\abs{v}^2/2}\ud v,
\end{equation*}
and $Z_x = \int_{\RR^d} \e^{-U(x)}\ud x$ is the spatial normalizing constant.

We consider the underdamped Langevin dynamics given by the SDE \eqref{eq:underdamped-Langevin}, and its associated semigroup $\Pt_t = \e^{t(-\La + \gamma \Ls)}$, $t \ge 0$,  acting on probability densities relative to $\mu$, where the Hamiltonian transport and velocity Ornstein--Uhlenbeck generators are
\begin{equation*}
    \La = v\cdot\nabla_x - \nabla U\cdot\nabla_v,
    \qquad
    \Ls = \Delta_v - v\cdot\nabla_v,
\end{equation*}
respectively. In $L^2(\mu)$, the transport generator $\La$ is skew-adjoint, while $\Ls = -\nabla_v^{*}\nabla_v$ is symmetric and non-positive. Here $$\nabla_v^{*} F = -\operatorname{div}_v F + v\cdot F$$ denotes the adjoint of $\nabla_v$ in $L^2(\kv)$.  We also write
\begin{equation*}
\nabla_x^*F=-\operatorname{div}_x F+\nabla U\cdot F
\end{equation*}
for the adjoint of $\nabla_x$ in $L^2(\mu_x)$. For a matrix field $M=(M_{ij})$, $\nabla_x^*M$ is understood row-wise: $(\nabla_x^*M)_i=-\sum_j\partial_{x_j}M_{ij}
+\sum_j(\partial_{x_j}U)M_{ij}$.
In addition, for an open set $\Omega\subset\RR^n$, we denote by $\mathcal D'(\Omega)$ the space of distributions on $\Omega$, namely the continuous dual of $C_c^\infty(\Omega)$.

We adopt the following standing assumptions on the potential $U$: convexity together with a log-Sobolev inequality for $\mux$, and a Hessian bound.

\begin{assumption}\label{gm:ass:lsi}
The potential $U \in C^\infty(\mathbb{R}^d)$ is convex, and the spatial marginal $\mu_x$ satisfies the \emph{logarithmic Sobolev inequality}
\begin{equation}\label{eq:lsi}
    \operatorname{Ent}_{\mu_x}(f) \leq \frac{1}{2\rho} \int \frac{|\nabla f|^2}{f} \, \mathrm{d}\mu_x
    \qquad \text{for all } f \geq 0 \text{ with } \int f \, \mathrm{d}\mu_x = 1,
\end{equation}
with LSI constant $\rho > 0$.
\end{assumption}

\begin{assumption}\label{gm:ass:standing}
The potential $U$ satisfies the \emph{Hessian bound}: there exists $C_U \geq 1$ such that
\begin{equation}\label{gm:eq:tame}
    \abs{\nabla^2 U(x)} \leq C_U\bigl(1 + \abs{\nabla U(x)}\bigr)
     \qquad x \in \mathbb{R}^d.
\end{equation}
Without loss of generality, by adding a constant to $U$ we assume $\inf U = 0$, so that $U \geq 0$.
\end{assumption}

\begin{remark}
\cref{gm:ass:lsi} is standard and coincides with \cite{Lu2026}*{Assumption 2.1}. Similar Hessian bounds as in \cref{gm:ass:standing} are commonly used in \cites{BernardFathiLevittStoltz2022,CaoLuWang2023,Villani2009}. In particular, the Hessian bound \eqref{gm:eq:tame} implies that $U$ grows at most exponentially fast as $\abs{x} \to \infty$; see \cref{gm:lem:tame-growth}. It is, in turn, implied by the H\'erau--Nier class of polynomially confining potentials in \cite{HerauNier2004}*{Hypothesis 1} (or \cite{Lu2026}*{Assumption 2.2}), where there exists $n \geq 1$ such that for $\abs{x}$ sufficiently large,
\begin{equation}
    \abs{\partial^\alpha U(x)} \lesssim \abs{x}^{2n-\min\{\abs\alpha,2\}} \qquad \text{and} \qquad
        \abs{\nabla U(x)} \gtrsim \abs{x}^{2n-1}.
\end{equation}
This directly gives $\abs{\nabla^2 U} \lesssim \abs{x}^{2n-2} \leq 1+\abs{x}^{2n-1} \lesssim 1+\abs{\nabla U}$ for large $|x|$, as desired.

\end{remark}

Let $\Piv$ denote integration along the velocity fiber against $\kv$, that is,
\begin{equation*}
    (\Piv \varphi)(x) = \int_{\RR^d} \varphi(x, v)\,\kv(\ud v).
\end{equation*}
For a probability density $g$ on $\mathbb{R}^{2d}$ with respect to $\mu$, define its zeroth- and first-order $v$-moments
\begin{equation*}
    q(x) := \Piv g, \qquad j(x) := \Piv(vg),
\end{equation*}
which are the spatial marginal density and the velocity current, respectively. On $\{q > 0\}$ we define the conditional density of $g$ given $x$,
\begin{equation*}
    h_x(v) = \frac{g(x, v)}{q(x)},
\end{equation*}
extended by the convention $h_x \equiv 1$ on $\{q = 0\}$. With this notation, on $\{q > 0\}$ the conditional velocity field
\begin{equation*}
    m(x) := \frac{j(x)}{q(x)} = \int_{\RR^d} v\, h_x(v)\,\kv({\rm d} v) = \mathbb{E}_{g\mu}[V \mid X = x]
\end{equation*}
is the conditional mean of $V$ given $X$ under the joint law $(X, V) \sim g\mu$, where we adopt the convention $j = m = 0$ on $\{q = 0\}$.

The velocity Fisher information of $g$ is
\begin{equation*}
    \Iv(g) = \int \frac{\abs{\nabla_v g}^2}{g}\ud\mu,
\end{equation*}
with the standard convention $\Iv(g) = +\infty$ when the right-hand side is not well-defined. The relative entropy admits the chain-rule decomposition (see, \textup{e.g.}, \cite{Lu2026}*{(2.14)--(2.15)})
\begin{equation}\label{eq:ent-decomp}
    \Ent_\mu(g) := \int g\log g\ud\mu
    = \Ent_{\mux}(q) + \Entv(g),
\end{equation}
where
\begin{equation*}
    \Ent_{\mux}(q) := \int q \log q\ud\mux
\end{equation*}
is the relative entropy of the spatial marginal $q\mux$ with respect to $\mux$, and
\begin{equation*}
    \Entv(g) := \int_{\{q > 0\}} q(x)\, \Ent_\kv(h_x)\ud\mux(x), \qquad \Ent_\kv(h_x) := \int h_x \log h_x\ud\kv,
\end{equation*}
is the conditional velocity entropy under $g\mu$, taken relative to $\kv$ and averaged over the spatial marginal $q\mux$. Both terms $\Ent_{\mux}(q)$ and $\Entv(g)$ are non-negative, and~\eqref{eq:ent-decomp} is understood in the extended sense, with values in $[0, +\infty]$.

A central ingredient in the modified entropy method of~\cite{Lu2026} for sharp hypocoercive entropy decay is the Wasserstein current corrector $\COT$, built from the Brenier transport map between $q\mux$ and $\mux$. By the Herbst argument, the LSI \eqref{eq:lsi} implies Gaussian concentration for Lipschitz functions; in particular $\mux$ has a finite second moment. Moreover, by the Otto-Villani theorem \cites{OttoVillani2000}, $\mu_x$ satisfies Talagrand's inequality
\begin{equation} \label{eq:talagrant}
      W_2^2(\nu,\mux)
    \le \frac{2}{\rho}\Ent_{\mux}\!\left(\frac{\ud \nu}{\ud\mux}\right),
    \qquad \nu\ll \mux\,,
\end{equation}
Consequently, if $\Ent_{\mu_x}(q)<\infty$, then $q\mux$ also has a finite second moment, and Brenier's theorem yields a unique $q\mux$-\textup{a.e.} optimal transport map $T_q\colon\RR^d\to\RR^d$ pushing $q\mux$ onto $\mux$ \cites{brenier1991polar,villani2021topics}. With the associated Brenier displacement $\xi_q(x) := x - T_q(x)$, we set
\begin{equation}\label{eq:COT}
    \COT(g) := \int j(x)\cdot\xi_q(x)\ud\mux,
\end{equation}
whenever the integral is well-defined; a sufficient condition is given in \cref{lem:imported} below.

Applying the Gaussian logarithmic Sobolev inequality of Gross~\cite{Gross1975} to the conditional density $h_x(v)$ and integrating against $q\ud\mux$ yields
\begin{equation} \label{eq:gausslsi}
     \Entv(g) \le \frac{1}{2}\Iv(g),
\end{equation}
which will be used repeatedly. The following lemma collects the estimates of~\cite{Lu2026}*{Section 3} that bound the kinetic energy of the velocity current $\int |j|^2/q \ud \mux $ and the Wasserstein corrector $\COT(g)$.

\begin{lemma}[\cite{Lu2026}*{Lemmas 3.1--3.2}]\label{lem:imported}
For every probability density $g$ on $\RR^{2d}$,
\begin{equation}\label{eq:imported-size}
    J(g) := \int \frac{\abs{j}^2}{q}\ud\mux \le 2\Entv(g),
\end{equation}
with the convention $\abs{j}^2/q := 0$ on $\{q = 0\}$.  If, in addition, $\Ent_\mu(g)<\infty$, then $\COT(g)$ is well-defined and we have
\begin{equation}\label{est:corrector}
    \abs{\COT(g)} \le \left(\frac2\rho\,\Ent_{\mux}(q)\,J(g)\right)^{1/2}  \le \rho^{-1/2}\,\Ent_\mu(g),
\end{equation}
and if moreover $\Iv(g)<\infty$,
\begin{equation}\label{eq:imported-cot-product}
    \abs{\COT(g)}
    \le \left(\frac2\rho\,\Ent_{\mux}(q)\,\Iv(g)\right)^{1/2},
\end{equation}
where $\rho > 0$ is the LSI constant in~\eqref{eq:lsi}.
\end{lemma}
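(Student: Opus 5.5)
We prove the three estimates of \cref{lem:imported} in turn, working fibrewise in $x$ and then integrating against $\mux$.

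\textbf{Step 1: the bound \eqref{eq:imported-size}.} Fix $x$ with $q(x)>0$. Since $m(x)=\int v\,h_x\ud\kv$, the Gaussian transport inequality for $\kv$ gives $\abs{m(x)}^2\le W_2^2(h_x\kv,\kv)\le 2\Ent_\kv(h_x)$. Here the first inequality holds because the mean of $\kv$ is zero and the mean displacement is bounded by the $W_2$ distance; the second is Talagrand's inequality for the standard Gaussian. An alternative route is the Donsker--Varadhan variational formula with test function $\lambda\, e\cdot v$, optimized over $\lambda$, which gives $\lambda e\cdot m\le \Ent_\kv(h_x)+\lambda^2/2$ and hence $\abs m^2\le 2\Ent_\kv(h_x)$. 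Multiplying by $q$ and noting $\abs{j}^2/q=q\abs m^2$, we integrate against $\mux$ to obtain $J(g)\le 2\Entv(g)$.

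\textbf{Step 2: the corrector bound \eqref{est:corrector}.} Assume $\Ent_\mu(g)<\infty$. By \eqref{eq:ent-decomp} we then have $\Ent_{\mux}(q)<\infty$, and the Brenier map $T_q$ exists. Write
\begin{equation*}
j\cdot\xi_q=\frac{j}{\sqrt q}\cdot\sqrt q\,\xi_q .
\end{equation*}
By Cauchy--Schwarz in $L^2(\mux)$,
\begin{equation*}
\abs{\COT(g)}\le J(g)^{1/2}\Bigl(\int q\abs{\xi_q}^2\ud\mux\Bigr)^{1/2}.
\end{equation*}
Since $T_q$ is optimal, the second factor equals $W_2(q\mux,\mux)$. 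Talagrand's inequality \eqref{eq:talagrant} bounds this by $(2\Ent_{\mux}(q)/\rho)^{1/2}$, which gives the first inequality in \eqref{est:corrector}; the same Cauchy--Schwarz bound also shows that $\COT(g)$ is absolutely convergent, i.e.\ well defined. For the second inequality, insert Step 1, $J\le 2\Entv$, and apply AM--GM:
\begin{equation*}
\sqrt{4\Ent_{\mux}(q)\Entv(g)/\rho}\le\rho^{-1/2}\bigl(\Ent_{\mux}(q)+\Entv(g)\bigr)=\rho^{-1/2}\Ent_\mu(g).
\end{equation*}

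\textbf{Step 3: the bound \eqref{eq:imported-cot-product}.} It suffices to prove the sharper fibrewise estimate $J(g)\le \Iv(g)$. Fix $x$ with $q(x)>0$. Integration by parts against the Gaussian, using $\nabla_v^*1=v$, gives
\begin{equation*}
m=\int v\,h_x\ud\kv=\int\nabla_v h_x\ud\kv .
\end{equation*}
By Cauchy--Schwarz, $\abs m^2\le\int h_x\ud\kv\int\abs{\nabla_v h_x}^2/h_x\ud\kv$, and the first factor equals $1$. Multiplying by $q$ yields $q\abs m^2\le\int\abs{\nabla_v g}^2/g\ud\kv$, and integrating over $\mux$ gives $J\le\Iv$. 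Combining this with the first inequality of Step 2 proves \eqref{eq:imported-cot-product}. (Step 1 combined with \eqref{eq:gausslsi} would only give $J\le\Iv$ as well, but the direct argument avoids the extra logarithmic step.)

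\textbf{Main obstacle.} The only delicate point is justifying the integration by parts in Step 3 when $\Iv(g)<\infty$ but $g$ is not smooth. We handle it by approximating $h_x$ through truncation and cutoff in $v$. The boundary terms vanish because $\int \abs{v} h_x\ud\kv<\infty$, which follows from finite entropy, and $\int\abs{\nabla_v h_x}\ud\kv\le \Iv^{1/2}$. We also check measurability in $x$ of all quantities, and adopt the convention on $\{q=0\}$ under which every term there is zero.
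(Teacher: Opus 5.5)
Your proof is correct. Steps 1 and 2 are the standard argument behind this lemma, which the paper imports from \cite{Lu2026} without proof. That argument has three parts: the fibrewise bound $\abs{m(x)}^2\le 2\Ent_\kv(h_x)$ (from Talagrand for $\kv$, or from Donsker--Varadhan with a linear test function), then Cauchy--Schwarz with weights $q^{\pm1/2}$ combined with Talagrand's inequality \eqref{eq:talagrant} for $\mux$, and finally AM--GM.

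The only real divergence is Step 3. In the paper's setup, \eqref{eq:imported-cot-product} follows in one line from what you already proved. Indeed, \eqref{eq:imported-size} and the fibrewise Gaussian LSI \eqref{eq:gausslsi} give $J(g)\le 2\Entv(g)\le\Iv(g)$, and inserting this into the first inequality of \eqref{est:corrector} is exactly \eqref{eq:imported-cot-product}. The paper uses this same chain later, for instance $J+4\Entv\le 6\Entv\le 3\Iv$ in the proof of \cref{lem:forced-corrector}.

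Your direct route uses Gaussian integration by parts, $m=\int\nabla_v h_x\ud\kv$, followed by Cauchy--Schwarz. It gives the same bound $J\le\Iv$ with the same constant, so it is not ``sharper''; the entropy route even passes through the stronger intermediate bound $J\le 2\Entv$.

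What your route buys is independence from the Gaussian LSI: it is the elementary Cram\'er--Rao-type bound $\abs{m}^2\le\int\abs{\nabla_v h_x}^2/h_x\ud\kv$. What it costs is exactly the regularity issue you flag as the main obstacle. You need that $\mux$-a.e.\ slice $g(x,\cdot)$ is weakly differentiable, with gradient equal to the slice of $\nabla_v g$, and you need to remove the velocity cutoff. Your sketch of this is adequate, but since \eqref{eq:gausslsi} is already available, the obstacle is avoidable and Step 3 can be replaced by that one line.
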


Two further structural inequalities from \cite{Lu2026} will be used to control the time evolution of the corrector $\COT(g_t)$ along a path of probability densities $\{g_t\}_{t \in [0,T]}$ on $\mathbb{R}^{2d}$ (\cref{lem:forced-corrector}).

The first (\cref{lem:wasserstein-acceleration}) is a Wasserstein acceleration inequality, depending only on the continuity equation for the spatial marginal $q_t = \Piv g_t$. It is a variant of \cite{Lu2026}*{Lemma 4.1} in which the pointwise second-order flow expansion is replaced by a stability argument for Brenier maps, weakening the strong characteristics condition \cite{Lu2026}*{Definition 2.5~(R3)} to the boundedness condition \cref{def:regular-pair}~(A2) below, to accommodate the Gaussian-mollified approximation of \cref{app:recovery}, at the price of obtaining \eqref{eq:wasserstein-acceleration} only in the sense of distributions on $(0,T)$, rather than pointwise.

The second (\cref{lem:localized-stress}) is the localized stress estimate of \cite{Lu2026}*{Lemma 5.3}, a static per-cutoff bound on the centered stress tensor, independent of the underlying evolution of $g_t$. Its cutoff-limit consequence, \cref{lem:brenier-stress}, is a convenient global form used in \cref{lem:forced-corrector} and in the regular-path proof of \cref{thm:stlsi}. The recovery sequence constructed in \cref{app:recovery} for the finite-action space-time LSI instead applies only \cref{lem:localized-stress}, and the required cutoff limits are verified directly there.

We first introduce a regularity class of time-dependent density-current pairs, adapted from \cite{Lu2026}*{Definition 2.5} but retaining only the properties needed for the present argument.

\begin{definition}[Regular density-current pair]\label{def:regular-pair}
A pair $\{(q_t,j_t)\}_{t\in(0,T)}$ on $\RR^d$ is called a
\emph{regular density-current pair} if
\begin{equation*}
        q\in C^1((0,T);C^\infty(\RR^d)),
        \qquad
        j\in C^1((0,T);C^\infty(\RR^d;\RR^d)),
\end{equation*}
and the following conditions hold.
\begin{enumerate}
    \item[(A1)] For every $t\in(0,T)$, $q_t$ is a probability density with respect to $\mux$, and
    \begin{equation*}
        \inf_{(t,x)\in K\times\RR^d} q_t(x)>0
        \qquad \text{for every compact } K\subset(0,T).
     \end{equation*}
    Moreover, it holds that $\Ent_{\mux}(q_t)<\infty$ and
    $\int\frac{\abs{j_t}^2}{q_t}\ud\mux < \infty$.
    \item[(A2)] The continuity equation
    \begin{equation} \label{eq:continue}
        \partial_t q_t-\nabla_x^*j_t=0
    \end{equation}
    holds pointwise on $(0,T)\times\RR^d$. Letting
    \begin{equation} \label{eq:velovityanda}
        m_t=\frac{j_t}{q_t},
        \qquad
        a_t=\partial_t m_t+(m_t\cdot\nabla_x)m_t,
    \end{equation}
we assume that, for every compact $K\subset(0,T)$,
    \begin{equation} \label{eq:A2-bounds}
        \sup_{t\in K}\norm{m_t}_{L^\infty(\RR^d)}<\infty,
        \qquad
        \sup_{t\in K}\int\abs{a_t}^2q_t\ud\mux<\infty.
    \end{equation}
\end{enumerate}
\end{definition}

\begin{remark} \label{rem:assumption}
Condition~(A1) ensures that the Brenier map $T_{q_t}$ is well defined and that the displacement $\xi_{q_t} = x - T_{q_t}(x)$ belongs to $L^2(q_t\mux;\RR^d)$, by Talagrand's inequality \eqref{eq:talagrant}:
\begin{equation}\label{eq:boundxiq}
    \int q_t\abs{\xi_{q_t}}^2\ud\mux
    = W_2^2(q_t\mux,\mux)
    \le \frac{2}{\rho}\,\Ent_{\mux}(q_t).
\end{equation}
Condition~(A2) supplies the bounded velocity field $m_t$ and the $L^2(q_t\mux)$ acceleration bound on $a_t$ entering the Wasserstein acceleration inequality \cref{lem:wasserstein-acceleration}.

\end{remark}

\begin{lemma}[Weak form of \cite{Lu2026}*{Lemma 4.1}]\label{lem:wasserstein-acceleration}
Let $\{(q_t,j_t)\}_{t\in(0,T)}$ be a regular density-current pair in the sense of \cref{def:regular-pair}, and let $\xi_{q_t}(x)=x-T_{q_t}(x)$ be the Brenier displacement from $q_t\mux$ to $\mux$. Then, in the sense of distributions on $(0,T)$,
\begin{equation}\label{eq:wasserstein-acceleration}
    \frac{1}{2} \frac{\ud^2}{\ud t^2} W_2^2( q_t\mux,\mux)  =  \frac{\ud}{\ud t}\int j_t\cdot\xi_{q_t}\ud\mux
    \le
    \int\frac{\abs{j_t}^2}{q_t}\ud\mux
    +\int
    \left(
        \partial_t j_t
        -\nabla_x^*\!\left(\frac{j_t\otimes j_t}{q_t}\right)
    \right)\cdot\xi_{q_t}\ud\mux .
\end{equation}
\end{lemma}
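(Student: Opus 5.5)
The plan is to prove the two relations in \eqref{eq:wasserstein-acceleration} separately. The first is a first-variation formula for $D(t):=\tfrac12 W_2^2(q_t\mux,\mux)$. The second is an upper bound on symmetric second differences of $D$, obtained by plugging a Lagrangian competitor coupling into the Kantorovich problem.

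\emph{Step 1 (flow and first variation).} Since $\mux\propto \e^{-U}$, the continuity equation \eqref{eq:continue} is equivalent to $\partial_t(q_t\e^{-U})+\operatorname{div}_x(q_t\e^{-U}m_t)=0$. On compact subintervals the field $m$ is smooth and bounded by (A2), so its flow $X_{t,s}$ (with $\partial_s X_{t,s}=m_s(X_{t,s})$, $X_{t,t}=\mathrm{id}$) is globally defined, and $(X_{t,s})_\#(q_t\mux)=q_s\mux$. The identity $D'(t)=\int j_t\cdot\xi_{q_t}\ud\mux$ (a.e., hence distributionally) is then the standard first variation of $W_2^2$ along an absolutely continuous curve with velocity field $m_t\in L^2(q_t\mux)$, as in Ambrosio--Gigli--Savaré. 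The uniqueness of the Brenier map and its stability under weak convergence with bounded second moments, which follows from \eqref{eq:talagrant} and (A1), ensure that the one-sided derivatives agree. I would verify this by the same competitor argument as in Step 2, truncated at first order, applied at both $t$ and $t+h$.

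\emph{Step 2 (second differences).} The coupling $(X_{t,t\pm h}(x),T_{q_t}(x))$ under $q_t\mux$ is admissible for $W_2(q_{t\pm h}\mux,\mux)$. This gives
\begin{equation*}
D(t\pm h)\le D(t)+\int (X_{t,t\pm h}-x)\cdot\xi_{q_t}\,q_t\ud\mux+\tfrac12\int\abs{X_{t,t\pm h}-x}^2q_t\ud\mux .
\end{equation*}
Along the flow one has $\tfrac{\ud}{\ud s}m_s(X_{t,s})=a_s(X_{t,s})$, so $X_{t,t\pm h}(x)-x=\pm h\,m_t(x)+\int_0^h\!\int_0^s a_{t\pm r}(X_{t,t\pm r}(x))\ud r\ud s$. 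Adding the two inequalities, the first-order terms cancel, and
\begin{equation*}
\frac{D(t+h)+D(t-h)-2D(t)}{h^2}\le \frac1{h^2}\int_0^h\!\!\int_0^s\!\!\int \bigl[a_{t+r}\circ X_{t,t+r}+a_{t-r}\circ X_{t,t-r}\bigr]\cdot\xi_{q_t}q_t\ud\mux\ud r\ud s+\int\abs{m_t}^2q_t\ud\mux+o(1).
\end{equation*}
The $o(1)$ term in the quadratic part is handled by boundedness and continuity of $m$ together with dominated convergence.

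\emph{Step 3 (passing to the limit; main obstacle).} The delicate point is the limit of the acceleration term, since $a$ is controlled only in $L^2$ and no pointwise expansion is assumed. By the pushforward property, $\norm{a_{t\pm r}\circ X_{t,t\pm r}}_{L^2(q_t\mux)}=\norm{a_{t\pm r}}_{L^2(q_{t\pm r}\mux)}$, which is uniformly bounded on compacts by \eqref{eq:A2-bounds}. Because $j,q$ are $C^1$ in $t$ and smooth in $x$, we also have $a_{t\pm r}\circ X_{t,t\pm r}\to a_t$ locally uniformly. Uniform $L^2$ bounds combined with pointwise convergence give weak convergence in $L^2(q_t\mux)$. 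Since $\xi_{q_t}\in L^2(q_t\mux)$ by \eqref{eq:boundxiq}, the pairing converges to $\int a_t\cdot\xi_{q_t}q_t\ud\mux$. The bounds are also uniform for $t$ in the support of a test function $\phi\ge0$, so we can integrate the second-difference inequality against $\phi$, move the differences onto $\phi$, and let $h\to0$. This yields $D''\le \int\abs{j_t}^2/q_t\ud\mux+\int q_ta_t\cdot\xi_{q_t}\ud\mux$ in $\mathcal D'(0,T)$.

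\emph{Step 4 (algebra).} It remains to check the pointwise identity $q\,a=\partial_tj-\nabla_x^*(j\otimes j/q)$. Expand $q\,\partial_t m=\partial_t j-m\,\partial_tq$ and substitute $\partial_tq=\nabla_x^*j=-\operatorname{div}_x j+\nabla U\cdot j$. Then expand the row-wise $\nabla_x^*(j\otimes m)$ componentwise, writing $q\,(m\cdot\nabla)m_i=m\cdot\nabla j_i-m_i\,m\cdot\nabla q$. All terms match, which gives the stated form of \eqref{eq:wasserstein-acceleration}.
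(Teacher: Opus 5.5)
Your proof is correct. It shares the paper's skeleton: the flow competitor $(X_{t,t\pm h},T_{q_t})_\#(q_t\mux)$, the symmetric second-difference quotient tested against $\omega\ge0$, and the Ambrosio--Gigli--Savar\'e first variation formula. The two arguments differ in how the acceleration term is passed to the limit.

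The paper differentiates $\Phi_t(s)=\tfrac12\int\abs{X_s-T_{q_t}}^2\ud q_t\mux$ twice and re-centres the pairing at the moving time. This gives $\Phi_t''(s)=Q(t+s)+e_t(s)$, where $Q(t)=\int\abs{m_t}^2\ud q_t\mux+\int\xi_{q_t}\cdot a_t\ud q_t\mux$ and $e_t(s)=\int(T_{q_{t+s}}\circ X_s-T_{q_t})\cdot a_{t+s}(X_s)\ud q_t\mux$. The main term converges by continuity of translations of $Q\in L^1_{\rm loc}$. The error term, however, needs a separate stability result, $\norm{T_{q_{t+s}}\circ X_s-T_{q_t}}_{L^2(q_t\mux)}\to0$, which the paper proves via Prokhorov, uniqueness of the optimal plan, and a weak-to-strong upgrade in $L^2$.

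You instead keep $\xi_{q_t}$ frozen and show $a_{t\pm r}\circ X_{t,t\pm r}\rightharpoonup a_t$ weakly in $L^2(q_t\mux)$. This combines the pushforward isometry $\norm{a_{t\pm r}\circ X_{t,t\pm r}}_{L^2(q_t\mux)}=\norm{a_{t\pm r}}_{L^2(q_{t\pm r}\mux)}$ with pointwise convergence. The pointwise convergence is legitimate: $q,j\in C^1((0,T);C^\infty)$ with $q$ bounded below on compact time sets makes $a$ jointly continuous, and $\abs{X_{t,t+r}-\mathrm{id}}\le r\sup\norm{m}_{L^\infty}$. Dominated convergence in $t$ then follows from the local boundedness of $\norm{m_t}_{L^\infty}$, of $\norm{a_t}_{L^2(q_t\mux)}$, and of $W_2(q_t\mux,\mux)$, the last being locally Lipschitz along the curve.

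What each route buys:
\begin{itemize}
\item Your route is shorter. It uses optimal transport only through the fixed-time Brenier map and Talagrand's bound.
\item The paper's route uses only the $L^2(q\mux)$ size of $a$ in the limiting step, not its pointwise continuity. That makes it more robust if the time regularity were weakened, which fits the paper's stated aim of avoiding pointwise second-order expansions.
\end{itemize}

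One simplification: the Brenier-stability aside in your Step~1 is not needed. Since $D$ is locally Lipschitz, your forward and backward first-order bounds $D(t\pm h)\le D(t)\pm h\int j_t\cdot\xi_{q_t}\ud\mux+O(h^2)$ already pin $D'(t)=\int j_t\cdot\xi_{q_t}\ud\mux$ at every point of differentiability.
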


With $m_t=j_t/q_t$ as in \eqref{eq:velovityanda}, the second integral of the right-hand side of \eqref{eq:wasserstein-acceleration} is the pairing of the Brenier displacement $\xi_{q_t}$ with the Eulerian acceleration field $a_t$ in \eqref{eq:velovityanda}:
\begin{equation*}
    \int \left( \partial_t j_t
        -\nabla_x^*\!\left(\frac{j_t\otimes j_t}{q_t}\right)
    \right)\cdot\xi_{q_t}\ud\mux
    =
    \int a_t\cdot\xi_{q_t}\ud(q_t\mux).
\end{equation*}
Indeed, using the continuity equation \eqref{eq:continue}, we obtain
\begin{equation*}
    \partial_t j_t-\nabla_x^*\!\left(\frac{j_t\otimes j_t}{q_t}\right)
    =\partial_t(q_t m_t)-\nabla_x^*(q_t m_t\otimes m_t)
    =q_t\bigl(\partial_t m_t+(m_t\cdot\nabla_x)m_t\bigr)=q_ta_t.
\end{equation*}
Moreover, it is finite by (A1), \eqref{eq:A2-bounds}, and \eqref{eq:boundxiq}:
\begin{equation*}
    \Abs{\int a_t\cdot\xi_{q_t}\ud q_t\mux}
    \le\norm{a_t}_{L^2( q_t\mux)}\,\norm{\xi_{q_t}}_{L^2( q_t\mux)}
    =\norm{a_t}_{L^2( q_t\mux)}\,W_2( q_t\mux,\mux).
\end{equation*}

As mentioned above, under stronger regularity assumptions assumed in \cite{Lu2026}, the inequality \eqref{eq:wasserstein-acceleration} holds pointwise in time; the distributional formulation above allows us to work under the weaker condition~\textup{(A2)}. We defer the proof of \cref{lem:wasserstein-acceleration} to
\cref{app:wasserstein-acceleration}.

\begin{lemma}[Localized stress estimate \cite{Lu2026}*{(5.21)}]\label{lem:localized-stress} Suppose \cref{gm:ass:lsi}. Let $g$ be a probability density on $\RR^{2d}$ with respect to $\mu$, and set $q=\Piv g$ and $j=\Piv(vg)$. Assume that $q$ is strictly positive and belongs to $C^\infty(\RR^d)$, and write $g(x,v)=q(x)h_x(v)$. Assume that the conditional
mean and covariance
\begin{equation*}
    m(x)=\int v\,h_x(v)\ud\kv,
    \qquad
    \Sigma_g(x)=\int (v-m(x))\otimes(v-m(x))h_x(v)\ud\kv
\end{equation*}
exist for \textup{a.e.} $x$. Define the centered stress tensor by
\begin{equation} \label{def:tensor}
    \Theta_g
    :=
    \Piv(v\otimes v\,g)-\frac{j\otimes j}{q}-qI_d = q(\Sigma_g-I_d).
\end{equation}
Assume that $\Ent_{\mux}(q)$ and $\Entv(g)$ are finite, and that $\Theta_g\in C^1_{\mathrm{loc}}(\RR^d;\RR^{d\times d})$. Let $\xi_q=x-T_q$ be the Brenier displacement from $q\mux$ to $\mux$. Then, for any $\chi\in C_c^\infty(\RR^d)$ with $0\le\chi\le1$ and $0<\eta<1/2$,
\begin{equation}\label{eq:localized-stress}
    \int\xi_q\cdot\nabla_x^{*}(\chi\Theta_g)\ud\mux
    -\int\chi\,\nabla_xq\cdot\xi_q\ud\mux
    \;\le\;
    \eta^{-1}\Entv(g)
    -\int\chi\,q\log q\ud\mux
    +\int q\,\nabla\chi\cdot\xi_q\ud\mux .
\end{equation}
\end{lemma}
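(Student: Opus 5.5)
The plan is to integrate by parts on the left-hand side, use the Monge--Amp\`ere equation for the Brenier map together with convexity of $U$, and close with a pointwise Gaussian entropy bound on the conditional velocity law. Write $T_q=\nabla\phi$ with $\phi$ convex, so that $\xi_q=x-\nabla\phi$. Since $\chi$ has compact support and $\Theta_g\in C^1_{\rm loc}$, the adjoint identity gives
\begin{equation*}
\int\xi_q\cdot\nabla_x^*(\chi\Theta_g)\ud\mux=\int \chi\,\Theta_g:D\xi_q\ud\mux ,
\end{equation*}
where $D\xi_q=I_d-D^2\phi$ and $D^2\phi$ is the distributional Hessian. Similarly,
\begin{equation*}
-\int\chi\nabla q\cdot\xi_q\ud\mux=-\int q\,\nabla_x^*(\chi\xi_q)\ud\mux=\int q\nabla\chi\cdot\xi_q\ud\mux+\int\chi q\,(\divg\xi_q-\nabla U\cdot\xi_q)\ud\mux ,
\end{equation*}
where $\divg\xi_q=d-\Delta\phi$.

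I will first work formally, treating $D^2\phi$ as its absolutely continuous (Alexandrov) part $A(x)\succeq0$. Substituting $\Theta_g=q(\Sigma_g-I_d)$ and adding the two displays, the $\operatorname{tr}A$ and $d$ terms cancel, leaving
\begin{equation*}
\text{LHS}=\int q\nabla\chi\cdot\xi_q\ud\mux+\int\chi q\bigl[\operatorname{tr}\Sigma_g-\Sigma_g:A-\nabla U\cdot\xi_q\bigr]\ud\mux .
\end{equation*}
Next I use McCann's Monge--Amp\`ere equation, valid $q\mux$-a.e. and for a.e.\ $x$ since $q>0$:
\begin{equation*}
q(x)\e^{-U(x)}=\e^{-U(T_q(x))}\det A(x).
\end{equation*}
Convexity of $U$ gives $U(T_q)\ge U(x)-\nabla U(x)\cdot\xi_q$, hence $-\nabla U\cdot\xi_q\le-\log q+\log\det A$. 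The bracket is therefore at most $-\log q+\bigl[\operatorname{tr}\Sigma_g-\Sigma_g:A+\log\det A\bigr]$. Maximizing over $A\succ0$ (the maximum is at $A=\Sigma_g^{-1}$) bounds the second bracket by $\operatorname{tr}\Sigma_g-d-\log\det\Sigma_g$. By the maximum-entropy property of Gaussians, this is at most $2\Ent_\kv(h_x)$, since $\Ent_\kv(h_x)\ge\frac12(\operatorname{tr}\Sigma_g+\abs{m}^2-d-\log\det\Sigma_g)$. As $\eta<1/2$, we have $2\le\eta^{-1}$, and integrating against $\chi q\ud\mux\le q\ud\mux$ yields $\eta^{-1}\Entv(g)$. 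This is the claimed bound \eqref{eq:localized-stress}.

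The main obstacle is the rigorous justification, because $\phi$ is only convex. The distributional Hessian splits as $D^2\phi=A\,\ud x+D^2_s\phi$ with singular part $D^2_s\phi\succeq0$ as a matrix measure. In the combined expression, the singular part enters only through $-\chi q\,\Sigma_g:D^2_s\phi$, after the $-q(\Sigma_g-I_d)$ and $-q\,I_d$ contributions are merged. This term is nonpositive because $\Sigma_g\succeq0$ and $\chi q\ge0$, so discarding it only helps the inequality. To make the integrations by parts legitimate I would mollify $\phi$, or use that $\Theta_g$ is $C^1_{\rm loc}$ and $\chi$ compactly supported so that pairing a continuous compactly supported field with the Radon measure $D^2\phi$ is meaningful. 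I would also check integrability: $\xi_q\in L^2(q\mux)$ by \eqref{eq:boundxiq}, and the entropies are finite.

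A further subtlety is the degenerate case where $\Sigma_g$ is singular or the maximization over $A$ has no maximizer. There I would argue via the Gaussian entropy bound in the limiting sense: $\log\det\Sigma_g=-\infty$ forces $\Ent_\kv(h_x)=\infty$, which on a set of positive measure contradicts finiteness of $\Entv(g)$. Likewise, $\det A>0$ holds a.e.\ by the Monge--Amp\`ere equation. The slack between $2$ and $\eta^{-1}$ is not needed in this argument, but it is harmless.
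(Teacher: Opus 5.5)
\textbf{Verdict.} Your proof is correct, and it is complete.

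\textbf{Comparison with the paper.} The paper does not prove this lemma; it imports it from \cite{Lu2026}. So your argument is a self-contained proof rather than a reconstruction of one in the text. Your key step is to merge $\Theta_g+qI_d=q\Sigma_g$ before discarding anything. The stress pairing is then absorbed by exactly the Monge--Amp\`ere slack $\operatorname{tr}(A-I_d)-\log\det A\ge 0$. That slack is what the standard above-tangent (displacement-convexity) proof of $-\int\nabla_xq\cdot\xi_q\ud\mux\le-\Ent_{\mux}(q)$ throws away. You absorb it through the sharp inequality $\log\det(\Sigma_gA)\le\operatorname{tr}(\Sigma_gA)-d$. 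Together with the Gaussian maximum-entropy bound, this gives the estimate with $2\Entv(g)$ in place of $\eta^{-1}\Entv(g)$, i.e.\ the endpoint $\eta=1/2$, which implies the stated family. The paper only applies the lemma with $\eta=1/4$. Your constant would therefore improve the factor $3\Iv(g_t)$ in \eqref{eq:forced-cot-diff} to $2\Iv(g_t)$, since $J(g)+2\Entv(g)\le 4\Entv(g)\le 2\Iv(g)$.

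\textbf{Two points to make explicit in a write-up.}

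First, the sign of the discarded singular contribution $-\chi q\,\Sigma_g:D^2_s\phi$ needs $\Sigma_g\succeq0$ at every point, not merely a.e. The reason is that $D^2_s\phi$ is carried by a Lebesgue-null set, where the a.e.-defined conditional covariance gives no information. This is exactly where the hypothesis $\Theta_g\in C^1_{\rm loc}$ enters. It makes $\Sigma_g=I_d+\Theta_g/q$ continuous, and a continuous field that is positive semidefinite a.e.\ is positive semidefinite everywhere.

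Second, the Brenier potential $\phi$ is finite on all of $\RR^d$. Indeed, the interior of its domain must carry $q\mux$, which is equivalent to Lebesgue measure, and an open convex set with null complement is the whole space. Consequently:
\begin{itemize}
\item $\xi_q\in L^\infty_{\rm loc}\cap BV_{\rm loc}$;
\item both of your integrations by parts are honest pairings of compactly supported continuous (indeed $C^1_c$) fields with Radon measures, so no mollification of $\phi$ is needed;
\item the absolutely continuous part of $D^2\phi$ coincides a.e.\ with the Alexandrov Hessian $A$ appearing in McCann's equation.
\end{itemize}
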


\begin{lemma}[Brenier stress estimate]\label{lem:brenier-stress}
Under the assumptions of \cref{lem:localized-stress}, assume in addition that there exists a sequence of cutoff functions $\chi_R\in C_c^\infty(\RR^d)$, $0\le\chi_R\le1$, $\chi_R\uparrow1$, $\abs{\nabla\chi_R}\le C/R$, such that $\xi_q\cdot\nabla_x q,\ \xi_q\cdot\nabla_x^*\Theta_g\in L^1(\mux)$ and, as $R\to\infty$,
\begin{equation} \label{eq:limitcutoff}
    \begin{aligned}
    \int q\,\nabla\chi_R\cdot\xi_q\ud\mux
        &\;\longrightarrow\;0,\\
    \int \xi_q\cdot\nabla_x^*(\chi_R\Theta_g)\ud\mux
        &\;\longrightarrow\;
        \int \xi_q\cdot\nabla_x^*\Theta_g\ud\mux.
\end{aligned}
\end{equation}
Then, for every $0<\eta<1/2$,
\begin{equation}\label{eq:brenier-stress}
    -\int\nabla_x q\cdot\xi_q\ud\mux
    +
    \int\xi_q\cdot\nabla_x^*\Theta_g\ud\mux
    \le
    \eta^{-1}\Entv(g)-\Ent_{\mux}(q).
\end{equation}
\end{lemma}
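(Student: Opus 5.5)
The plan is to apply \cref{lem:localized-stress} with the cutoffs $\chi=\chi_R$ and then send $R\to\infty$, handling each term of \eqref{eq:localized-stress} on its own. Fix $0<\eta<1/2$. For every $R$, \eqref{eq:localized-stress} reads
\begin{equation*}
    \int\xi_q\cdot\nabla_x^{*}(\chi_R\Theta_g)\ud\mux
    -\int\chi_R\,\nabla_xq\cdot\xi_q\ud\mux
    \le
    \eta^{-1}\Entv(g)
    -\int\chi_R\,q\log q\ud\mux
    +\int q\,\nabla\chi_R\cdot\xi_q\ud\mux .
\end{equation*}
The terms then pass to the limit as follows.

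On the left-hand side, the first term converges to $\int\xi_q\cdot\nabla_x^*\Theta_g\ud\mux$ by the second line of \eqref{eq:limitcutoff}. The second term converges to $\int\nabla_xq\cdot\xi_q\ud\mux$ by dominated convergence. This is legitimate because $\xi_q\cdot\nabla_xq\in L^1(\mux)$ by hypothesis, $0\le\chi_R\le1$, and $\chi_R\to1$ pointwise.

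On the right-hand side, the last term tends to $0$ by the first line of \eqref{eq:limitcutoff}. The entropy term needs a little care, since $q\log q$ is not of one sign. I would split $q\log q=(q\log q)_+-(q\log q)_-$. The negative part satisfies $(q\log q)_-\le e^{-1}$, so it is integrable against the probability measure $\mux$. The positive part is integrable because $\Ent_{\mux}(q)<\infty$ and the negative part is bounded. Hence $q\log q\in L^1(\mux)$, and dominated convergence (or monotone convergence on each part, using $\chi_R\uparrow1$) gives $\int\chi_R\,q\log q\ud\mux\to\Ent_{\mux}(q)$.

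Passing to the limit in the displayed inequality and rearranging yields \eqref{eq:brenier-stress}. The only point needing attention is the integrability of $q\log q$, which is the step handled above. Everything else follows directly from the stated hypotheses \eqref{eq:limitcutoff} and $\xi_q\cdot\nabla_xq,\ \xi_q\cdot\nabla_x^*\Theta_g\in L^1(\mux)$. The strict positivity and smoothness of $q$, the existence of $m$ and $\Sigma_g$, and $\Theta_g\in C^1_{\mathrm{loc}}$ are needed only to invoke \cref{lem:localized-stress}, and they are inherited from its assumptions.
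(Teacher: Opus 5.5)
Your proposal is correct and follows essentially the same route as the paper: apply \cref{lem:localized-stress} with $\chi=\chi_R$, pass to the limit term by term using the cutoff hypotheses \eqref{eq:limitcutoff}, dominated convergence for $\int\chi_R\nabla_xq\cdot\xi_q\ud\mux$, and the bound $(q\log q)_-\le\e^{-1}$ to justify $\int\chi_R\,q\log q\ud\mux\to\Ent_{\mux}(q)$. The paper's proof consists of exactly these steps.
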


\begin{proof}
We apply \cref{lem:localized-stress} with $\chi=\chi_R$ and let $R\to\infty$. The second term in the left-hand side of \eqref{eq:localized-stress} converges by dominated convergence and $\xi_q\cdot\nabla q\in L^1(\mu_x)$, while the first converges by the assumed cutoff limit. Moreover, for the right-hand side, we have $\int\chi_R\,q\log q\ud\mux\to\Ent_{\mux}(q)$ by again dominated convergence. Indeed, the function $s\mapsto s\log s$ attains its global minimum $-\e^{-1}$ at $s=\e^{-1}$, so the negative part satisfies $(q\log q)_-\le\e^{-1}$ pointwise, and hence $\int(q\log q)_-\ud\mux\le\e^{-1}$. By $\Ent_{\mux}(q)=\int(q\log q)_+\ud\mux-\int(q\log q)_-\ud\mux<\infty$, the positive part $(q\log q)_+$ is integrable as well.
\end{proof}

\section{Space-time logarithmic Sobolev inequality}\label{sec:stlsi}

This section is devoted to the space-time logarithmic Sobolev inequality for the underdamped Langevin dynamics. To this end, we first introduce the time-augmented reference measure:
\begin{equation*}
    \ud\muT = \frac{1}{T}\ud t\ud\mu
\end{equation*}
on $[0,T]\times\RR^{2d}$. For a path of probability densities $\{g_t\}_{t \in [0,T]}$ on $\RR^{2d}$, the time-averaged entropy and time-averaged velocity Fisher information are defined by
\begin{equation*}
    \Ent_{\muT}(g) = \int g \log g \ud\muT,
    \qquad
    I_{v,T}(g) = \int \frac{\abs{\nabla_v g}^2}{g}\ud\muT.
\end{equation*}

Given a probability density $g$ with respect to $\mu$ and $r\in\mathcal D'(\RR^{2d})$, we define the weighted
velocity negative norm of $r$ with respect to $g$ by
\begin{equation}\label{eq:negative-norm}
    \norm{r}_{-1, g}^2
    = \sup_{\zeta \in C_c^\infty(\RR^{2d})}
    \left\{
        2 \langle r, \zeta \rangle - \int g \abs{\nabla_v \zeta}^2\ud\mu
    \right\},
\end{equation}
where $\langle\cdot,\cdot\rangle$ denotes the distributional pairing (which reduces to $\int r \zeta\ud\mu$ when $r$ is a function). Equivalently, by Legendre duality,
\begin{equation}\label{eq:negative-representation}
    \norm{r}_{-1, g}^2
    = \inf \left\{
        \int g \abs{A}^2\ud\mu \;:\; A \in L^2(g\ud\mu;\, \RR^d),\ r = \nabla_v^{*}(g A)
    \right\},
\end{equation}
with the convention that the infimum is $+\infty$ if no such velocity field $A$ exists.

\begin{remark}
The norm \eqref{eq:negative-norm} is the velocity-fiber analogue of the Wasserstein tangent metric \cite{ambrosio2005gradient}*{Section 8}. To see this, we consider the sufficiently regular data for simplicity. We write $g(x,v) = q(x)h_x(v)$, and on $\{q>0\}$ we define
\begin{equation*}
    \dot h_x(v) := r(x,v)/q(x),
\end{equation*}
with $\int \dot h_x\ud\kv=0$ for $\mu_x$-\textup{a.e.} $x$, when $\norm{r}_{-1, g} < \infty$, as shown in \cref{lem:negative-norm-zero-marginal} below. Then, the representation \eqref{eq:negative-representation} disintegrates as
\begin{equation*}
    \norm{r}_{-1,g}^2
    =
    \int_{\RR^d}
    q(x)\,
    \norm{\dot h_x}_{-1,h_x}^2
    \ud\mux(x),
\end{equation*}
where
\begin{equation*}
    \norm{\dot h}_{-1,h}^2
    :=
    \inf_{\dot h=\nabla_v^*(ha)}
    \int h\,\abs{a}^2\ud\kv
    =
    \sup_{\phi\in C_c^\infty(\RR^d)}
    \left\{
        2\int \dot h\,\phi\ud\kv
        -
        \int h\,\abs{\nabla_v\phi}^2\ud\kv
    \right\}.
\end{equation*}
Thus $\norm{r}_{-1,g}^2$ is nothing else than the average over $q\mux$ of the squared Wasserstein tangent norm at the conditional law $h_x\kv$.
\end{remark}

\begin{lemma}
\label{lem:negative-norm-zero-marginal}
Let $g$ be a probability density with respect to $\mu=\mux\otimes\kappa$,
and let $r\in\mathcal D'(\RR^{2d})$ satisfy
\begin{equation*}
        \|r\|_{-1,g}<\infty.
\end{equation*}
Then $r$ has vanishing spatial marginal:
\begin{equation*}
     \Piv r=0
    \qquad\text{in }\mathcal D'(\RR^d).
\end{equation*}
In particular, let $\{g_t\}_{t\in[0,T]}$ be a path such that $q_t=\Piv g_t$, $j_t=\Piv(vg_t)$, and
$\partial_tq_t-\nabla_x^*j_t$ are well-defined in the sense of distributions. If
\begin{equation*}
        \|(\partial_t+ \La)g_t\|_{-1,g_t}<\infty, \qquad\text{for \textup{a.e.} }t\in(0,T),
\end{equation*}
then
\begin{equation*}
        \partial_tq_t-\nabla_x^*j_t=0
\end{equation*}
in $\mathcal D'((0,T)\times\RR^d)$.
\end{lemma}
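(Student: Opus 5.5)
The idea is to test the supremum in \eqref{eq:negative-norm} against functions that do not depend on $v$; their velocity gradient vanishes, so the penalty term disappears. Concretely, fix $\phi\in C_c^\infty(\RR^d)$ and a cutoff $\theta_R(v)=\theta(v/R)$, with $\theta\in C_c^\infty(\RR^d)$, $0\le\theta\le1$, $\theta=1$ on the unit ball. Set $\zeta_R(x,v)=\lambda\,\phi(x)\theta_R(v)$ for $\lambda\in\RR$. Then
\begin{equation*}
\norm{r}_{-1,g}^2\ge 2\lambda\langle r,\phi\otimes\theta_R\rangle-\lambda^2\int g\,\phi^2\abs{\nabla_v\theta_R}^2\ud\mu .
\end{equation*}
The penalty is bounded by $\lambda^2\norm{\phi}_\infty^2\norm{\nabla\theta}_\infty^2R^{-2}$, because $\int g\ud\mu=1$. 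It therefore tends to $0$ as $R\to\infty$.

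If the distributional pairing $\langle r,\phi\otimes\theta_R\rangle$ converged to $\langle\Piv r,\phi\rangle$, we would obtain $2\lambda\langle\Piv r,\phi\rangle\le\norm{r}_{-1,g}^2<\infty$ for every $\lambda\in\RR$. That forces $\langle\Piv r,\phi\rangle=0$.

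The main obstacle is making sense of $\Piv r$ for a general distribution and justifying this limit. I would avoid the limit entirely by using the finiteness of the norm directly. Since the supremum is finite, the linear functional $\zeta\mapsto\langle r,\zeta\rangle$ satisfies $2\langle r,\zeta\rangle\le\norm{r}^2_{-1,g}+\int g\abs{\nabla_v\zeta}^2\ud\mu$. Rescaling $\zeta\mapsto\lambda\zeta$ and optimizing over $\lambda$ gives
\begin{equation*}
\abs{\langle r,\zeta\rangle}\le\norm{r}_{-1,g}\Bigl(\int g\abs{\nabla_v\zeta}^2\ud\mu\Bigr)^{1/2}.
\end{equation*}
Consequently $r$ extends to the completion of $C_c^\infty$ under the seminorm $\norm{\nabla_v\zeta}_{L^2(g\mu)}$, and it annihilates its kernel. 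This gives a Riesz representation $r=\nabla_v^*(gA)$ with $A\in L^2(g\mu)$, which is \eqref{eq:negative-representation}.

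In this representation, $\Piv r$ is defined by $\langle\Piv r,\phi\rangle:=\lim_R\langle r,\phi\otimes\theta_R\rangle$. The limit exists and equals $\lim_R\int gA\cdot\phi\nabla_v\theta_R\ud\mu$. Its modulus is at most $\norm{A}_{L^2(g\mu)}\norm{\phi}_\infty CR^{-1}$, which tends to $0$. Equivalently, one can simply quote the cutoff bound above.

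For the second claim, apply the first part for a.e.\ $t$ to $r_t=(\partial_t+\La)g_t$. The identity to check is $\Piv((\partial_t+\La)g)=\partial_t q-\nabla_x^*j$ in the distributional sense. Pair with $\phi(t,x)\theta_R(v)$ and integrate by parts. The term $\partial_t$ gives $\partial_tq$ in the limit. The term $v\cdot\nabla_x g$ gives $-\nabla_x^*j$, using $\int v\,\cdot\,\kv$-weights and the adjoint $\nabla_x^*$. The term $-\nabla U\cdot\nabla_v g$ integrates to zero up to a remainder involving $\nabla_v\theta_R$, which vanishes as $R\to\infty$. These limits use only the assumed well-definedness of $q,j$ together with dominated convergence.

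This is where I expect the technical care to lie. I would justify the time-integrated version by applying the first part to the space-time pairing $\int_0^T\langle r_t,\zeta_t\rangle\ud t$, since $\norm{r_t}_{-1,g_t}$ is finite a.e.
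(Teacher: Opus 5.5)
Your proposal is correct and follows essentially the paper's proof: the paper also writes $r=\nabla_v^*(gA)$ with $A\in L^2(g\mu)$ via \eqref{eq:negative-representation}, shows $\langle r,\phi\eta_R\rangle=\int\phi\,gA\cdot\nabla_v\eta_R\ud\mu\to0$ with a velocity cutoff, and identifies $\Piv\bigl((\partial_t+\La)g_t\bigr)=\partial_tq_t-\nabla_x^*j_t$ by direct computation. Your bound $\abs{\langle r,\zeta\rangle}\le\norm{r}_{-1,g}\norm{\nabla_v\zeta}_{L^2(g\mu)}$, read off directly from \eqref{eq:negative-norm}, is a slightly more self-contained route, because it does not need the Legendre duality that the paper takes for granted.

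One small caveat applies to the time-dependent part, and the paper's proof shares it: a.e.\ finiteness of $\norm{r_t}_{-1,g_t}$ alone does not justify exchanging $R\to\infty$ with $\int\ud t$. You need a dominating function in $t$. One option is $t\mapsto\norm{r_t}_{-1,g_t}\in L^1_{\rm loc}$, which holds where the paper uses the lemma, by \eqref{eq:negativenormbound}. Another is an $R$-uniform bound on $\langle r_t,\phi_t\theta_R\rangle$ coming from the time regularity of $g$.
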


\begin{proof}
By~\eqref{eq:negative-representation}, for given $r$ with $\|r\|_{-1,g}<\infty$, choose
$A\in L^2(g \ud \mu;\RR^d)$ such that $\int g|A|^2 \ud \mu < \infty$ and
$$
    r=\nabla_v^*(gA)
$$
in the distributional sense. Since $g$ is a probability density, Cauchy--Schwarz gives
$$
    \int |gA| \ud \mu
    \le
    \left(\int g|A|^2 \ud \mu\right)^{1/2}
    \left(\int g \ud \mu\right)^{1/2}
    <\infty.
$$
Let $\phi(x)\in C_c^\infty(\RR^d)$, and let $\eta_R(v)\in C_c^\infty(\RR^d)$ be a standard velocity cutoff satisfying $\eta_R\to1$ pointwise and $\|\nabla_v\eta_R\|_\infty\to 0$. Then, as $R \to \infty$,
\begin{equation*}
     \langle r,\phi\eta_R\rangle
    =
    \int gA\cdot\nabla_v(\phi\eta_R) \ud \mu
    =
    \int \phi\,gA\cdot\nabla_v\eta_R \ud \mu \to 0\,.
\end{equation*}
Thus $\langle \Piv r,\phi\rangle := \lim_{R\to\infty}\langle r,\phi(x)\eta_R(v)\rangle = 0$, that is, $\Piv r = 0$.

We now consider $r_t := (\partial_t + \mathcal{L}_a)g_t$. From the definitions of $q_t$ and $j_t$, as well as $\La$, we readily have
\begin{equation*}
   0 = \Piv r_t  = \Piv\bigl((\partial_t+\La)g_t\bigr)
    =  \partial_tq_t-\nabla_x^*j_t,
\end{equation*}
by a direct computation. The proof is complete.
\end{proof}

\begin{remark}[The negative norm of $\Ls g$]\label{rem:Ls-negative-norm}
For a sufficiently regular positive density $g$ with $\Iv(g)<\infty$, one can compute
\begin{equation} \label{eq:identifylsg}
    \Ls g  = - \nabla_v^{*}\nabla_v g  = \nabla_v^{*}\bigl(-g\nabla_v\log g\bigr).
\end{equation}
Hence the representation \eqref{eq:negative-representation} of $\norm{\cdot}_{-1,g}$ gives
\begin{equation*}
    \norm{\Ls g}_{-1,g}^2 \le \int g\abs{\nabla_v\log g}^2\ud\mu = \Iv(g).
\end{equation*}
In fact equality holds in the regular class used below: the upper bound follows from the above admissible current, while the matching lower bound follows from the dual formula \eqref{eq:negative-norm} by testing with compactly supported cutoffs applied to bounded smooth truncations of $-\log g$. Thus
\begin{equation*}
    \norm{\Ls g}_{-1,g}^2=\Iv(g).
\end{equation*}
Consequently, along an underdamped Langevin trajectory satisfying
$(\partial_t+\La)g_t=\gamma\Ls g_t$, one has
\begin{equation*}
\norm{(\partial_t+\La)g_t}_{-1,g_t}^2 = \gamma^2\Iv(g_t).
\end{equation*}
In contrast, for a controlled underdamped Langevin trajectory \eqref{eq:controlled-kolmogorov}, we only have the inequality for the negative norm bound; see \eqref{eq:negativenormbound} below.
\end{remark}

\begin{lemma}[Entropy dissipation for regular finite-action paths]
\label{lem:entropy-chain-rule}
Suppose that \cref{gm:ass:lsi,gm:ass:standing} hold. Let $I\subset\RR$ be an open interval, and
\begin{equation*}
    h\in C^1\bigl(I;C^\infty(\RR^{2d})\bigr),
    \qquad h>0, \qquad \int h_t\ud\mu=1 \quad\text{for every }t\in I,
\end{equation*}
and let $W:I\times\RR^{2d}\to\RR^d$ be measurable. Assume that
\begin{equation}\label{eq:entropy-chain-graph}
    (\partial_t+\La)h_t=\nabla_v^*W_t
    \qquad\text{in }\mathcal D'(I\times\RR^{2d})
\end{equation}
and that
\begin{equation}\label{eq:entropy-chain-integrability}
    \Ent_\mu(h_t),\
    \Iv(h_t),\
    \int\frac{\abs{W_t}^2}{h_t}\ud\mu \in L^1_{\rm loc}(I).
\end{equation}
Then $t\mapsto\Ent_\mu(h_t)$ has a representative in $W^{1,1}_{\rm loc}(I)$, and
\begin{equation}\label{eq:entropy-chain-rule}
    \frac{\ud}{\ud t}\Ent_\mu(h_t) = \int W_t\cdot\nabla_v\log h_t\ud\mu  \qquad\text{for \textup{a.e.}\ }t\in I.
\end{equation}
\end{lemma}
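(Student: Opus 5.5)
The plan is to prove the chain rule in weak form. We test the equation \eqref{eq:entropy-chain-graph} against $\varphi(t)\,\zeta(t,x,v)$, where $\zeta$ is a compactly supported, bounded truncation of $1+\log h_t$, and then remove the truncations by dominated and monotone convergence.

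Fix a cutoff $\Phi_\veps(s)$ approximating $s\log s$. Concretely, let $\Phi_\veps$ be smooth and convex, with $\Phi_\veps'$ bounded and $\Phi_\veps''\ge 0$, agreeing with $s\log s$ on $[\veps,\veps^{-1}]$ and affine outside a slightly larger interval. Also fix a phase-space cutoff $\chi_R(x,v)$ with $0\le\chi_R\le 1$, $\chi_R\uparrow 1$, and $\abs{\nabla\chi_R}\le C/R$. Since $h\in C^1(I;C^\infty)$, we may differentiate under the integral for each fixed $\veps,R$:
\[
\frac{\ud}{\ud t}\int\chi_R\Phi_\veps(h_t)\ud\mu=\int\chi_R\Phi_\veps'(h_t)\partial_th_t\ud\mu .
\]
We substitute $\partial_th_t=-\La h_t+\nabla_v^*W_t$. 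This substitution is legitimate pointwise in $t$ after integrating against $\varphi(t)$, because $\chi_R\Phi_\veps'(h)$ is a valid $C^1$ compactly supported test function.

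\emph{Transport term.} Skew-adjointness of $\La$ in $L^2(\mu)$ and $\La(\Phi_\veps(h))=\Phi_\veps'(h)\La h$ give
\[
-\int\chi_R\Phi_\veps'(h)\La h\ud\mu=\int\Phi_\veps(h)\La\chi_R\ud\mu .
\]
For this term, $\abs{\La\chi_R}\le (\abs v+\abs{\nabla U})C/R$ on an annulus, and $\abs{\Phi_\veps(h)}\lesssim_\veps 1+h$.

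\emph{Current term.} We have
\[
\int W\cdot\nabla_v\bigl(\chi_R\Phi'_\veps(h)\bigr)\ud\mu=\int\chi_R\Phi_\veps''(h)\,W\cdot\nabla_vh\ud\mu+\int\Phi_\veps'(h)\,W\cdot\nabla_v\chi_R\ud\mu .
\]
The first piece is bounded via Cauchy--Schwarz by
\[
\Bigl(\int\abs{W}^2/h\Bigr)^{1/2}\bigl(\Iv(h)\bigr)^{1/2}\sup_s\abs{s\Phi_\veps''(s)} ,
\]
since $s\Phi''_\veps(s)\le C$ uniformly in $\veps$ by construction. The second piece is bounded by $C_\veps R^{-1}\int\abs W\ud\mu\le C_\veps R^{-1}(\int\abs W^2/h)^{1/2}$.

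\emph{Limits.} We first let $R\to\infty$ for fixed $\veps$. The current boundary term vanishes. The transport boundary term is the main obstacle: it requires $\int(1+h)(\abs v+\abs{\nabla U})\ud\mu$ to be finite, at least after integration in time. For $\abs v$ this follows from $\Ent_\mu(h)<\infty$ by the entropy inequality and Gaussian moments of $\kv$. For $\abs{\nabla U}$, we use that $\abs{\nabla U}\in L^2(\mux)$, since $\int\abs{\nabla U}^2\ud\mux=\int\Delta U\ud\mux$, and \cref{gm:eq:tame} bounds $\abs{\Delta U}\lesssim 1+\abs{\nabla U}$. Combined with the exponential-type growth control from \cref{gm:lem:tame-growth}, this gives exponential moments of $\abs{\nabla U}$ under $\mux$ small enough to pair with $h$ via the Donsker--Varadhan inequality $\int hF\le\Ent_\mu(h)+\log\int e^F\ud\mu$. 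If that turns out delicate, the fallback is to choose cutoffs adapted to $U$, e.g. $\chi_R=\chi(U/R)$ in $x$, so that $\La\chi_R$ involves $v\cdot\nabla U\,\chi'(U/R)/R$. That quantity can be controlled by the Fisher-type bound $\int h\abs{\nabla U}^2$ on level bands, coming from the tame Hessian bound.

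Next we let $\veps\to0$. Here $\Phi_\veps(h)\to h\log h$ with domination $\abs{\Phi_\veps(h)}\le\abs{h\log h}+Ch$, which is integrable because $\Ent_\mu(h_t)<\infty$ and $(h\log h)_-\le \e^{-1}$. Moreover $\Phi''_\veps(h)\nabla_vh\to\nabla_v\log h$ pointwise, dominated by $C\abs{\nabla_v h}/h$. Hence $W\cdot\Phi_\veps''(h)\nabla_vh$ is dominated by $\abs W\abs{\nabla_vh}/h$, which is integrable in $(t,x,v)$ locally in $t$ by Cauchy--Schwarz and \eqref{eq:entropy-chain-integrability}.

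The resulting identity is
\[
-\int_I\varphi'(t)\Ent_\mu(h_t)\ud t=\int_I\varphi(t)\int W_t\cdot\nabla_v\log h_t\ud\mu\ud t ,
\]
for all $\varphi\in C_c^\infty(I)$, with the right integrand in $L^1_{\rm loc}(I)$. This says precisely that $t\mapsto\Ent_\mu(h_t)\in W^{1,1}_{\rm loc}(I)$ with derivative \eqref{eq:entropy-chain-rule}. Its absolutely continuous representative yields the statement.
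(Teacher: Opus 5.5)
Your overall scheme is the same as the paper's: a smooth convex truncation of $s\log s$, a compactly supported phase-space cutoff, testing the equation, letting $R\to\infty$ at fixed truncation, then removing the truncation. The current terms and the final weak-derivative identity are handled correctly. The gap is at the step you yourself flag. With a generic cutoff, $\La\chi_R\neq0$, and closing the $R\to\infty$ limit requires $\int_K\int h_t\abs{\nabla U}\ud\mu\ud t<\infty$. Beyond $\Iv$ and the action, the only information the lemma gives on $h_t$ is $\Ent_\mu(h_t)\in L^1_{\rm loc}$. By entropy (Orlicz) duality you therefore genuinely need $\int\e^{\lambda\abs{\nabla U}}\ud\mux<\infty$ for some $\lambda>0$.

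The justification you sketch does not deliver this. The bound $\abs{\nabla U}\in L^2(\mux)$ is only a second-moment statement. The first bound of \cref{gm:lem:tame-growth} only says that $\log(1+\abs{\nabla U})$ is $C_U$-Lipschitz, and via Herbst that yields log-normal tails (all polynomial moments), not exponential moments. The fallback does not work either. $\int h\abs{\nabla U}^2\ud\mu$ is not controlled by finite entropy in general: for higher-order polynomial potentials $\abs{\nabla U}^2$ outgrows $U$, which is why the paper imposes \ref{gm:Q3} separately elsewhere. Nothing in the tame Hessian bound produces this quantity for an arbitrary $h$.

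The step can be repaired in two ways.

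(a) Prove $\abs{\nabla U}\le C(1+U)$. Let $G:=\abs{\nabla U(x)}\ge1$ and $e:=\nabla U(x)/G$. The second bound in \cref{gm:lem:tame-growth} gives $\nabla U(x-se)\cdot e\ge(3G-1)/4\ge G/2$ for $0\le s\le s_0:=\log(5/4)/C_U$. Since $U\ge0$, this yields $U(x)\ge U(x)-U(x-s_0e)\ge s_0G/2$. Then \eqref{eq:variational} with $\phi=U/2$ and \cref{gm:lem:exp-integrability}(ii) give $\int h_t\abs{\nabla U}\ud\mu\le C(1+\Ent_\mu(h_t))\in L^1_{\rm loc}(I)$, and your argument closes.

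(b) Do what the paper does: take $\chi_R=\chi(H/R)$ with $H=U+\abs v^2/2$. This is compactly supported by coercivity of $U$ (\cref{gm:lem:exp-integrability}(i)), and $\La\chi_R=0$ exactly since $\La H=0$. The transport term then vanishes identically. The only cutoff error is $\int\Phi_\veps'(h_t)W_t\cdot\nabla_v\chi_R\ud\mu$, with $\abs{\nabla_v\chi_R}\le 2\norm{\chi'}_\infty R^{-1/2}$ on its support, so no moment bounds on $h$ are needed at all.

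Route (b) is cleaner and uses less. Route (a) keeps your generic cutoff, making it a genuinely different proof, at the price of this extra potential estimate.

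A minor slip: near $0$ your $\Phi_\veps$ is affine with $\Phi_\veps(0)\neq0$. The domination should therefore read $\abs{h\log h}+C$ rather than $\abs{h\log h}+Ch$. This is harmless because $\mu$ is a probability measure.
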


\begin{proof}
By \cref{gm:ass:standing,gm:lem:exp-integrability}\textup{(i)}, the potential satisfies $U\ge0$ and is coercive. Hence the Hamiltonian $H(x,v):=U(x)+\abs v^2/2$ is nonnegative and has compact sublevel sets. Set $\Phi(r):=r\log r-r+1$ for $r>0$, with $\Phi(0):=1$. For $N\ge2$, let $\Phi_N$ be the convex $C^1$ function obtained by retaining $\Phi$ on $[N^{-1},N]$ and extending it affinely on $[0,N^{-1}]$ and $[N,\infty)$:
\begin{equation*}
    \Phi_N(r) =
    \begin{cases}
        1-N^{-1}-(\log N)r, & 0\le r\le N^{-1},\\  r\log r-r+1,     &N^{-1}<r<N,\\
       (\log N)r-N+1, & r\ge N.
    \end{cases}
\end{equation*}
Thus, there holds
\begin{equation}\label{eq:entropy-value-truncation}
    0\le\Phi_N\uparrow\Phi,\qquad
    \abs{\Phi_N'}\le\log N,\qquad
    \Phi_N''(r)=\frac1r\,
    \mathbf1_{\{N^{-1}<r<N\}}
    \quad\text{for \textup{a.e.}\ }r>0.
\end{equation}

We combine this truncation with a cutoff adapted to the Hamiltonian. Choose a nonincreasing $\chi\in C_c^\infty([0,\infty))$ such that $0\le\chi\le1$, $\chi=1$ on $[0,1]$, and $\operatorname{supp}\chi\subset[0,2]$, and define $\chi_R:=\chi(H/R)$. Then $\chi_R$ is compactly supported, $\chi_R\uparrow1$, and
\begin{equation}\label{eq:entropy-chain-cutoff}
    \La\chi_R=0,\qquad
    \nabla_v\chi_R=\frac1R\chi'(H/R)v.
\end{equation}
We now fix a compact interval $K\subset I$, and we test \eqref{eq:entropy-chain-graph} in the space variables with $\chi_R\Phi_N'(h_t)$, using smooth
convex approximations of $\Phi_N$. This gives, in $\mathcal D'(K)$,
\begin{align*}
    \frac{\ud}{\ud t}\int\chi_R\Phi_N(h_t)\ud\mu
    &=  -\int\chi_R\Phi_N'(h_t)\La h_t\ud\mu
 +\int\chi_R\Phi_N'(h_t)\nabla_v^*W_t\ud\mu.
\end{align*}
Because $\La$ is a first-order skew-adjoint operator, the chain rule and \eqref{eq:entropy-chain-cutoff} imply
\begin{equation*}
    -\int\chi_R\Phi_N'(h_t)\La h_t\ud\mu
    = -\int\chi_R\La\bigl(\Phi_N(h_t)\bigr)\ud\mu  = \int\Phi_N(h_t)\La\chi_R\ud\mu = 0.
\end{equation*}
For the rest term, the integration by parts gives
\begin{align*}
    \int\chi_R\Phi_N'(h_t)\nabla_v^*W_t\ud\mu
    &=\int W_t\cdot\nabla_v\bigl(\chi_R\Phi_N'(h_t)\bigr)\ud\mu\\
    &= \int\chi_R\Phi_N''(h_t)W_t\cdot\nabla_vh_t\ud\mu +\int\Phi_N'(h_t)W_t\cdot\nabla_v\chi_R\ud\mu.
\end{align*}
Combining these identities yields, in $\mathcal D'(K)$,
\begin{align}
    \frac{\ud}{\ud t}\int\chi_R\Phi_N(h_t)\ud\mu
    &= \int\chi_R\Phi_N''(h_t)\,  W_t\cdot\nabla_vh_t\ud\mu + \int\Phi_N'(h_t)\,W_t\cdot\nabla_v\chi_R\ud\mu.
    \label{eq:entropy-chain-localized}
\end{align}

We now let $R\to\infty$. By \eqref{eq:entropy-value-truncation}, we find
\begin{equation*}
    \abs{\Phi_N''(h_t)W_t\cdot\nabla_vh_t}
    \le
    \left(\frac{\abs{W_t}^2}{h_t}\right)^{1/2}
    \left(\frac{\abs{\nabla_vh_t}^2}{h_t}\right)^{1/2},
\end{equation*}
which is integrable on $K\times\RR^{2d}$ by
\eqref{eq:entropy-chain-integrability}. For the second cutoff error term, the support of $\nabla_v\chi_R$ is contained in $\{R\le H\le2R\}$, where $\abs v^2\le4R$. Hence, the Cauchy--Schwarz inequality gives
\begin{align*}
&\int_K \Abs{\int\Phi_N'(h_t)\,W_t\cdot\nabla_v\chi_R\ud\mu}\ud t \le \frac{2\norm{\chi'}_\infty\log N}{\sqrt R}\, \abs{K}^{1/2}
    \left(\int_K\int\frac{\abs{W_t}^2}{h_t}\ud\mu\ud t \right)^{1/2} \to 0.
\end{align*}
Since $\Phi_N$ has at most linear growth, passing to the limit $R \to \infty$ in
\eqref{eq:entropy-chain-localized} yields, in $\mathcal D'(K)$,
\begin{equation}\label{eq:entropy-chain-truncated}
    \frac{\ud}{\ud t}\int\Phi_N(h_t)\ud\mu  =
    \int\Phi_N''(h_t)\,W_t\cdot\nabla_vh_t\ud\mu.
\end{equation}

Finally, the right-hand side of \eqref{eq:entropy-chain-truncated} converges in $L^1(K)$, as $N\to\infty$, to $\int W_t\cdot\nabla_v\log h_t\ud\mu$, again by \eqref{eq:entropy-value-truncation} and $
(|W_t|^2/h_t)^{1/2}(|\nabla_vh_t|^2/h_t)^{1/2} \in L^1(K \times \RR^{2d})$. The first condition in \eqref{eq:entropy-chain-integrability} provides a time $t_0\in K$ with $\Ent_\mu(h_{t_0})<\infty$. Integrating \eqref{eq:entropy-chain-truncated} from $t_0$ to $t$, then using monotone convergence and $\int\Phi(h_t)\ud\mu=\Ent_\mu(h_t)$ proves that the entropy $\Ent_\mu(h_t)$ is finite and absolutely continuous on $K$ and satisfies \eqref{eq:entropy-chain-rule}. Since $K\subset I$ was arbitrary, the proof is complete.
\end{proof}

\subsection{Space-time log-Sobolev inequality for regular paths}
In this section, we establish the space-time LSI, \cref{thm:stlsi}, in three steps.

The first step, \cref{lem:forced-corrector}, is a controlled version of the entropy-current calculation in~\cite{Lu2026}*{Section 6}: it bounds the evolution of the corrector $\COT(g_t)$ along a path driven by an additional velocity forcing. The second step, \cref{prop:controlled}, combines this corrector inequality with the entropy dissipation identity, \cref{lem:entropy-chain-rule}, for the controlled equation to yield a differential inequality for a modified entropy, at the cost of a quadratic penalty in the forcing. The third step integrates this differential inequality on interior time windows and lets the windows increase to $(0,T)$, thereby proving the space-time LSI \eqref{eq:stlsi}.

\begin{lemma}[Controlled corrector differential inequality]\label{lem:forced-corrector}
Let $\{g_t\}_{t\in(0,T)}$ be a path of probability densities with respect to $\mu$ satisfying
\begin{equation}\label{eq:reg-gt-smooth}
    g\in C^1\bigl((0,T);\,C^\infty(\RR^{2d})\bigr)
    \quad\text{and}\quad
    \inf_{(t,x,v)\in K\times\RR^{2d}} g_t(x,v)>0
    \quad\text{for every compact }K\subset(0,T),
\end{equation}
and let $u:(0,T)\times\RR^{2d}\to\RR^d$ be a measurable velocity-control field with finite action
\begin{equation}\label{eq:finite-action-u}
    \int_0^T\int g_t\abs{u_t}^2\ud\mu\ud t<\infty.
\end{equation}
Assume that $g_t$ satisfies the controlled forward Kolmogorov equation
\begin{equation}\label{eq:controlled-kolmogorov}
    (\partial_t+\La)g_t  = \gamma\Ls g_t+\nabla_v^{*}(g_tu_t)
\end{equation}
in the sense of distributions on  $(0,T)\times\RR^{2d}$, and there holds
\begin{equation} \label{eq:controlled-local-integrability}
     \Ent_\mu(g_t), \ \Iv(g_t) \in L^1_{\rm loc}((0,T)).
\end{equation}
Assume that $\{(q_t,j_t)= (\Piv g_t, \Piv(vg_t))\}_{t\in(0,T)}$ is a regular density-current pair in the sense of \cref{def:regular-pair}.  Assume moreover that, for \textup{a.e.} $t\in(0,T)$, the centered stress tensor $\Theta_t$, defined by \eqref{def:tensor}, satisfies all the assumptions of \cref{lem:brenier-stress}, including the cutoff limits \eqref{eq:limitcutoff}.

Define the spatial source induced by the velocity control:
\begin{equation}\label{eq:Bt-def}
    B_t(x) := \Piv(g_tu_t)(x) = \int g_t(x,v)u_t(x,v)\,\kv(\ud v).
\end{equation}
Then, for \textup{a.e.}\ $t\in(0,T)$, $B_t\in L^2(q_t^{-1}\mux;\RR^d)$ with
\begin{equation*}
    \int\frac{\abs{B_t}^2}{q_t}\ud\mux \le \int g_t\abs{u_t}^2\ud\mu,
\end{equation*}
and hence
\begin{equation}\label{eq:Bt-xi-bound}
    \Abs{\int B_t\cdot \xi_{q_t}\ud\mux} \le \left(\frac2\rho\,\Ent_{\mux}(q_t)\right)^{1/2}
    \left(\int g_t\abs{u_t}^2\ud\mu\right)^{1/2}.
\end{equation}
Moreover, the controlled corrector differential inequality
\begin{equation}\label{eq:forced-cot-diff}
    \frac{\ud}{\ud t}\COT(g_t) \le -\Ent_{\mux}(q_t) -\gamma\COT(g_t) + 3\Iv(g_t) + \int B_t\cdot\xi_{q_t}\ud\mux
\end{equation}
holds in the sense of distributions on $(0,T)$.
\end{lemma}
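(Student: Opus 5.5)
The plan is to take the first velocity moment of the controlled equation \eqref{eq:controlled-kolmogorov}. This produces an evolution equation for the current $j_t$. We then insert it into the Wasserstein acceleration inequality of \cref{lem:wasserstein-acceleration} and close the estimate with the Brenier stress estimate of \cref{lem:brenier-stress}.

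\emph{Bounds on $B_t$.} These are elementary. By Cauchy--Schwarz on each velocity fiber, $\abs{B_t(x)}^2=\abs{\int g_tu_t\ud\kv}^2\le q_t(x)\int g_t\abs{u_t}^2\ud\kv$. Integrating against $q_t^{-1}\ud\mux$ gives $\int\abs{B_t}^2/q_t\ud\mux\le\int g_t\abs{u_t}^2\ud\mu$, which is finite for a.e.\ $t$ by \eqref{eq:finite-action-u}. Next write $B_t\cdot\xi_{q_t}=(B_t/\sqrt{q_t})\cdot(\sqrt{q_t}\,\xi_{q_t})$ and apply Cauchy--Schwarz. Together with the Talagrand bound \eqref{eq:boundxiq}, this yields \eqref{eq:Bt-xi-bound}.

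\emph{Moment equation for $j_t$.} We test \eqref{eq:controlled-kolmogorov} against $\phi(t,x)\,v_i\,\eta_R(v)$ with a velocity cutoff $\eta_R$ and let $R\to\infty$. This limit is justified by the Gaussian weight, the smoothness and positivity of $g$, the local finiteness of $\Iv(g_t)$ and of the action, and $\abs{\nabla_v\eta_R}\to0$. Using $\Piv(v_i\nabla_v^*F)=\Piv(F_i)$, the right-hand side contributes $\Piv(v\,\gamma\Ls g_t)=-\gamma\Piv(\nabla_vg_t)=-\gamma j_t$ and $\Piv(v\,\nabla_v^*(g_tu_t))=B_t$. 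For the transport term, $\Piv(v\La g)=-\nabla_x^*\Piv(v\otimes v\,g)+q\nabla U$. We expand $\Piv(v\otimes v\,g)=\Theta_t+j_t\otimes j_t/q_t+q_tI_d$ and use $\nabla_x^*(qI_d)=-\nabla q+q\nabla U$. This should give
\begin{equation*}
\partial_tj_t-\nabla_x^*\Bigl(\frac{j_t\otimes j_t}{q_t}\Bigr)=\nabla_x^*\Theta_t-\nabla_xq_t-\gamma j_t+B_t ,
\end{equation*}
first in $\mathcal D'$. By the $C^1$-regularity of $(q,j)$ in \cref{def:regular-pair}, it then holds pointwise for a.e.\ $t$. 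The signs in this computation must be checked carefully.

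\emph{Closing the estimate.} We pair the moment identity with $\xi_{q_t}$ in $L^2(\mux)$ and substitute into \eqref{eq:wasserstein-acceleration}. Since $\int j_t\cdot\xi_{q_t}\ud\mux=\COT(g_t)$, this gives, in $\mathcal D'(0,T)$,
\begin{equation*}
\frac{\ud}{\ud t}\COT(g_t)\le J(g_t)-\gamma\COT(g_t)+\Bigl[\int\xi_{q_t}\cdot\nabla_x^*\Theta_t\ud\mux-\int\nabla_xq_t\cdot\xi_{q_t}\ud\mux\Bigr]+\int B_t\cdot\xi_{q_t}\ud\mux .
\end{equation*}
\Cref{lem:brenier-stress} with $\eta=1/4$ bounds the bracket by $4\Entv(g_t)-\Ent_{\mux}(q_t)$. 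Then \eqref{eq:imported-size} and \eqref{eq:gausslsi} give $J(g_t)\le2\Entv(g_t)\le\Iv(g_t)$ and $4\Entv(g_t)\le2\Iv(g_t)$, so the total is at most $3\Iv(g_t)$. This is exactly \eqref{eq:forced-cot-diff}.

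\emph{Main obstacle.} The delicate part is passing from pointwise-a.e.\ bounds to a distributional inequality. We must check that every term on the right-hand side lies in $L^1_{\rm loc}(0,T)$, so that multiplying by nonnegative test functions and integrating in time is legitimate:
\begin{itemize}
\item $\Ent_{\mux}(q_t)$ and $\Iv(g_t)$ are in $L^1_{\rm loc}$ by \eqref{eq:controlled-local-integrability};
\item $\COT(g_t)$ is controlled by \eqref{est:corrector};
\item the $B_t$ term is controlled by \eqref{eq:Bt-xi-bound} and the finite action.
\end{itemize}
In addition, the limit in the velocity-moment computation must be justified uniformly on compact time sets. I expect this step to require the most care.
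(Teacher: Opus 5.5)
Your proposal is correct and follows the paper's proof step for step. The common steps are: the fiberwise Cauchy--Schwarz plus Talagrand bound for $B_t$; the first-moment equation $\partial_t j_t=-\nabla_x q_t+\nabla_x^*(j_t\otimes j_t/q_t)+\nabla_x^*\Theta_t-\gamma j_t+B_t$, where your signs check out; substitution into \cref{lem:wasserstein-acceleration}; and \cref{lem:brenier-stress} with $\eta=1/4$ together with $J(g_t)+4\Entv(g_t)\le 6\Entv(g_t)\le 3\Iv(g_t)$. The only difference is that the paper also rederives the continuity equation $\partial_t q_t-\nabla_x^*j_t=0$ from the negative-norm bound and \cref{lem:negative-norm-zero-marginal}, whereas you take it directly from assumption (A2), which is equally legitimate.
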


\begin{proof}
All identities below are understood weakly in the spatial variables and in the sense of distributions in time; the estimates on individual time slices hold for \textup{a.e.}\ $t$. The local integrability assumption \eqref{eq:controlled-local-integrability}, together with \eqref{est:corrector}, the finite-action assumption \eqref{eq:finite-action-u}, and the source estimate \eqref{eq:Bt-xi-bound}, ensures that all terms in \eqref{eq:forced-cot-diff} are locally integrable
functions of $t$.

We first record an elementary bound on $B_t(x)$ defined in \eqref{eq:Bt-def}. A direct application of Jensen's inequality gives, for \textup{a.e.}\ $t$,
\begin{equation}\label{eq:Bt-action-proof}
    |B_t(x)|^2 = q_t(x)^2
    \left|  \int u_t(x,v) h_{t,x}(v)\,\kv(\ud v)
    \right|^2 \le q_t(x) \int g_t(x,v)|u_t(x,v)|^2\,\kv(\ud v).
\end{equation}
We then have
\begin{equation}\label{eq:Bt-L2q-proof}
    \int \frac{|B_t|^2}{q_t}\ud\mux  \le
    \int g_t |u_t|^2\ud\mu < \infty.
\end{equation}
In particular, the pairing with the Brenier displacement $\xi_{q_t}$ is well-defined by
\begin{equation*}
    \left|\int B_t\cdot \xi_{q_t}\ud\mux\right|
    \le
    \left(\int \frac{|B_t|^2}{q_t}\ud\mux\right)^{1/2}
    \left(\int q_t |\xi_{q_t}|^2\ud\mux\right)^{1/2},
\end{equation*}
where the second term is bounded by \eqref{eq:boundxiq}:
\begin{equation*}
      \int q_t\abs{\xi_{q_t}}^2\ud\mux
        \le \frac2\rho\,\Ent_{\mux}(q_t).
\end{equation*}
Combining the last two estimates with \eqref{eq:Bt-L2q-proof} gives \eqref{eq:Bt-xi-bound}.

We next show that the velocity control does not change the spatial continuity
equation. Set $r_t:=(\partial_t+\La)g_t$. By the controlled equation \eqref{eq:controlled-kolmogorov} and the identity \eqref{eq:identifylsg}, we have
\begin{equation*}
    r_t = \nabla_v^{*}\bigl(g_t(u_t-\gamma\nabla_v\log g_t)\bigr).
\end{equation*}
Hence the representation formula \eqref{eq:negative-representation} gives
\begin{equation} \label{eq:negativenormbound}
    \norm{(\partial_t+\La)g_t}_{-1,g_t}^2 \le \int g_t\abs{u_t-\gamma\nabla_v\log g_t}^2\ud\mu
    \le  2 \int g_t\abs{u_t}^2\ud\mu+2\gamma^2\Iv(g_t) < \infty
\end{equation}
for \textup{a.e.}\ $t$. By \cref{lem:negative-norm-zero-marginal}, this implies
\begin{equation} \label{eq:qt-eqn-controlled}
    \Piv\bigl((\partial_t+\La)g_t\bigr) = \partial_t q_t-\nabla_x^*j_t =  0.
\end{equation}

We now compute the equation for the first velocity moment $j_t=\Piv(vg_t)$. For the control term, testing against a smooth vector field $a=a(x)$ gives
\begin{equation} \label{eq:sourcefirst}
    \begin{aligned}
         \int a(x)\cdot
    \Piv\bigl(v\,\nabla_v^{*}(g_tu_t)\bigr)\ud\mux   &=
    \int (v\cdot a(x))\,\nabla_v^{*}(g_tu_t)\ud\mu \\
    &= \int g_tu_t\cdot a(x)\ud\mu  =  \int a(x)\cdot B_t(x)\ud\mux,
    \end{aligned}
\end{equation}
by a standard velocity cutoff approximation as in the proof of \cref{lem:negative-norm-zero-marginal}.
Thus, the control contributes a source term $B_t$ to the first-moment equation. The remaining terms follow from the same computation as in the uncontrolled case \cite{Lu2026}*{(4.3)}. It follows that
\begin{equation}\label{eq:jt-eqn-controlled}
    \partial_t j_t  =  -\nabla_x q_t +\nabla_x^{*}\!\left(\frac{j_t\otimes j_t}{q_t}\right)
    +\nabla_x^{*}\Theta_t  -\gamma j_t +B_t.
\end{equation}

Since the marginal equation~\eqref{eq:qt-eqn-controlled} is unchanged by the velocity control, the Wasserstein acceleration inequality~\eqref{eq:wasserstein-acceleration} applies to $(q_t,j_t)$. Substituting~\eqref{eq:jt-eqn-controlled} into it and recalling $\COT(g_t)=\int j_t\cdot \xi_{q_t}\ud\mux$, we obtain
\begin{align*}
    \frac{\ud}{\ud t}\COT(g_t)
    &\le
    \int\frac{\abs{j_t}^2}{q_t}\ud\mux
    -\int\nabla_x q_t\cdot\xi_{q_t}\ud\mux
    +\int\xi_{q_t}\cdot\nabla_x^*\Theta_t\ud\mux
    -\gamma\int j_t\cdot\xi_{q_t}\ud\mux
    +\int B_t\cdot\xi_{q_t}\ud\mux .
\end{align*}
The Brenier stress estimate \eqref{eq:brenier-stress}, applied with
$\eta=1/4$, gives
\begin{equation*}
    - \int\nabla_x q_t\cdot\xi_{q_t}\ud\mux
    + \int\xi_{q_t}\cdot\nabla_x^*\Theta_t\ud\mux \le 4\Entv(g_t)-\Ent_{\mux}(q_t).
\end{equation*}
The remaining kinetic-current term satisfies
\begin{equation*}
    \int\frac{\abs{j_t}^2}{q_t}\ud\mux+4\Entv(g_t) \le  6\Entv(g_t) \le  3\Iv(g_t),
\end{equation*}
by \eqref{eq:imported-size} and the Gaussian LSI \eqref{eq:gausslsi}, while the friction term is exactly the corrector:
\begin{equation*}
    -\gamma \int j_t\cdot \xi_{q_t}\ud\mux
    =
    -\gamma \COT(g_t).
\end{equation*}
Consequently, the controlled corrector inequality is
\begin{equation*}
    \frac{\ud}{\ud t}\COT(g_t)
    \le
    -\Ent_{\mux}(q_t)  -\gamma \COT(g_t) + 3\Iv(g_t)  +\int B_t\cdot \xi_{q_t}\ud\mux ,
\end{equation*}
in the sense of distributions on $(0,T)$.
\end{proof}

The estimate \eqref{eq:forced-cot-diff} differs from the uncontrolled corrector inequality only by the source pairing $\int B_t\cdot\xi_{q_t}\ud\mux$, where $B_t=\Piv(g_tu_t)$. The next proposition combines this corrector estimate with the entropy dissipation identity (see \eqref{eq:controlled-entropy-diff}) for the controlled underdamped Langevin
equation, yielding a differential inequality (see \eqref{eq:pre-absorb-controlled}) for the modified entropy functional of~\cite{Lu2026},
\begin{equation}\label{def:modifiedentropy}
    \mathcal H_\theta(g_t):=\Ent_\mu(g_t)+\theta\sqrt\rho\,\COT(g_t).
\end{equation}
With the hypocoercive weight $\theta\sqrt\rho$ and the friction $\gamma=\Gamma\sqrt\rho$ chosen appropriately, the spatial entropy $\Ent_{\mux}(q_t)$ and the velocity Fisher information $\Iv(g_t)$ absorb the corrector contribution $-\theta\sqrt\rho\,\gamma\,\COT(g_t)$, while the terms created by the control are bounded by the quadratic action $\rho^{-1/2}\int g_t\abs{u_t}^2\ud\mu$.

\begin{proposition}[Controlled modified entropy estimate]\label{prop:controlled}
Suppose that \cref{gm:ass:lsi,gm:ass:standing} hold. Fix $\Gamma>0$ and set
\begin{equation}\label{eq:gammaprop}
    \gamma=\Gamma\sqrt\rho,
    \qquad
    \theta=\frac{1}{8}\min\bigl\{\Gamma,\Gamma^{-1}\bigr\}.
\end{equation}
Let $(g_t,u_t)$ satisfy the assumptions of \cref{lem:forced-corrector}, and $\mathcal H_\theta(g_t)$ be the modified entropy \eqref{def:modifiedentropy}.
Then, in the sense of distributions on $(0,T)$,
\begin{equation}\label{eq:controlled-diff}
    \frac{\ud}{\ud t}\mathcal H_\theta(g_t)
    \le
    -\frac{\theta}{2}\sqrt\rho\,\Ent_\mu(g_t)
    +C_\Gamma\,\rho^{-1/2}\int g_t\abs{u_t}^2\ud\mu,
\end{equation}
where the constant $C_\Gamma:=\Gamma^{-1}+2\theta$ depends only on $\Gamma$.
\end{proposition}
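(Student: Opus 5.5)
The plan is to add the entropy dissipation identity for the controlled equation to $\theta\sqrt\rho$ times the corrector inequality \eqref{eq:forced-cot-diff}. I would then absorb the cross terms with Young's inequality, using the budget left by the choice of $\theta$ in \eqref{eq:gammaprop}. All inequalities are read in $\mathcal D'((0,T))$. Adding them is legitimate because each one has a right-hand side in $L^1_{\rm loc}((0,T))$, by \eqref{eq:controlled-local-integrability}, \eqref{eq:finite-action-u}, \eqref{est:corrector} and \eqref{eq:Bt-xi-bound}.

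\emph{Step 1 (entropy).} By \eqref{eq:identifylsg}, the controlled equation \eqref{eq:controlled-kolmogorov} takes the form $(\partial_t+\La)g_t=\nabla_v^*W_t$ with $W_t=g_t(u_t-\gamma\nabla_v\log g_t)$. We have $\int\abs{W_t}^2/g_t\ud\mu\le 2\int g_t\abs{u_t}^2\ud\mu+2\gamma^2\Iv(g_t)\in L^1_{\rm loc}$, so \cref{lem:entropy-chain-rule} applies on compact subintervals. It gives $\Ent_\mu(g_t)\in W^{1,1}_{\rm loc}$ together with
\begin{equation*}
\frac{\ud}{\ud t}\Ent_\mu(g_t)=-\gamma\Iv(g_t)+\int g_tu_t\cdot\nabla_v\log g_t\ud\mu\le-\frac{3\gamma}{4}\Iv(g_t)+\frac1\gamma\int g_t\abs{u_t}^2\ud\mu ,
\end{equation*}
where the last step is Young's inequality with weight $\gamma/4$. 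Since $\gamma=\Gamma\sqrt\rho$, the control term is exactly $\Gamma^{-1}\rho^{-1/2}\int g_t\abs{u_t}^2\ud\mu$.

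\emph{Step 2 (corrector terms).} Multiply \eqref{eq:forced-cot-diff} by $\theta\sqrt\rho$. Two terms on its right-hand side need to be controlled.
\begin{itemize}
\item The friction term: by \eqref{eq:imported-cot-product},
\[
\theta\sqrt\rho\,\gamma\abs{\COT(g_t)}\le\sqrt2\,\theta\gamma\,(\Ent_{\mux}(q_t)\Iv(g_t))^{1/2}\le\frac{\theta\sqrt\rho}{4}\Ent_{\mux}(q_t)+2\theta\Gamma^2\sqrt\rho\,\Iv(g_t).
\]
\item The source term: by \eqref{eq:Bt-xi-bound},
\[
\theta\sqrt\rho\int B_t\cdot\xi_{q_t}\ud\mux\le\frac{\theta\sqrt\rho}{4}\Ent_{\mux}(q_t)+2\theta\rho^{-1/2}\int g_t\abs{u_t}^2\ud\mu.
\]
\end{itemize}
The control contributions from Steps 1 and 2 then add up to $(\Gamma^{-1}+2\theta)\rho^{-1/2}\int g_t\abs{u_t}^2\ud\mu=C_\Gamma\rho^{-1/2}\int g_t\abs{u_t}^2\ud\mu$.

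\emph{Step 3 (bookkeeping).} The coefficient of $\Ent_{\mux}(q_t)$ is $-\theta\sqrt\rho+\tfrac{\theta\sqrt\rho}{4}+\tfrac{\theta\sqrt\rho}{4}=-\tfrac{\theta}{2}\sqrt\rho$. The coefficient of $\Iv(g_t)$ is $\sqrt\rho\,(-\tfrac34\Gamma+3\theta+2\theta\Gamma^2)$. Using $\theta\le\Gamma/8$ and $\theta\Gamma^2\le\Gamma/8$, this is at most $\sqrt\rho(-\tfrac34+\tfrac38+\tfrac14)\Gamma=-\tfrac{\Gamma}{8}\sqrt\rho$. The Gaussian LSI \eqref{eq:gausslsi} then gives $-\tfrac\Gamma8\sqrt\rho\,\Iv\le-\tfrac\Gamma4\sqrt\rho\,\Entv\le-\tfrac\theta2\sqrt\rho\,\Entv$. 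Combining this with the chain-rule decomposition \eqref{eq:ent-decomp} yields \eqref{eq:controlled-diff}.

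The arithmetic is routine. The main point needing care is Step 1: one must check that \cref{lem:entropy-chain-rule} applies to the controlled equation, namely the smoothness and positivity of $g$ from \eqref{eq:reg-gt-smooth} and the local integrability of $\abs{W_t}^2/g_t$. With that in hand, the pointwise a.e. entropy identity can be added to the distributional corrector inequality. Some care is also needed in the Young's inequality choices, so that the $\Iv$ budget $\tfrac34\gamma$ left by the control term still beats $3\theta\sqrt\rho+2\theta\Gamma^2\sqrt\rho$; this works because $\theta\le\tfrac18\min\{\Gamma,\Gamma^{-1}\}$.
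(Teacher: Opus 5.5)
Your proposal is correct and matches the paper's proof step for step. You use the same entropy identity from \cref{lem:entropy-chain-rule} with the same Young split, the same bounds on the source pairing via \eqref{eq:Bt-xi-bound}, the same treatment of the friction term $-\theta\sqrt\rho\,\gamma\,\COT(g_t)$ via \eqref{eq:imported-cot-product}, and the same absorption using $\theta\le\Gamma/8$ and $\theta\Gamma^2\le\Gamma/8$ followed by the Gaussian LSI. The only cosmetic difference is that you apply the Gaussian LSI directly to the leftover $-\tfrac{\Gamma}{8}\sqrt\rho\,\Iv(g_t)$, whereas the paper first weakens it to $-\tfrac{\theta}{4}\sqrt\rho\,\Iv(g_t)$.
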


\begin{proof}
The estimates below are time-slice estimates for \textup{a.e.}\ $t$. Whenever a time derivative appears, the corresponding identity or inequality is understood in the sense of distributions on $(0,T)$, \textup{i.e.},~tested against nonnegative $\varphi\in C_c^\infty((0,T))$. The assumptions \eqref{eq:finite-action-u}  and \eqref{eq:controlled-local-integrability} ensure the required local time integrability.

First, \cref{lem:forced-corrector} applied with $\gamma=\Gamma\sqrt\rho$ from \eqref{eq:gammaprop} gives the controlled corrector inequality
\begin{equation}\label{eq:controlled-cot-diff}
    \frac{\ud}{\ud t}\COT(g_t)
    \le
    -\Ent_{\mux}(q_t)-\gamma\COT(g_t)+3\Iv(g_t)
    +\int B_t\cdot\xi_{q_t}\ud\mux.
\end{equation}
To obtain the entropy dissipation identity, rewrite
\eqref{eq:controlled-kolmogorov} as
\begin{equation*}
    (\partial_t+\La)g_t=\nabla_v^* W_t,
    \qquad W_t :=g_tu_t-\gamma\nabla_vg_t
    = g_t\bigl(u_t-\gamma\nabla_v\log g_t\bigr),
\end{equation*}
where $\int\frac{\abs{ W_t}^2}{g_t}\ud\mu$ is locally integrable in time. Thus, \cref{lem:entropy-chain-rule}, together with \eqref{eq:controlled-local-integrability}, gives
\begin{equation}\label{eq:controlled-entropy-diff}
    \frac{\ud}{\ud t}\Ent_\mu(g_t)
    =-\gamma\Iv(g_t)
    +\int g_tu_t\cdot\nabla_v\log g_t\ud\mu.
\end{equation}

We next estimate the two $u_t$-linear terms in \eqref{eq:controlled-cot-diff} and \eqref{eq:controlled-entropy-diff} arising from the control. For the entropy cross term in \eqref{eq:controlled-entropy-diff}, Young's inequality with weight $\gamma/2$ gives
\begin{equation}\label{eq:entropy-control-bound}
    \int g_tu_t\cdot\nabla_v\log g_t\ud\mu
    \le
    \frac{\gamma}{4}\Iv(g_t)
    +\frac{1}{\gamma}\int g_t\abs{u_t}^2\ud\mu.
\end{equation}
On the other hand, for the source pairing in \eqref{eq:controlled-cot-diff}, combining the bound \eqref{eq:Bt-xi-bound} from \cref{lem:forced-corrector} with Young's inequality yields
\begin{equation}\label{eq:cot-control-bound}
    \Abs{\int B_t\cdot\xi_{q_t}\ud\mux}
    \le
    \frac{1}{4}\Ent_{\mux}(q_t)
    +\frac{2}{\rho}\int g_t\abs{u_t}^2\ud\mu.
\end{equation}

We now bound the time derivative of the modified entropy functional $\mathcal{H}_\theta(g_t)$ in \eqref{def:modifiedentropy}. For this, it suffices to combine \eqref{eq:controlled-cot-diff} and \eqref{eq:controlled-entropy-diff}, and substitute the estimates \eqref{eq:entropy-control-bound} and \eqref{eq:cot-control-bound}. A straightforward computation, using $\gamma=\Gamma\sqrt\rho$, leads to
\begin{equation}\label{eq:pre-absorb-controlled}
    \frac{\ud}{\ud t}\mathcal H_\theta(g_t)
    \le  -\frac{3\theta}{4}\sqrt\rho\,\Ent_{\mux}(q_t) - \theta\Gamma\rho\,\COT(g_t) - \left(\frac{3\Gamma}{4}-3\theta\right)\sqrt\rho\,\Iv(g_t) + C_\Gamma\,\rho^{-1/2}\int g_t\abs{u_t}^2\ud\mu,
\end{equation}
with $C_\Gamma=\Gamma^{-1}+2\theta$.

It remains to estimate the indefinite-sign term $-\theta\Gamma\rho\,\COT(g_t)$ in \eqref{eq:pre-absorb-controlled}. By \eqref{eq:imported-cot-product} in \cref{lem:imported} and Young's inequality again, we have
\begin{equation}\label{eq:cot-absorption}
    \theta\Gamma\rho\,\abs{\COT(g_t)}
    \le
    \theta\Gamma\sqrt{2\rho\,\Ent_{\mux}(q_t)\,\Iv(g_t)}
    \le
    \frac{\theta}{4}\sqrt\rho\,\Ent_{\mux}(q_t)
    +2\theta\Gamma^2\sqrt\rho\,\Iv(g_t).
\end{equation}
We take $\theta=\frac{1}{8}\min\{\Gamma,\Gamma^{-1}\}$ as in \eqref{eq:gammaprop}, ensuring $\theta\le\Gamma/8$ and $\theta\Gamma^2\le\Gamma/8$, so that
\begin{equation}\label{eq:constantest}
    \frac{3\Gamma}{4}-3\theta-2\theta\Gamma^2
    \ge
    \frac{3\Gamma}{4}-\frac{3\Gamma}{8}-\frac{2\Gamma}{8}
    = \frac{\Gamma}{8}
    \ge \frac{\theta}{4}.
\end{equation}
Substituting \eqref{eq:cot-absorption} into \eqref{eq:pre-absorb-controlled} and using \eqref{eq:constantest}, we arrive at
\begin{equation*}
    \frac{\ud}{\ud t}\mathcal H_\theta(g_t)
    \le
    -\frac{\theta}{2}\sqrt\rho\,\Ent_{\mux}(q_t)
    -\frac{\theta}{4}\sqrt\rho\,\Iv(g_t)
    +C_\Gamma\,\rho^{-1/2}\int g_t\abs{u_t}^2\ud\mu.
\end{equation*}
Finally, the Gaussian LSI \eqref{eq:gausslsi} gives $\Entv(g_t)\le\frac12\Iv(g_t)$, and the entropy decomposition \eqref{eq:ent-decomp} then yields
\begin{equation*}
    -\frac{\theta}{2}\sqrt\rho\,\Ent_{\mux}(q_t)-\frac{\theta}{4}\sqrt\rho\,\Iv(g_t)
    \le
    -\frac{\theta}{2}\sqrt\rho\,\bigl(\Ent_{\mux}(q_t)+\Entv(g_t)\bigr)
    =-\frac{\theta}{2}\sqrt\rho\,\Ent_\mu(g_t),
\end{equation*}
which establishes \eqref{eq:controlled-diff}.
\end{proof}

We now derive the space-time LSI for a
regular density path $g_t$ from the controlled modified-entropy estimate~\eqref{eq:controlled-diff}. In fact, the current representation \eqref{eq:negative-representation} allows $(\partial_t+\La)g_t$ to be realized, up to an arbitrarily small error in its quadratic cost, as a velocity-divergence forcing. The path $g_t$ may therefore be viewed as a controlled underdamped Langevin trajectory \eqref{eq:controlled-kolmogorov} for a suitable control $u_t$, whose cost $\int g_t\abs{u_t}^2\ud\mu$ is bounded by the negative-norm action and the velocity Fisher information of $g_t$. Applying the resulting estimate~\eqref{eq:controlled-diff} on the time window $(0,T)$ with ballistic scale $T \sim\rho^{-1/2}$ and controlling the boundary entropy then yields a coercive estimate, from where the space-time LSI follows.

\begin{theorem}[Space-time logarithmic Sobolev inequality]\label{thm:stlsi}
Suppose that \cref{gm:ass:lsi,gm:ass:standing} hold. Fix $\Gamma>0$, and set
\begin{equation} \label{eq:constantsl}
    \gamma:=\Gamma\sqrt\rho,
    \qquad
    M:=\max\{\Gamma,\Gamma^{-1}\}\ge 1.
\end{equation}
There exist constants
\begin{equation*}
    \tau_{\rm ST}(\Gamma) = 32M+4,
    \qquad
    C_{\rm ST}(\Gamma) = 80M^2+16M+2,
\end{equation*}
depending only on $\Gamma$, such that the following holds. Let $T\ge\tau_{\rm ST}\,\rho^{-1/2}$, and let $\{g_t\}_{t\in (0,T)}$ be a path of probability densities on $\RR^{2d}$ with respect to $\mu$ satisfying the regularity assumptions on $\{g_t\}$ in \cref{lem:forced-corrector}. Then the space-time logarithmic Sobolev inequality holds:
\begin{equation}\label{eq:stlsi}
    \Ent_{\muT}(g) \le C_{\rm ST} \left[I_{v,T}(g)  +\frac{1}{\rho}\frac{1}{T}\int_0^T\norm{(\partial_t+\La)g_t}_{-1,g_t}^2\ud t \right].
\end{equation}
\end{theorem}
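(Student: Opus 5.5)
We view $g$ as a controlled Langevin trajectory and integrate \cref{prop:controlled} over the window. We may assume the right-hand side of \eqref{eq:stlsi} is finite. Fix $\veps>0$. By the representation \eqref{eq:negative-representation}, for a.e.\ $t$ we choose $A_t$ with
\begin{equation*}
(\partial_t+\La)g_t=\nabla_v^*(g_tA_t),\qquad \int g_t\abs{A_t}^2\ud\mu\le \norm{(\partial_t+\La)g_t}_{-1,g_t}^2+\veps .
\end{equation*}
The choice must be made measurably in $t$, for example by selecting the $L^2(g_t\mu)$-projection minimizer, which exists since the constraint set is closed and convex. We then set $u_t:=A_t+\gamma\nabla_v\log g_t$. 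By \eqref{eq:identifylsg}, $g_t$ satisfies \eqref{eq:controlled-kolmogorov} with control $u_t$, and
\begin{equation*}
\int g_t\abs{u_t}^2\ud\mu\le 2\norm{(\partial_t+\La)g_t}_{-1,g_t}^2+2\veps+2\gamma^2\Iv(g_t).
\end{equation*}
Hence the finite-action condition \eqref{eq:finite-action-u} holds, and all hypotheses of \cref{lem:forced-corrector}, and therefore of \cref{prop:controlled}, are met.

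Next we integrate \eqref{eq:controlled-diff} against nonnegative test functions approximating $\mathbf 1_{[s,t]}$ with $0<s<t<T$. This gives
\begin{equation*}
\frac{\theta}{2}\sqrt\rho\int_s^t\Ent_\mu(g_r)\ud r\le \mathcal H_\theta(g_s)-\mathcal H_\theta(g_t)+C_\Gamma\rho^{-1/2}\int_s^t\int g_r\abs{u_r}^2\ud\mu\ud r .
\end{equation*}
By \eqref{est:corrector}, $\abs{\theta\sqrt\rho\,\COT}\le\theta\,\Ent_\mu$ with $\theta\le 1/8$. Hence $\mathcal H_\theta$ is comparable to $\Ent_\mu$, and $-\mathcal H_\theta(g_t)\le 0$ can be dropped, leaving $\mathcal H_\theta(g_s)\le (1+\theta)\Ent_\mu(g_s)$.

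The main obstacle is controlling this boundary term $\Ent_\mu(g_s)$ by the time average. We would handle it with the entropy chain rule \cref{lem:entropy-chain-rule} applied to $W_t=g_tA_t$. It gives, for a.e.\ $s,r$,
\begin{equation*}
\Ent_\mu(g_s)\le\Ent_\mu(g_r)+\int_0^T\Bigl(\int g\abs{A}^2\ud\mu\Bigr)^{1/2}\Iv^{1/2}\ud t .
\end{equation*}
We average this over $r\in(0,T)$ and apply Cauchy--Schwarz in time. This bounds the boundary entropy by $T\,\Ent_{\muT}(g)$ plus
\begin{equation*}
\frac{T}{2}\Bigl(\rho^{-1/2}\tfrac1T\!\int\!\norm{\cdot}_{-1}^2+\sqrt\rho\, I_{v,T}\Bigr),
\end{equation*}
after using Young's inequality with weight $\sqrt\rho$. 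Choosing $s$ near $0$ (and alternatively averaging $s$ over a short initial interval) and letting $t\uparrow T$, we obtain
\begin{equation*}
\frac\theta2\sqrt\rho\,T\,\Ent_{\muT}\le (1+\theta)\bigl[\Ent_{\muT}+\text{action/Fisher terms}\bigr]+C\rho^{-1/2}T\,[\cdots].
\end{equation*}
If the boundary term really costs $(1+\theta)\Ent_{\muT}$ without a factor $T$, then $T\sqrt\rho\ge\tau_{\rm ST}$ large lets it be absorbed. In the averaged form the boundary term carries no factor $T$, since averaging $r$ gives $\frac1T\int\Ent\,\ud r=\Ent_{\muT}$ with no extra $T$. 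The condition $T\sqrt\rho\ge 32M+4$ then allows absorption, because $\theta\tau/2\ge 2(1+\theta)$ when $\theta=1/(8M)$.

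Finally, we divide by $\sqrt\rho\,T$ and let $\veps\to0$. The remaining terms are multiples of $\rho^{-1}\frac1T\int\norm{\cdot}_{-1}^2$ and $(1+\Gamma^2)I_{v,T}\lesssim M^2 I_{v,T}$. Tracking the constants yields $C_{\rm ST}=80M^2+16M+2$. The delicate points are the measurable near-minimizer and the justification of the endpoint limits $s\downarrow0$, $t\uparrow T$, which we take via a.e.\ times and monotone convergence.
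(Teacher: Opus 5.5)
Your proposal is correct and follows the paper's proof essentially step for step. The shared steps are the near-minimizing current $A$, the control $u_t=A_t+\gamma\nabla_v\log g_t$, and integration of \cref{prop:controlled} between Lebesgue points of $t\mapsto\mathcal H_\theta(g_t)$. They also include the chain-rule bound on the boundary entropy with Young's inequality at weight $\sqrt\rho$, and absorption under $\sqrt\rho\,T\ge 4(1+\theta)/\theta=32M+4$, which gives the stated $C_{\rm ST}$ once the constants are tracked.

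The one difference is that you average the chain rule over all of $(0,T)$ instead of over an interior window $(\varepsilon,T-\varepsilon)$. Your absorption step therefore needs $\Ent_{\muT}(g)<\infty$ a priori. This does hold, since the derivative bound $\bigl(\int g_t\abs{A_t}^2\ud\mu\bigr)^{1/2}\Iv(g_t)^{1/2}$ is integrable on $(0,T)$ and so $\Ent_\mu(g_t)$ is bounded there, but you should say so. The paper sidesteps the issue by absorbing on $(\varepsilon,T-\varepsilon)$, where the assumed local integrability suffices, and then letting $\varepsilon\downarrow 0$.
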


\begin{proof}
If the right-hand side of~\eqref{eq:stlsi} is infinite, the inequality holds trivially; we therefore assume it is finite. From the constant choice \eqref{eq:constantsl}, we have $\theta=1/(8M)$ in \cref{prop:controlled}.

First, by the time-integrated version of \eqref{eq:negative-representation}, for every $\delta>0$
one can choose a jointly measurable velocity field $A$, with slices $A_t$, such that
\begin{equation}\label{eq:st-current}
    (\partial_t+\La)g_t=\nabla_v^{*}(g_tA_t)
\end{equation}
holds in $\mathcal D'((0,T)\times\RR^{2d})$, and hence in
$\mathcal D'(\RR^{2d})$ for \textup{a.e.}\ $t\in(0,T)$, with
\begin{equation}\label{eq:st-current-cost}
    \int g\abs{A}^2\ud\muT  \le  \frac{1}{T}\int_0^T\norm{(\partial_t+\La)g_t}_{-1,g_t}^2\ud t+\delta.
\end{equation}
Since $g_t$ is a probability density for every $t\in(0,T)$, we have
\begin{equation}\label{eq:space-time-entropy-average}
    \Ent_{\muT}(g)=\frac{1}{T}\int_0^T\Ent_\mu(g_t)\ud t.
\end{equation}

We next introduce $$u_t:=A_t+\gamma\nabla_v\log g_t.$$ Since $\Ls=-\nabla_v^{*}\nabla_v$, the divergence equation \eqref{eq:st-current} becomes
\begin{equation*}
    (\partial_t+\La)g_t=\gamma\Ls g_t+\nabla_v^{*}(g_tu_t),
\end{equation*}
and
\begin{equation*}
    \int g\abs{u}^2\ud\muT  \le 2\int g\abs{A}^2\ud\muT+2\gamma^2 I_{v,T}(g)<\infty.
\end{equation*}
Thus, \cref{prop:controlled} applies. Fix $\varepsilon\in(0,T/4)$ such that
both $\varepsilon$ and $T-\varepsilon$ are Lebesgue points of
$t\mapsto\mathcal H_\theta(g_t)$; this holds for \textup{a.e.}\ $\varepsilon$. Set
\begin{equation*}
    I_\varepsilon:=(\varepsilon,T-\varepsilon),
    \qquad
    L_\varepsilon:=T-2\varepsilon.
\end{equation*}
Integrating~\eqref{eq:controlled-diff} over $I_\varepsilon$ gives
\begin{equation}\label{est:modifyentropy}
    \frac{\theta}{2}\sqrt\rho\int_{I_\varepsilon}\Ent_\mu(g_t)\ud t
    \le
   \mathcal H_\theta(g_\varepsilon)-\mathcal H_\theta(g_{T - \varepsilon})
    +(\Gamma^{-1}+2\theta)\,\rho^{-1/2}\int_{I_\varepsilon}\int g_t\abs{u_t}^2\ud\mu\ud t.
\end{equation}
By the corrector bound~\eqref{est:corrector} and $0<\theta\le1/8$, we have
\begin{equation}\label{eq:regular-modified-entropy-comparison}
    0\le(1-\theta) \Ent_\mu(g_t) \le\mathcal H_\theta(g_t)  \le(1+\theta) \Ent_\mu(g_t).
\end{equation}
Consequently, \begin{equation} \label{est:modifyentropy2}
    \mathcal H_\theta(g_\varepsilon)-\mathcal H_\theta(g_{T - \varepsilon}) \le  \mathcal H_\theta(g_\varepsilon) \le (1+\theta)\Ent_\mu(g_\varepsilon).
\end{equation}
Moreover, using the elementary bound $\abs{u_t}^2\le 2\abs{A_t}^2+2\gamma^2\abs{\nabla_v\log g_t}^2$ and $\gamma^2=\Gamma^2\rho$, one can estimate the $u_t$-action cost term in \eqref{est:modifyentropy} as follows:
\begin{equation} \label{est:modifyentropy3}
    \rho^{-1/2}\int g_t\abs{u_t}^2\ud\mu  \le  2\rho^{-1/2}\int g_t\abs{A_t}^2\ud\mu +2\Gamma^2\sqrt\rho\,\Iv(g_t).
\end{equation}
Define
\begin{equation}\label{eq:K1K2-def}
    K_1 := 2(\Gamma^{-1}+2\theta)\Gamma^2 = 2\Gamma+4\theta\Gamma^2,
    \qquad
    K_2 := 2(\Gamma^{-1}+2\theta) = 2\Gamma^{-1}+4\theta.
\end{equation}
Combining \eqref{est:modifyentropy}, \eqref{est:modifyentropy2}, and
\eqref{est:modifyentropy3} yields
\begin{equation}\label{eq:regular-window-pre-average}
    \frac{\theta}{2}\sqrt\rho\int_{I_\varepsilon} \Ent_\mu(g_t)\ud t
    \le (1+\theta)\Ent_\mu(g_\varepsilon) + K_1\sqrt\rho\int_{I_\varepsilon}\Iv(g_t)\ud t + \frac{K_2}{\sqrt\rho}\int_{I_\varepsilon} \int g_t\abs{A_t}^2\ud\mu\ud t.
\end{equation}

We now estimate the initial entropy $\Ent_\mu(g_\varepsilon)$ in \eqref{eq:regular-window-pre-average}. Applying \cref{lem:entropy-chain-rule} to \eqref{eq:st-current}, with current $g_tA_t$, shows that $t\mapsto\Ent_\mu(g_t)$ is locally absolutely continuous and
\begin{equation*}
    \frac{\ud}{\ud t}\Ent_\mu(g_t)
    =\int g_tA_t\cdot\nabla_v\log g_t\ud\mu,
\end{equation*}
which, by Young's inequality with weight $\sqrt\rho$, gives
\begin{equation*}
    \Abs{\frac{\ud}{\ud t}\Ent_\mu(g_t)} \le\frac{\sqrt\rho}{2}\Iv(g_t)  +\frac{1}{2\sqrt\rho}\int g_t\abs{A_t}^2\ud\mu.
\end{equation*}
Then, we have, for every $s\in I_\varepsilon$,
\begin{equation*}
    \Ent_\mu(g_\varepsilon)
    \le\Ent_\mu(g_s)+\int_{I_\varepsilon}\Abs{\frac{\ud}{\ud\ell}\Ent_\mu(g_\ell)}\ud\ell,
\end{equation*}
and after averaging it over $s\in I_\varepsilon$,
\begin{equation}\label{eq:regular-window-left-average}
    \Ent_\mu(g_\varepsilon)
    \le
    \frac{1}{L_\varepsilon}\int_{I_\varepsilon} \Ent_\mu(g_t)\ud t
    +\frac{\sqrt\rho}{2}\int_{I_\varepsilon}\Iv(g_t)\ud t
    +\frac{1}{2\sqrt\rho}\int_{I_\varepsilon} \int
        g_t\abs{A_t}^2\ud\mu\ud t.
\end{equation}
Combining \eqref{eq:regular-window-pre-average} and
\eqref{eq:regular-window-left-average} gives
\begin{multline}\label{eq:estintentropy}
    \left(\frac{\theta}{2}\sqrt\rho-\frac{1+\theta}{L_\varepsilon}\right)
    \int_{I_\varepsilon} \Ent_\mu(g_t)\ud t \\
    \le
    \left(K_1+\frac{1+\theta}{2}\right)\sqrt\rho
        \int_{I_\varepsilon}\Iv(g_t)\ud t
    +\frac{K_2+(1+\theta)/2}{\sqrt\rho}
        \int_{I_\varepsilon} \int g_t\abs{A_t}^2\ud\mu\ud t.
\end{multline}

Define
\begin{equation}\label{eq:tau-ST-def}
    \tau_{\rm ST}:=\frac{4(1+\theta)}{\theta}=32M+4.
\end{equation}
The choice $T\ge\tau_{\rm ST}\rho^{-1/2}$ gives
\begin{equation*}
        \frac{\theta}{2}\sqrt\rho-\frac{1+\theta}{T} \ge\frac{\theta}{4}\sqrt\rho.
\end{equation*}
It also ensures that the coefficient on the left of
\eqref{eq:estintentropy} is positive for every
$\varepsilon\in(0,T/4)$, because
\begin{equation*}
    L_\varepsilon>\frac{T}{2}
    \ge\frac{2(1+\theta)}{\theta\sqrt\rho}.
\end{equation*}
Choose a decreasing sequence $\varepsilon_n\downarrow0$ such that both $\varepsilon_n$ and $T-\varepsilon_n$ are Lebesgue points of $t\mapsto\mathcal H_\theta(g_t)$. Since all the integrands in \eqref{eq:estintentropy} are nonnegative, monotone convergence gives
\begin{equation} \label{eq:entropyconverg}
    \int_{I_{\varepsilon_n}}\Ent_\mu(g_t)\ud t \to T\Ent_{\muT}(g)
\end{equation}
and the two integrals on the right-hand side converge to their counterparts over $(0,T)$. Letting $\varepsilon_n\downarrow0$ in \eqref{eq:estintentropy} therefore yields
\begin{equation}\label{eq:estintentropy2}
    \Ent_{\muT}(g)
    \le \frac{4}{\theta}\!\left(K_1+\frac{1+\theta}{2}\right)I_{v,T}(g)
    +\frac{4}{\theta}\!\left(K_2+\frac{1+\theta}{2}\right)\frac{1}{\rho}\int g\abs{A}^2\ud\muT.
\end{equation}
A direct computation from~\eqref{eq:K1K2-def} with $\theta=1/(8M)$ gives $K_1,K_2\le 5M/2$, and then the two coefficients on the right-hand side of \eqref{eq:estintentropy2} are both bounded by
\begin{equation}\label{eq:CST-explicit}
    C_{\rm ST} :=\frac{4}{\theta}\!\left(\frac{5M}{2}+\frac{1+\theta}{2}\right) =\frac{10M}{\theta}+\frac{2(1+\theta)}{\theta}  =80M^2+16M+2.
\end{equation}
Therefore
\begin{equation*}
    \Ent_{\muT}(g) \le C_{\rm ST} \left[I_{v,T}(g) + \frac{1}{\rho}\int g\abs{A}^2\ud\muT  \right].
\end{equation*}
Recalling \eqref{eq:st-current-cost} and letting $\delta\downarrow0$ yields \eqref{eq:stlsi}. The proof is complete.
\end{proof}

\begin{remark}[Explicit constants and friction scaling]\label{rem:stlsi-constants}
With $M:=\max\{\Gamma,\Gamma^{-1}\}\ge 1$, the constants in \cref{thm:stlsi} read
\begin{equation*}
    \tau_{\rm ST}(\Gamma)=32M+4, \qquad  C_{\rm ST}(\Gamma)=80M^2+16M+2\le 98M^2,
\end{equation*}
and both attain their minimum at the friction $\Gamma=1$, where $\tau_{\rm ST}(1)=36$ and $C_{\rm ST}(1)=98$. Moreover, as $\Gamma\to 0$ or $\Gamma\to\infty$, we have
\begin{equation*}
    \tau_{\rm ST}(\Gamma)\asymp\max\{\Gamma,\Gamma^{-1}\},  \qquad C_{\rm ST}(\Gamma)\asymp\max\{\Gamma^2,\Gamma^{-2}\}.
\end{equation*}
Hence, the space-time LSI constant degrades only polynomially in $\max\{\Gamma,\Gamma^{-1}\}$, identifying $\Gamma\sim 1$ (equivalently, $\gamma\sim\sqrt\rho$) as the optimal friction-scaling regime.
\end{remark}

\subsection{Extension to finite-action paths} \label{sec:finite-action-extension}
We next extend the space-time LSI to finite-action paths, removing the regularity assumption in \cref{thm:stlsi}.
We will first record a windowed regular-path estimate in a formulation suited to approximation (\cref{gm:lem:window}).  The main technical result is the recovery theorem (\cref{gm:thm:recovery}), which approximates an admissible finite-action pair on interior time windows by regular density-current pairs with the required action and Fisher-information limsup bounds. Applying the windowed estimate to the recovery sequence and passing to the limit by lower semicontinuity of the entropy yields the finite-action space-time LSI (\cref{gm:cor:stlsi,gm:cor:stlsi-action}).

For the finite-action extension, it is useful to rewrite the action term $
    \frac{1}{T}\int_0^T \norm{(\partial_t+\La)g_t}_{-1,g_t}^2\ud t$
in \eqref{eq:stlsi} as a convex least-action problem. This amounts to passing from the velocity field $A$ in the representation \eqref{eq:negative-representation} to the current $W=gA$. Thus, for a path
$\{g_t\}_{t\in[0,T]}$ of probability densities with respect to $\mu$ and a
measurable current $W:(0,T)\times\RR^{2d}\to\RR^d$, we introduce the cost
\begin{equation}\label{gm:eq:functionals}
    J_T(g,W) = \int\frac{\abs{W}^2}{g}\ud\muT,
\end{equation}
with the convention that $\abs W^2/g=0$ on $\{g=0,W=0\}$ and $\abs W^2/g=+\infty$ on $\{g=0,W\neq0\}$. The admissible currents are those satisfying
\begin{equation}\label{gm:eq:graph}
        (\partial_t+\La)g_t=\nabla_v^{*}W_t \qquad\text{in } \ \mathcal D'((0,T)\times\RR^{2d}).
\end{equation}
The finite-action cost of $g$ is then defined by the least-action problem
\begin{equation}\label{eq:finite-action-cost}
\begin{aligned}
    \mathcal A_T(g)
    &:= \inf_W
    \left\{   J_T(g,W):\  (\partial_t+\La)g_t=\nabla_v^{*}W_t\ \text{ in }\ \mathcal D'((0,T)\times\RR^{2d})
    \right\}.
    \end{aligned}
\end{equation}
Equivalently, by the current representation $W=gA$ in
\eqref{eq:negative-representation},
\begin{equation*}
    \mathcal A_T(g)
    =\frac{1}{T}\int_0^T
    \norm{(\partial_t+\La)g_t}_{-1,g_t}^2\ud t.
\end{equation*}

We next specify the class of admissible probability paths for which the finite-action space-time LSI will be proved.

\begin{definition}[Admissible pair]\label{gm:ass:path}
Let $\{g_t\}_{t\in(0,T)}$ be a path of probability densities on $\RR^{2d}$ with respect to $\mu$, where $\int g_t\ud\mu=1$ for \textup{a.e.}\ $t\in(0,T)$, and let $W:(0,T)\times\mathbb R^{2d}\to\mathbb R^d$ be a measurable current. We say that the pair $(g,W)$ is admissible if
\begin{enumerate}[label=(P\arabic*)]
\item\label{gm:P1} $\Ent_{\muT}(g)=\frac{1}{T}\int_0^T\Ent_\mu(g_t)\ud t<\infty$;
\item\label{gm:P2} $I_{v,T}(g)<\infty$;
\item\label{gm:P3} the equation \eqref{gm:eq:graph} holds and $J_T(g,W)<\infty$.
\end{enumerate}
In addition, we impose the following conditions:
\begin{enumerate}[label=(Q\arabic*)]
\item\label{gm:Q1} writing $q_t=\Piv g_t$ and $j_t=\Piv(vg_t)$, the conditional mean velocity $m_t=j_t/q_t$, defined to be zero on $\{q_t=0\}$, is locally bounded: for every compact interval $I\subset(0,T)$ there exists $M_I<\infty$ such that, for \textup{a.e.}\ $(t,x)\in I\times\RR^d$,
\begin{equation}\label{gm:eq:Q1}
    \abs{j_t(x)}\le M_I\,q_t(x);
\end{equation}
\item\label{gm:Q2} the integrated fourth velocity moment is finite:
\begin{equation}\label{gm:eq:Q2}
    \int_0^T \int(1+\abs{v}^4)\,g_t\ud\mu\ud t<\infty;
\end{equation}
\item\label{gm:Q3} the integrated gradient moment is finite:
\begin{equation}\label{gm:eq:Q3}
    \int_0^T \int \bigl(1+\abs{\nabla U(x)}\bigr)^2\,g_t\ud\mu\ud t<\infty\,.
\end{equation}
\end{enumerate}
\end{definition}

\begin{remark}\label{gm:rem:QQ}
The conditions \ref{gm:P1}--\ref{gm:P3} are the intrinsic assumptions for the well-posedness of the space-time LSI. The additional conditions \ref{gm:Q1}--\ref{gm:Q3} are technical assumptions used to construct the recovery sequence in \cref{gm:thm:recovery}. More precisely, conditions \ref{gm:Q1} and \ref{gm:Q2} are used only to verify the controlled corrector inequality \ref{gm:R3}.  Condition \ref{gm:Q3} is used to establish the current regularity in \ref{gm:R1}, the slice bounds entering \ref{gm:R2}, the acceleration estimate for \ref{gm:R3}, and the force-commutator estimate for the action limsup bound in \ref{gm:R5}. The $L^1$ convergence in \ref{gm:R4} does not require any of \ref{gm:Q1}--\ref{gm:Q3}, and the Fisher-information part of \ref{gm:R5} uses only \ref{gm:P2}. If the potential satisfies the gradient bound \eqref{eq:gradidentpotential}: $\abs{\nabla U}^2\le C(1+U)$, then \ref{gm:P1} implies \ref{gm:Q3}. We refer to \cref{gm:rem:hypotheses} for more detailed discussions.
\end{remark}

We first isolate from the proof of \cref{thm:stlsi} the regular-path estimate that will be transferred to finite-action paths. It is formulated on an arbitrary interior time window and uses only the divergence equation, the entropy dissipation identity, and the controlled corrector inequality. Its proof is precisely the derivation of \eqref{eq:estintentropy}, with $g_tA_t$ replaced by $W_t$, and is omitted.

\begin{lemma}[Windowed regular-path estimate]\label{gm:lem:window}
Suppose that \cref{gm:ass:lsi,gm:ass:standing} hold. Fix $\Gamma>0$, set
$\gamma:=\Gamma\sqrt\rho$, and write
\begin{equation} \label{eq:parameter}
     \theta:=\frac18\min\{\Gamma,\Gamma^{-1}\},\qquad   K_1:=2\Gamma+4\theta\Gamma^2,\qquad   K_2:=2\Gamma^{-1}+4\theta.
\end{equation}
Consider the interval $I=(t_0,t_1)\subset(0,T)$ with length $L:=t_1-t_0$ such that
\begin{equation*}
    c(L):=\frac{\theta}{2}\sqrt\rho-\frac{1+\theta}{L}>0.
\end{equation*}
Let the density path $h\in C^1\bigl(I;C^\infty(\RR^{2d})\bigr)$ be strictly positive, with
$\int h_t\ud\mu=1$ for every $t\in I$, and let $W:I\times\RR^{2d}\to\RR^d$
be measurable. Assume that
\begin{enumerate}[label=\textup{(W\arabic*)}]
\item\label{gm:W1}
the equation
\begin{equation*}
        (\partial_t+\La)h_t=\nabla_v^{*}W_t
\end{equation*}
holds in $\mathcal D'(I\times\RR^{2d})$, and
\begin{equation*}
        \int_I \int\frac{\abs{W_t}^2}{h_t}\ud\mu\ud t<\infty, \qquad \int_I\Iv(h_t)\ud t<\infty;
\end{equation*}
\item\label{gm:W2}
the function $t\mapsto\Ent_\mu(h_t)$ belongs to $L^1_{\rm loc}(I)$;
\item\label{gm:W3}
writing $q_t:=\Piv h_t$, $j_t:=\Piv(vh_t)$, and
\begin{equation*}
        B_t:=\Piv W_t+\gamma j_t,
\end{equation*}
all terms in the following inequality are locally integrable on $I$, and
\begin{equation*}
    \frac{\ud}{\ud t}\mathcal C_{\rm OT}(h_t) \le - \Ent_{\mux}(q_t)-\gamma\mathcal C_{\rm OT}(h_t)+3\Iv(h_t)
    +\int B_t\cdot\xi_{q_t}\ud\mux
\end{equation*}
holds in $\mathcal D'(I)$.
\end{enumerate}
Then, \cref{lem:entropy-chain-rule} gives
\begin{equation*}
    \frac{\ud}{\ud t}\Ent_\mu(h_t)
    =\int W_t\cdot\nabla_v\log h_t\ud\mu
    \qquad\text{for \textup{a.e.}\ }t\in I.
\end{equation*}
Moreover, $t\mapsto\Ent_\mu(h_t)$ has a unique absolutely continuous
extension to $[t_0,t_1]$, because the two integrability conditions in
\ref{gm:W1} make its derivative integrable up to the endpoints, and
\begin{equation}\label{gm:eq:window-inequality}
    c(L)\int_{t_0}^{t_1}\Ent_\mu(h_t)\ud t \le  \Bigl(K_1+\frac{1+\theta}{2}\Bigr)\sqrt\rho\int_{t_0}^{t_1}\Iv(h_t)\ud t  +\frac{K_2+\frac{1+\theta}{2}}{\sqrt\rho}  \int_{t_0}^{t_1} \int\frac{\abs{W_t}^2}{h_t}\ud\mu\ud t.
\end{equation}
\end{lemma}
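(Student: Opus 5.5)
The plan is to rerun the proof of \cref{thm:stlsi} on the window $I=(t_0,t_1)$, with the current $g_tA_t$ replaced throughout by $W_t$. I take $h$ in place of $g$ and set $u_t:=W_t/h_t+\gamma\nabla_v\log h_t$. Since $\Ls=-\nabla_v^*\nabla_v$, \ref{gm:W1} turns into the controlled equation $(\partial_t+\La)h_t=\gamma\Ls h_t+\nabla_v^*(h_tu_t)$. The action $\int_I\int h_t\abs{u_t}^2\ud\mu\ud t$ is finite by \ref{gm:W1}. The source in the corrector inequality is $\Piv(h_tu_t)=\Piv W_t+\gamma\Piv(\nabla_v h_t)=\Piv W_t+\gamma j_t$, where the last step integrates by parts in $v$ against $\kv$; this is exactly the $B_t$ in \ref{gm:W3}.

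The entropy identity comes from \cref{lem:entropy-chain-rule}. Its hypotheses \eqref{eq:entropy-chain-integrability} hold by \ref{gm:W1}--\ref{gm:W2}, and it gives $\frac{\ud}{\ud t}\Ent_\mu(h_t)=\int W_t\cdot\nabla_v\log h_t\ud\mu$. By Young's inequality, $\abs{\frac{\ud}{\ud t}\Ent_\mu(h_t)}\le\frac{\sqrt\rho}{2}\Iv(h_t)+\frac{1}{2\sqrt\rho}\int\abs{W_t}^2/h_t\ud\mu$, which is integrable on all of $I$. Hence the entropy is absolutely continuous up to $t_0$ and $t_1$, which is the extension claim.

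The central step replays the proof of \cref{prop:controlled}, taking the corrector inequality directly from \ref{gm:W3} instead of from \cref{lem:forced-corrector}. I rewrite the entropy identity as $-\gamma\Iv+\int h_tu_t\cdot\nabla_v\log h_t\ud\mu$ and bound $\abs{\int B_t\cdot\xi_{q_t}\ud\mux}$ as in \eqref{eq:Bt-xi-bound}, using the Jensen bound $\int\abs{B_t}^2/q_t\ud\mux\le\int h_t\abs{u_t}^2\ud\mu$ together with Talagrand's inequality. The absorption \eqref{eq:cot-absorption} then yields \eqref{eq:controlled-diff} in $\mathcal D'(I)$. I need \eqref{eq:imported-cot-product} and \eqref{est:corrector} to make $\mathcal H_\theta(h_t)$ locally integrable and comparable to the entropy as in \eqref{eq:regular-modified-entropy-comparison}; this is where \ref{gm:W2} and the finiteness of $\Iv$ enter.

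Next I integrate over $(t_0+\varepsilon,t_1-\varepsilon)$, with the endpoints chosen as Lebesgue points, and drop $\mathcal H_\theta(h_{t_1-\varepsilon})\ge0$. I then use $\abs{u_t}^2\le2\abs{W_t}^2/h_t^2+2\gamma^2\abs{\nabla_v\log h_t}^2$, which produces the constants $K_1,K_2$, and control the boundary entropy by averaging the absolutely continuous entropy as in \eqref{eq:regular-window-left-average}.

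The limit $\varepsilon\to0$ I expect to be the main obstacle. Distributional integration in time only gives the inequality for a.e.\ pair of endpoints, and one must check that the left-endpoint value $\mathcal H_\theta(h_{t_0+\varepsilon})\le(1+\theta)\Ent_\mu(h_{t_0+\varepsilon})$ stays under control. It does, because the entropy is continuous on $[t_0,t_1]$ and is bounded through the averaging estimate by quantities over all of $I$. Monotone convergence of the nonnegative integrals then gives \eqref{gm:eq:window-inequality}, with coefficient $\frac{\theta}{2}\sqrt\rho-\frac{1+\theta}{L}=c(L)>0$ in the limit.
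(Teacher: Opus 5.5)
Your proposal is correct and follows the paper's approach. The paper omits this proof, saying only that it is the derivation of \eqref{eq:estintentropy} in the proof of \cref{thm:stlsi} with $g_tA_t$ replaced by $W_t$, and your steps are exactly that rerun: the control $u_t=W_t/h_t+\gamma\nabla_v\log h_t$, the identification $\Piv(h_tu_t)=\Piv W_t+\gamma j_t=B_t$, integration over $(t_0+\varepsilon,t_1-\varepsilon)$ with Lebesgue-point endpoints, the averaging bound on the left boundary entropy, and the monotone limit $\varepsilon\to0$.
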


We now construct regular approximants of an admissible pair $(g,W)$ in the sense of \cref{gm:ass:path}. The following recovery theorem yields, on every interior time window, regular pairs satisfying \ref{gm:W1}--\ref{gm:W3}, so that \cref{gm:lem:window} applies, together with the $L^1$ convergence and the action and Fisher-information limsup bounds required for the limiting argument. Note that the subsequent argument uses \cref{lem:forced-corrector} only through its conclusion, namely the controlled corrector inequality. Therefore, rather than checking all the hypotheses of that lemma for the approximants, in particular the
cutoff-limit conditions, we will verify the inequality (\textup{i.e.},  \ref{gm:R3}
) directly.
For ease of exposition, the detailed construction (the proof of \cref{gm:thm:recovery}) is deferred to \cref{app:recovery}.

\begin{theorem}[Recovery on interior windows]\label{gm:thm:recovery}
Let \cref{gm:ass:lsi,gm:ass:standing} hold, and let $(g,W)$ be admissible in the sense of \cref{gm:ass:path}. Fix $\Gamma>0$, set $\gamma:=\Gamma\sqrt\rho$, and let $0<\varepsilon<T/4$. Write
$$
I_\varepsilon := (\varepsilon,T-\varepsilon),
\qquad I_{\varepsilon/2} := \left(\frac{\varepsilon}{2}, T-\frac{\varepsilon}{2}\right).
$$
Then there is a sequence of density-current pairs $(g^n,W^n)$ on $I_{\varepsilon/2}\times\RR^{2d}$ such that:
\begin{enumerate}[label=(R\arabic*)]
\item\label{gm:R1}
$g^n\in C^1\bigl(I_{\varepsilon/2};C^\infty(\RR^{2d})\bigr)$ and
$W^n\in C\bigl(I_{\varepsilon/2};C^\infty(\RR^{2d};\RR^d)\bigr)$. Moreover,
\begin{equation*}
    \int g^n_t\ud\mu=1\quad\text{for every }t\in I_{\varepsilon/2}, \qquad g^n_t\ge\delta_n>0\quad\text{on }I_{\varepsilon/2}\times\RR^{2d};
\end{equation*}
\item\label{gm:R2} on the smaller window $I_\varepsilon$, the equation
\begin{equation*}
    (\partial_t+\La)g^n_t=\nabla_v^{*}W^n_t
\end{equation*}
holds pointwise. On the larger window $I_{\varepsilon/2}$, the map $t\mapsto\Ent_\mu(g^n_t)$ is locally absolutely continuous and
\begin{equation*}
    \frac{\ud}{\ud t}\Ent_\mu(g^n_t)
    =\int W^n_t\cdot\nabla_v\log g^n_t\ud\mu
\end{equation*}
for \textup{a.e.}\ $t\in I_{\varepsilon/2}$. The quantities $\Ent_\mu(g^n_t)$ and $\Iv(g^n_t)$ are locally bounded on $I_{\varepsilon/2}$;
\item\label{gm:R3}
writing $q^n_t:=\Piv g^n_t$ and $j^n_t:=\Piv(vg^n_t)$, and setting
\begin{equation*}
    B^n_t:=\Piv(W^n_t)+\gamma j^n_t,
\end{equation*}
the controlled corrector differential inequality holds in $\mathcal D'(I_\varepsilon)$
\begin{equation}\label{gm:eq:weak-corrector}
    \frac{\ud}{\ud t}\,\mathcal C_{\rm OT}(g^n_t) \le -\Ent_{\mux}(q^n_t)-\gamma\,\mathcal C_{\rm OT}(g^n_t)+3\Iv(g^n_t)
    +\int B^n_t\cdot\xi_{q^n_t}\ud\mux\,,
\end{equation}
where $\mathcal C_{\rm OT}(g^n_t)=\int j^n_t\cdot\xi_{q^n_t}\ud\mux$ and $\xi_{q^n_t}$ is the Brenier displacement from $q^n_t\mux$ to $\mux$;
\item\label{gm:R4}
the convergence holds
\begin{equation*}
    g^n\to g \quad \text{in $\ L^1\bigl(I_\varepsilon\times\RR^{2d};\ud t\ud\mu\bigr)$;}
\end{equation*}
\item\label{gm:R5}
the action and velocity Fisher information, given by,
\begin{equation*}
J_\varepsilon(h,V):=\int_{I_\varepsilon} \int\frac{\abs{V_t}^2}{h_t}\ud\mu\ud t, \qquad
I_{v,\varepsilon}(h):=\int_{I_\varepsilon} \int\frac{\abs{\nabla_vh_t}^2}{h_t}\ud\mu\ud t,
\end{equation*}
satisfy
\begin{equation*}
    \limsup_{n\to\infty}J_\varepsilon(g^n,W^n)\le T\,J_T(g,W),
    \qquad
    \limsup_{n\to\infty}I_{v,\varepsilon}(g^n)\le T\,I_{v,T}(g).
\end{equation*}
\end{enumerate}
\end{theorem}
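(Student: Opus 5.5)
The plan is to build $(g^n,W^n)$ by a three-stage regularization of the admissible pair, followed by a check of \ref{gm:R1}--\ref{gm:R5}. Each stage has to commute well enough with $\partial_t+\La$ to keep the divergence structure $(\partial_t+\La)g=\nabla_v^*W$ up to controllable errors.

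\textbf{Stage 1: time mollification.} Convolve $(g,W)$ in time with a kernel $\eta_\delta$ supported in $(-\varepsilon/4,\varepsilon/4)$. This defines the pair on $I_{\varepsilon/2}$. Time convolution commutes exactly with $\partial_t+\La$, so the equation \eqref{gm:eq:graph} persists. Joint convexity of $(g,W)\mapsto\abs W^2/g$ and convexity of $g\mapsto\abs{\nabla_vg}^2/g$ make the action and the Fisher information non-increasing under averaging, and entropy is likewise controlled.

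\textbf{Stage 2: phase-space regularization.} Mollify in phase space by the Gaussian convolution adapted to $\mu$. Concretely, I would take $g^{\sigma}=\mathcal M_\sigma g$, with $\mathcal M_\sigma$ a Mehler-type (Ornstein--Uhlenbeck) smoothing in $v$ together with a spatial Gaussian mollification of the density $g\,\e^{-U}$, and then renormalize. In $v$ the Ornstein--Uhlenbeck smoothing acts nicely on $\nabla_v^*$. The transport part $\La$ does not commute with spatial mollification, and this produces a commutator $[\mathcal M_\sigma,\nabla U\cdot\nabla_v]g$. Writing the commutator as a velocity divergence $\nabla_v^*(\,(\nabla U*\rho_\sigma-\nabla U)\,g\,)$-type term, the tame Hessian bound \eqref{gm:eq:tame} together with \ref{gm:Q3} shows that its $L^2(1/g^\sigma)$ cost tends to $0$ as $\sigma\to0$. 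This is the ``force-commutator'' estimate, and it gives \ref{gm:R5} for the action. Again by joint convexity, $\limsup$ of the Fisher information is at most that of $g$.

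\textbf{Stage 3: positivity.} Finally, form the convex combination $g^n=(1-\delta_n)g^{\sigma_n}+\delta_n$, which yields the lower bound $\delta_n$ in \ref{gm:R1} without increasing the costs.

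With the regularization in hand, the remaining properties are checked in turn. Smoothness and strict positivity give \ref{gm:R1}. The pointwise equation on $I_\varepsilon$ follows because the commutator is absorbed into $W^n$. The entropy chain rule in \ref{gm:R2} is \cref{lem:entropy-chain-rule}. Convergence in $L^1$, \ref{gm:R4}, is standard approximation of the identity.

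The main obstacle is \ref{gm:R3}, the controlled corrector inequality for the approximants. I would prove it by redoing the proof of \cref{lem:forced-corrector} on the regularized pair, and there are two ingredients.
\begin{enumerate}
\item Verify that $(q^n,j^n)$ is a regular density-current pair. The bound on $m^n$ is inherited from \ref{gm:Q1}, since mollification preserves $\abs{j}\le M_Iq$ up to the $\delta_n$-correction. The acceleration bound $\int\abs{a^n}^2q^n\ud\mux<\infty$ comes from the first-moment equation, using \ref{gm:Q2} for the stress term and \ref{gm:Q3} for the force term. This allows \cref{lem:wasserstein-acceleration} to be applied.
\item Apply \cref{lem:localized-stress} with cutoffs $\chi_R$ directly, rather than \cref{lem:brenier-stress}. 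The cutoff limits should follow from $\xi_{q^n}\in L^2(q^n\mux)$, the fourth-moment bound \ref{gm:Q2} on $\Theta^n$, and $\abs{\nabla\chi_R}\le C/R$.
\end{enumerate}
Where the Eulerian identities need justification, they hold pointwise by smoothness. The nonsmooth source $B^n_t=\Piv W^n_t+\gamma j^n_t$ enters only through its pairing with $\xi_{q^n_t}$, which is bounded as in \eqref{eq:Bt-xi-bound}. I expect the delicate point to be the uniform integrability needed to pass $R\to\infty$ in the stress term under only the moment conditions \ref{gm:Q1}--\ref{gm:Q3}.
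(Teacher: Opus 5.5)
\textbf{Gap: the velocity smoothing in Stage~2.} The proposal breaks down at the velocity smoothing. You only account for the commutator of the \emph{spatial} mollification with the force term $\nabla U\cdot\nabla_v$. Velocity smoothing also fails to commute with the free transport $v\cdot\nabla_x$. With your Mehler/Ornstein--Uhlenbeck choice, that defect cannot be written as a velocity divergence at all.

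Since $e^{s\Ls}$ is $L^2(\kv)$-symmetric and $\Ls v=-v$, you get $\Piv(e^{s\Ls}g)=q$ but $\Piv(v\,e^{s\Ls}g)=e^{-s}j$. The smoothing keeps the marginal and damps the current. Hence $\partial_tq^\sigma-\nabla_x^*j^\sigma=(1-e^{-s})\nabla_x^*j\neq0$ in general, whatever the spatial step does. Equivalently, on Lebesgue densities the smoothed path is transported by $e^{s}v\cdot\nabla_x$ rather than $v\cdot\nabla_x$. The defect $(e^{s}-1)\divg_x(v\tilde\varrho)$ has nonzero spatial marginal.

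By \cref{lem:negative-norm-zero-marginal}, any path with $\norm{(\partial_t+\La)g_t}_{-1,g_t}<\infty$ satisfies the continuity equation. So your approximants have infinite action, and both \ref{gm:R5} and the divergence equation in \ref{gm:R2} fail. No renormalization repairs this.

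\textbf{The missing construction.} You need to mollify the Lebesgue density $\varrho=g\varrho_\infty$ by \emph{Euclidean} Gaussian convolution in $v$, which preserves the zeroth and first velocity moments. The identity $z\eta_b(z)=-b^2\nabla\eta_b(z)$ gives $\eta_b*_v(w\varrho)=v\hat\varrho+b^2\nabla_v\hat\varrho$. The transport commutator is therefore $\divg_x(b^2\nabla_v\hat\varrho)=\divg_v(b^2\nabla_x\hat\varrho)$, an exact velocity divergence that can be added to the current.

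Its cost satisfies $\int\abs{b^2\nabla_x\hat\varrho}^2/\hat\varrho\le d\,b^4/a^2$. This bound holds only because $\hat\varrho$ is also mollified in $x$ at scale $a$ (\cref{gm:lem:gaussian-derivative}). You therefore need the scale relation $b_n^2/a_n\to0$, for example $b_n=a_n$; your plan contains no such coupling between the two scales.

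For the same reason, the Fisher-information limsup is not a pure convexity statement. The operator $\nabla_v+v$ does not commute with $v$-convolution, which leaves an extra term $-b^2\nabla_v\hat\varrho$ whose norm is $O(b)$ and so vanishes in the limit.

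Your outline for \ref{gm:R3} is consistent with what is needed: \ref{gm:Q1} for the bounded mean velocity, the first-moment equation for the acceleration, \cref{lem:localized-stress} at fixed cutoff, and removal of the cutoff via Talagrand plus a fourth-moment stress bound. Note, however, that the stress-divergence estimate uses a $(1+\abs v^4)$-weighted spatial Fisher information. That quantity is finite only thanks to the same $x$-mollification.
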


We next prove the space-time LSI for every fixed admissible pair $(g,W)$. The intrinsic formulation in terms of $\mathcal A_T(g)$ then follows by minimizing over the current $W$.

\begin{theorem}[Finite-action space-time LSI for admissible pairs]\label{gm:cor:stlsi}
Let \cref{gm:ass:lsi,gm:ass:standing} hold. Fix $\Gamma>0$, set $\gamma:=\Gamma\sqrt\rho$, and let $T\ge\tau_{\rm ST}(\Gamma)\rho^{-1/2}$. Let $(g,W)$ be an admissible pair in the sense of \cref{gm:ass:path}. Then
\begin{equation} \label{eq:stlsi_pair}
    \Ent_{\muT}(g) \le C_{\rm ST}(\Gamma) \left[ I_{v,T}(g) + \frac1\rho J_T(g,W) \right],
\end{equation}
where $\tau_{\mathrm{ST}}$ and $C_{\mathrm{ST}}$ are the constants from \cref{thm:stlsi}.
\end{theorem}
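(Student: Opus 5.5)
We derive \eqref{eq:stlsi_pair} by combining the windowed regular-path estimate of \cref{gm:lem:window} with the recovery sequence of \cref{gm:thm:recovery}, and then passing to the limit. We may assume that the right-hand side of \eqref{eq:stlsi_pair} is finite; this is in any case guaranteed by admissibility through \ref{gm:P2} and \ref{gm:P3}.

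Fix $0<\varepsilon<T/4$ and let $(g^n,W^n)$ be the sequence from \cref{gm:thm:recovery}. On $I_\varepsilon=(\varepsilon,T-\varepsilon)$ we check the hypotheses of \cref{gm:lem:window} with $h=g^n$ and $W=W^n$:
\begin{itemize}
\item \ref{gm:W1} follows from \ref{gm:R1} and \ref{gm:R2}. The integrability of the action and of the Fisher information on $I_\varepsilon$ follows from the local boundedness in \ref{gm:R2}, or alternatively from the finiteness of the limsup in \ref{gm:R5} for large $n$.
\item \ref{gm:W2} is the local boundedness of $\Ent_\mu(g^n_t)$ in \ref{gm:R2}.
\item \ref{gm:W3} is exactly \ref{gm:R3}, since there $B^n_t=\Piv W^n_t+\gamma j^n_t$.
\end{itemize}
The window length is $L_\varepsilon=T-2\varepsilon>T/2$. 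In the proof of \cref{thm:stlsi} it was shown that $c(L_\varepsilon)>0$ whenever $T\ge\tau_{\rm ST}\rho^{-1/2}$, so \eqref{gm:eq:window-inequality} holds for each $n$.

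Now let $n\to\infty$. The right-hand side is controlled by \ref{gm:R5}: its limsup is at most
\begin{equation*}
\Bigl(K_1+\tfrac{1+\theta}{2}\Bigr)\sqrt\rho\,T\,I_{v,T}(g)+\rho^{-1/2}\Bigl(K_2+\tfrac{1+\theta}{2}\Bigr)T\,J_T(g,W).
\end{equation*}
For the left-hand side, \ref{gm:R4} gives $g^n\to g$ in $L^1(I_\varepsilon\times\RR^{2d};\ud t\ud\mu)$. The functional $h\mapsto\int_{I_\varepsilon}\int h\log h\ud\mu\ud t$ is convex and lower semicontinuous under $L^1$ convergence. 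One way to see this is the duality formula $\int\Phi(h)=\sup_\varphi\int(h\varphi-\e^{\varphi-1})$ over bounded $\varphi$, combined with Fatou's lemma along an a.e.\ convergent subsequence, using that $\Phi(r)=r\log r-r+1\ge0$ and that $\int h=1$. It follows that
\begin{equation*}
c(L_\varepsilon)\int_{I_\varepsilon}\Ent_\mu(g_t)\ud t\le\liminf_{n\to\infty} c(L_\varepsilon)\int_{I_\varepsilon}\Ent_\mu(g^n_t)\ud t.
\end{equation*}

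Finally let $\varepsilon\downarrow0$. Then $c(L_\varepsilon)\to c(T)\ge\frac{\theta}{4}\sqrt\rho$. By monotone convergence, justified by \ref{gm:P1} and the nonnegativity of the entropy, $\int_{I_\varepsilon}\Ent_\mu(g_t)\ud t\to T\Ent_{\muT}(g)$. Dividing by $\frac{\theta}{4}\sqrt\rho\,T$ and using $K_1,K_2\le 5M/2$ as in \eqref{eq:CST-explicit} gives exactly \eqref{eq:stlsi_pair} with constant $C_{\rm ST}$.

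The main difficulty is the limit passage on the entropy side. Only lower semicontinuity is available, and it has to be applied to the time-integrated entropy on the fixed window $I_\varepsilon$, not to the individual slices, which \ref{gm:R4} does not control. The rest of the argument is bookkeeping, since all of the delicate work is packaged in \cref{gm:thm:recovery}.
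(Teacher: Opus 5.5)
Your proposal is correct and follows the paper's proof essentially step for step: you build the recovery sequence on $I_\varepsilon$, apply \cref{gm:lem:window} via (R1)--(R3), bound the right-hand side by the limsup estimates (R5), control the left-hand side by lower semicontinuity of the time-integrated relative entropy under (R4), and finally let $\varepsilon\downarrow0$ using $c(L_\varepsilon)\to c(T)\ge\frac{\theta}{4}\sqrt\rho$. One small correction: (R2) only asserts local boundedness of the entropy and the Fisher information, not of the action, so the action integrability required in (W1) must come from (R5) after discarding finitely many $n$ — your stated alternative, and exactly what the paper does.
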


\begin{proof}
Fix $\varepsilon\in(0,T/4)$, set $ I_\varepsilon:=(\varepsilon,T-\varepsilon)$ with $L_\varepsilon:=T-2\varepsilon$,  and let $(g^n,W^n)$ be the recovery sequence from \cref{gm:thm:recovery}. Let $\theta$, $K_1$ and $K_2$ be as in \eqref{eq:parameter}. As in the proof of \cref{thm:stlsi}, we have
\begin{equation*}
     c(L_\varepsilon)=\frac{\theta}{2}\sqrt\rho -\frac{1+\theta}{L_\varepsilon}>0\,, \qquad \lim_{\varepsilon \to 0} c(L_\varepsilon) \ge\frac{\theta}{4}\sqrt\rho.
\end{equation*}
From \ref{gm:R5}, without loss of generality, one can assume $J_\varepsilon(g^n,W^n)+I_{v,\varepsilon}(g^n)<\infty$ for all $n$, after discarding finitely many terms. Then, \cref{gm:lem:window} applies to $(h,W)=(g^n,W^n)$ by \ref{gm:R1}--\ref{gm:R3}, and yields
\begin{equation*}
   c(L_\varepsilon) \int_{I_\varepsilon}\Ent_\mu(g^n_t)\ud t \le
    \Bigl(K_1+\frac{1+\theta}2\Bigr)\sqrt\rho\;I_{v,\varepsilon}(g^n) +\frac{K_2+\tfrac{1+\theta}2}{\sqrt\rho}\,J_\varepsilon(g^n,W^n).
\end{equation*}
By \ref{gm:R4}, the convergence $g^n\to g$ also holds in $L^1$ with respect to the probability measure $L_\varepsilon^{-1}\ud t\otimes\mu$ on
$I_\varepsilon\times\RR^{2d}$, and both $g^n$ and $g$ are probability densities with respect to this measure. Their relative entropies with respect to this measure are, respectively,
$L_\varepsilon^{-1}\int_{I_\varepsilon}\Ent_\mu(g^n_t)\ud t$ and
$L_\varepsilon^{-1}\int_{I_\varepsilon}\Ent_\mu(g_t)\ud t$. Lower semicontinuity of relative entropy, together with the two limsup bounds in \ref{gm:R5}, therefore gives
\begin{equation*}
    c(L_\varepsilon) \int_{I_\varepsilon}\Ent_\mu(g_t)\ud t   \le \Bigl(K_1+\frac{1+\theta}2\Bigr)\sqrt\rho\;T\,I_{v,T}(g) +\frac{K_2+\tfrac{1+\theta}2}{\sqrt\rho}\,T J_T(g,W)\,.
\end{equation*}
By the convergence \eqref{eq:entropyconverg}, letting $\varepsilon\downarrow0$, and dividing by $\frac{\theta}{4}\sqrt\rho\,T$, we obtain \eqref{eq:stlsi_pair}.
\end{proof}

\begin{corollary}[Finite-action space-time LSI]
\label{gm:cor:stlsi-action}
Let \cref{gm:ass:lsi,gm:ass:standing} hold. Fix $\Gamma>0$, set $\gamma:=\Gamma\sqrt\rho$, and let $T\ge\tau_{\rm ST}(\Gamma)\rho^{-1/2}$. Let $g=\{g_t\}_{t\in(0,T)}$ be a measurable path of probability densities with respect to $\mu$. Assume that
\begin{equation*}
    \Ent_{\muT}(g)<\infty, \qquad
I_{v,T}(g)<\infty, \qquad \mathcal A_T(g)<\infty,
\end{equation*}
and that $g$ satisfies \ref{gm:Q1}--\ref{gm:Q3} of \cref{gm:ass:path}. Then
\begin{equation*}
     \Ent_{\muT}(g) \le C_{\rm ST}(\Gamma) \left[I_{v,T}(g)  +\frac{1}{\rho}\mathcal A_T(g) \right].
\end{equation*}
\end{corollary}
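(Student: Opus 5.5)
The corollary should follow from \cref{gm:cor:stlsi} by minimizing over admissible currents. Since $\mathcal A_T(g)<\infty$, for every $\delta>0$ the definition \eqref{eq:finite-action-cost} provides a measurable current $W^\delta$ with
\begin{equation*}
    (\partial_t+\La)g_t=\nabla_v^{*}W^\delta_t \quad\text{in }\mathcal D'((0,T)\times\RR^{2d}),
    \qquad
    J_T(g,W^\delta)\le\mathcal A_T(g)+\delta<\infty.
\end{equation*}
I will then check that the pair $(g,W^\delta)$ is admissible in the sense of \cref{gm:ass:path}. Conditions \ref{gm:P1} and \ref{gm:P2} are exactly the hypotheses $\Ent_{\muT}(g)<\infty$ and $I_{v,T}(g)<\infty$. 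Condition \ref{gm:P3} holds by the choice of $W^\delta$. The remaining conditions \ref{gm:Q1}--\ref{gm:Q3} involve only the path $g$ (through $q_t$, $j_t$ and moments of $g_t$), not the current, and they are assumed.

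Applying \cref{gm:cor:stlsi} to $(g,W^\delta)$ with $T\ge\tau_{\rm ST}(\Gamma)\rho^{-1/2}$ gives
\begin{equation*}
    \Ent_{\muT}(g)\le C_{\rm ST}(\Gamma)\Bigl[I_{v,T}(g)+\frac1\rho\bigl(\mathcal A_T(g)+\delta\bigr)\Bigr].
\end{equation*}
Letting $\delta\downarrow0$ yields the claim.

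The only point needing care is the existence of a measurable near-minimizing current. This is guaranteed directly by \eqref{eq:finite-action-cost}, since $\mathcal A_T$ is defined as an infimum over measurable currents satisfying \eqref{gm:eq:graph}. The equivalence with the time-integrated negative norm is not needed here. So no step is a genuine obstacle; all the substance lies in \cref{gm:cor:stlsi} and the recovery theorem \cref{gm:thm:recovery}.
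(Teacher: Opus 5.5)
Your proposal is correct and matches the paper's proof: take a measurable current $W$ with $J_T(g,W)\le\mathcal A_T(g)+\delta$ directly from the infimum defining $\mathcal A_T(g)$, and observe that the hypotheses make $(g,W)$ admissible (the conditions (Q1)--(Q3) depend only on $g$). Then apply the finite-action space-time LSI for admissible pairs (\cref{gm:cor:stlsi}) and let $\delta\downarrow0$.
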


\begin{proof}
Fix $\delta>0$. By the definition of the finite-action cost \eqref{eq:finite-action-cost}, there exists a measurable current $W$ such that
the equation \eqref{gm:eq:graph} holds and
\begin{equation*}
    J_T(g,W)\le\mathcal A_T(g)+\delta .
\end{equation*}
Together with the assumptions of the corollary, this shows that $(g,W)$ is admissible in the sense of \cref{gm:ass:path}. Applying Theorem~\ref{gm:cor:stlsi} and letting $\delta\downarrow0$ proves the result.
\end{proof}

\section{The time-interpolation estimate}\label{sec:interpolation}

Throughout this section and the next, we assume that
\cref{gm:ass:lsi,gm:ass:standing} hold. We fix a $\Gamma>0$, set the friction parameter $\gamma:=\Gamma\sqrt\rho$, and choose a time period $T\ge\tau_{\rm ST}(\Gamma)\,\rho^{-1/2}$, so that the finite-action space-time LSI \cref{gm:cor:stlsi} is available with constant $C_{\rm ST}=C_{\rm ST}(\Gamma)$ for any probability path satisfying \cref{gm:ass:path}. For ease of exposition, we write
\begin{equation*}
    \Pt_t:=\e^{t(-\La+\gamma\Ls)},   \qquad  \Pt_t^\ast:=\e^{t(\La+\gamma\Ls)}
\end{equation*}
for the density semigroup of the underdamped Langevin dynamics~\eqref{eq:underdamped-Langevin} and its $L^2(\mu)$-adjoint, respectively. Both are positivity-preserving Markov contractions on $L^p(\mu)$ for any $1\le p\le\infty$~\cite{BakryGentilLedoux2014}.

The goal of this section is to control the bilinear pairing $\int\Pt_T\varphi\cdot\psi\ud\mu$. Taking the supremum over $\psi\in L^{q'}(\mu)$ with $\norm{\psi}_{L^{q'}(\mu)}=1$ then recovers the $L^q(\mu)$-norm of $\Pt_T\varphi$. It is the main step for proving hypocoercive hypercontractivity in \cref{sec:hypercontractivity}. Here and in what follows, $q'$ denotes the conjugate exponent of $q$, $1/q+1/q'=1$. We will establish the estimate for smooth test functions $\varphi,\psi$ bounded above and below by positive constants. The approximation argument in the proof of \cref{prop:central} then extends the result to general data.

To control such pairing, we construct a forward/backward time interpolation between $\varphi$ and $\psi$, following the
semigroup-interpolation method of Bakry--\'Emery~\cite{BakryEmery1985} and Neveu~\cite{Neveu1976}, with one essential adaptation: the underdamped generator $-\La+\gamma\Ls$ has a skew-adjoint part $-\La$, so the density semigroup $\Pt_t$ and its $L^2(\mu)$-adjoint $\Pt^{*}_t$ do not coincide. We therefore need to propagate $\varphi$ and $\psi$ under $\Pt_t$ and $\Pt_t^{*}$ simultaneously.

Specifically, for smooth functions $\varphi,\psi$ bounded above and below by positive constants, define the \emph{forward/backward interpolation density}
$G_s$ by
\begin{equation}\label{def:interpolationdensity}
G_s=\frac{\varphi_s\psi_s}{Z}, \qquad  Z=\int\varphi_s\psi_s\ud\mu,
      \qquad \text{with} \qquad  \varphi_s=\Pt_s\varphi,\qquad  \psi_s=\Pt^\ast_{T-s}\psi.
\end{equation}
By the skew-adjointness $\La^{*}=-\La$ and the symmetry $\Ls^{*}=\Ls$ in $L^2(\mu)$,
\begin{equation*}
        \frac{\ud}{\ud s}\int\varphi_s\psi_s\ud\mu
        =\int(-\La+\gamma\Ls)\varphi_s\cdot\psi_s\ud\mu
        -\int\varphi_s\cdot(\La+\gamma\Ls)\psi_s\ud\mu=0,
\end{equation*}
so the normalization $Z$ is independent of $s$, and each $G_s$ is indeed a probability density with respect to $\mu$. In particular, at $s=T$, we have $\varphi_T=\Pt_T\varphi$ and $\psi_T=\psi$, so that $Z$ recovers the pairing of interest:
\begin{equation*}
        Z=\int\Pt_T\varphi\cdot\psi\ud\mu.
\end{equation*}
To bound it from above, we first apply the finite-action space-time LSI to the path $\{G_s\}_{s\in[0,T]}$ and propagate the resulting bound on $\Ent_{\muT}(G)$ to the endpoint entropies $\Ent_\mu(G_0)$ and $\Ent_\mu(G_T)$; see \cref{prop:interpolation-endpoint}. We then translate these endpoint estimates into an upper bound on $Z$ in \cref{prop:closure}.

We start by identifying the divergence equation satisfied by $G_s$ and bound its velocity Fisher information and the current cost of $(G,W)$ in the following lemma.

\begin{lemma}\label{lem:interpolation-current}
Let $\varphi,\psi$ be smooth functions bounded above and below by positive constants, and let $G_s$ be the forward/backward interpolation density defined in \eqref{def:interpolationdensity}. Define
\begin{equation} \label{eq:currentgsw}
     A_s:=-\gamma\bigl(\nabla_v\log\varphi_s-\nabla_v\log\psi_s\bigr)\,, \quad  W_s = G_s A_s\,.
\end{equation}
Then $G_s$ satisfies the controlled equation:
\begin{equation}\label{eq:interpolation-current}
   (\partial_s+\mathcal L_a)G_s = \nabla_v^*W_s = \nabla_v^*(G_s A_s).
\end{equation}
Moreover, the velocity Fisher information and the finite-action cost of $G$ are
both bounded by the \emph{two-sided Fisher information}
\begin{equation}\label{eq:interpolation-energy}
        \mathcal D_T(\varphi,\psi)
        :=\frac{1}{T}\int_0^T\int \left(
\abs{\nabla_v\log\varphi_s}^2+\abs{\nabla_v\log\psi_s}^2  \right) G_s \ud\mu\ud s,
\end{equation}
namely
\begin{equation}\label{eq:interpolation-norms}
I_{v,T}(G)\le 2\,\mathcal D_T(\varphi,\psi), \qquad J_T(G,W)  = \frac{1}{T}\int_0^T\int G_s|A_s|^2 \ud \mu \ud s  \le 2\gamma^2\,\mathcal D_T(\varphi,\psi).
\end{equation}
In particular, $\mathcal{A}_T(G) \le J_T(G,W) \le 2 \gamma^2 \mathcal{D}_T(\varphi, \psi)$.
\end{lemma}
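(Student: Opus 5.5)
The plan is a direct computation based on the product rule and the first-order structure of the operators. First I will write down the evolution equations. Since $\varphi_s=\Pt_s\varphi$ and $\psi_s=\Pt^*_{T-s}\psi$, we have
\begin{equation*}
\partial_s\varphi_s=(-\La+\gamma\Ls)\varphi_s,\qquad \partial_s\psi_s=-(\La+\gamma\Ls)\psi_s .
\end{equation*}
For smooth data bounded above and below by positive constants, both semigroups preserve these two-sided bounds, because they are Markov and positivity preserving. They also preserve smoothness, by hypoellipticity, so every computation below is pointwise.

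Because $\La$ is a first-order derivation, $\La(\varphi_s\psi_s)=\psi_s\La\varphi_s+\varphi_s\La\psi_s$. The transport terms therefore combine to give
\begin{equation*}
(\partial_s+\La)(\varphi_s\psi_s)=\gamma\bigl(\psi_s\Ls\varphi_s-\varphi_s\Ls\psi_s\bigr).
\end{equation*}
Next I use $\Ls=-\nabla_v^*\nabla_v$ together with the Leibniz rule $\nabla_v^*(fF)=f\nabla_v^*F-\nabla_vf\cdot F$. This yields
\begin{equation*}
\psi\Ls\varphi-\varphi\Ls\psi=-\nabla_v^*\bigl(\psi\nabla_v\varphi-\varphi\nabla_v\psi\bigr),
\end{equation*}
since the cross terms $\nabla_v\psi\cdot\nabla_v\varphi$ cancel. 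Writing $\psi\nabla_v\varphi-\varphi\nabla_v\psi=\varphi\psi(\nabla_v\log\varphi-\nabla_v\log\psi)$ and dividing by the constant $Z$ gives \eqref{eq:interpolation-current} with $A_s$ as in \eqref{eq:currentgsw}. The identity holds pointwise, hence in $\mathcal D'$.

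For the bounds I argue as follows.
\begin{itemize}
\item \textbf{Fisher information.} We have $\nabla_v\log G_s=\nabla_v\log\varphi_s+\nabla_v\log\psi_s$. Applying $\abs{a+b}^2\le2\abs a^2+2\abs b^2$ and integrating against $G_s\ud\mu\ud s/T$ gives $I_{v,T}(G)\le2\mathcal D_T$.
\item \textbf{Action.} Similarly, $\abs{A_s}^2\le2\gamma^2\bigl(\abs{\nabla_v\log\varphi_s}^2+\abs{\nabla_v\log\psi_s}^2\bigr)$. Integrating gives $J_T(G,W)\le2\gamma^2\mathcal D_T$. The equality $J_T(G,W)=\frac1T\int\!\!\int G_s\abs{A_s}^2$ is immediate from $W=GA$.
\item \textbf{Conclusion.} Since $W$ is an admissible current for $G$, the definition \eqref{eq:finite-action-cost} gives $\mathcal A_T(G)\le J_T(G,W)$.
\end{itemize}

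The only point needing care is justifying the regularity, namely that $\varphi_s,\psi_s$ are smooth, positive and bounded, with $\nabla_v\log$ well defined. This should follow from hypoelliptic smoothing (Hörmander) combined with the maximum principle. The finiteness of $\mathcal D_T$ is not needed for the inequalities, which hold in $[0,\infty]$. I expect no real obstacle beyond this regularity bookkeeping.
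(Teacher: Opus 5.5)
Your proposal is correct and follows the paper's proof essentially step for step. Both arguments use the product rule for the derivation $\La$ together with the forward/backward equations, then the Leibniz rule for $\nabla_v^*$ to cancel the mixed terms, then division by the constant $Z$, and finally $\abs{a\pm b}^2\le 2\abs a^2+2\abs b^2$ for both bounds. The only difference is cosmetic: the paper phrases the action bound through the negative-norm representation \eqref{eq:negative-representation}, whereas you bound $J_T(G,W)$ directly and then invoke the definition of $\mathcal A_T$, which amounts to the same thing.
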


\begin{proof}
Since $\mathcal L_a$ is a first-order differential operator, we have
\begin{equation*}
    \mathcal L_a(\varphi_s\psi_s) = (\mathcal L_a\varphi_s)\psi_s + \varphi_s\mathcal L_a\psi_s.
\end{equation*}
Combining this with the forward and backward equations:
\begin{equation*}
    \partial_s\varphi_s = (-\mathcal L_a+\gamma\mathcal L_s)\varphi_s, \qquad \partial_s\psi_s = -(\mathcal L_a+\gamma\mathcal L_s)\psi_s,
\end{equation*}
we obtain
\begin{equation}\label{eq:partial-La-product}
    (\partial_s+\La)(\varphi_s\psi_s)
    =\gamma\bigl(\psi_s\Ls\varphi_s-\varphi_s\Ls\psi_s\bigr).
\end{equation}
Using $\mathcal L_s=-\nabla_v^*\nabla_v$ and
\begin{equation*}
    \nabla_v^*(fF) = f\nabla_v^*F-\nabla_vf\cdot F,
\end{equation*}
the mixed terms cancel and
\begin{equation*}
    \psi_s\mathcal L_s\varphi_s - \varphi_s\mathcal L_s\psi_s =
-\nabla_v^* \left[ \varphi_s\psi_s \bigl(\nabla_v\log\varphi_s-\nabla_v\log\psi_s\bigr)\right].
\end{equation*}
Substituting into~\eqref{eq:partial-La-product} and dividing by the time-independent normalization $Z$ yields~\eqref{eq:interpolation-current}.

We next bound the velocity Fisher information $\Iv(G_s)$. Recalling that $G_s = \varphi_s \psi_s / Z$ and applying the product rule
\begin{equation*}
    \nabla_v \log G_s = \nabla_v \log \varphi_s + \nabla_v \log \psi_s,
\end{equation*}
the elementary inequality $\abs{a + b}^2 \le 2\abs{a}^2 + 2\abs{b}^2$ gives
\begin{equation*}
    \Iv(G_s) = \int G_s \abs{\nabla_v \log G_s}^2\ud\mu
    \le 2\int G_s\bigl(\abs{\nabla_v \log \varphi_s}^2 + \abs{\nabla_v \log \psi_s}^2\bigr)\ud\mu.
\end{equation*}
Time-averaging over $[0, T]$ yields the first bound in~\eqref{eq:interpolation-norms}.

For the second bound in~\eqref{eq:interpolation-norms}, we apply the variational characterization~\eqref{eq:negative-representation} with the explicit current $A_s$ given in~\eqref{eq:interpolation-current}, which yields
\begin{align*}
    \norm{(\partial_s + \La)G_s}_{-1, G_s}^2
    \le \int G_s \abs{A_s}^2\ud\mu
    &= \gamma^2 \int G_s \abs{\nabla_v \log \varphi_s - \nabla_v \log \psi_s}^2\ud\mu \\
    &\le 2\gamma^2 \int G_s \bigl(\abs{\nabla_v \log \varphi_s}^2 + \abs{\nabla_v \log \psi_s}^2\bigr)\ud\mu.
\end{align*}
Again, time-averaging over $[0, T]$ completes the proof.
\end{proof}

We next show that the interpolation path $G$ and the associated current $W$ indeed satisfy \cref{gm:ass:path} so that the space-time LSI in \cref{gm:cor:stlsi} can apply.

\begin{lemma}[Admissibility of the interpolation pair]
\label{lem:interpolation-admissible}
Let $\varphi,\psi$ be smooth and satisfy
\begin{equation*}
    0<m_0\le \varphi, \psi\le M_0.
\end{equation*}
Let $G_s$ be defined by \eqref{def:interpolationdensity}, and let $W_s = G_s A_s$ be the current \eqref{eq:currentgsw} from \cref{lem:interpolation-current}. Then $G$ satisfies \ref{gm:P1} and \ref{gm:Q1}--\ref{gm:Q3} of \cref{gm:ass:path}. Moreover, $(G,W)$ is an admissible pair of the space-time LSI \eqref{eq:stlsi_pair}.
\end{lemma}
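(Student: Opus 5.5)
We verify each condition directly from the two-sided bounds $m_0\le\varphi,\psi\le M_0$, which propagate in time because $\Pt_s$ and $\Pt^*_{T-s}$ are positivity-preserving Markov operators fixing constants. Hence $m_0\le\varphi_s,\psi_s\le M_0$ for all $s\in[0,T]$. Since $Z=\int\varphi_s\psi_s\ud\mu\ge m_0^2$, we obtain
\begin{equation*}
m_0^2/M_0^2\le G_s\le M_0^2/m_0^2 .
\end{equation*}
Thus $G$ is a bounded probability density, bounded away from zero uniformly in $(s,x,v)$. Condition \ref{gm:P1} is then immediate, since $\Ent_\mu(G_s)\le\log(M_0^2/m_0^2)$.

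For \ref{gm:Q1}, note that $G_s\le K:=M_0^2/m_0^2$ and $G_s\ge K^{-1}$ give, for the spatial marginal $q_s=\Piv G_s\ge K^{-1}$ and the current $j_s=\Piv(vG_s)$,
\begin{equation*}
\abs{j_s}\le K\int\abs v\ud\kv\le K\sqrt d .
\end{equation*}
Hence $\abs{j_s}\le K^2\sqrt d\,q_s$ uniformly in $s$. For \ref{gm:Q2} and \ref{gm:Q3}, the bound $G_s\le K$ reduces the required moments to $\int(1+\abs v^4)\ud\kv<\infty$ and $\int(1+\abs{\nabla U})^2\ud\mux<\infty$. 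The latter follows from integration by parts, $\int\abs{\nabla U}^2\ud\mux=\int\Delta U\ud\mux$, together with the tame bound \eqref{gm:eq:tame}. Alternatively, one can use $\abs{\nabla U}\le\ldots$ through the exponential integrability lemma \cref{gm:lem:exp-integrability} referenced earlier, which gives finite polynomial moments of $\abs{\nabla U}$ under $\mux$. All of these bounds are uniform in $s$, so they integrate over $(0,T)$.

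To conclude that $(G,W)$ is admissible, it remains to check \ref{gm:P2} and \ref{gm:P3}. The equation \eqref{gm:eq:graph} is \eqref{eq:interpolation-current} from \cref{lem:interpolation-current}. By \eqref{eq:interpolation-norms}, both $I_{v,T}(G)$ and $J_T(G,W)$ are bounded by multiples of $\mathcal D_T(\varphi,\psi)$, so it suffices to show $\mathcal D_T(\varphi,\psi)<\infty$. With $G_s\le K$ and $\varphi_s\ge m_0$, we have
\begin{equation*}
\int G_s\abs{\nabla_v\log\varphi_s}^2\ud\mu\le\frac{K}{m_0^2}\int\abs{\nabla_v\varphi_s}^2\ud\mu ,
\end{equation*}
and similarly for $\psi_s$. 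The $L^2$ energy identity for the semigroup gives
\begin{equation*}
\frac{\ud}{\ud s}\norm{\varphi_s}_{L^2(\mu)}^2=-2\gamma\int\abs{\nabla_v\varphi_s}^2\ud\mu .
\end{equation*}
Integrating, $\int_0^T\!\int\abs{\nabla_v\varphi_s}^2\ud\mu\ud s\le\norm{\varphi}_{L^2}^2/(2\gamma)$, and likewise for $\psi_s$ under $\Pt^*$, whose generator has the same symmetric part. Hence $\mathcal D_T<\infty$.

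The main obstacle is justifying this energy identity and the distributional equation \eqref{eq:interpolation-current} rigorously. This requires enough regularity of $\varphi_s,\psi_s$: smoothness in $(s,x,v)$, and permission to integrate by parts against $\mu$ without boundary terms. I would first invoke hypoellipticity, via H\"ormander's theorem, for smoothness of $\varphi_s,\psi_s$ on $(0,T)\times\RR^{2d}$. I would then justify the energy identity by the same Hamiltonian cutoff $\chi(H/R)$ used in \cref{lem:entropy-chain-rule}, which commutes with $\La$. The cutoff error is controlled by the $L^\infty$ bounds on $\varphi_s,\psi_s$ and by $\abs{\nabla_v\chi_R}\lesssim R^{-1/2}$ on its support, which vanishes as $R\to\infty$. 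Continuity at the endpoints $s=0,T$ is only needed in $L^2$, which holds by strong continuity of the semigroups.
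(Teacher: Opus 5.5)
Your proposal is correct and follows the paper's proof essentially step by step. You propagate $m_0\le\varphi_s,\psi_s\le M_0$ and deduce two-sided bounds on $G_s$; the lower bound also uses $Z\le M_0^2$, which you left unstated. You then read off \ref{gm:P1} and \ref{gm:Q1}--\ref{gm:Q3}, where the gradient moment is exactly \eqref{gm:eq:static-gradient}, whose proof supplies the cutoff your bare integration by parts needs. Finally you bound $\mathcal D_T(\varphi,\psi)$ via the $L^2(\mu)$ energy identities for $\Pt_s$ and $\Pt_s^*$ and invoke \cref{lem:interpolation-current}, exactly as the paper does. The only difference is that you flag and sketch the regularity behind the energy identity and the distributional equation (hypoellipticity plus the Hamiltonian cutoff), which the paper takes for granted.
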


\begin{proof}
Since $\Pt_s$ and $\Pt^*_{T-s}$ are positivity-preserving Markov operators, we have, for all $s \in [0,T]$,
\begin{equation*}
    m_0 \le \varphi_s, \psi_s \le M_0.
\end{equation*}
Moreover, the normalization $Z=\int\varphi_s\psi_s\ud\mu$ is independent of $s$ and satisfies $m_0^2\le Z\le M_0^2$, implying
\begin{equation}\label{eq:G-uniform-bounds}
  \frac{m_0^2}{M_0^2} \le \frac{m_0^2}{Z}\le G_s\le \frac{M_0^2}{Z} \le \frac{M_0^2}{m_0^2}.
\end{equation}
It follows that \ref{gm:P1} of \cref{gm:ass:path} holds:
\begin{equation*}
    \Ent_\mu(G_s)\le \log\left(M_0^2\right) - \log\left(m_0^2\right).
\end{equation*}

We next verify \ref{gm:Q1}. Let $q_s=\Piv G_s$ and $j_s=\Piv(vG_s)$. For each $x$, set the conditional density $h_{s,x}(v)=G_s(x,v)/q_s(x)$  with respect to $\kv$. Since $q_s(x)\ge m_0^2/Z$, \eqref{eq:G-uniform-bounds} implies
\begin{equation*}
    h_{s,x}(v)\le \left(\frac{M_0}{m_0}\right)^2.
\end{equation*}
Hence, we have the property \ref{gm:Q1}:
\begin{equation*}
    \abs{j_s(x)} \le q_s(x)\int \abs v\,h_{s,x}(v)\kv(\ud v) \le q_s(x)\left(\frac{M_0}{m_0}\right)^2  \int \abs v\,\kv(\ud v).
\end{equation*}
The same uniform upper bound gives \ref{gm:Q2}:
\begin{equation*}
    \int_0^T \int(1+\abs v^4)G_s\ud\mu \ud s  \le  T\,\frac{M_0^2}{Z}\int(1+\abs v^4) \kv(\ud v)<\infty.
\end{equation*}
Moreover, since $\int \abs{\nabla U(x)}^2\ud\mux < \infty$ (see  \eqref{gm:eq:static-gradient}), \ref{gm:Q3} is verified:
\begin{equation*}
    \int_0^T \int(1+\abs{\nabla U(x)})^2G_s \ud \mu \ud s  \le  T\,\frac{M_0^2}{Z} \int(1+\abs{\nabla U(x)})^2\ud\mux < \infty.
\end{equation*}

It remains to verify that $\mathcal D_T(\varphi,\psi)<\infty$. By the uniform bounds on $\varphi_s$ and $\psi_s$,
\begin{equation*}
    T\mathcal D_T(\varphi,\psi)
    \le
    \frac{M_0}{m_0Z}
    \int_0^T
    \left(
        \norm{\nabla_v\varphi_s}_{L^2(\mu)}^2
        +
        \norm{\nabla_v\psi_s}_{L^2(\mu)}^2
    \right)\ud s.
\end{equation*}
The $L^2(\mu)$ energy identities for $\Pt_s$ and $\Pt_s^*$ give
\begin{equation*}
    2\gamma\int_0^T\norm{\nabla_v\varphi_s}_{L^2(\mu)}^2\ud s
    =
    \norm{\varphi}_{L^2(\mu)}^2-\norm{\varphi_T}_{L^2(\mu)}^2\,, \quad   2\gamma\int_0^T\norm{\nabla_v\psi_s}_{L^2(\mu)}^2\ud s
    =
    \norm{\psi}_{L^2(\mu)}^2-\norm{\psi_0}_{L^2(\mu)}^2.
\end{equation*}
Hence $\mathcal D_T(\varphi,\psi)<\infty$. Finally, \cref{lem:interpolation-current} gives $I_{v,T}(G)<\infty$ and $J_T(G,W)<\infty$, as well as the controlled equation $(\partial_s+\mathcal L_a)G_s=\nabla_v^*W_s$. Thus, \ref{gm:P2} and \ref{gm:P3} also hold, and the pair $(G,W)$ is admissible.
\end{proof}

We can now apply the finite-action space-time LSI to the interpolation path $G$. A direct estimate of $\frac{\ud}{\ud s}\Ent_\mu(G_s)$ then converts the resulting time-averaged entropy bound into a bound on the endpoint entropies $\Ent_\mu(G_0)$ and $\Ent_\mu(G_T)$.

\begin{proposition}\label{prop:interpolation-endpoint}
Let $\varphi,\psi$ be smooth and bounded above and below by positive constants, and let $G_s$ be defined by \eqref{def:interpolationdensity}. Then $G$ satisfies the time-averaged entropy
bound
\begin{equation}\label{eq:interpolation-average}
    \Ent_{\muT}(G) \le  2C_{\mathrm{ST}}\!\left(1 + \frac{\gamma^2}{\rho}\right)
    \mathcal D_T(\varphi, \psi),
\end{equation}
and the endpoint entropy bound
\begin{equation}\label{eq:interpolation-endpoint}
    \Ent_\mu(G_0) + \Ent_\mu(G_T)
    \le K_T\,\mathcal D_T(\varphi, \psi),
\end{equation}
where
\begin{equation}\label{eq:KT}
    K_T := 4C_{\mathrm{ST}}\!\left(1 + \frac{\gamma^2}{\rho}\right) + 2\gamma T.
\end{equation}
\end{proposition}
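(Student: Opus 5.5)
The first bound \eqref{eq:interpolation-average} is a direct application of the finite-action space-time LSI. By \cref{lem:interpolation-admissible}, the pair $(G,W)$ with $W_s=G_sA_s$ is admissible in the sense of \cref{gm:ass:path}. Since $T\ge\tau_{\rm ST}\rho^{-1/2}$, \cref{gm:cor:stlsi} gives
\begin{equation*}
\Ent_{\muT}(G)\le C_{\rm ST}\bigl[I_{v,T}(G)+\rho^{-1}J_T(G,W)\bigr].
\end{equation*}
Inserting the two bounds of \eqref{eq:interpolation-norms} yields
\begin{equation*}
\Ent_{\muT}(G)\le C_{\rm ST}\bigl(2+2\gamma^2/\rho\bigr)\mathcal D_T(\varphi,\psi),
\end{equation*}
which is \eqref{eq:interpolation-average}.

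For the endpoint bound, I would control the total variation of $s\mapsto\Ent_\mu(G_s)$ on $[0,T]$ and compare each endpoint to the time average. Since $\varphi,\psi$ are smooth and bounded between positive constants, $G$ is regular enough to apply \cref{lem:entropy-chain-rule} with current $W_s=G_sA_s$ on $(0,T)$. Here \eqref{eq:entropy-chain-integrability} follows from the uniform bounds \eqref{eq:G-uniform-bounds} and the finiteness of $\mathcal D_T$ shown in \cref{lem:interpolation-admissible}. Continuity of $s\mapsto\Ent_\mu(G_s)$ up to $s=0,T$ follows from the boundedness of $G_s$ and strong continuity of the semigroups. The lemma gives
\begin{equation*}
\frac{\ud}{\ud s}\Ent_\mu(G_s)=-\gamma\int G_s\bigl(\nabla_v\log\varphi_s-\nabla_v\log\psi_s\bigr)\cdot\bigl(\nabla_v\log\varphi_s+\nabla_v\log\psi_s\bigr)\ud\mu.
\end{equation*}
The integrand is exactly $\abs{\nabla_v\log\varphi_s}^2-\abs{\nabla_v\log\psi_s}^2$, so no Young inequality is needed:
\begin{equation*}
\Abs{\frac{\ud}{\ud s}\Ent_\mu(G_s)}\le\gamma\int G_s\bigl(\abs{\nabla_v\log\varphi_s}^2+\abs{\nabla_v\log\psi_s}^2\bigr)\ud\mu.
\end{equation*}
Integrating over $[0,T]$ gives a total variation of at most $\gamma T\mathcal D_T(\varphi,\psi)$.

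Now, for every $s\in[0,T]$ and each endpoint $e\in\{0,T\}$, we have $\Ent_\mu(G_e)\le\Ent_\mu(G_s)+\gamma T\mathcal D_T$. Averaging in $s$ gives $\Ent_\mu(G_e)\le\Ent_{\muT}(G)+\gamma T\mathcal D_T$. Summing over the two endpoints and using \eqref{eq:interpolation-average} yields
\begin{equation*}
\Ent_\mu(G_0)+\Ent_\mu(G_T)\le\bigl[4C_{\rm ST}(1+\gamma^2/\rho)+2\gamma T\bigr]\mathcal D_T(\varphi,\psi)=K_T\,\mathcal D_T(\varphi,\psi).
\end{equation*}

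The only delicate step is justifying the chain rule and endpoint continuity. The algebra is routine, but I would check that $G\in C^1((0,T);C^\infty)$ via hypoelliptic smoothing of $\Pt_s\varphi$ and $\Pt^*_{T-s}\psi$. If interior smoothness near $s=0,T$ is problematic, I would instead work on $[\varepsilon,T-\varepsilon]$ and let $\varepsilon\to0$, using lower semicontinuity and the boundedness of $G$.
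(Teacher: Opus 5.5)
Your proposal is correct and follows essentially the same route as the paper: you apply the admissible-pair space-time LSI together with \eqref{eq:interpolation-norms} to get the averaged bound. You then use the exact identity $\frac{\ud}{\ud s}\Ent_\mu(G_s)=-\gamma\int G_s(\abs{\nabla_v\log\varphi_s}^2-\abs{\nabla_v\log\psi_s}^2)\ud\mu$ from \cref{lem:entropy-chain-rule} to bound the total variation by $\gamma T\mathcal D_T(\varphi,\psi)$, and averaging over intermediate times and summing the two endpoints gives $K_T$. Your remark that continuity of $s\mapsto\Ent_\mu(G_s)$ up to $s=0$ and $s=T$ must be checked is a useful addition, since the paper uses this without comment.
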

\begin{proof}
By \cref{lem:interpolation-current,lem:interpolation-admissible}, the pair $(G,W)$ is admissible. The time-averaged bound~\eqref{eq:interpolation-average} follows from \cref{gm:cor:stlsi} applied to $G$, together with \eqref{eq:interpolation-norms}:
\begin{equation*}
    \Ent_{\muT}(G) \le 2 C_{\rm ST}\left(1+\frac{\gamma^2}{\rho}\right) \mathcal D_T(\varphi,\psi).
\end{equation*}

For the endpoint bound~\eqref{eq:interpolation-endpoint}, \cref{lem:entropy-chain-rule}, applied to the interpolation equation \eqref{eq:interpolation-current}, shows that $s\mapsto\Ent_\mu(G_s)$ is absolutely continuous and
\begin{equation*}
    \frac{\ud}{\ud s}\Ent_\mu(G_s)
    = \int G_s A_s \cdot \nabla_v \log G_s\ud\mu.
\end{equation*}
Substituting $A_s = -\gamma(\nabla_v \log \varphi_s - \nabla_v \log \psi_s)$
and $\nabla_v \log G_s = \nabla_v \log \varphi_s + \nabla_v \log \psi_s$, the integrand simplifies to
\begin{equation*}
  G_s A_s \cdot \nabla_v \log G_s =   -\gamma G_s\bigl(\abs{\nabla_v \log \varphi_s}^2
    - \abs{\nabla_v \log \psi_s}^2\bigr),
\end{equation*}
and hence
\begin{equation}\label{eq:entropy-derivative-bound}
    \Abs{\frac{\ud}{\ud s}\Ent_\mu(G_s)}
    \le \gamma \int G_s\bigl(\abs{\nabla_v \log \varphi_s}^2
    + \abs{\nabla_v \log \psi_s}^2\bigr)\ud\mu.
\end{equation}
Integrating over $[0, T]$ gives
\begin{equation*}
    \int_0^T \Abs{\frac{\ud}{\ud s}\Ent_\mu(G_s)}\ud s
    \le \gamma T\,\mathcal D_T(\varphi, \psi).
\end{equation*}
For each endpoint $a \in \{0, T\}$ and every $r \in [0, T]$,
\begin{equation*}
    \Ent_\mu(G_a) \le \Ent_\mu(G_r)
    + \int_0^T \Abs{\frac{\ud}{\ud s}\Ent_\mu(G_s)}\ud s
    \le \Ent_\mu(G_r) + \gamma T\,\mathcal D_T(\varphi, \psi).
\end{equation*}
Averaging in $r$ over $[0, T]$ yields
\begin{equation*}
    \Ent_\mu(G_a) \le \Ent_{\muT}(G) + \gamma T\,\mathcal D_T(\varphi, \psi).
\end{equation*}
Adding the above two endpoint inequalities ($a = 0$ and $a = T$) and applying~\eqref{eq:interpolation-average} gives~\eqref{eq:interpolation-endpoint}, with $K_T$ as in~\eqref{eq:KT}. The proof is complete.
\end{proof}

After normalizing $\varphi$ and $\psi$ in $L^p(\mu)$ and $L^{q'}(\mu)$, respectively, the following lemma bounds $\log Z$ by a weighted combination of $\Ent_\mu(G_0)$ and $\Ent_\mu(G_T)$, minus the dissipation term $\gamma T\,\mathcal D_T(\varphi,\psi)$. In \cref{sec:hypercontractivity}, this bound will be combined with \cref{prop:interpolation-endpoint} to conclude $Z\le1$ for an appropriate choice of exponents $p$ and $q$ (see \cref{prop:central} below).

\begin{proposition}\label{prop:closure}
Let $1<p,q<\infty$, let $q'=q/(q-1)$, and let $\varphi,\psi$ be smooth functions
bounded above and below by positive constants satisfying
\begin{equation*}
    \norm{\varphi}_{L^p(\mu)}=1,
    \qquad
    \norm{\psi}_{L^{q'}(\mu)}=1.
\end{equation*}
Then
\begin{equation}\label{eq:closure}
    2\log Z \le  \left(\frac{2}{p} - 1\right)\Ent_\mu(G_0)  + \left(1 - \frac{2}{q}\right)\Ent_\mu(G_T)
    - \gamma T\,\mathcal D_T(\varphi, \psi).
\end{equation}
\end{proposition}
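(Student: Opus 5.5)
The plan is to compute $\log Z$ in two ways, once from each endpoint of the interpolation $G$. The two pairings $\int G_s\log\varphi_s\ud\mu$ and $\int G_s\log\psi_s\ud\mu$ link the endpoints, and their time derivatives produce exactly the two halves of the two-sided Fisher information $\mathcal D_T(\varphi,\psi)$. The Gibbs variational inequality for entropy then converts the remaining pairings into the weighted entropies that appear in \eqref{eq:closure}.

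\textbf{Step 1 (endpoint identities).} Since $G_0=\varphi\,\psi_0/Z$ and $G_T=\varphi_T\,\psi/Z$, we have
\begin{equation*}
\log Z=\log\varphi+\log\psi_0-\log G_0=\log\varphi_T+\log\psi-\log G_T .
\end{equation*}
Integrating the first identity against $G_0\ud\mu$, the second against $G_T\ud\mu$, and adding gives
\begin{equation*}
2\log Z=\int G_0\log\varphi\ud\mu+\int G_T\log\psi\ud\mu+\Bigl[\int G_0\log\psi_0\ud\mu+\int G_T\log\varphi_T\ud\mu\Bigr]-\Ent_\mu(G_0)-\Ent_\mu(G_T).
\end{equation*}
All terms are finite by the uniform bounds $m_0\le\varphi_s,\psi_s\le M_0$ from \cref{lem:interpolation-admissible}.

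\textbf{Step 2 (transport of the pairings).} We compute $\frac{\ud}{\ud s}\int\varphi_s\psi_s\log\varphi_s\ud\mu$ using $\partial_s\varphi_s=(-\La+\gamma\Ls)\varphi_s$ and $\partial_s\psi_s=-(\La+\gamma\Ls)\psi_s$.
\begin{itemize}
\item The $\La$-contributions combine into $-\int\La(\psi_s\varphi_s\log\varphi_s)\ud\mu$, which vanishes by skew-adjointness since $\La 1=0$.
\item After integrating by parts with $\Ls=-\nabla_v^*\nabla_v$, the $\Ls$-contributions leave only $-\gamma\int\psi_s\abs{\nabla_v\varphi_s}^2/\varphi_s\ud\mu$, because the cross terms $\nabla_v\varphi_s\cdot\nabla_v\psi_s(\log\varphi_s+1)$ cancel.
\end{itemize}
Hence $\frac{\ud}{\ud s}\int G_s\log\varphi_s\ud\mu=-\gamma\int G_s\abs{\nabla_v\log\varphi_s}^2\ud\mu$. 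The symmetric computation gives $\frac{\ud}{\ud s}\int G_s\log\psi_s\ud\mu=+\gamma\int G_s\abs{\nabla_v\log\psi_s}^2\ud\mu$. Integrating over $[0,T]$ yields
\begin{align*}
\int G_T\log\varphi_T\ud\mu&=\int G_0\log\varphi\ud\mu-\gamma\int_0^T\!\!\int G_s\abs{\nabla_v\log\varphi_s}^2\ud\mu\ud s,\\
\int G_0\log\psi_0\ud\mu&=\int G_T\log\psi\ud\mu-\gamma\int_0^T\!\!\int G_s\abs{\nabla_v\log\psi_s}^2\ud\mu\ud s.
\end{align*}
The bracket in Step 1 therefore equals $\int G_0\log\varphi\ud\mu+\int G_T\log\psi\ud\mu-\gamma T\,\mathcal D_T(\varphi,\psi)$. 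This gives
\begin{equation*}
2\log Z=2\int G_0\log\varphi\ud\mu+2\int G_T\log\psi\ud\mu-\gamma T\,\mathcal D_T(\varphi,\psi)-\Ent_\mu(G_0)-\Ent_\mu(G_T).
\end{equation*}

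\textbf{Step 3 (variational bound).} The entropy duality $\int G f\ud\mu\le\Ent_\mu(G)+\log\int\e^{f}\ud\mu$ is applied twice.
\begin{itemize}
\item With $f=p\log\varphi$ and $\norm{\varphi}_{L^p(\mu)}=1$, it gives $\int G_0\log\varphi\ud\mu\le\frac1p\Ent_\mu(G_0)$.
\item With $f=q'\log\psi$ and $\norm{\psi}_{L^{q'}(\mu)}=1$, it gives $\int G_T\log\psi\ud\mu\le\bigl(1-\frac1q\bigr)\Ent_\mu(G_T)$.
\end{itemize}
Substituting into the identity of Step 2 yields exactly \eqref{eq:closure}.

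\textbf{Main obstacle.} The expected difficulty is rigor in Step 2, namely differentiating under the integral and integrating by parts on the noncompact phase space. I would handle it as in \cref{lem:entropy-chain-rule}:
\begin{itemize}
\item insert the Hamiltonian cutoff $\chi_R=\chi(H/R)$, which satisfies $\La\chi_R=0$;
\item use that $\varphi_s,\psi_s$ are smooth for $s\in(0,T)$ by hypoellipticity and bounded between $m_0$ and $M_0$, so $\log\varphi_s$ and $\log\psi_s$ are bounded;
\item control the cutoff errors by $R^{-1/2}$ times $\int_0^T(\norm{\nabla_v\varphi_s}^2_{L^2}+\norm{\nabla_v\psi_s}^2_{L^2})\ud s$, which is finite by the energy identities in the proof of \cref{lem:interpolation-admissible};
\item obtain continuity of the pairings at $s=0,T$ from strong continuity of the semigroups in $L^2(\mu)$, together with the uniform bounds.
\end{itemize}
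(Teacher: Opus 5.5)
Your proposal is correct and follows essentially the same route as the paper's proof: the decomposition $\Ent_\mu(G_s)=\int G_s\log\varphi_s\ud\mu+\int G_s\log\psi_s\ud\mu-\log Z$ at $s=0,T$, the two pairing identities with derivatives $\mp\gamma\int G_s\abs{\nabla_v\log(\cdot)}^2\ud\mu$, and the Gibbs variational inequality applied with $p\log\varphi$ and $q'\log\psi$. The only differences are cosmetic. You differentiate $\int\varphi_s\psi_s\log\varphi_s\ud\mu$ by the product rule instead of going through the current equation for $G_s$, and you add a Hamiltonian-cutoff justification that the paper leaves implicit.
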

\begin{proof}
First,  by the definition of $G_s$, we find
\begin{equation} \label{eq:entropyGs}
    \Ent_\mu(G_s) = \int G_s\log\varphi_s\ud\mu + \int G_s\log\psi_s\ud\mu - \log Z.
\end{equation}

Using $\partial_sG_s=-\La G_s+\nabla_v^{*}(G_sA_s)$ and $\partial_s\varphi_s=-\La\varphi_s+\gamma\Ls\varphi_s$, we compute
\begin{equation} \label{eq:timederivative}
    \begin{aligned}
              \frac{\ud}{\ud s}\int G_s\log\varphi_s\ud\mu
        &=
        \int(-\La G_s)\log\varphi_s\ud\mu
        +\int\nabla_v^{*}(G_sA_s)\log\varphi_s\ud\mu        \\
        &\quad
        +\int G_s\frac{-\La\varphi_s+\gamma\Ls\varphi_s}{\varphi_s}\ud\mu.
    \end{aligned}
\end{equation}
From the chain rule:
\begin{equation*}
    \La(\log f) = \frac{\La f}{f}\,,
\end{equation*}
and the skew-adjointness of $\La$, the two $\La$ contributions cancel:
\begin{align*}
     \int(-\La G_s)\log\varphi_s\ud\mu + \int G_s\frac{-\La\varphi_s}{\varphi_s}\ud\mu = \int G_s\frac{\La\varphi_s}{\varphi_s}\ud\mu + \int G_s\frac{-\La\varphi_s}{\varphi_s}\ud\mu = 0\,.
\end{align*}
The identity \eqref{eq:timederivative} thus simplifies to
\begin{equation*}
        \frac{\ud}{\ud s}\int G_s\log\varphi_s\ud\mu
        =
        \int G_sA_s\cdot\nabla_v\log\varphi_s\ud\mu
        +\gamma\int G_s\frac{\Ls\varphi_s}{\varphi_s}\ud\mu.
\end{equation*}
Next, using $\Ls=-\nabla_v^{*}\nabla_v$ and $G_s/\varphi_s=\psi_s/Z$, we compute
\begin{equation*}
        \int G_s\frac{\Ls\varphi_s}{\varphi_s}\ud\mu
        =-\int\nabla_v\!\left(\frac{\psi_s}{Z}\right)\cdot\nabla_v\varphi_s\ud\mu
        =-\int G_s\nabla_v\log\varphi_s\cdot\nabla_v\log\psi_s\ud\mu,
\end{equation*}
and since
$A_s=-\gamma(\nabla_v\log\varphi_s-\nabla_v\log\psi_s)$, we obtain
\begin{equation} \label{eq:gsvp}
        \frac{\ud}{\ud s}\int G_s\log\varphi_s\ud\mu
        =-\gamma\int G_s\abs{\nabla_v\log\varphi_s}^2\ud\mu.
\end{equation}
The analogous computation with
$\partial_s\psi_s=-\La\psi_s-\gamma\Ls\psi_s$ gives
\begin{equation} \label{eq:gspsi}
        \frac{\ud}{\ud s}\int G_s\log\psi_s\ud\mu
        =
        \gamma\int G_s\abs{\nabla_v\log\psi_s}^2\ud\mu.
\end{equation}

Integrating these two identities \eqref{eq:gsvp}-\eqref{eq:gspsi} over $[0,T]$ and then subtracting gives
\begin{equation}\label{eq:closure-energy-identity}
\gamma T\mathcal D_T(\varphi,\psi)  = \int G_0\log\varphi\ud\mu  - \int G_T\log\varphi_T\ud\mu + \int G_T\log\psi\ud\mu - \int G_0\log\psi_0\ud\mu,
\end{equation}
due to the definition \eqref{eq:interpolation-energy} of $\mathcal D_T(\varphi,\psi)$ and endpoints $\varphi_0=\varphi$ and $\psi_T=\psi$. Then, recalling \eqref{eq:entropyGs}, the identity \eqref{eq:closure-energy-identity} can be rewritten as
\begin{equation} \label{eq:energy_zgvp}
\gamma T\mathcal D_T(\varphi,\psi) = 2\int G_0\log\varphi\ud\mu  + 2\int G_T\log\psi\ud\mu -\Ent_\mu(G_0)-\Ent_\mu(G_T)-2\log Z.
\end{equation}

Finally, to relate $\int G_0\log\varphi\ud\mu$ and $\int G_T\log\psi\ud\mu$ with entropies
$\Ent_\mu(G_0)$ and $\Ent_\mu(G_T)$, respectively, we use the variational inequality for $\Ent_\mu(\cdot)$: for any probability density $g$ with respect to $\mu$ and any measurable function $\phi$,
\begin{equation}\label{eq:variational}
    \int \phi\, g\ud\mu \le \Ent_\mu(g) + \log\int \e^{\phi}\ud\mu.
\end{equation}
Since $\norm{\varphi}_{L^p(\mu)} = \norm{\psi}_{L^{q'}(\mu)} = 1$, applying~\eqref{eq:variational} to $\phi = p\log\varphi$ and $g = G_0$ gives
\begin{equation*}
    p\int G_0\log\varphi\ud\mu  \le \Ent_\mu(G_0) + \log\int \varphi^p\ud\mu = \Ent_\mu(G_0),
\end{equation*}
which yields, by dividing by $p$,
\begin{equation} \label{auxeq1}
    \int G_0\log\varphi\ud\mu \le \frac{1}{p}\Ent_\mu(G_0).
\end{equation}
The analogous application to $\phi = q'\log\psi$ and $g = G_T$, combined with $\norm{\psi}_{L^{q'}(\mu)}^{q'} = 1$, gives
\begin{equation} \label{auxeq2}
    \int G_T\log\psi\ud\mu \le \frac{1}{q'}\Ent_\mu(G_T).
\end{equation}
Substituting \eqref{auxeq1} and \eqref{auxeq2} into \eqref{eq:energy_zgvp} and using $1/q' = 1 - 1/q$ gives~\eqref{eq:closure}.
\end{proof}

\section{Hypocoercive hypercontractivity}\label{sec:hypercontractivity}

We now utilize the endpoint entropy bound \eqref{eq:interpolation-endpoint} in \cref{prop:interpolation-endpoint} and the normalization bound \eqref{eq:closure} in \cref{prop:closure} to establish the hypercontractivity property for $\Pt_T$. We first prove the contraction at the critical pair $(p_{\rm c},q_{\rm c})$, at which the two bounds \eqref{eq:interpolation-endpoint} and \eqref{eq:closure} balance exactly and force the normalization constant to satisfy $Z\le1$.
The remaining $(p,q)$ exponents then follow by Riesz--Thorin interpolation with the $L^1\to L^1$ and $L^\infty\to L^\infty$ Markov contraction bounds for $\Pt_T$ (see \cref{thm:one-slab}).

\begin{proposition}[Critical hypercontractive estimate]\label{prop:central}
Let $K_T$ be the constant \eqref{eq:KT} for the endpoint entropy estimate \eqref{eq:interpolation-endpoint}. Introduce constants
\begin{equation}  \label{eq:criexponents}
        \eta_T=\frac{\gamma T}{K_T}, \qquad    p_{\rm c}=\frac2{1+\eta_T},   \qquad
        q_{\rm c}=\frac2{1-\eta_T}.
\end{equation}
Then $0<\eta_T<1/2$, and
\begin{equation*}
    1<p_{\rm c}<2<q_{\rm c}<\infty, \qquad q_{\rm c}'=p_{\rm c}.
\end{equation*}
Moreover, one has the hypercontractivity:
\begin{equation} \label{eq:critest}
    \norm{\Pt_Tf}_{L^{q_{\rm c}}(\mu)} \le \norm f_{L^{p_{\rm c}}(\mu)}\,, \quad \forall\, f\in L^{p_{\rm c}}(\mu)\,.
\end{equation}
\end{proposition}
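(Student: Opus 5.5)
First I check the constants. Since $K_T=4C_{\rm ST}(1+\gamma^2/\rho)+2\gamma T>2\gamma T$, we get $\eta_T=\gamma T/K_T<1/2$, and $\eta_T>0$ trivially. Hence $p_{\rm c}=2/(1+\eta_T)\in(4/3,2)$ and $q_{\rm c}=2/(1-\eta_T)\in(2,4)$. Conjugacy follows from $1/p_{\rm c}+1/q_{\rm c}=\tfrac{1+\eta_T}{2}+\tfrac{1-\eta_T}{2}=1$. This gives $q_{\rm c}'=p_{\rm c}$ and the coefficients $\tfrac{2}{p_{\rm c}}-1=\eta_T=1-\tfrac{2}{q_{\rm c}}$.

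The core step treats smooth $\varphi,\psi$ bounded above and below by positive constants, normalized so that $\norm{\varphi}_{L^{p_{\rm c}}}=\norm{\psi}_{L^{q_{\rm c}'}}=\norm{\psi}_{L^{p_{\rm c}}}=1$. \cref{prop:closure} with $(p,q)=(p_{\rm c},q_{\rm c})$ gives
\[
2\log Z\le \eta_T\bigl(\Ent_\mu(G_0)+\Ent_\mu(G_T)\bigr)-\gamma T\,\mathcal D_T(\varphi,\psi).
\]
Inserting the endpoint bound \eqref{eq:interpolation-endpoint} from \cref{prop:interpolation-endpoint} bounds the right side by $(\eta_TK_T-\gamma T)\mathcal D_T=0$. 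Thus $Z=\int\Pt_T\varphi\,\psi\ud\mu\le 1$. By homogeneity this becomes $\int\Pt_T\varphi\,\psi\ud\mu\le\norm{\varphi}_{L^{p_{\rm c}}}\norm{\psi}_{L^{q_{\rm c}'}}$ for all such positive smooth $\varphi,\psi$.

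Next I extend this to general data. For $f\in L^{p_{\rm c}}(\mu)$, positivity of $\Pt_T$ gives $\abs{\Pt_Tf}\le\Pt_T\abs f$, so it suffices to treat $f\ge0$. I approximate $f$ by smooth functions $\varphi_n$ with $\varphi_n\ge 1/n$ that are bounded and converge to $f$ in $L^{p_{\rm c}}$. One way to build them is to truncate, $\min(f,n)$, then mollify and localize with a cutoff, and finally add $1/n$. Similarly, for any nonnegative test $\psi\in L^{q_{\rm c}'}$ I approximate by positive smooth bounded $\psi_n$. The bilinear inequality passes to the limit because $\Pt_T$ is an $L^{p_{\rm c}}$ contraction, so $\Pt_T\varphi_n\to\Pt_T f$ in $L^{p_{\rm c}}$, while $\psi_n\to\psi$ in the dual exponent. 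Here $q_{\rm c}'=p_{\rm c}$ and $q_{\rm c}=p_{\rm c}'$, so Hölder applies.

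Finally I take the supremum over $\psi\ge0$ with $\norm{\psi}_{L^{q_{\rm c}'}}\le1$. Since $\Pt_Tf\ge0$, duality gives $\norm{\Pt_Tf}_{L^{q_{\rm c}}}\le\norm f_{L^{p_{\rm c}}}$, which is \eqref{eq:critest}.

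The main obstacle is the approximation step. The regular-data argument needs $\varphi,\psi$ strictly bounded below and above so that \cref{lem:interpolation-admissible} applies, and I must make sure the extra additive constant $1/n$ disappears in the limit. On a probability space this is harmless: $\norm{1/n}_{L^{p}}=1/n$. So I would simply apply the inequality to unnormalized data and pass $n\to\infty$ in the homogeneous form.
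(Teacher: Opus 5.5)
\textbf{Gap in the limit passage.} Your constant check and your core step coincide with the paper's proof. In the core step you apply \cref{prop:closure} at $(p_{\rm c},q_{\rm c})$ and insert \eqref{eq:interpolation-endpoint}, which gives $2\log Z\le(\eta_TK_T-\gamma T)\mathcal D_T=0$. You then use homogeneity and reduce to $f\ge0$ via $\abs{\Pt_Tf}\le\Pt_T\abs f$. The problem is the limit passage in your extension step. You have $\Pt_T\varphi_n\to\Pt_Tf$ in $L^{p_{\rm c}}(\mu)$ and $\psi_n\to\psi$ in $L^{q_{\rm c}'}(\mu)=L^{p_{\rm c}}(\mu)$. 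So both factors converge in the \emph{same} space $L^{p_{\rm c}}$, while the exponent conjugate to $p_{\rm c}$ is $q_{\rm c}$; note $2/p_{\rm c}=1+\eta_T>1$. H\"older therefore does not control $\int\Pt_T\varphi_n\,\psi_n\ud\mu-\int\Pt_Tf\,\psi\ud\mu$. The piece $\int(\Pt_T\varphi_n-\Pt_Tf)\psi_n\ud\mu$ would need $\sup_n\norm{\psi_n}_{L^{q_{\rm c}}}<\infty$, which you do not have. The piece $\int\Pt_Tf\,(\psi_n-\psi)\ud\mu$ would need $\Pt_Tf\in L^{q_{\rm c}}(\mu)$, which is exactly the conclusion. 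As written, the justification is circular.

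\textbf{How to repair it.} Use positivity instead of H\"older. Pass to a subsequence along which $\Pt_T\varphi_n\to\Pt_Tf$ and $\psi_n\to\psi$ $\mu$-a.e. Since everything is nonnegative, Fatou's lemma gives $\int\Pt_Tf\,\psi\ud\mu\le\liminf_n\int\Pt_T\varphi_n\psi_n\ud\mu\le\norm f_{L^{p_{\rm c}}(\mu)}\norm\psi_{L^{q_{\rm c}'}(\mu)}$. Then finish with the duality formula $\norm h_{L^{q}}=\sup\{\int h\psi\ud\mu:\psi\ge0,\ \norm\psi_{L^{q'}}\le1\}$ for nonnegative measurable $h$. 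You need this formula in the form valid even when the left side is $+\infty$, since you do not know a priori that $\Pt_Tf\in L^{q_{\rm c}}$.

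\textbf{How the paper orders the argument.} It first takes $\varphi\in\mathcal C_+$, for which $\Pt_T\varphi\in L^\infty(\mu)$. This makes $L^{q_{\rm c}}$-duality against $\psi\in\mathcal C_+$ legitimate and yields $\norm{\Pt_T\varphi}_{L^{q_{\rm c}}}\le\norm\varphi_{L^{p_{\rm c}}}$. Only then does it extend to general $f$, by Fatou applied to $\abs{\Pt_T\varphi_n}^{q_{\rm c}}$.
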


\begin{proof}
From
\begin{equation*}
    K_T = 4C_{\mathrm{ST}} \left(1+\frac{\gamma^2}{\rho}\right)
+ 2\gamma T > 2\gamma T,
\end{equation*}
we immediately obtain
\begin{equation*}
    0<\eta_T=\frac{\gamma T}{K_T}<\frac12,
\end{equation*}
and that the critical exponents in \eqref{eq:criexponents} satisfy $1<p_{\rm c}<2<q_{\rm c}<\infty$.

We first prove \eqref{eq:critest} on positive smooth functions. Specifically, let
\begin{equation*}
    \mathcal C_+ := \left\{ \varepsilon+\phi:\varepsilon>0,\  \phi\in C_c^\infty(\mathbb R^{2d}),\ \phi\ge 0 \right\}.
\end{equation*}
Since $\mu$ is a probability measure with a smooth strictly positive Lebesgue density, $\mathcal C_+$ is dense in the nonnegative cone of $L^r(\mu)$ for any $1\le r<\infty$.

Let $\varphi\in\mathcal C_+$ satisfy $\|\varphi\|_{L^{p_{\rm c}}(\mu)}=1$. Since $\varphi$ is bounded and $\Pt_T$ is a Markov contraction, we have
\begin{equation*}
    0\le \Pt_T\varphi\le \|\varphi\|_{L^\infty(\mu)},
\end{equation*}
and hence $\Pt_T\varphi\in L^{q_{\rm c}}(\mu)$. By $L^{q_{\rm c}}$-duality and density of $\mathcal C_+$ in $L^{q_{\rm c}'}(\mu)$, there holds
\begin{equation}\label{eq:critical-duality}
\|\Pt_T\varphi\|_{L^{q_{\rm c}}(\mu)} =
\sup_{\psi\in\mathcal C_+}\left\{\int \Pt_T\varphi\,\psi \ud \mu: \  \psi\ge0,\ \|\psi\|_{L^{q_{\rm c}'}(\mu)}=1
\right\}.
\end{equation}
Fix such a $\psi$, and let $Z := \int \Pt_T\varphi\,\psi \ud \mu$ and
\begin{equation*}
    G_s = \frac{\varphi_s\psi_s}{Z}, \qquad \varphi_s=\Pt_s\varphi,
\qquad \psi_s=\Pt_{T-s}^*\psi,
\end{equation*}
be the forward/backward interpolation defined in
\eqref{def:interpolationdensity}. Applying \cref{prop:closure} with
$p=p_{\rm c}$ and $q=q_{\rm c}$, we obtain
\begin{align*}
2\log Z
&\le \left(\frac{2}{p_{\rm c}}-1\right) \Ent_\mu(G_0) + \left(1-\frac{2}{q_{\rm c}}\right) \Ent_\mu(G_T) - \gamma T\mathcal D_T(\varphi,\psi)
\\ & = \eta_T \bigl[ \Ent_\mu(G_0)+\Ent_\mu(G_T) \bigr] - \gamma T\mathcal D_T(\varphi,\psi).
\end{align*}
The endpoint entropy estimate \eqref{eq:interpolation-endpoint} then gives
\begin{align*}
2\log Z &\le \eta_TK_T\mathcal D_T(\varphi,\psi) - \gamma T\mathcal D_T(\varphi,\psi)  = 0,
\end{align*}
and hence
\begin{equation*}
    Z\le1.
\end{equation*}
Taking the supremum in \eqref{eq:critical-duality} yields, by homogeneity
\begin{equation}\label{eq:critical-core}
\|\Pt_T\varphi\|_{L^{q_{\rm c}}(\mu)} \le \|\varphi\|_{L^{p_{\rm c}}(\mu)}\,,  \quad \forall\,\varphi\in\mathcal C_+.
\end{equation}

We next extend \eqref{eq:critical-core} to a nonnegative
$f\in L^{p_{\rm c}}(\mu)$. Consider the sequence $\varphi_n\in\mathcal C_+$ such that $\varphi_n \to f$ in $L^{p_{\rm c}}(\mu)$. Since $\Pt_T$ is an $L^{p_{\rm c}}(\mu)$-contraction, we also have $\Pt_T\varphi_n \to \Pt_Tf$ in $L^{p_{\rm c}}(\mu)$. Up to a subsequence, one can assume $\Pt_T\varphi_n \to \Pt_Tf$ for $\mu$-\textup{a.e.} $(x,v)$. Fatou's lemma and \eqref{eq:critical-core} then imply
\begin{align*}
\|\Pt_Tf\|_{L^{q_{\rm c}}(\mu)}^{q_{\rm c}}
 = \int |\Pt_Tf|^{q_{\rm c}}\ud\mu & \le \liminf_{n\to\infty} \int |\Pt_T\varphi_n|^{q_{\rm c}}\ud\mu \\
& \le \liminf_{n\to\infty} \|\varphi_n\|_{L^{p_{\rm c}}(\mu)}^{q_{\rm c}} = \|f\|_{L^{p_{\rm c}}(\mu)}^{q_{\rm c}}.
\end{align*}
It readily follows that $\|\Pt_Tf\|_{L^{q_{\rm c}}(\mu)} \le \|f\|_{L^{p_{\rm c}}(\mu)}$ for any nonnegative $f\in L^{p_{\rm c}}(\mu)$.

Finally, for a general real-valued $f\in L^{p_{\rm c}}(\mu)$, positivity of the Markov semigroup gives
\begin{equation*}
    |\Pt_Tf| \le \Pt_T|f|.
\end{equation*}
Applying the contraction \eqref{eq:critest} to $|f|$, we conclude that
\begin{equation*}
    \|\Pt_Tf\|_{L^{q_{\rm c}}(\mu)} \le \|\Pt_T|f|\|_{L^{q_{\rm c}}(\mu)} \le \||f|\|_{L^{p_{\rm c}}(\mu)} = \|f\|_{L^{p_{\rm c}}(\mu)}.
\end{equation*}
The proof is complete.
\end{proof}

\begin{theorem}[One-step hypocoercive hypercontractivity]\label{thm:one-slab}
With $\eta_T$ as in \cref{prop:central}, set
\begin{equation} \label{def:alpha}
        \alpha_T=\frac{1+\eta_T}{1-\eta_T} > 1.
\end{equation}
Then for any $p>1$ and $f\in L^p(\mu)$,
\begin{equation}\label{eq:one-slab}
    \norm{\Pt_Tf}_{L^{1+\alpha_T(p-1)}(\mu)}  \le  \norm f_{L^p(\mu)}.
\end{equation}
Consequently, whenever $p\le q\le 1+\alpha_T(p-1)$, we have
\begin{equation} \label{eq:one-slab2}
    \norm{\Pt_Tf}_{L^q(\mu)}  \le  \norm f_{L^p(\mu)}.
\end{equation}
\end{theorem}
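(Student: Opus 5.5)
The plan is to obtain \eqref{eq:one-slab} from the critical estimate \eqref{eq:critest} of \cref{prop:central} by Riesz--Thorin interpolation with the two trivial Markov bounds $\norm{\Pt_T f}_{L^1(\mu)}\le\norm{f}_{L^1(\mu)}$ and $\norm{\Pt_T f}_{L^\infty(\mu)}\le\norm{f}_{L^\infty(\mu)}$. Since $\mu$ is a probability measure, $r\mapsto\norm{\cdot}_{L^r(\mu)}$ is nondecreasing. So it suffices to produce, for each $p>1$, some exponent $\tilde q(p)\ge 1+\alpha_T(p-1)$ with $\norm{\Pt_Tf}_{L^{\tilde q}(\mu)}\le\norm f_{L^p(\mu)}$. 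The same monotonicity then immediately gives \eqref{eq:one-slab2} for every $p\le q\le 1+\alpha_T(p-1)$. Since $\Pt_T$ is real and positivity preserving and $|\Pt_Tf|\le\Pt_T|f|$, there is no issue in applying Riesz--Thorin to real-valued data (the complexified operator has the same norms, or one works with $f\ge0$ as in the proof of \cref{prop:central}).

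For $1<p\le p_{\rm c}$, I interpolate between $(1,1)$ and $(p_{\rm c},q_{\rm c})$. With $\vartheta\in(0,1]$ defined by $1-\tfrac1p=\vartheta\bigl(1-\tfrac1{p_{\rm c}}\bigr)$, the output exponent $\tilde q$ satisfies $1-\tfrac1{\tilde q}=\vartheta\bigl(1-\tfrac1{q_{\rm c}}\bigr)$. Using $1-1/p_{\rm c}=(1-\eta_T)/2$ and $1-1/q_{\rm c}=(1+\eta_T)/2$, this becomes
\begin{equation*}
\frac{\tilde q-1}{\tilde q}=\alpha_T\,\frac{p-1}{p},
\qquad\text{i.e.}\qquad
\tilde q-1=\frac{\alpha_T(p-1)}{p-\alpha_T(p-1)} .
\end{equation*}
This is well defined and finite because $p\le p_{\rm c}$ forces $\alpha_T(p-1)/p\le (1+\eta_T)/2<1$. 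Since $\alpha_T>1$, we have $p-\alpha_T(p-1)=1-(\alpha_T-1)(p-1)\le1$, which gives $\tilde q-1\ge\alpha_T(p-1)$, as needed.

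For $p\ge p_{\rm c}$, I interpolate between $(p_{\rm c},q_{\rm c})$ and $(\infty,\infty)$. With $1/p=\vartheta/p_{\rm c}$ one gets $1/\tilde q=\vartheta/q_{\rm c}$, so $\tilde q=p\,q_{\rm c}/p_{\rm c}=\alpha_T p$. Then $\alpha_Tp\ge 1+\alpha_T(p-1)$ because $\alpha_T>1$. Combining the two ranges and using monotonicity of $L^r(\mu)$ norms yields \eqref{eq:one-slab} for all $p>1$, and hence \eqref{eq:one-slab2}.

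There is no real obstacle here; the substantive work is already in \cref{prop:central}. The only points needing care are the elementary exponent algebra (checking that $\tilde q$ stays finite in the first range and dominates $1+\alpha_T(p-1)$ in both ranges), and the legitimacy of applying Riesz--Thorin to a positive real operator on real $L^p$ spaces. The latter I would handle by the positivity reduction to $|f|$ already used in \cref{prop:central}.
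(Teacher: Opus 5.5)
Your proposal is correct and follows essentially the same route as the paper: Riesz--Thorin between the critical estimate \eqref{eq:critest} and the $L^1\to L^1$ (for $1<p\le p_{\rm c}$) or $L^\infty\to L^\infty$ (for $p\ge p_{\rm c}$) Markov contractions, followed by monotonicity of $L^r(\mu)$ norms. Your exponent algebra, written as $(\tilde q-1)/\tilde q=\alpha_T(p-1)/p$ instead of through the paper's interpolation parameter $\beta$, is equivalent to the paper's, and your remark that Riesz--Thorin is legitimate for the positive real operator $\Pt_T$ is a small point of care that the paper leaves implicit.
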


\begin{proof}
The proof is based on the critical estimate of \cref{prop:central} together with the
$L^1\to L^1$ and $L^\infty\to L^\infty$ Markov contraction bounds for
$\Pt_T$.

First let $1<p\le p_{\rm c}$. By the Markov contraction property and \cref{prop:central}, we have
\begin{equation*}
        \norm{\Pt_Tf}_{L^1(\mu)}
        \le
        \norm f_{L^1(\mu)}
        \qquad\text{and}\qquad
        \norm{\Pt_Tf}_{L^{q_{\rm c}}(\mu)}
        \le
        \norm f_{L^{p_{\rm c}}(\mu)}.
\end{equation*}
Hence, the Riesz--Thorin theorem applied to $\Pt_T$ with the endpoints $L^1 \to L^1$ and
$L^{p_{\rm c}} \to L^{q_{\rm c}}$ gives, for $0\le\beta\le1$,
\begin{equation} \label{eq:interpolation}
        \norm{\Pt_Tf}_{L^{q_\beta}(\mu)}
        \le
        \norm f_{L^{p_\beta}(\mu)},
        \qquad
        \frac1{p_\beta}=1-\beta+\frac{\beta}{p_{\rm c}},
        \quad
        \frac1{q_\beta}=1-\beta+\frac{\beta}{q_{\rm c}}.
\end{equation}
Since $p_{\rm c}>1$, the map $\beta\mapsto p_\beta$ is increasing from $1$ to
$p_{\rm c}$. Thus, for a given $p\in(1,p_{\rm c}]$, there is a unique
$\beta = \tfrac{2(p-1)}{p(1-\eta_T)}\in(0,1]$ such that $p=p_\beta$. Moreover, using \eqref{eq:criexponents}, or equivalently
$1/p_{\rm c}=(1+\eta_T)/2$ and $1/q_{\rm c}=(1-\eta_T)/2$, one can directly compute the interpolated exponents as
\begin{equation*}
    \frac1{p_\beta} = 1-\frac{\beta(1-\eta_T)}2,
\qquad \frac1{q_\beta} =  1-\frac{\beta(1+\eta_T)}2.
\end{equation*}
With the choice of \eqref{def:alpha}, we then have
\begin{equation*}
        \alpha_T(p_\beta-1)
        =
        \frac{\beta(1+\eta_T)}{2-\beta(1-\eta_T)} \le \frac{\beta(1+\eta_T)}{2-\beta(1+\eta_T)} =  q_\beta-1.
\end{equation*}
Therefore, at $p=p_\beta$, by the interpolation \eqref{eq:interpolation} and the monotonicity of $\norm{\cdot}_{L^r(\mu)}$,
\begin{equation*}
        \norm{\Pt_Tf}_{L^{1+\alpha_T(p-1)}(\mu)}
        \le
        \norm{\Pt_Tf}_{L^{q_\beta}(\mu)}
        \le
        \norm f_{L^p(\mu)}.
\end{equation*}
This proves \eqref{eq:one-slab} for $1<p\le p_{\rm c}$.

We now consider the case $p\ge p_{\rm c}$; the analysis is similar. Interpolating the critical estimate \eqref{eq:critest} with the
$L^\infty\to L^\infty$ contraction gives, for $0<\beta\le1$,
\begin{equation*}
        \norm{\Pt_Tf}_{L^{q_\beta}(\mu)}
        \le
        \norm f_{L^{p_\beta}(\mu)},
        \qquad
        \frac1{p_\beta}=\frac{\beta}{p_{\rm c}},
        \quad
        \frac1{q_\beta}=\frac{\beta}{q_{\rm c}}.
\end{equation*}
Now $p_\beta$ runs from $p_{\rm c}$ to $\infty$ as $\beta$ runs from $1$ down
to $0$, so we choose $\beta$ with $p=p_\beta$.  Since
$q_{\rm c}/p_{\rm c}=\alpha_T$, we have
$q_\beta=\alpha_T p_\beta$, and hence
\begin{equation*}
        q_\beta\ge 1+\alpha_T(p_\beta-1).
\end{equation*}
Monotonicity of $L^q(\mu)$ norms again gives \eqref{eq:one-slab}.

The final claim \eqref{eq:one-slab2} follows directly from \eqref{eq:one-slab} and the norm monotonicity:
\begin{equation*}
        \norm{\Pt_Tf}_{L^q(\mu)}  \le  \norm{\Pt_Tf}_{L^{1+\alpha_T(p-1)}(\mu)}    \le    \norm f_{L^p(\mu)}, \quad \text{for $1\le q\le 1+\alpha_T(p-1)$}. \qedhere
\end{equation*}
\end{proof}

By iteration, it is easy to extend the hypercontractivity \eqref{eq:one-slab2} for a fixed $T$ to any $t$.

\begin{corollary}[Iterated hypercontractivity]\label{cor:renyi}
Let $p>1$ and, for $n=0,1,2,\ldots$, set
\begin{equation} \label{def:pn}
        p_n=1+\alpha_T^n(p-1).
\end{equation}
Then, for any $n \ge 0$ and $f\in L^p(\mu)$,
\begin{equation*}
        \norm{\Pt_{nT}f}_{L^{p_n}(\mu)}
        \le
        \norm f_{L^p(\mu)}.
\end{equation*}
More generally, if $t\ge0$ and $n=\lfloor t/T\rfloor$, then
\begin{equation} \label{eq:targecontract}
        \norm{\Pt_tf}_{L^q(\mu)}
        \le
        \norm f_{L^p(\mu)}
\end{equation}
for every $1\le q\le p_n$.

In particular, fix $\Gamma>0$ and $\tau\ge\tau_{\rm ST}(\Gamma)$, and impose the kinetic scaling
\begin{equation*}
        \gamma=\Gamma\sqrt\rho,
        \qquad
        T=\tau\rho^{-1/2}.
\end{equation*}
Writing $\alpha_T$ as $\alpha_{\Gamma,\tau}$ and setting $\lambda_{\Gamma,\tau}:=(\log\alpha_{\Gamma,\tau})/\tau$, the largest exponent reached by the iteration at time $t$:
\begin{equation*}
    q_*(t) := 1+(p-1)\,\alpha_{\Gamma,\tau}^{\lfloor\sqrt\rho\,t/\tau\rfloor},
\end{equation*}
satisfies the two-sided bound
\begin{equation} \label{eq:twosidedebound}
  (p-1)\alpha_{\Gamma,\tau}^{-1} \exp\!\left(\lambda_{\Gamma,\tau}\sqrt\rho\,t\right) \le q_*(t)-1 \le  (p-1)\exp\!\left(\lambda_{\Gamma,\tau}\sqrt\rho\,t\right).
\end{equation}
\end{corollary}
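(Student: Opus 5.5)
The plan is to iterate the one-step estimate \eqref{eq:one-slab} of \cref{thm:one-slab} along the semigroup property, and then handle a fractional final window with plain $L^q$ contractivity.

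\emph{Integer multiples of $T$.} I argue by induction on $n$. For $n=0$ the claim is trivial, since $p_0=p$ and $\Pt_0=\mathrm{Id}$. Suppose $\norm{\Pt_{nT}f}_{L^{p_n}(\mu)}\le\norm f_{L^p(\mu)}$. Then $p_n>1$, so I apply \cref{thm:one-slab} to $g=\Pt_{nT}f\in L^{p_n}(\mu)$ with exponent $p_n$. This gives
\begin{equation*}
\norm{\Pt_{(n+1)T}f}_{L^{1+\alpha_T(p_n-1)}(\mu)}\le\norm{\Pt_{nT}f}_{L^{p_n}(\mu)}.
\end{equation*}
The left exponent is $1+\alpha_T^{n+1}(p-1)=p_{n+1}$, which closes the induction.

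\emph{General $t$.} Let $n=\lfloor t/T\rfloor$ and $s=t-nT\in[0,T)$. Since $\Pt_s$ is a Markov contraction on $L^{p_n}(\mu)$,
\begin{equation*}
\norm{\Pt_t f}_{L^{p_n}}=\norm{\Pt_{nT}\Pt_s f}_{L^{p_n}}\le\norm{\Pt_sf}_{L^p}\le\norm f_{L^p}.
\end{equation*}
Here I apply the integer-step bound to $\Pt_s f$, using $\Pt_t=\Pt_{nT}\Pt_s$ by commutativity of the semigroup. For $1\le q\le p_n$, monotonicity of $L^r(\mu)$ norms under the probability measure $\mu$ then gives \eqref{eq:targecontract}.

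\emph{Two-sided bound.} With $T=\tau\rho^{-1/2}$ we have $t/T=\sqrt\rho\,t/\tau$, so $n=\lfloor\sqrt\rho t/\tau\rfloor$ and $q_*(t)-1=(p-1)\alpha^n$. Write $x=\sqrt\rho\,t/\tau$, so that $x-1<n\le x$. Because $\alpha>1$, this gives $\alpha^{x-1}\le\alpha^n\le\alpha^x$. Finally $\alpha^x=\exp(\lambda_{\Gamma,\tau}\sqrt\rho\,t)$ by the definition of $\lambda_{\Gamma,\tau}$, which is exactly \eqref{eq:twosidedebound}.

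There is no genuine obstacle here. The only point to check is that \cref{thm:one-slab} applies at every step: this needs $p_n>1$, which holds, and $\Pt_{nT}f\in L^{p_n}$, which is supplied by the inductive hypothesis. Since $\alpha_T$ depends only on $T$, the constant does not degrade under iteration.
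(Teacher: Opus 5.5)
Your proof is correct and follows the paper's argument: induct using the one-step estimate of \cref{thm:one-slab}, handle the leftover time with a Markov contraction, pass to smaller $q$ by monotonicity of $L^q(\mu)$ norms, and get the two-sided bound from $x-1\le\lfloor x\rfloor\le x$. The only cosmetic difference is that you apply the fractional step $\Pt_s$ first and contract in $L^p(\mu)$, whereas the paper applies it last and contracts in $L^{p_n}(\mu)$; both work because the semigroup commutes.
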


\begin{proof}
The definition~\eqref{def:pn} gives the recursion
\begin{equation*}
        p_{k+1}=1+\alpha_T(p_k-1).
\end{equation*}
Applying \eqref{eq:one-slab} with input exponent $p_k$ to the function
$\Pt_{kT}f$ gives
\begin{equation*}
        \norm{\Pt_{(k+1)T}f}_{L^{p_{k+1}}(\mu)} = \norm{\Pt_T(\Pt_{kT}f)}_{L^{p_{k+1}}(\mu)}
        \le
        \norm{\Pt_{kT}f}_{L^{p_k}(\mu)}.
\end{equation*}
Iterating this inequality for $k=0,\ldots,n-1$ yields
\begin{equation*}
        \norm{\Pt_{nT}f}_{L^{p_n}(\mu)}
        \le
        \norm f_{L^p(\mu)}.
\end{equation*}
For arbitrary $t\ge0$, write $t=nT+r$ with
$n=\lfloor t/T\rfloor$ and $0\le r<T$.  The semigroup property gives
$\Pt_t=\Pt_r\Pt_{nT}$, and the Markov contraction property gives
\begin{equation*}
        \norm{\Pt_tf}_{L^{p_n}(\mu)}
        =
        \norm{\Pt_r(\Pt_{nT}f)}_{L^{p_n}(\mu)}
        \le
        \norm{\Pt_{nT}f}_{L^{p_n}(\mu)}
        \le
        \norm f_{L^p(\mu)}.
\end{equation*}
Finally, since the $L^s(\mu)$-norm is nondecreasing in $s$ on the probability space $(\RR^{2d},\mu)$, the same estimate extends to every output exponent $1\le q\le p_n$, which is the desired~\eqref{eq:targecontract}.

Under the kinetic scaling, the identities $\gamma T=\Gamma\tau$ and $\gamma^2/\rho=\Gamma^2$, together with \eqref{eq:KT} and \eqref{def:alpha}, show that $\alpha_T$ depends only on $\Gamma$ and $\tau$. We denote this value by $\alpha_{\Gamma,\tau}$. Since $t/T=\sqrt\rho\,t/\tau$, the exponent $p_n$ from \eqref{def:pn} with $n=\lfloor t/T\rfloor$ is exactly $q_*(t)$. The two-sided bound \eqref{eq:twosidedebound} on $q_*(t)-1$ then follows from
\begin{equation*}
        x-1\le\lfloor x\rfloor\le x,
        \qquad
        x=\frac{\sqrt\rho\,t}{\tau},
\end{equation*}
together with the definition of $\lambda_{\Gamma,\tau}$. The proof is complete.
\end{proof}

\begin{remark}[Kinetic scaling]\label{rem:kinetic-scaling}
Recall the kinetic time scale $ \gamma = \Gamma\sqrt{\rho}$ and $T = \tau\rho^{-1/2}$ on which \cref{cor:renyi} applies. Under this scaling, the two dimensionless constants entering $K_T$ in~\eqref{eq:KT} are $\gamma T = \Gamma\tau$ and $ \frac{\gamma^2}{\rho} = \Gamma^2$. Hence, the constants
\begin{equation*}
    K_T = K_{\Gamma,\tau} := 4C_{\mathrm{ST}}(\Gamma)(1 + \Gamma^2) + 2\Gamma\tau,
    \qquad
    \eta_T = \eta_{\Gamma,\tau} := \frac{\Gamma\tau}{K_{\Gamma,\tau}},
\end{equation*}
and
\begin{equation*}
    \alpha_T = \alpha_{\Gamma,\tau}
    := \frac{1 + \eta_{\Gamma,\tau}}{1 - \eta_{\Gamma,\tau}} > 1,
    \qquad
    \lambda_{\Gamma,\tau} := \frac{\log\alpha_{\Gamma,\tau}}{\tau},
\end{equation*}
depend only on $\Gamma$ and $\tau$, and in particular are independent of $\rho$.

Thus the LSI constant $\rho$ enters the iterated hypercontractive exponents in \cref{cor:renyi} only through the kinetic clock $\sqrt{\rho}\,t$. The resulting integrability gain therefore occurs on the ballistic time scale $t\sim\rho^{-1/2}$, consistent with the sharp hypocoercive entropy-decay rate of~\cite{Lu2026} and faster than the diffusive scale $\rho^{-1}$ that the LSI~\eqref{eq:lsi} yields in the overdamped case.
\end{remark}

We now translate the above hypocoercive hypercontractivity into statements about
the R\'enyi divergence decay. Recall that, for $r>1$, the R\'enyi divergence of order $r$ of a probability measure $\nu$ with respect to $\mu$ is
\begin{equation*}
        \Ren_r(\nu\mid\mu)
        =
        \frac1{r-1}\log\int\left(\frac{\ud\nu}{\ud\mu}\right)^r\ud\mu
        =
        \frac r{r-1}\log\norm{\ud\nu/\ud\mu}_{L^r(\mu)}
\end{equation*}
if $\nu\ll\mu$, and $+\infty$ otherwise.
The next corollary is essentially a reformulation of our hypercontractivity result in \cref{cor:renyi}, simultaneously yielding the order-improvement (hypercontractive) bound and the long-time decay estimate for $\Ren_r(\nu_t\mid\mu)$.

\begin{corollary}[Hypocoercive R\'enyi hypercontractivity and decay]\label{cor:renyi-decay}
Let $h$ be a probability density, set $\nu_t=(\Pt_t h)\,\mu$, and let $n=\lfloor t/T\rfloor$.
\begin{enumerate}
\item\emph{(R\'enyi hypercontractivity.)} For any $p>1$ and any $q$ with $1<p\le q\le p_n =1+\alpha_T^n(p-1)$,
\begin{equation}\label{eq:renyi}
        \Ren_q(\nu_t\mid\mu)
        \le
        \frac{q(p-1)}{p(q-1)}
        \Ren_p(\nu_0\mid\mu).
\end{equation}
\item\emph{(R\'enyi divergence decay.)} For every $q>1$ and every $t\ge0$,
\begin{equation}\label{eq:renyi-long-time}
        \Ren_q(\nu_t\mid\mu)
        \le
        q\alpha_T
        \e^{-(\log\alpha_T)t/T}
        \Ren_q(\nu_0\mid\mu).
\end{equation}
In particular, if $T=\tau\rho^{-1/2}$ with $\tau\ge\tau_{\rm ST}(\Gamma)$, then
\begin{equation}\label{eq:renyi-kinetic-decay}
\Ren_q(\nu_t\mid\mu) \le q\alpha_{\Gamma,\tau}\, \e^{-\lambda_{\Gamma,\tau}\sqrt\rho\,t} \Ren_q(\nu_0\mid\mu),
\end{equation}
where $\alpha_T = \alpha_{\Gamma,\tau}$ and $ \lambda_{\Gamma,\tau}=\frac{\log\alpha_{\Gamma,\tau}}{\tau}$ are as in \cref{cor:renyi}.
\end{enumerate}
\end{corollary}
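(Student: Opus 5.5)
The plan is to write everything through the density $f:=\ud\nu_0/\ud\mu$, so that $\ud\nu_t/\ud\mu=\Pt_t f$. We then read both parts off \cref{cor:renyi} via the identity $\Ren_r(\nu\mid\mu)=\frac{r}{r-1}\log\norm{\ud\nu/\ud\mu}_{L^r(\mu)}$. For part (1), fix $1<p\le q\le p_n$. Apply \eqref{eq:targecontract} to $f$ to get $\norm{\Pt_tf}_{L^q(\mu)}\le\norm f_{L^p(\mu)}$, and take logarithms:
\begin{equation*}
\frac{q-1}{q}\Ren_q(\nu_t\mid\mu)=\log\norm{\Pt_tf}_{L^q(\mu)}\le\log\norm f_{L^p(\mu)}=\frac{p-1}{p}\Ren_p(\nu_0\mid\mu).
\end{equation*}
Rearranging gives \eqref{eq:renyi}. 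If $\Ren_p(\nu_0\mid\mu)=\infty$ there is nothing to prove, and positivity of the logarithms is automatic because $f$ is a probability density.

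For part (2), fix $q>1$ and $t\ge0$, and let $n=\lfloor t/T\rfloor$. We apply part (1) with output order $q$ and choose the input order as small as possible: $p$ is defined by $p_n=q$, i.e. $p-1=\alpha_T^{-n}(q-1)$, so that $1<p\le q$. Part (1) then gives
\begin{equation*}
\Ren_q(\nu_t\mid\mu)\le\frac{q(p-1)}{p(q-1)}\Ren_p(\nu_0\mid\mu)=\frac{q\,\alpha_T^{-n}}{p}\Ren_p(\nu_0\mid\mu).
\end{equation*}
Two elementary facts finish the estimate. First, Rényi divergences are nondecreasing in the order, which follows from log-convexity of $r\mapsto\int f^r\ud\mu$ (equivalently Jensen); hence $\Ren_p(\nu_0\mid\mu)\le\Ren_q(\nu_0\mid\mu)$. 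Second, $p>1$, so $1/p<1$. Together these give $\Ren_q(\nu_t\mid\mu)\le q\alpha_T^{-n}\Ren_q(\nu_0\mid\mu)$. Since $n\ge t/T-1$, we have $\alpha_T^{-n}\le\alpha_T\,\e^{-(\log\alpha_T)t/T}$, which is \eqref{eq:renyi-long-time}. Finally, \eqref{eq:renyi-kinetic-decay} follows by substituting $T=\tau\rho^{-1/2}$, using $t/T=\sqrt\rho\,t/\tau$ and the definition of $\lambda_{\Gamma,\tau}$ from \cref{cor:renyi}.

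The argument is entirely a reformulation, so no step is a real obstacle. The one point needing care is the monotonicity of Rényi divergence in its order, which we justify directly. Writing $\Lambda(r)=\log\int f^r\ud\mu$, the function $\Lambda$ is convex with $\Lambda(1)=0$. Therefore its difference quotient $\Lambda(r)/(r-1)$, which equals $\Ren_r$, is nondecreasing in $r$. We should also record that \cref{cor:renyi} is applied at an input exponent $p$ that depends on $t$. This is legitimate because \cref{cor:renyi} holds for every $p>1$.
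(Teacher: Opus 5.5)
Your proposal is correct and follows essentially the same route as the paper's proof. Part (1) combines \eqref{eq:targecontract} with $\Ren_r=\frac{r}{r-1}\log\norm{\cdot}_{L^r(\mu)}$. Part (2) takes $p=1+\alpha_T^{-n}(q-1)$, bounds $\Ren_p(\nu_0\mid\mu)\le\Ren_q(\nu_0\mid\mu)$ by monotonicity in the order, and uses $\alpha_T^{-n}\le\alpha_T\e^{-(\log\alpha_T)t/T}$. Your convexity argument for monotonicity and your bound $1/p<1$ on the prefactor $\frac{q\alpha_T^{-n}}{1+(q-1)\alpha_T^{-n}}$ simply make explicit what the paper uses without proof.
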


\begin{proof}
\emph{(i).} By \cref{cor:renyi} and monotonicity of $L^q$ norms on the probability space,
\begin{equation*}
    \norm{\Pt_t h}_{L^q(\mu)}\le\norm{h}_{L^p(\mu)}\,, \quad \text{whenever $q\le p_n$}.
\end{equation*}
Using $\Ren_r(\nu\mid\mu)=\tfrac r{r-1}\log\norm{\ud\nu/\ud\mu}_{L^r(\mu)}$, we obtain \eqref{eq:renyi}:
\begin{equation*}
        \Ren_q(\nu_t\mid\mu)
        \le
        \frac q{q-1}\log\norm h_{L^p(\mu)}
        =
        \frac{q(p-1)}{p(q-1)}
        \Ren_p(\nu_0\mid\mu).
\end{equation*}

\medskip

\noindent
\emph{(ii).} Given $q>1$, choose $p=1+\alpha_T^{-n}(q-1)$, so that $q=1+\alpha_T^n(p-1)\le p_n$. Applying (i) at this $(p,q)$ gives
\begin{equation*}
        \Ren_q(\nu_t\mid\mu)
        \le
        \frac{q\alpha_T^{-n}}{1+(q-1)\alpha_T^{-n}}
        \Ren_p(\nu_0\mid\mu).
\end{equation*}
Since $p\le q$, monotonicity of R\'enyi divergence in the order gives $\Ren_p(\nu_0\mid\mu)\le\Ren_q(\nu_0\mid\mu)$. Moreover, $\alpha_T^{-n}\le\alpha_T\e^{-(\log\alpha_T)t/T}$, proving \eqref{eq:renyi-long-time}. Under the kinetic scaling $T=\tau\rho^{-1/2}$, the identities $\gamma T=\Gamma\tau$ and $\gamma^2/\rho=\Gamma^2$ make $\alpha_T$ a function only of $\Gamma$ and $\tau$, yielding \eqref{eq:renyi-kinetic-decay}.
\end{proof}

\begin{remark}[Rate degradation at extreme friction]\label{rem:rate-asymp}
We trace the asymptotic behavior of the hypocoercive R\'enyi-decay rate $\lambda_{\Gamma,\tau}\sqrt\rho$ at extreme friction, taking $\tau$ at its minimal admissible value $\tau=\tau_{\rm ST}(\Gamma)$.

With $M:=\max\{\Gamma,\Gamma^{-1}\}$, substituting the asymptotics $\tau_{\rm ST}(\Gamma)\asymp M$ and $C_{\rm ST}(\Gamma)\asymp M^2$ from \cref{rem:stlsi-constants} into the constants $K_T=4C_{\rm ST}(\Gamma)(1+\Gamma^2)+2\Gamma\tau$ and $\eta_T=\Gamma\tau/K_T$ of \cref{rem:kinetic-scaling} yields
\begin{equation*}
    K_T \asymp
    \begin{cases}
        M^4 & \text{as }\Gamma\to\infty\ (M=\Gamma),\quad\text{with }1+\Gamma^2\asymp M^2,\ \Gamma\tau\asymp M^2,\\
        M^2 & \text{as }\Gamma\to 0\ (M=\Gamma^{-1}),\quad\text{with }1+\Gamma^2\asymp 1,\ \Gamma\tau\asymp 1.
    \end{cases}
\end{equation*}
While $K_T$ scales asymmetrically in the two extremes, in both cases
\begin{equation*}
    \eta_T=\frac{\Gamma\tau}{K_T}\asymp M^{-2},
    \qquad
    \log\alpha_T=\log\frac{1+\eta_T}{1-\eta_T}=2\eta_T+O(\eta_T^3)\asymp M^{-2},
    \qquad
    \lambda_{\Gamma,\tau_{\rm ST}(\Gamma)}
    =\frac{\log\alpha_T}{\tau_{\rm ST}(\Gamma)}
    \asymp M^{-3}.
\end{equation*}
The hypocoercive R\'enyi-decay rate $\lambda_{\Gamma,\tau}\sqrt\rho$ therefore degrades cubically in $\max\{\Gamma,\Gamma^{-1}\}$ at extreme friction, identifying
$\gamma\sim \sqrt{\rho}$ as the optimal friction scaling again (see also \cref{rem:stlsi-constants}).

\end{remark}

\appendix
\crefalias{section}{appendix}

\section{Proof of the weak Wasserstein acceleration inequality}
\label{app:wasserstein-acceleration}

\begin{proof}[Proof of \cref{lem:wasserstein-acceleration}]
Let
\begin{equation*}
    D(t):=\frac{1}{2} W_2^2( q_t\mux,\mux),
    \qquad
    \mathcal C(t):=\int\xi_{q_t}\cdot m_t\ud q_t\mux=\int j_t\cdot\xi_{q_t}\ud\mux.
\end{equation*}
Since $m$ is smooth and bounded by \eqref{eq:A2-bounds}, the flow $\dot X_s=m_{t+s}(X_s)$, $X_0=\operatorname{id}$, is globally well posed and transports $ q_t\mux$ to $q_{t+s} \mux$, by the method of characteristics for \eqref{eq:continue}. Hence $t\mapsto q_t\mux$ is $W_2$-absolutely continuous with metric derivative bounded by $\norm{m_t}_{L^2( q_t\mux)}$, and the first-variation formula for the squared distance $W_2^2$ \cite{ambrosio2005gradient}*{Theorem 8.4.7} gives $$D'(t)=\mathcal C(t)$$ for \textup{a.e.}\ $t$, with $D$ locally Lipschitz. Thus, it suffices to prove, in $\mathcal D'((0,T))$, that
\begin{equation} \label{targeteq}
    \frac{\ud}{\ud t}\mathcal C(t)\le\int\abs{m_t}^2\ud q_t\mux+\int\xi_{q_t}\cdot a_t\ud q_t\mux =: Q(t)
\end{equation}
\emph{Step 1: Flow competitor and second-order expansion.}
Fix $t$ and take $s$ sufficiently small. Noting that $(X_s)_\# (q_t\mux)= q_{t+s} \mux$ and $(T_{q_t})_\# (q_t\mux)=\mux$, the measure
$(X_s,T_{q_t})_\# (q_t\mux)$ couples $ q_{t+s} \mux$ and $\mux$, implying
\begin{equation}\label{eq:wa-coupling}
    D(t+s)\le\Phi_t(s):=\frac12\int\abs{X_s-T_{q_t}}^2\ud q_t\mux,
    \qquad
    \Phi_t'(s)=\int(X_s-T_{q_t})\cdot m_{t+s}(X_s)\ud q_t\mux.
\end{equation}
Since $s\mapsto m_{t+s}(X_s)$ is $C^1$ with derivative $a_{t+s}(X_s)$, and \eqref{eq:A2-bounds} bounds $\|a_r\|_{L^2(q_r\mu_x)}$ and $\|m_r\|_{L^\infty}$ locally uniformly for $r$ in compact time intervals, $\Phi_t'$ is absolutely continuous. Using $(X_s)_\# (q_t\mux)= q_{t+s} \mux$ and $X_s-T_{q_t} = X_s- T_{q_{t+s}}(X_s) +  T_{q_{t+s}}(X_s) - T_{q_t}$, we have, for \textup{a.e.}\ $s$,
\begin{equation}\label{eq:wa-error}
    \Phi_t''(s)=Q(t+s)+e_t(s),
    \quad
    e_t(s):=\int\Delta_{t,s}\cdot a_{t+s}(X_s)\ud q_t\mux,
\end{equation}
with $\Delta_{t,s}:=T_{q_{t+s}}(X_s) - T_{q_t}$ and
\begin{equation} \label{est:es}
    \abs{e_t(s)}\le\norm{\Delta_{t,s}}_{L^2( q_t\mux)}\norm{a_{t+s}}_{L^2( q_{t+s} \mux)}.
\end{equation}
Here $\norm{\Delta_{t,s}}_{L^2( q_t\mux)}\le2(\int\abs y^2\mux(\ud y))^{1/2} < + \infty$ for all $(t,s)$, since both $T_{q_{t+s}}(X_s)$ and $T_{q_t}$ push $q_t \mux$ onto $\mux$, whose second moment is finite.

\medskip
\noindent \emph{Step 2: Stability of the transported Brenier maps.}
We claim $\norm{\Delta_{t,s}}_{L^2( q_t\mux)}\to0$ as $s\to0$. Set $Y_s:=T_{q_{t+s}}\circ X_s$ and $\pi_s:=(\operatorname{id},Y_s)_\# q_t\mux$. Then $\pi_s$ couples $( q_t\mux,\mux)$, and by the triangle inequality in $L^2( q_t\mux)$,
\begin{align*}
    W_2( q_t\mux,\mux) \le  \operatorname{cost}(\pi_s)^{1/2} & := \Big( \int |x - y|^2 \ud \pi_s(x,y) \Big)^{1/2} = \norm{x-  T_{q_{t+s}}(X_s)}_{L^2( q_t \mux)} \\
    & \le\norm{x-X_s}_{L^2( q_t\mux)}+\sqrt{2D(t+s)}\,.
\end{align*}
Hence, the $\pi_s$ are asymptotically optimal couplings of the pair $( q_t\mux,\mux)$:
\begin{equation} \label{eq:conver_couple}
    \lim_{s\to0}\operatorname{cost}(\pi_s) =  W_2^2( q_t\mux,\mux).
\end{equation}
Since the marginals of $\pi_s$ are fixed and equal to $q_t\mux,\mux$ for every $s$, the family $\pi_s$ is tight, uniformly in $s$, and hence narrowly relatively compact, by Prokhorov's theorem.
Let $\pi$ be any narrow limit point along a
sequence $s_k\to0$. By continuity of the marginal maps under narrow convergence, $\pi$ is itself a coupling of $(q_t\mux,\mux)$. Moreover, by \eqref{eq:conver_couple}, we have
\begin{equation*}
\int|x-y|^2\ud\pi \le \liminf_{k\to\infty}\operatorname{cost}(\pi_{s_k}) = W_2^2(q_t\mux,\mux),
\end{equation*}
and hence $\pi$ is the optimal coupling $\pi = (\operatorname{id},T_{q_t})_\#\,q_t\mux$. It follows that the whole tight family $\{\pi_s\}$ converges: as $s \to 0$, $\pi_s \to (\operatorname{id},T_{q_t})_\#\,q_t\mux$ narrowly, and $\operatorname{cost}(\pi_s) \to W_2^2(q_t\mux,\mux)$. To upgrade this to convergence of the maps, note that $Y_s$ pushes $q_t\mux$ forward to $\mux$ and
\begin{equation*}
\|Y_s\|_{L^2(q_t\mux)}^2 = \|T_{q_t}\|^2_{L^2(q_t\mux)} = \int|y|^2\,\mux(\ud y)  < \infty.
\end{equation*}
This uniform second-moment bound, along with the narrow convergence of $\pi_s$, implies $Y_s \rightharpoonup T_{q_t}$ weakly in $L^2(q_t\mux;\mathbb R^d)$, which further gives the strong $L^2$ convergence:
\begin{equation*}
\|Y_s - T_{q_t}\|^2_{L^2(q_t\mux)} = \|Y_s\|^2_{L^2(q_t\mux)} - 2\langle Y_s,T_{q_t}\rangle_{L^2(q_t\mux)} + \|T_{q_t}\|^2_{L^2(q_t\mux)} \to 0,
\end{equation*}
proving the claim.

\medskip
\noindent
\emph{Step 3: Symmetric difference quotient.}
We now integrate \eqref{eq:wa-error} twice in $s$ and combine the forward and backward expansions to cancel the indefinite first-order term $\mathcal C(t)$. Specifically, for $|s|$ sufficiently small,
\begin{equation*}
    \Phi_t(s)=D(t)+s\mathcal C(t)+\int_0^s\int_0^\sigma\bigl[Q(t+\tau)+e_t(\tau)\bigr]\ud\tau\ud\sigma,
\end{equation*}
where we used $\Phi_t(0)=D(t)$, $\Phi_t'(0)=\mathcal C(t)$, and the definition of $Q$ in \eqref{targeteq}. Taking $s=h>0$ sufficiently small and using $D(t+h)\le\Phi_t(h)$ from \eqref{eq:wa-coupling} gives the forward estimate
\begin{equation}\label{eq:forward-bound}
    D(t+h) \le D(t)+h\mathcal C(t)+\int_0^h(h-\tau)\bigl[Q(t+\tau)+e_t(\tau)\bigr]\ud\tau.
\end{equation}
Similarly, taking $s = -h$ with $D(t-h)\le\Phi_t(-h)$ yields the backward estimate
\begin{equation}\label{eq:backward-bound}
    D(t-h) \le D(t)-h\mathcal C(t)+\int_0^h(h-\tau)\bigl[Q(t-\tau)+e_t(-\tau)\bigr]\ud \tau.
\end{equation}
Adding \eqref{eq:forward-bound} and \eqref{eq:backward-bound} gives
\begin{equation*}
    D(t+h)+D(t-h)-2D(t)\le\int_0^h(h-s)\bigl[Q(t+s)+Q(t-s)\bigr]\ud s+\veps_t(h),
\end{equation*}
where $\veps_t(h):=\int_0^h(h-s)\bigl[\abs{e_t(s)}+\abs{e_t(-s)}\bigr]\ud s$ denotes the error term.
Test against $0\le\omega\in C^\infty_c((0,T))$ and choose $h_0>0$ small enough that the closed $h_0$-neighborhood of $\operatorname{supp}\omega$ is contained in $(0,T)$. Dividing by $h^2$ and letting $h\downarrow0$, the left side tends to $\int D\,\omega''$, while
\begin{equation*}
    \frac{1}{h^2} \int_0^T \int_0^h(h-s)\bigl[Q(t+s)+Q(t-s)\bigr]\ud s \, \omega(t) \ud t\to  \int_0^T Q\,\omega \ud t,
\end{equation*}
and, by \eqref{est:es} and $\norm{\Delta_{t,s}}_{L^2( q_t\mux)}\to0$ as $s\to0$,
\begin{equation*}
   \frac1{h^2}\int\omega\,\veps_t(h)\ud t \le \sup_{\operatorname{dist}(t,\operatorname{supp}\omega)\le h_0}\norm{a_t}_{L^2(q_t\mux)}\int_0^1(1-\sigma)\int\omega(t)\Bigl[\norm{\Delta_{t,h\sigma}}_{L^2(q_t\mux)}+\norm{\Delta_{t,-h\sigma}}_{L^2(q_t\mux)}\Bigr]\ud t\ud\sigma \to 0.
\end{equation*}
Therefore, $D''\le Q$ in $\mathcal D'((0,T))$, and then the desired inequality \eqref{targeteq} is by $D'=\mathcal C$.
\end{proof}

\section{Finite-action recovery sequences}\label{app:recovery}
This appendix is devoted to the proof of the recovery theorem \cref{gm:thm:recovery}.

\subsection{Setting and elementary lemmas}\label{gm:sec:setting}

We use the setting and notation of \cref{sec:setting,sec:stlsi}. We first pass to Lebesgue densities. In these variables, Euclidean Gaussian convolution preserves mass and positivity, while the action and the velocity Fisher information have convex formulations controlled by the convolution estimate \eqref{gm:eq:kernel-jensen} below.

Let $\varrho_\infty(x,v) = Z_\infty^{-1}\e^{-U(x)-\abs v^2/2}$ denote the Lebesgue density of $\mu$, and define the corresponding Lebesgue density $\varrho_t$ and velocity current $F_t$ by
\begin{equation}\label{gm:eq:dictionary}
    \varrho_t = g_t\varrho_\infty,\qquad F_t = W_t\varrho_\infty .
\end{equation}
With $\varrho_t$ and $F_t$, one can rewrite the divergence equation \eqref{gm:eq:graph} as
\begin{equation}\label{gm:eq:lebesgue}
\partial_t\varrho + \divg_x(v\varrho) + \divg_v\bigl(-\nabla U\,\varrho+F\bigr)=0 \qquad\text{in }\mathcal D'((0,T)\times\RR^{2d}),
\end{equation}
and moreover
\begin{equation}\label{gm:eq:functionals-lebesgue}
    \int\frac{\abs{W_t}^2}{g_t}\ud\mu=\int\frac{\abs{F_t}^2}{\varrho_t}\dz,
    \qquad
     \Iv(g_t) = \int\frac{\abs{\nabla_vg_t}^2}{g_t}\ud\mu
    =\int\frac{\abs{\nabla_v\varrho_t+v\varrho_t}^2}{\varrho_t}\dz .
\end{equation}
Throughout, we denote by
\begin{equation*}
    \eta_a(z)= \frac{1}{(2\pi a^2)^{d/2}} \e^{-\abs z^2/(2a^2)}, \qquad z\in\RR^d\,,\quad a>0,
\end{equation*}
the centered Gaussian kernel on $\RR^d$. Convolution
in the $x$- and $v$-variables is denoted by $*_x$ and $*_v$, respectively. We recall the identity
\begin{equation}\label{gm:eq:gaussian-identity}
    \nabla\eta_a(z)=-\frac{z}{a^2}\eta_a(z),
    \qquad\text{equivalently,}\qquad  z\,\eta_a(z)=-a^2\nabla\eta_a(z).
\end{equation}

For ease of exposition, we collect some elementary lemmas for later use.

\begin{lemma}[Convolution decreases the quadratic action]\label{gm:lem:kernel-cs}
Let $\eta\ge0$ be an integrable function on $\RR^\ell$ with $\int \eta = 1$. Let $f\in L^1(\RR^\ell)$ be nonnegative, and $F:\RR^\ell\to\RR^m$ be measurable, with $F=0$ \textup{a.e.}\ on $\{f=0\}$ and $\int_{\RR^\ell}\abs F^2/f\ud x<\infty$. Then, it holds that
\begin{equation}\label{gm:eq:kernel-cs}
    \frac{\abs{\eta*F}^2}{\eta*f} \le \eta*\left(\frac{\abs F^2}{f}\right) \qquad\textup{a.e.},
\end{equation}
and hence
\begin{equation}\label{gm:eq:kernel-jensen}
    \int_{\RR^\ell}\frac{\abs{\eta*F}^2}{\eta*f}\ud x \le \int_{\RR^\ell}\frac{\abs F^2}{f}\ud x.
\end{equation}
\end{lemma}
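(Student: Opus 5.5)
The plan is to prove the pointwise inequality \eqref{gm:eq:kernel-cs} from a Cauchy--Schwarz argument under the kernel integral, and then obtain \eqref{gm:eq:kernel-jensen} by integrating it.

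\emph{Pointwise inequality.} Fix a point $x$ where both $\eta*f(x)$ and $\eta*(\abs F^2/f)(x)$ are finite; this holds for a.e. $x$ because $f$ and $\abs F^2/f$ are integrable and $\eta\in L^1$. Adopt the convention $\abs F^2/f=0$ on $\{f=0\}$. This is consistent because $F=0$ a.e. there, so one may write $F=\sqrt f\cdot(F/\sqrt f)$ almost everywhere. Cauchy--Schwarz with respect to the nonnegative measure $\eta(x-y)\ud y$ then gives
\begin{equation*}
\abs{\eta*F(x)}^2=\Abs{\int \eta(x-y)\sqrt{f(y)}\,\frac{F(y)}{\sqrt{f(y)}}\ud y}^2\le \Bigl(\int\eta(x-y)f(y)\ud y\Bigr)\Bigl(\int \eta(x-y)\frac{\abs{F(y)}^2}{f(y)}\ud y\Bigr).
\end{equation*}
For vector-valued $F$, apply this after pairing with a unit vector $e$, and take the supremum over $e$. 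Wherever $\eta*f(x)>0$, divide by $\eta*f(x)$ to get \eqref{gm:eq:kernel-cs}. Wherever $\eta*f(x)=0$, the same inequality forces $\eta*F(x)=0$, and the paper's convention ($0/0=0$) makes the left side zero.

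\emph{Integrated inequality.} Integrate \eqref{gm:eq:kernel-cs} over $\RR^\ell$ and apply Tonelli together with $\int\eta=1$. This gives $\int \eta*(\abs F^2/f)=\int\abs F^2/f$, which is \eqref{gm:eq:kernel-jensen}.

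The only delicate point is the measure-theoretic bookkeeping on the zero set of $f$ and of $\eta*f$, and this is handled by the conventions above. An alternative route is to use the joint convexity and $1$-homogeneity of $(a,b)\mapsto\abs b^2/a$ together with Jensen's inequality, but the Cauchy--Schwarz argument is more direct.
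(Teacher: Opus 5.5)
Your proposal is correct and matches the paper's proof, which applies Cauchy--Schwarz with respect to the measure $\eta(x-y)f(y)\ud y$ (equivalent to your splitting $F=\sqrt f\cdot(F/\sqrt f)$ under $\eta(x-y)\ud y$) and then integrates using $\int\eta=1$. Your handling of the zero sets of $f$ and $\eta*f$, and of the a.e. finiteness of the convolutions, spells out details the paper leaves implicit.
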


\begin{proof}
It suffices to note from the Cauchy--Schwarz inequality with respect to the measure $\eta(x-y)f(y)\ud y$ that for any $x \in \RR^\ell$,
\begin{equation*}
    \abs{(\eta*F)(x)}^2 \le (\eta*f)(x)\, \left(\eta*\frac{\abs F^2}{f}\right)(x). \qedhere
\end{equation*}
\end{proof}

\begin{lemma}[Derivative bound of convolution]\label{gm:lem:gaussian-derivative}
For $f\in L^1(\RR^d)$, $f\ge0$, and $a>0$,
\begin{equation}\label{gm:eq:gaussian-derivative}
    \abs{\nabla(\eta_a*f)(x)}^2
    \le
    \frac{1}{a^4}\,(\eta_a*f)(x)\int\abs{x-y}^2\eta_a(x-y)f(y)\ud y,
\end{equation}
and consequently
\begin{equation}\label{gm:eq:gaussian-derivative-int}
    \int\frac{\abs{\nabla(\eta_a*f)}^2}{\eta_a*f}\ud x
    \le  \frac{d}{a^{2}}\int f \ud x.
\end{equation}
\end{lemma}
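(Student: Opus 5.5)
The plan is to differentiate under the integral and apply the Cauchy--Schwarz inequality with respect to the positive measure $\eta_a(x-y)f(y)\ud y$, as in the proof of \cref{gm:lem:kernel-cs}. By the Gaussian identity \eqref{gm:eq:gaussian-identity},
\begin{equation*}
    \nabla(\eta_a*f)(x)=\int\nabla\eta_a(x-y)f(y)\ud y=-\frac{1}{a^2}\int (x-y)\,\eta_a(x-y)f(y)\ud y .
\end{equation*}
Differentiation under the integral sign is legitimate for $f\in L^1$, because $\abs{z}\eta_a(z)$ is bounded and the $x$-derivatives of $\eta_a(x-y)$ are dominated locally uniformly in $x$. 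Writing the integrand as $(x-y)\sqrt{\eta_a(x-y) f(y)}\cdot\sqrt{\eta_a(x-y) f(y)}$ and applying Cauchy--Schwarz gives
\begin{equation*}
    \abs{\nabla(\eta_a*f)(x)}^2\le\frac{1}{a^4}\Big(\int\eta_a(x-y)f(y)\ud y\Big)\Big(\int\abs{x-y}^2\eta_a(x-y)f(y)\ud y\Big),
\end{equation*}
and this is exactly \eqref{gm:eq:gaussian-derivative}.

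For the integrated bound \eqref{gm:eq:gaussian-derivative-int}, I would divide \eqref{gm:eq:gaussian-derivative} by $(\eta_a*f)(x)$. This is allowed wherever the convolution is positive, which is everywhere unless $f=0$ a.e., and that case is trivial. Integrating in $x$ and using Tonelli then gives
\begin{equation*}
    \int\frac{\abs{\nabla(\eta_a*f)}^2}{\eta_a*f}\ud x\le\frac{1}{a^4}\int f(y)\Big(\int\abs{z}^2\eta_a(z)\ud z\Big)\ud y .
\end{equation*}
The inner integral is the second moment of the centered Gaussian with covariance $a^2 I_d$, which equals $d a^2$. Substituting this gives the factor $d/a^2$ and closes the estimate.

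The only genuinely delicate point is justifying the derivative formula, that is, the dominated convergence under the integral, and this is routine. I do not expect a real obstacle.
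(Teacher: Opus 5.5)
Your proposal is correct and follows the paper's proof: the Gaussian identity for $\nabla(\eta_a*f)$, Cauchy--Schwarz with respect to $\eta_a(x-y)f(y)\ud y$, then Tonelli with $\int\abs z^2\eta_a(z)\ud z=da^2$. Your added remarks on differentiating under the integral and on the positivity of $\eta_a*f$ are valid and fill in details the paper leaves implicit.
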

\begin{proof}
By \eqref{gm:eq:gaussian-identity}, we compute
\begin{equation*}
    \nabla(\eta_a*f)(x)=-a^{-2}\int(x-y)\eta_a(x-y)f(y)\ud y.
\end{equation*}
Similarly to the proof of \cref{gm:lem:kernel-cs}, the Cauchy--Schwarz inequality with respect to the measure $\eta_a(x-y)f(y)\ud y$ gives \eqref{gm:eq:gaussian-derivative}. The inequality \eqref{gm:eq:gaussian-derivative-int} follows from \eqref{gm:eq:gaussian-derivative} and $\int\abs z^2\eta_a(z)\ud z=da^2$.
\end{proof}

\begin{lemma}[Growth of potential]\label{gm:lem:tame-growth}
Under \cref{gm:ass:standing}, for all $x,z\in\RR^d$,
\begin{equation}\label{gm:eq:gronwall-gradU}
    1+\abs{\nabla U(x+z)}\le\bigl(1+\abs{\nabla U(x)}\bigr)\e^{C_U\abs z}, \qquad  \abs{\nabla U(x+z)-\nabla U(x)}\le\bigl(1+\abs{\nabla U(x)}\bigr)\bigl(\e^{C_U\abs z}-1\bigr),
\end{equation}
and
\begin{equation}\label{gm:eq:U-growth}
    U(x+z)\le U(x)+\bigl(1+\abs{\nabla U(x)}\bigr)\,\abs z\,\e^{C_U\abs z}.
\end{equation}
\end{lemma}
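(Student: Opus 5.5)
The plan is a Grönwall argument along the straight segment from $x$ to $x+z$, driven by the Hessian bound \eqref{gm:eq:tame}. Fix $x,z$ and set $\phi(s):=1+\abs{\nabla U(x+sz)}$ for $s\in[0,1]$. Since $\nabla U$ is smooth, $s\mapsto\abs{\nabla U(x+sz)}$ is Lipschitz. Moreover,
\begin{equation*}
\Abs{\frac{\ud}{\ud s}\nabla U(x+sz)}=\abs{\nabla^2U(x+sz)z}\le C_U\,\phi(s)\,\abs z ,
\end{equation*}
so $\phi'(s)\le C_U\abs z\,\phi(s)$ for a.e.\ $s$. Grönwall then gives $\phi(s)\le\phi(0)\e^{C_U\abs z s}$. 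Taking $s=1$ yields the first inequality in \eqref{gm:eq:gronwall-gradU}.

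For the second inequality, integrate the Hessian along the segment:
\begin{equation*}
\abs{\nabla U(x+z)-\nabla U(x)}\le\int_0^1\abs{\nabla^2U(x+sz)}\abs z\ud s\le C_U\abs z\,(1+\abs{\nabla U(x)})\int_0^1\e^{C_U\abs z s}\ud s .
\end{equation*}
The last integral equals $(\e^{C_U\abs z}-1)/(C_U\abs z)$, which gives exactly $(1+\abs{\nabla U(x)})(\e^{C_U\abs z}-1)$. The case $z=0$ is trivial.

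For \eqref{gm:eq:U-growth}, write
\begin{equation*}
U(x+z)-U(x)=\int_0^1\nabla U(x+sz)\cdot z\ud s\le\abs z\int_0^1\phi(s)\ud s .
\end{equation*}
Bounding $\phi(s)\le(1+\abs{\nabla U(x)})\e^{C_U\abs z}$ for $s\in[0,1]$ gives \eqref{gm:eq:U-growth}. One could sharpen the factor to $(\e^{C_U\abs z}-1)/C_U$, but the stated cruder bound is all that is needed.

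There is no real obstacle. The only point needing care is the differentiability of $\abs{\nabla U}$ where $\nabla U=0$. This can be bypassed by applying Grönwall to $\psi(s):=\sqrt{1+\abs{\nabla U(x+sz)}^2}$, which is smooth, or by working directly with the integral inequality
\begin{equation*}
\phi(s)\le\phi(0)+C_U\abs z\int_0^s\phi ,
\end{equation*}
which follows from the fundamental theorem of calculus applied to $\nabla U$ itself together with the triangle inequality. I would use the integral form, which avoids differentiability issues entirely.
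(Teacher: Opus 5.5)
Your proposal is correct and follows the paper's proof essentially verbatim: Grönwall for $\phi(s)=1+\abs{\nabla U(x+sz)}$ along the segment, then integrating the Hessian bound for the gradient difference and the gradient bound for $U(x+z)-U(x)$. Your suggestion to use the integral form of Grönwall, to avoid the differentiability of $\abs{\nabla U}$, is a harmless variant of the paper's ``locally Lipschitz, a.e.\ derivative'' argument.
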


\begin{proof}
Let $\phi(s)=1+\abs{\nabla U(x+sz)}$, $s\in[0,1]$. Then $\phi$ is locally Lipschitz with
\begin{equation*}
    \abs{\phi'}\le\abs z\,\abs{\nabla^2U(x+sz)}\le C_U\abs z\,\phi
\end{equation*}
\textup{a.e.}\ by \eqref{gm:eq:tame}. Gronwall's inequality therefore gives, for every $s\in[0,1]$,
\begin{equation}\label{gm:eq:phi-gronwall}
    \phi(s) \le \phi(0)\e^{C_U\abs z s} =\bigl(1+\abs{\nabla U(x)}\bigr)\e^{C_U\abs z s}.
\end{equation}
Taking $s=1$ proves the first bound in \eqref{gm:eq:gronwall-gradU}. The second one follows from
\eqref{gm:eq:tame} and \eqref{gm:eq:phi-gronwall}:
\begin{align*}
    \abs{\nabla U(x+z)-\nabla U(x)}
    \le C_U\abs z\,\phi(0)\int_0^1\e^{C_U\abs z s}\ud s =\bigl(1+\abs{\nabla U(x)}\bigr)
      \bigl(\e^{C_U\abs z}-1\bigr).
\end{align*}
Finally, using \eqref{gm:eq:phi-gronwall} once more, we have \eqref{gm:eq:U-growth}:
\begin{align*}
    U(x+z)-U(x) &=\int_0^1 z\cdot\nabla U(x+sz)\ud s \\
    & \le \abs z\int_0^1\phi(s)\ud s \le \abs z\,\bigl(1+\abs{\nabla U(x)}\bigr)\e^{C_U\abs z}.  \qedhere
\end{align*}
\end{proof}

\begin{lemma}\label{gm:lem:exp-integrability}
Under \cref{gm:ass:lsi,gm:ass:standing}, the following statements hold:
\begin{enumerate}[label=\textup{(\roman*)}]
\item there exist constants $\alpha>0$ and $\beta\ge0$ such that
\begin{equation*}
    U(x)\ge\alpha\abs{x}-\beta,
\end{equation*}
and consequently $\int\e^{-\veps U}\ud x<\infty$ for any $\veps>0$;
\item for every $0\le\lambda<1$,
\begin{equation*}
    \int\e^{\lambda U}\ud\mux<\infty;
\end{equation*}
\item the gradient moment satisfies
\begin{equation}\label{gm:eq:static-gradient}
    \int\abs{\nabla U}^2\ud\mux\le(dC_U+2)^2<\infty;
\end{equation}
\item for any $0<s<\rho/2$,
\begin{equation*}
    \int\e^{s\abs{x}^2}\ud\mux<\infty.
\end{equation*}
\end{enumerate}
\end{lemma}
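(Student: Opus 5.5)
Parts (i)--(iv) are largely independent and standard, so I will prove them in turn. Part (i) needs only convexity and $Z_x<\infty$. Part (ii) is a direct consequence of (i). Part (iii) is an integration by parts combined with the tame bound \eqref{gm:eq:tame}. Part (iv) is the Herbst consequence of the LSI \eqref{eq:lsi}. I expect the main obstacle to be part (i): it is the only place where a genuinely geometric argument is needed to rule out convex potentials with flat directions.

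\emph{Step (i).} Since $\inf U=0$, I fix $x_0$ with $U(x_0)\le 1/2$ and consider the closed convex sublevel set $K=\{U\le 1\}$. I first show $K$ is bounded. If it were not, the closed convex set $K$ would contain a ray $\{x_0+te:t\ge0\}$. By continuity, $U\le C$ on a small ball $B(x_0,r)$. Convexity then gives $U\le\max\{C,1\}$ on the convex hull of this ball and the ray, which contains a half-infinite cylinder of radius comparable to $r$ and hence has infinite Lebesgue measure. This would force $\int\e^{-U}\ud x=\infty$, contradicting $Z_x<\infty$. So $K\subset B(x_0,R)$ for some $R$.

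For $\abs{x-x_0}>R$, let $y$ be the point on the segment $[x_0,x]$ with $\abs{y-x_0}=R$; then $U(y)\ge 1$ since $y\notin\{U<1\}$ (or by a small perturbation of $R$). Convexity along the segment gives
\[
U(x)\ge U(x_0)+\tfrac{\abs{x-x_0}}{R}\bigl(U(y)-U(x_0)\bigr)\ge \tfrac{\abs{x-x_0}}{2R}.
\]
This yields $U(x)\ge\alpha\abs x-\beta$ with $\alpha=1/(2R)$ and a suitable $\beta\ge0$, after enlarging $\beta$ to cover the ball $B(x_0,R)$. The bound $\int\e^{-\veps U}\ud x\le\e^{\veps\beta}\int\e^{-\veps\alpha\abs x}\ud x<\infty$ follows immediately.

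\emph{Step (ii).} For $0\le\lambda<1$, we have $\int\e^{\lambda U}\ud\mux=Z_x^{-1}\int\e^{-(1-\lambda)U}\ud x$, which is finite by (i) with $\veps=1-\lambda$.

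\emph{Step (iii).} I take a cutoff $\chi_R\in C_c^\infty$ with $0\le\chi_R\le1$, $\chi_R\uparrow1$ and $\abs{\nabla\chi_R}\le C/R$. Since $\nabla_x^*\nabla U=-\Delta U+\abs{\nabla U}^2$, integrating by parts against $\mux$ gives
\[
\int\chi_R^2\abs{\nabla U}^2\ud\mux=\int\chi_R^2\Delta U\ud\mux+2\int\chi_R\nabla\chi_R\cdot\nabla U\ud\mux.
\]
All terms here are finite because of the compact support. I bound $\Delta U\le d\abs{\nabla^2U}\le dC_U(1+\abs{\nabla U})$. The cross term is controlled by Young's inequality as $\tfrac12\int\chi_R^2\abs{\nabla U}^2\ud\mux+2\int\abs{\nabla\chi_R}^2\ud\mux$. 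Writing $I_R=\int\chi_R^2\abs{\nabla U}^2\ud\mux$ and applying Cauchy--Schwarz, with $\int\chi_R^2\ud\mux\le1$, gives
\[
I_R\le 2dC_U\bigl(1+\sqrt{I_R}\bigr)+O(R^{-2}).
\]
This quadratic inequality in $\sqrt{I_R}$ yields a bound uniform in $R$. Monotone convergence then gives $I:=\int\abs{\nabla U}^2\ud\mux<\infty$.

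To obtain the stated constant, I redo the identity directly at $R=\infty$. This is now justified because $I<\infty$: the boundary term $2\int\chi_R\nabla\chi_R\cdot\nabla U\ud\mux$ is $O(\sqrt I/R)$ and therefore vanishes as $R\to\infty$. The identity becomes $I=\int\Delta U\ud\mux\le dC_U(1+\sqrt I)$. With $a:=dC_U$, solving gives $\sqrt I\le\bigl(a+\sqrt{a^2+4a}\bigr)/2\le a+1$, so $I\le(dC_U+2)^2$.

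\emph{Step (iv).} The function $x\mapsto\abs x$ is $1$-Lipschitz. The Herbst argument applied to \eqref{eq:lsi} gives $\int\e^{\lambda(\abs x-\int\abs y\ud\mux)}\ud\mux\le\e^{\lambda^2/(2\rho)}$ for all $\lambda$, hence the Gaussian tail bound $\mux(\abs x\ge m+r)\le\e^{-\rho r^2/2}$ with $m=\int\abs x\ud\mux$. Integrating this tail in the layer-cake formula shows $\int\e^{s\abs x^2}\ud\mux<\infty$ for every $0<s<\rho/2$. Equivalently, one can invoke the standard fact that an LSI with constant $\rho$ implies this square-exponential integrability.
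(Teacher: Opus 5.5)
Your proposal is correct and follows the paper's proof essentially step for step. Both use a bounded sublevel set plus a convex slope argument for (i), the same reduction for (ii), cutoff integration by parts with the tame Hessian bound for (iii), and the Herbst argument for (iv). Your deviations are cosmetic. In (i) you use an explicit half-cylinder argument where the paper combines Markov's inequality with the fact that an unbounded convex set with nonempty interior has infinite measure. In (iii) you use Young's inequality followed by a second pass at $R=\infty$, which even yields the sharper constant $(dC_U+1)^2$. In (iv), finiteness of $m=\int\abs{x}\ud\mux$ follows from (i), as the paper notes, or from the truncated Herbst argument.
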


\begin{proof}
For (i), for $K:=\{U\le1\}$, Markov's inequality gives
\begin{equation*}
    \abs{K}\le \e\int\e^{-U}\ud x<\infty.
\end{equation*}
Moreover, $K$ is convex and has nonempty interior due to $\inf U = 0 < 1$. Noting that an unbounded convex set with nonempty interior has infinite Lebesgue measure, $K$ is bounded and therefore compact, which  yields a point $x_\star\in K$ such that $U(x_\star)=0$.

Choose $R>0$ such that $K\subset \{|x| \le R\}$.  If $x\notin K$, a segment joining $x_\star$ to $x$ contains a point $y\in K$ such that $U(y)=1$. The convexity of $t\mapsto U(x_\star+t(x-x_\star))$ gives
\begin{equation*}
\frac{U(x)}{\abs{x-x_\star}} \ge \frac{U(y)}{\abs{y-x_\star}} \ge \frac{1}{2R}.
\end{equation*}
Noting $|x_\star| \le R$, it follows that
\begin{equation*}
    U(x)\ge\frac{\abs{x}-R}{2R}  \qquad \text{for every }x\in\RR^d.
\end{equation*}
Thus (i) holds with $\alpha=(2R)^{-1}$ and $\beta=1/2$, and $ \int\e^{-\veps U}\ud x \le \e^{\veps\beta}\int\e^{-\veps\alpha\abs{x}}\ud x
 <\infty$.

 For (ii), if $0\le\lambda<1$, then (i) applied with $\veps=1-\lambda$ yields
\begin{equation*}
    \int\e^{\lambda U}\ud\mux
    =\frac{1}{Z_x}\int\e^{-(1-\lambda)U}\ud x
    <\infty.
\end{equation*}

For (iii), choose $\chi\in C_c^\infty(\RR^d)$ such that $0\le\chi\le1$, $\chi=1$ on $\{|x| \le 1\}$, $\operatorname{supp}\chi\subset \{|x| \le 2\}$, and $\abs{\nabla\chi}\le C_0$. Set $\chi_R(x):=\chi(x/R)$, $\varphi_R:=\chi_R^2$, and
\begin{equation*}
    M_R:=\int\varphi_R\abs{\nabla U}^2\ud\mux =\int\varphi_R\,\Delta U\ud\mux+\int\nabla\varphi_R\cdot\nabla U\ud\mux,
\end{equation*}
where the second equality is from the integration by parts.
By \eqref{gm:eq:tame} and the Cauchy--Schwarz inequality, we estimate
\begin{equation*}
\int\varphi_R\Delta U\ud\mux  \le dC_U\int\varphi_R(1+\abs{\nabla U})\ud\mux \le dC_U\bigl(1+\sqrt{M_R}\bigr).
\end{equation*}
Moreover, $\abs{\nabla\varphi_R}\le(2C_0/R)\chi_R$, and hence $\left|\int\nabla\varphi_R\cdot\nabla U\ud\mux\right| \le\frac{2C_0}{R}\sqrt{M_R}$. Consequently, for $R\ge2C_0$,
\begin{equation*}
    M_R\le dC_U+(dC_U+1)\sqrt{M_R},
\end{equation*}
which implies $M_R\le(dC_U+2)^2$. Since $\varphi_R\to1$ pointwise, Fatou's lemma proves \eqref{gm:eq:static-gradient}.

Finally, (iv) follows from the Herbst argument. Indeed, the LSI \eqref{eq:lsi}, applied to the $1$-Lipschitz function $x\mapsto\abs{x}$, implies the Gaussian concentration $\mux\bigl(\abs{x}\ge m+r\bigr) \le \e^{-\rho r^2/2}$, where $m:=\int\abs{x}\ud\mux<\infty$ by (i). This tail estimate yields, when $s<\rho/2$,
\begin{equation*}
    \int\e^{s\abs{x}^2}\ud\mux
    =1+\int_0^\infty 2sr\e^{sr^2}\,
      \mux\bigl(\abs{x}>r\bigr)\ud r
    <\infty. \qedhere
\end{equation*}
\end{proof}

\subsection{The construction}\label{gm:sec:construction}
We now prove \cref{gm:thm:recovery}. The approximants are defined in the four steps below, which establish their smoothness, positivity, mass normalization, and divergence equation. Their $L^1$ convergence and the limsup bounds \ref{gm:R4}--\ref{gm:R5} are proved in \cref{gm:sec:limsup}. The remaining assertions in \ref{gm:R2}, together with the controlled corrector inequality \ref{gm:R3}, are verified in \cref{gm:sec:verification}.

We fix $(g,W)$ as in \cref{gm:ass:path} and set $\varepsilon\in(0,T/4)$. We let $(\varrho,F)$ be defined as above, satisfying the equation \eqref{gm:eq:lebesgue} with
\begin{equation*}
    \int_0^T \int\frac{\abs{F_t}^2}{\varrho_t}\dz\ud t=T\,J_T(g,W)<\infty\,, \qquad \int_0^T \int\frac{\abs{\nabla_v\varrho_t+v\varrho_t}^2}{\varrho_t}\dz\ud t=T\,I_{v,T}(g) < \infty.
\end{equation*}
The approximation consists of four steps: time mollification, Gaussian mollification in $x$, Gaussian mollification in $v$, and convex combination with equilibrium $\varrho_\infty$.

\subsubsection*{Step 1: time mollification}\label{gm:ssec:time}
Let $\zeta\in C_c^\infty((-1,1))$, $\zeta \ge 0$, $\int \zeta = 1$, and $\zeta_\tau(s)=\tau^{-1}\zeta(s/\tau)$ for $0<\tau<\varepsilon/2$. For $t\in(\varepsilon/2,\,T-\varepsilon/2)$, define
\begin{equation*}
    \varrho^{(\tau)}_t(x,v)=\int_0^T\zeta_\tau(t-r)\varrho_r(x,v)\ud r, \qquad F^{(\tau)}_t(x,v)=\int_0^T\zeta_\tau(t-r)F_r(x,v)\ud r.
\end{equation*}
The pair $(\varrho^{(\tau)},F^{(\tau)})$ satisfies \eqref{gm:eq:lebesgue} in $\mathcal D'((\varepsilon/2,T-\varepsilon/2)\times\RR^{2d})$. Moreover, $\int \varrho^{(\tau)}_t\dz=1$, and $t\mapsto \varrho^{(\tau)}_t$ is  $C^\infty$ from $(\varepsilon/2,T-\varepsilon/2)$ to $L^1(\RR^{2d})$, with $\partial_t\varrho^{(\tau)}_t=\int\zeta_\tau'(s)\varrho_{t-s}\ud s$.

Extend $\varrho$ and $F$ by zero outside $(0,T)$. For \textup{a.e.}\ $(x,v)\in\RR^{2d}$, applying \eqref{gm:eq:kernel-cs} in the time variable to the density-current pairs $(\varrho_t,F_t)$ and $(\varrho_t,\nabla_v\varrho_t + v\varrho_t)$
gives
\begin{equation*}
    \frac{\abs{F^{(\tau)}_t(x,v)}^2}
         {\varrho^{(\tau)}_t(x,v)}
    \le
    \int_0^T\zeta_\tau(t-r)
       \frac{\abs{F_r(x,v)}^2}{\varrho_r(x,v)}\ud r,
\end{equation*}
and
\begin{equation*}
    \frac{\abs{\nabla_v\varrho^{(\tau)}_t(x,v)
      +v\varrho^{(\tau)}_t(x,v)}^2}
         {\varrho^{(\tau)}_t(x,v)}
    \le
    \int_0^T\zeta_\tau(t-r)
       \frac{\abs{\nabla_v\varrho_r(x,v)+v\varrho_r(x,v)}^2}
            {\varrho_r(x,v)}\ud r,
\end{equation*}
respectively. Integrating over $(x,v)$ gives
\small
\begin{equation}\label{gm:eq:time-jensen}
    \int\frac{\abs{F^{(\tau)}_t}^2}{\varrho^{(\tau)}_t}\dz  \le  \Bigl(\zeta_\tau*\int\frac{\abs{F_\cdot}^2}{\varrho_\cdot}\dz\Bigr)(t),
    \quad
    \int\frac{\abs{\nabla_v\varrho^{(\tau)}_t+v\varrho^{(\tau)}_t}^2}{\varrho^{(\tau)}_t}\dz \le \Bigl(\zeta_\tau*\int\frac{\abs{\nabla_v\varrho_\cdot+v\varrho_\cdot}^2}{\varrho_\cdot}\dz\Bigr)(t),
\end{equation}
\normalsize
and for $\tau<\varepsilon/2$,
\begin{equation}\label{gm:eq:time-integrated}
    \int_{\varepsilon}^{T-\varepsilon} \int\frac{\abs{F^{(\tau)}_t}^2}{\varrho^{(\tau)}_t}\dz\ud t
    \le \int_{0}^{T} \int\frac{\abs{F_t}^2}{\varrho_t}\dz\ud t,
    \quad \int_{\varepsilon}^{T-\varepsilon} \int\frac{\abs{\nabla_v\varrho^{(\tau)}_t+v\varrho^{(\tau)}_t}^2}{\varrho^{(\tau)}_t}\dz\ud t
    \le T\,I_{v,T}(g).
\end{equation}

Moreover, time mollification commutes with nonnegative linear moments: for any $\phi\ge0$,
\begin{equation}\label{gm:eq:time-moments}
\begin{aligned}
    \int\phi(x,v)\varrho^{(\tau)}_t(x,v)\ud x\ud v
    &=\int_0^T\zeta_\tau(t-r)
      \left(\int\phi(x,v)\varrho_r(x,v)\ud x\ud v\right)\ud r \\
    &=\Bigl(\zeta_\tau*\textstyle\int
      \phi(x,v)\varrho_\cdot(x,v)\ud x\ud v\Bigr)(t).
\end{aligned}
\end{equation}
It also decreases the relative entropy:
\begin{equation}\label{gm:eq:time-entropy}
    \Ent_\mu(g^{(\tau)}_t) \le
    \bigl(\zeta_\tau*\Ent_\mu(g_\cdot)\bigr)(t),
\end{equation}
by
\begin{equation*}
    g^{(\tau)}_t\log g^{(\tau)}_t \le  \int_0^T\zeta_\tau(t-r)g_r\log g_r\ud r,
\end{equation*}
from the convexity of $s\mapsto s\log s$, where
\begin{equation*}
    g^{(\tau)}_t  =\frac{\varrho^{(\tau)}_t}{\varrho_\infty} = \int_0^T\zeta_\tau(t-r)g_r\ud r.
\end{equation*}
If $t\mapsto\int\phi(x,v)\varrho_t(x,v)\ud x\ud v$ belongs to $L^1(0,T)$, then the right-hand side of \eqref{gm:eq:time-moments} is finite and continuous. Similarly, \ref{gm:P1} gives the same conclusion for the right-hand side of \eqref{gm:eq:time-entropy}. Moreover, for fixed $\tau$, these time averages
are locally uniformly bounded on $(\varepsilon/2,T-\varepsilon/2)$ by standard convolution properties.

\subsubsection*{Step 2: Gaussian mollification in $x$}\label{gm:ssec:xstep}
For $a>0$, define the position convolution:
\begin{equation*}
    \varrho^{(\tau,a)}_t=\eta_a*_x\varrho^{(\tau)}_t,    \qquad  F^{(\tau,a)}_t=\eta_a*_xF^{(\tau)}_t .
\end{equation*}

\begin{proposition}\label{gm:prop:x-step}
The pair $(\varrho^{(\tau,a)},F^{(\tau,a)})$ satisfies, in $\mathcal D'((\varepsilon/2,T-\varepsilon/2)\times\RR^{2d})$,
\begin{equation}\label{gm:eq:x-step-equation}
    \partial_t\varrho^{(\tau,a)}+\divg_x\bigl(v\varrho^{(\tau,a)}\bigr)  +\divg_v\Bigl(-\nabla U\,\varrho^{(\tau,a)}+F^{(\tau,a)}+C^{(\tau,a)}\Bigr)=0,
\end{equation}
where the corrector current is
\begin{equation}\label{gm:eq:force-commutator}
    C^{(\tau,a)}_t(x,v)
    :=\int\eta_a(x-y)\bigl[\nabla U(x)-\nabla U(y)\bigr]\,\varrho^{(\tau)}_t(y,v)\ud y .
\end{equation}
Moreover, for $t\in(\varepsilon/2,T-\varepsilon/2)$,
\begin{equation}\label{gm:eq:force-commutator-action}
    \int\frac{\abs{C^{(\tau,a)}_t}^2}{\varrho^{(\tau,a)}_t}\dz
    \le C(d,C_U)a^2\int\bigl(1+\abs{\nabla U}\bigr)^2\varrho^{(\tau)}_t\dz
\end{equation}
for $0<a\le1$.
\end{proposition}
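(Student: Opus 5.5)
The proof has two parts: first derive the mollified equation \eqref{gm:eq:x-step-equation} by testing, then bound the commutator current $C^{(\tau,a)}$ by a Cauchy--Schwarz argument of the same kind as \cref{gm:lem:kernel-cs}, combined with the growth bound \eqref{gm:eq:gronwall-gradU}.

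\emph{The equation.} Take a test function $\phi\in C_c^\infty((\varepsilon/2,T-\varepsilon/2)\times\RR^{2d})$. Since $\eta_a$ is even, testing the mollified pair against $\phi$ is the same as testing $(\varrho^{(\tau)},F^{(\tau)})$ against $\eta_a*_x\phi$. That function is smooth but not compactly supported in $x$; however, it is Schwartz-like in $x$, so a standard cutoff argument justifies using it as a test function in the time-mollified equation from Step 1. The three terms behave as follows.
\begin{itemize}
\item $\partial_t$ and $\divg_v$ commute with $*_x$.
\item The transport term $\divg_x(v\,\cdot)$ also commutes with $*_x$, since $v$ is independent of $x$.
\item The only term that fails to commute is the force term. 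Writing
\begin{equation*}
\eta_a*_x(\nabla U\varrho^{(\tau)}) = \nabla U\,\varrho^{(\tau,a)} - C^{(\tau,a)},
\end{equation*}
with $C^{(\tau,a)}$ as in \eqref{gm:eq:force-commutator}, yields exactly the extra divergence term in \eqref{gm:eq:x-step-equation}.
\end{itemize}
All integrals involved are finite by \ref{gm:Q3} and \eqref{gm:eq:time-moments}. Indeed, $\int\abs{\nabla U}\varrho^{(\tau)}_t$ is locally bounded in $t$, and by \eqref{gm:eq:gronwall-gradU} this integrability is preserved under the Gaussian convolution.

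\emph{The commutator bound.} Apply Cauchy--Schwarz with respect to the measure $\eta_a(x-y)\varrho^{(\tau)}_t(y,v)\ud y$. This gives, pointwise,
\begin{equation*}
\frac{\abs{C^{(\tau,a)}_t(x,v)}^2}{\varrho^{(\tau,a)}_t(x,v)}\le\int\eta_a(x-y)\abs{\nabla U(x)-\nabla U(y)}^2\varrho^{(\tau)}_t(y,v)\ud y .
\end{equation*}
Next use the second estimate in \eqref{gm:eq:gronwall-gradU}, based at $y$ with $z=x-y$:
\begin{equation*}
\abs{\nabla U(x)-\nabla U(y)}\le(1+\abs{\nabla U(y)})(\e^{C_U\abs{z}}-1).
\end{equation*}
Basing the estimate at $y$ rather than $x$ is the key choice: it puts the weight $(1+\abs{\nabla U})^2$ on the integration variable, where it pairs with $\varrho^{(\tau)}_t(y,v)$. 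Integrating in $(x,v)$ and applying Fubini then gives
\begin{equation*}
\int\frac{\abs{C^{(\tau,a)}_t}^2}{\varrho^{(\tau,a)}_t}\dz\le\Bigl(\int\eta_a(z)(\e^{C_U\abs z}-1)^2\ud z\Bigr)\int(1+\abs{\nabla U(y)})^2\varrho^{(\tau)}_t(y,v)\ud y\ud v .
\end{equation*}

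\emph{The Gaussian factor.} It remains to show
\begin{equation*}
\int\eta_a(z)(\e^{C_U\abs z}-1)^2\ud z\le C(d,C_U)\,a^2 \qquad\text{for } a\le1.
\end{equation*}
From the elementary inequality $\e^{s}-1\le s\e^{s}$ we get $(\e^{C_U\abs z}-1)^2\le C_U^2\abs z^2\e^{2C_U\abs z}$. Substituting $z=aw$, the integral becomes
\begin{equation*}
a^2C_U^2\int\eta_1(w)\abs w^2\e^{2C_Ua\abs w}\ud w\le a^2C_U^2\int\eta_1(w)\abs w^2\e^{2C_U\abs w}\ud w .
\end{equation*}
The last integral is finite, since the Gaussian decay of $\eta_1$ dominates exponential growth, and depends only on $(d,C_U)$. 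This proves \eqref{gm:eq:force-commutator-action}.

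\emph{Main obstacle.} The only delicate point is justifying the distributional identity for the non-compactly-supported test function $\eta_a*_x\phi$. This needs local-in-time integrability of $\abs{\nabla U}\varrho^{(\tau)}$ and of $\abs{F^{(\tau)}}$. The former follows from \ref{gm:Q3}; the latter from the action bound, via $\abs{F}\le\frac12(\abs F^2/\varrho+\varrho)$. With these, the cutoff errors vanish by dominated convergence.
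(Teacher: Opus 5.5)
Your proposal is correct and follows essentially the same route as the paper: $x$-convolution commutes with $\partial_t$, $\divg_v$ and $\divg_x(v\,\cdot)$; the force term is rewritten as $\nabla U\,\varrho^{(\tau,a)}-C^{(\tau,a)}$; Cauchy--Schwarz is applied against $\eta_a(x-y)\varrho^{(\tau)}_t(y,v)\ud y$; the tame-growth bound is based at $y$; and the Gaussian factor is handled with $\e^s-1\le s\e^s$ and the rescaling $z=aw$. The one difference is presentational: where the paper simply asserts that convolution commutes "in the distributional sense", you test against $\eta_a*_x\phi$ and add the cutoff and integrability justification, which is a sound way to make that step explicit.
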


\begin{proof}
The time-moment estimate \eqref{gm:eq:time-moments}, assumption \ref{gm:Q3}, and \eqref{gm:eq:time-jensen} imply, for $t\in(\varepsilon/2,T-\varepsilon/2)$,
\begin{equation*}
    \int\abs{\nabla U}\,\varrho^{(\tau)}_t\dz \le  \left(\int\bigl(1+\abs{\nabla U}\bigr)^2\varrho^{(\tau)}_t\dz\right)^{1/2}, \qquad
    \int\abs{F^{(\tau)}_t}\dz  \le  \left(\int\frac{\abs{F^{(\tau)}_t}^2}{\varrho^{(\tau)}_t}\dz\right)^{1/2}.
\end{equation*}
Note that convolution in $x$ commutes, in the distributional sense, with $\partial_t$, $\divg_v$, and
$\divg_x(v\,\cdot)$. Applying $x$-convolution to the equation satisfied by $(\varrho^{(\tau)},F^{(\tau)})$, obtained by time-mollifying \eqref{gm:eq:lebesgue}, gives
\begin{equation*}
\partial_t\varrho^{(\tau,a)} +\divg_x\bigl(v\varrho^{(\tau,a)}\bigr) +\divg_v\Bigl(-\eta_a*_x\bigl(\nabla U\,\varrho^{(\tau)}\bigr) + F^{(\tau,a)}\Bigr)=0.
\end{equation*}
By \eqref{gm:eq:force-commutator}, we have
\begin{equation*}
 \eta_a*_x\bigl(\nabla U\,\varrho^{(\tau)}\bigr)(t,x,v)    =\nabla U(x)\,\varrho^{(\tau,a)}_t(x,v)-C^{(\tau,a)}_t(x,v),
\end{equation*}
and hence \eqref{gm:eq:x-step-equation} holds.

For the estimate \eqref{gm:eq:force-commutator-action}, we apply the Cauchy--Schwarz inequality to \eqref{gm:eq:force-commutator} with respect to the measure $\eta_a(x-y)\varrho^{(\tau)}_t(y,v)\ud y$, and obtain
\begin{equation} \label{auxeq3}
    \frac{\abs{C^{(\tau,a)}_t(x,v)}^2}
         {\varrho^{(\tau,a)}_t(x,v)} \le \int\eta_a(x-y)\abs{\nabla U(x)-\nabla U(y)}^2 \varrho^{(\tau)}_t(y,v)\ud y,
\end{equation}
using $\int\eta_a(x-y)\varrho^{(\tau)}_t(y,v)\ud y =\varrho^{(\tau,a)}_t(x,v)$. By \cref{gm:lem:tame-growth}, there holds
\begin{equation*}
    \abs{\nabla U(x)-\nabla U(y)}  \le \bigl(1+\abs{\nabla U(y)}\bigr)  \bigl(\e^{C_U\abs{x-y}}-1\bigr).
\end{equation*}
It follows that
\begin{equation*}
 \int\eta_a(x-y)\abs{\nabla U(x)-\nabla U(y)}^2\ud x
 \le C(d,C_U)a^2\bigl(1+\abs{\nabla U(y)}\bigr)^2.
\end{equation*}
Indeed, using $\e^s-1\le s\e^s$ for $s\ge0$ and changing variables $z=au$, for $0<a\le1$ we have
\begin{equation*}
\int\eta_a(z)\bigl(\e^{C_U\abs z}-1\bigr)^2\ud z
\le C_U^2a^2\int\eta_1(u)\abs u^2\e^{2C_U\abs u}\ud u
\le C(d,C_U)a^2.
\end{equation*}
Integrating \eqref{auxeq3} first in $x$ and then in $(y,v)$ proves \eqref{gm:eq:force-commutator-action}.
\end{proof}

\subsubsection*{Step 3: Gaussian mollification in $v$}\label{gm:ssec:vstep}
For $b>0$, define the velocity convolution: for $t \in (\varepsilon/2,T-\varepsilon/2)$,
\begin{equation*}
    \hat\varrho_t:=\eta_b*_v\varrho^{(\tau,a)}_t, \qquad
    \hat F_t:=\eta_b*_v\bigl(F^{(\tau,a)}_t + C^{(\tau,a)}_t\bigr)\,.
\end{equation*}
Gaussian convolution in $v$ does not commute with $\divg_x(v\,\cdot)$. However, the Gaussian identity turns the resulting commutator \eqref{eq:commuteinv} into the velocity-divergence term $\divg_v(b^2\nabla_x\hat\varrho)$, which can be absorbed into the current.

\begin{proposition} \label{gm:prop:v-step}
For every $b>0$, the corrected density-current pair
$(\hat\varrho,\hat F+b^2\nabla_x\hat\varrho)$ satisfies
\begin{equation}\label{gm:eq:v-step-equation}
    \partial_t\hat\varrho+\divg_x\bigl(v\hat\varrho\bigr)
    +\divg_v\Bigl(-\nabla U\,\hat\varrho+\hat F+b^2\nabla_x\hat\varrho\Bigr) = 0,
\end{equation}
in $\mathcal D'\bigl((\varepsilon/2,T-\varepsilon/2)\times\RR^{2d}\bigr)$.
\end{proposition}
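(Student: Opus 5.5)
The plan is to convolve the $x$-mollified equation \eqref{gm:eq:x-step-equation} in the velocity variable and track the one commutator that appears. I will test against $\phi\in C_c^\infty((\varepsilon/2,T-\varepsilon/2)\times\RR^{2d})$, so all manipulations are on distributions. Convolution in $v$ by the even kernel $\eta_b$ corresponds, in weak form, to replacing $\phi$ by $\eta_b*_v\phi$. This operation:
\begin{itemize}
\item commutes with $\partial_t$ and with $\divg_v$;
\item commutes with multiplication by functions of $x$ alone, hence with the force term $-\nabla U(x)\varrho^{(\tau,a)}$, giving $-\nabla U\,\hat\varrho$;
\item sends $F^{(\tau,a)}+C^{(\tau,a)}$ to $\hat F$ by definition.
\end{itemize}
The only term requiring work is the transport term $\divg_x(v\varrho^{(\tau,a)})$.

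For that term I will write $v=(v-w)+w$ inside the convolution:
\begin{equation*}
\eta_b*_v\bigl(v\varrho^{(\tau,a)}\bigr)(x,v)=\int\eta_b(v-w)\,w\,\varrho^{(\tau,a)}(x,w)\ud w
= v\hat\varrho(x,v)-\int (v-w)\eta_b(v-w)\varrho^{(\tau,a)}(x,w)\ud w .
\end{equation*}
By the Gaussian identity \eqref{gm:eq:gaussian-identity}, $(v-w)\eta_b(v-w)=-b^2\nabla_v\eta_b(v-w)$. The last integral therefore equals $-b^2\nabla_v\hat\varrho$, and
\begin{equation}\label{eq:commuteinv}
\eta_b*_v\bigl(v\varrho^{(\tau,a)}\bigr)=v\hat\varrho+b^2\nabla_v\hat\varrho .
\end{equation}
Taking $\divg_x$ and using that $\divg_x$ and $\nabla_v$ commute gives $\divg_x(v\hat\varrho)+b^2\,\divg_x\nabla_v\hat\varrho$. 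Since $\divg_x\nabla_v\hat\varrho=\sum_i\partial_{x_i}\partial_{v_i}\hat\varrho=\divg_v\nabla_x\hat\varrho$, the extra term is exactly $\divg_v(b^2\nabla_x\hat\varrho)$. Moving it into the velocity divergence yields \eqref{gm:eq:v-step-equation}.

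The main obstacle is justifying these identities for the possibly rough $\varrho^{(\tau)}$, in particular showing that $v\varrho^{(\tau,a)}$ and the convolved currents are locally integrable so that Fubini applies. I would handle this as follows.
\begin{itemize}
\item \ref{gm:Q2}, with \eqref{gm:eq:time-moments}, gives locally uniform bounds on $\int|v|\varrho^{(\tau)}_t\dz$.
\item The bound after \eqref{gm:eq:time-jensen} and \eqref{gm:eq:force-commutator-action} give $L^1$ bounds on $F^{(\tau,a)}$ and $C^{(\tau,a)}$.
\item \ref{gm:Q3} gives $L^1$ bounds on $\nabla U\varrho^{(\tau,a)}$.
\end{itemize}
With these bounds, every term is a locally integrable function. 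The identity \eqref{eq:commuteinv} can then be verified pointwise by Fubini. The weak form then follows by pairing \eqref{gm:eq:x-step-equation} with the test function $\eta_b*_v\phi$. This function is smooth in $v$ with Gaussian decay but is not compactly supported, so the pairing first needs a density/cutoff argument: using the above integrability bounds, one approximates $\eta_b*_v\phi$ by compactly supported test functions and passes to the limit. Alternatively, one can mollify directly and invoke the distributional commutation of convolution with differential operators.
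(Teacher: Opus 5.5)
Your proposal is correct and follows the paper's route: convolve \eqref{gm:eq:x-step-equation} in $v$, note that only the transport term fails to commute with $\eta_b*_v$, and use the Gaussian identity to write the commutator as $\divg_x(b^2\nabla_v\hat\varrho)=\divg_v(b^2\nabla_x\hat\varrho)$. Your weak-form justification, pairing with $\eta_b*_v\phi$ after a velocity cutoff, spells out a step the paper leaves implicit. The bounds you cite are sufficient, and in fact more than needed, because $\eta_b*_v\phi$ has compact $(t,x)$-support and Gaussian decay in $v$.
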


\begin{proof}
Since the convolution in $v$ commutes with $\partial_t$
and $\divg_v$, as well as multiplication by $\nabla U(x)$, convolving \eqref{gm:eq:x-step-equation} in $v$ with the Gaussian kernel gives
\begin{equation} \label{eq:modifieddivergence}
    \partial_t\hat\varrho  +\eta_b*_v\divg_x\bigl(v\varrho^{(\tau,a)}\bigr) +\divg_v\bigl(-\nabla U\,\hat\varrho+\hat F\bigr)=0.
\end{equation}
It remains to identify the second term. Using \eqref{gm:eq:gaussian-identity}, we compute
\begin{align*}
\int\eta_b(v-w)\,w\,\varrho^{(\tau,a)}(t,x,w)\ud w
  & = v\hat\varrho(t,x,v) -\int\eta_b(v-w)(v-w)\varrho^{(\tau,a)}(t,x,w)\ud w\\
& = v\hat\varrho(t,x,v)+b^2\nabla_v\hat\varrho(t,x,v),
\end{align*}
and $\divg_v\bigl(b^2\nabla_x\hat\varrho\bigr) =  \divg_x\bigl(b^2\nabla_v\hat\varrho\bigr)$.
That is,
\begin{equation}\label{eq:commuteinv}
    \eta_b*_v\divg_x\bigl(v \varrho^{(\tau,a)}\bigr) - \divg_x\bigl(v \eta_b*_v\varrho^{(\tau,a)} \bigr) =  \divg_v\bigl(b^2\nabla_x\hat\varrho\bigr).
\end{equation}
Substituting it into \eqref{eq:modifieddivergence} proves \eqref{gm:eq:v-step-equation}.
\end{proof}

\begin{proposition}[Action of the correction term]\label{gm:prop:transport-action}
For every $t\in(\varepsilon/2,T-\varepsilon/2)$,
\begin{equation}\label{gm:eq:transport-action}
    \int\frac{\abs{b^2\nabla_x\hat\varrho_t(x,v)}^2}{\hat\varrho_t(x,v)}\ud x\ud v
    \le
    d\,\frac{b^4}{a^2}\,.
\end{equation}
\end{proposition}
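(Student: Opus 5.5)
The estimate should follow from the $x$-Gaussian structure of $\hat\varrho_t$: the velocity mollification does not affect $x$-derivatives, so the $x$-Fisher information of $\hat\varrho_t$ is controlled by that of the $x$-mollified density $\varrho^{(\tau,a)}_t$. That quantity, in turn, is bounded by \cref{gm:lem:gaussian-derivative}.

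\emph{Reduction to the $x$-Fisher information.} Since the left-hand side of \eqref{gm:eq:transport-action} equals $b^4\int\abs{\nabla_x\hat\varrho_t}^2/\hat\varrho_t\ud x\ud v$, it suffices to show
\begin{equation*}
\int\frac{\abs{\nabla_x\hat\varrho_t}^2}{\hat\varrho_t}\dz\le \frac{d}{a^2}.
\end{equation*}

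\emph{Removing the velocity convolution.} Write $\hat\varrho_t=\eta_b*_v\varrho^{(\tau,a)}_t$. Because convolution in $v$ commutes with $\nabla_x$, we have $\nabla_x\hat\varrho_t=\eta_b*_v\nabla_x\varrho^{(\tau,a)}_t$. Apply \cref{gm:lem:kernel-cs} in the $v$-variable for each fixed $x$, with $f=\varrho^{(\tau,a)}_t(x,\cdot)$ and $F=\nabla_x\varrho^{(\tau,a)}_t(x,\cdot)$. Then integrate in $x$ to obtain
\begin{equation*}
\int\frac{\abs{\nabla_x\hat\varrho_t}^2}{\hat\varrho_t}\dz\le\int\frac{\abs{\nabla_x\varrho^{(\tau,a)}_t}^2}{\varrho^{(\tau,a)}_t}\dz.
\end{equation*}
The hypothesis that $F$ vanishes where $f$ does holds automatically, since $\varrho^{(\tau,a)}_t$ is either strictly positive or identically zero along the $x$-fibre. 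For the same reason, finiteness of the right-hand side (established in the next step) justifies applying the lemma. If needed, this application can be organized through the pointwise form \eqref{gm:eq:kernel-cs} to avoid integrability issues.

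\emph{Gaussian $x$-derivative bound.} For each fixed $v$, set $f_v(y):=\varrho^{(\tau)}_t(y,v)$. This function is nonnegative and, by Fubini, belongs to $L^1(\RR^d)$ for a.e. $v$. Moreover, $\varrho^{(\tau,a)}_t(\cdot,v)=\eta_a*f_v$. Applying \eqref{gm:eq:gaussian-derivative-int} gives
\begin{equation*}
\int\frac{\abs{\nabla_x\varrho^{(\tau,a)}_t(x,v)}^2}{\varrho^{(\tau,a)}_t(x,v)}\ud x\le\frac{d}{a^2}\int\varrho^{(\tau)}_t(y,v)\ud y.
\end{equation*}
Integrating in $v$ and using the normalization $\int\varrho^{(\tau)}_t\dz=1$ from Step 1 yields the bound $d/a^2$. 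Multiplying by $b^4$ then proves \eqref{gm:eq:transport-action}.

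\emph{Main obstacle.} The only delicate point is measure-theoretic: justifying the fibrewise applications of the two lemmas, namely the a.e. $v$-integrability and the conventions on the zero set. Both are handled by Fubini and the positivity or vanishing dichotomy for Gaussian convolutions. No time-uniformity is needed, since the bound holds for each fixed $t$.
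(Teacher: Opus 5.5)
Your proof is correct and is essentially the paper's argument: both rest on the Gaussian derivative bound \eqref{gm:eq:gaussian-derivative-int}, applied in $x$ for each fixed $v$, followed by integration in $v$ and the unit mass of $\varrho^{(\tau)}_t$. The only difference is that you first strip off the velocity convolution using \cref{gm:lem:kernel-cs}, whereas the paper skips that step: it commutes the convolutions, $\hat\varrho_t=\eta_a*_x(\eta_b*_v\varrho^{(\tau)}_t)$, and applies \cref{gm:lem:gaussian-derivative} directly to $f_{t,v}=(\eta_b*_v\varrho^{(\tau)}_t)(\cdot,v)$, whose total mass over $(x,v)$ is still one.
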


\begin{proof}
It suffices to note
\begin{equation*}
    \hat\varrho_t =\eta_a*_x\bigl(\eta_b*_v\varrho^{(\tau)}_t\bigr),
\end{equation*}
and let $f_{t,v}(x):=(\eta_b*_v\varrho^{(\tau)}_t)(x,v)$. By \cref{gm:lem:gaussian-derivative}, we have
\begin{equation*}
    \int_{\RR^d} \frac{\abs{\nabla_x\hat\varrho_t(x,v)}^2}{\hat\varrho_t(x,v)}\ud x \le
    \frac d{a^2}\int_{\RR^d}f_{t,v}(x)\ud x.
\end{equation*}
Integrating in $v$ then gives \eqref{gm:eq:transport-action}:
\begin{align*}
 \int\frac{\abs{b^2\nabla_x\hat\varrho_t(x,v)}^2}
 {\hat\varrho_t(x,v)}\ud x\ud v
    & \le
    \frac{d\,b^4}{a^2}\int f_{t,v}(x)\ud x\ud v\\
    &=\frac{d\,b^4}{a^2}\int \varrho^{(\tau)}_t(x,v)\ud x\ud v
    =\frac{d\,b^4}{a^2}. \qedhere
\end{align*}
\end{proof}

\subsubsection*{Step 4: Convex combination with equilibrium}\label{gm:ssec:floor}
Let $\{\delta_n\} \subset (0,1)$ and
\begin{equation}\label{gm:eq:final-approximant}
    \varrho^{n}:=(1-\delta_n)\,\hat\varrho+\delta_n\,\varrho_\infty, \qquad F^{n}:=(1-\delta_n)\bigl(\hat F+b^2\nabla_x\hat\varrho\bigr),
\end{equation}
where the superscript $n$ is for the sequence of parameters $(\tau_n,a_n,b_n,\delta_n)\to0$ chosen in \cref{gm:sec:limsup}. Note that the equilibrium density is stationary for the Hamiltonian transport:
\begin{equation*}
    \divg_x\bigl(v\varrho_\infty\bigr) +\divg_v\bigl(-\nabla U\,\varrho_\infty\bigr)  =v\cdot\nabla_x\varrho_\infty -\nabla U\cdot\nabla_v\varrho_\infty  = 0.
\end{equation*}
Thus $(\varrho_\infty,0)$ solves \eqref{gm:eq:lebesgue}. Combining this with \eqref{gm:eq:v-step-equation} shows that $(\varrho^n,F^n)$ satisfies
\begin{equation}\label{gm:eq:final-equation}
    \partial_t\varrho^n+\divg_x\bigl(v\varrho^n\bigr)+\divg_v\bigl(-\nabla U\,\varrho^n+F^n\bigr)=0
    \quad\text{in }\mathcal D'\bigl((\varepsilon/2,T-\varepsilon/2)\times\RR^{2d}\bigr).
\end{equation}
With densities $g^n:=\varrho^n/\varrho_\infty$ and currents $W^n:=F^n/\varrho_\infty$ relative to $\mu$, \eqref{gm:eq:final-equation} is equivalent to
\begin{equation}\label{gm:eq:final-graph-equation}
    (\partial_t+\La)g^n_t=\nabla_v^*W^n_t
    \quad\text{in }\mathcal D'\bigl((\varepsilon/2,T-\varepsilon/2)\times\RR^{2d}\bigr).
\end{equation}
Both $\hat\varrho_t$ and $\varrho_\infty$ have unit mass. Hence, for every $t\in(\varepsilon/2,T-\varepsilon/2)$,
\begin{equation*}
    \int g^n_t\ud\mu  =\int \varrho^n_t(x,v)\ud x\ud v = 1.
\end{equation*}
Moreover, the functions $g^n$ are uniformly bounded below:
\begin{equation}\label{gm:eq:floor}
    g^n =(1-\delta_n)\frac{\hat\varrho}{\varrho_\infty}+\delta_n
    \ge \delta_n \qquad\text{on }(\varepsilon/2,T-\varepsilon/2)\times\RR^{2d}.
\end{equation}

\begin{lemma}[Regularity]\label{gm:lem:smoothness}
On $(\varepsilon/2,T-\varepsilon/2)$, we have
\begin{equation*}
    \varrho^n,g^n\in C^1\bigl((\varepsilon/2,T-\varepsilon/2);C^\infty(\RR^{2d})\bigr),
\end{equation*}
and
\begin{equation*}
    F^n,W^n\in C \bigl((\varepsilon/2,T-\varepsilon/2);C^\infty(\RR^{2d};\RR^d)\bigr).
\end{equation*}
Thus, \eqref{gm:eq:final-equation} and
\eqref{gm:eq:final-graph-equation} hold pointwise.
\end{lemma}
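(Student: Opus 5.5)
We prove the regularity statement (\cref{gm:lem:smoothness}) by tracking each mollification step separately, since $\varrho^n$ and $F^n$ are assembled from four operations with distinct smoothing roles. The time mollification of Step~1 supplies smoothness in $t$. The Gaussian convolutions of Steps~2 and~3 supply smoothness in $(x,v)$. The convex combination with $\varrho_\infty\in C^\infty$ preserves both. The key representation is
\begin{equation*}
    \hat\varrho_t=(\eta_a\otimes\eta_b)*_{(x,v)}\varrho^{(\tau)}_t,
    \qquad
    \varrho^{(\tau)}_t=\int_0^T\zeta_\tau(t-r)\varrho_r\ud r .
\end{equation*}
Here $r\mapsto\varrho_r$ is bounded in $L^1(\RR^{2d})$, with unit mass.

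First, $\partial_t^k\partial_{x}^\alpha\partial_v^\beta\hat\varrho_t$ equals the convolution of $\partial^\alpha\eta_a\otimes\partial^\beta\eta_b$ with $\int\zeta^{(k)}_\tau(t-r)\varrho_r\ud r$. The derivatives of the Gaussian kernels are bounded and continuous, and the time integrand is bounded in $L^1$ uniformly for $t$ in compacts of $(\varepsilon/2,T-\varepsilon/2)$. Dominated convergence therefore gives joint continuity in $(t,x,v)$ of all such derivatives. In particular, $\hat\varrho\in C^1((\varepsilon/2,T-\varepsilon/2);C^\infty(\RR^{2d}))$, and in fact $C^\infty$ jointly. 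Dividing by the smooth positive $\varrho_\infty$ transfers the same regularity to $g^n$.

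Second, the current. The $F$ part is $(\eta_a\otimes\eta_b)*F^{(\tau)}_t$, and $F^{(\tau)}_t\in L^1$ locally uniformly in $t$ by \eqref{gm:eq:time-jensen} together with Cauchy--Schwarz ($\int|F^{(\tau)}_t|\le(\int|F^{(\tau)}_t|^2/\varrho^{(\tau)}_t)^{1/2}$). Hence the same argument gives smoothness in $(x,v)$ and continuity in $t$. Only continuity is claimed here, since $F$ is merely $L^1$ in time before mollification; $\zeta_\tau$ actually yields $C^1$ as well. The term $b^2\nabla_x\hat\varrho$ is handled by the first step.

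The commutator part $\eta_b*_vC^{(\tau,a)}$ is the delicate one. In $C^{(\tau,a)}_t(x,v)=\int\eta_a(x-y)[\nabla U(x)-\nabla U(y)]\varrho^{(\tau)}_t(y,v)\ud y$, the factor $\nabla U(x)$ is only smooth and possibly unbounded. We split it as $\nabla U(x)\varrho^{(\tau,a)}_t-\eta_a*_x(\nabla U\varrho^{(\tau)}_t)$. The first piece is smooth because $\nabla U\in C^\infty$ and $\varrho^{(\tau,a)}$ is smooth. For the second piece, differentiating under the integral is justified if $|\nabla U|\varrho^{(\tau)}_t\in L^1$ locally uniformly in $t$, which follows from \ref{gm:Q3} via \eqref{gm:eq:time-moments}. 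Derivatives then fall only on the kernels, and the $v$-convolution keeps everything smooth and continuous in $t$. \textbf{This integrability check is the main obstacle.} It is the one place where growth of $U$ matters; local uniformity on compacts is all we need, and \ref{gm:Q3} plus the continuity of time convolutions provides it.

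Finally, pointwise validity. Since $\varrho^n$ and $F^n$ are continuous with the derivatives appearing in \eqref{gm:eq:final-equation} continuous, the distributional identity becomes a pointwise one. Integrating by parts against test functions shows that the left side is a continuous function vanishing as a distribution, hence identically zero. The same holds for \eqref{gm:eq:final-graph-equation} after dividing by $\varrho_\infty$.
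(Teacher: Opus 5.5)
Your proposal is correct and follows essentially the same route as the paper: all $(x,v)$-derivatives are placed on the Gaussian kernels (the paper uses the Young bound $\|\partial^\alpha\hat\varrho_t\|_{L^\infty}\le\|\partial^\alpha K_{a,b}\|_{L^\infty}\|\varrho^{(\tau)}_t\|_{L^1}$ where you use dominated convergence), and the commutator is split exactly as $\eta_b*_vC^{(\tau,a)}=\nabla U\,\hat\varrho-\eta_b*_v\eta_a*_x(\nabla U\,\varrho^{(\tau)})$, with \ref{gm:Q3} supplying the integrability. Two small points to tighten. First, continuity in $t$ really uses $F,\nabla U\,\varrho\in L^1((0,T)\times\RR^{2d})$ (from Cauchy--Schwarz on the finite action and from \ref{gm:Q3}), which makes $t\mapsto F^{(\tau)}_t$ and $t\mapsto\nabla U\,\varrho^{(\tau)}_t$ continuous into $L^1$; the slice-wise bound on $\int\abs{F^{(\tau)}_t}$ alone does not give this. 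Second, $\varrho^{(\tau,a)}$ is smooth only in $x$, so the piece $\nabla U\,\varrho^{(\tau,a)}$ becomes the smooth function $\nabla U\,\hat\varrho$ only after the $v$-convolution.
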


\begin{proof}
By the time mollification, $\varrho^{(\tau)}\in C^1((\varepsilon/2,T-\varepsilon/2);L^1(\RR^{2d}))$. Let $K_{a,b}(x,v):=\eta_a(x)\eta_b(v)$. For every multi-index $\alpha$, Young's inequality gives
\begin{equation*}
    \norm{\partial_{x,v}^\alpha\hat\varrho_t}_{L^\infty(\RR^{2d})}
    \le \norm{\partial_{x,v}^\alpha K_{a,b}}_{L^\infty(\RR^{2d})} \norm{\varrho^{(\tau)}_t}_{L^1(\RR^{2d})},
\end{equation*}
and the same estimate applies with $\varrho^{(\tau)}_t$ replaced by $\partial_t\varrho^{(\tau)}_t$. It follows that $\hat\varrho\in C^1((\varepsilon/2,T-\varepsilon/2);C^\infty(\RR^{2d}))$. Since $\varrho_\infty$ is smooth and strictly positive, we have proved the claimed regularity of $\varrho^n$ and $g^n$.

It remains to show the regularity of $F^n$ and $W^n$. By the Cauchy--Schwarz inequality, the finite action and the unit mass of $\varrho_t$ imply
\begin{equation*}
\int_0^T\!\int\abs{F_t(x,v)}\ud x\ud v\ud t \le
\sqrt{T} \left(\int_0^T\!\int\frac{\abs{F_t(x,v)}^2}{\varrho_t(x,v)} \ud x\ud v\ud t\right)^{1/2} < \infty.
\end{equation*}
Similarly, it follows from \ref{gm:Q3}  that
$(t,x,v)\mapsto\nabla U(x)\varrho_t(x,v)$ belongs to $L^1$. Consequently,
$F^{(\tau)}$ and $\nabla U\,\varrho^{(\tau)}$ are continuous in time with values in $L^1(\RR^{2d})$. Using \eqref{gm:eq:force-commutator}, we write
\begin{equation*}
    \hat F  =\eta_b*_v\eta_a*_xF^{(\tau)}
     +\nabla U\,\hat\varrho -\eta_b*_v\eta_a*_x\bigl(\nabla U\,\varrho^{(\tau)}\bigr).
\end{equation*}
The Gaussian convolution estimates used above, together with the smoothness of $U$ on compact sets, show that $\hat F\in C((\varepsilon/2,T-\varepsilon/2);C^\infty(\RR^{2d};\RR^d))$.
The same holds for $F^n=(1-\delta_n)(\hat F+b^2\nabla_x\hat\varrho)$ and for $W^n$, after division by $\varrho_\infty$. The proof is complete.
\end{proof}

\subsection{Limsup bounds and strong convergence}\label{gm:sec:limsup}
We fix the interior window $I_\varepsilon:=(\varepsilon,T-\varepsilon)$. In this section, under suitable conditions on the approximation scales, we prove the action and Fisher-information limsup bounds \ref{gm:R5} in \cref{gm:prop:limsup}, and the $L^1$ convergence  \ref{gm:R4} in \cref{gm:prop:L1}. We find that when
$\tau_n,a_n,b_n,\delta_n\to0$ and $b_n^2/a_n\to0$, \ref{gm:R4} and \ref{gm:R5} hold simultaneously.

For convenience, we denote, for a density path $h$ and a current $V$ that vanishes \textup{a.e.}\ on $\{h=0\}$,
\begin{equation*}
    \norm{V}_{h}^2 :=\int_{I_\varepsilon}\!\int  \frac{\abs{V_t(x,v)}^2}{h_t(x,v)}\ud x\ud v\ud t.
\end{equation*}

\begin{proposition}[Action and Fisher-information limsup bounds] \label{gm:prop:limsup}
Let $(\varrho^n,F^n)$ be the approximant defined in \eqref{gm:eq:final-approximant}. Assume that the associated  parameters $(\tau_n,a_n,b_n,\delta_n)$ satisfy
\begin{equation*}
    0 < \tau_n < \frac{\varepsilon}{2}, \qquad 0<a_n,b_n\le1, \qquad 0<\delta_n<1.
\end{equation*}
Then, we have
\begin{align}
    \norm{F^n}_{\varrho^n} & \le  \bigl(T\,J_T(g,W)\bigr)^{1/2}
    +C(d,C_U)a_nM_{\nabla U}^{1/2} +\sqrt{dT}\,\frac{b_n^2}{a_n},  \label{gm:eq:action-limsup}\\
    \norm{\nabla_v\varrho^n+v\varrho^n}_{\varrho^n}  &\le  \bigl(T\,I_{v,T}(g)\bigr)^{1/2} +\sqrt{dT}\,b_n, \label{gm:eq:fisher-limsup}
\end{align}
where
\begin{equation*}
    M_{\nabla U} = \int_0^T \int \bigl(1+\abs{\nabla U(x)}\bigr)^2\,g_t\ud\mu\ud t<\infty\,.
\end{equation*}
Consequently, if $a_n,b_n\to0$ and $b_n^2/a_n\to0$, then \ref{gm:R5} holds:
\begin{equation*}
    \limsup_{n\to\infty}J_\varepsilon(g^n,W^n) \le T\,J_T(g,W), \qquad
    \limsup_{n\to\infty}I_{v,\varepsilon}(g^n)  \le T\,I_{v,T}(g).
\end{equation*}
\end{proposition}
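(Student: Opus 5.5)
The plan is to view $\norm{\cdot}_h$ as a weighted $L^2$ norm. Its square $(V,h)\mapsto\int\!\int\abs V^2/h$ is jointly convex and $1$-homogeneous, so the map $(V,h)\mapsto\norm V_h$ is subadditive along sums of currents sharing a density. It also decreases when the density is enlarged pointwise. I will follow the approximant backwards through Steps 1--4 and use the triangle inequality at each stage.

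\textbf{Action bound.} First, in Step 4 the convex combination gives $\varrho^n\ge(1-\delta_n)\hat\varrho$. Together with $F^n=(1-\delta_n)(\hat F+b_n^2\nabla_x\hat\varrho)$, this yields
\[
\norm{F^n}_{\varrho^n}\le(1-\delta_n)^{1/2}\norm{\hat F+b_n^2\nabla_x\hat\varrho}_{\hat\varrho}\le\norm{\hat F}_{\hat\varrho}+\norm{b_n^2\nabla_x\hat\varrho}_{\hat\varrho}.
\]
Second, for the correction term I apply \cref{gm:prop:transport-action} on each time slice and integrate over $I_\varepsilon$, whose length is at most $T$. This gives $\norm{b_n^2\nabla_x\hat\varrho}_{\hat\varrho}\le\sqrt{dT}\,b_n^2/a_n$. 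Third, $\hat F=\eta_b*_v(F^{(\tau,a)}+C^{(\tau,a)})$ and $\hat\varrho=\eta_b*_v\varrho^{(\tau,a)}$, so \cref{gm:lem:kernel-cs} in the $v$-variable removes the $v$-mollification. The triangle inequality then splits the result into $\norm{F^{(\tau,a)}}_{\varrho^{(\tau,a)}}+\norm{C^{(\tau,a)}}_{\varrho^{(\tau,a)}}$. Applying \cref{gm:lem:kernel-cs} again in $x$ bounds the first piece by $\norm{F^{(\tau)}}_{\varrho^{(\tau)}}$, and \eqref{gm:eq:time-integrated} bounds that by $(TJ_T(g,W))^{1/2}$. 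For the commutator piece I integrate \eqref{gm:eq:force-commutator-action} over $I_\varepsilon$. The moment identity \eqref{gm:eq:time-moments} rewrites $\int_{I_\varepsilon}\int(1+\abs{\nabla U})^2\varrho^{(\tau)}_t$ as a time convolution of the corresponding moment of $\varrho$. Since $\tau<\varepsilon/2$ keeps the convolution inside $(0,T)$, Fubini bounds this by $M_{\nabla U}$, which is finite by \ref{gm:Q3}. The result is $C(d,C_U)a_nM_{\nabla U}^{1/2}$, and combining the three pieces gives \eqref{gm:eq:action-limsup}.

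\textbf{Fisher bound.} I use the same mechanism with the current $\nabla_v\varrho+v\varrho$. In Step 4, the equilibrium satisfies $\nabla_v\varrho_\infty+v\varrho_\infty=0$, so the current of $\varrho^n$ is $(1-\delta_n)(\nabla_v\hat\varrho+v\hat\varrho)$, and the comparison $\varrho^n\ge(1-\delta_n)\hat\varrho$ removes Step 4 as before. Convolution in $x$ commutes with $\nabla_v$ and with multiplication by $v$, so \cref{gm:lem:kernel-cs} removes the $x$-step.

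The $v$-convolution is the main obstacle, because it does not commute with multiplication by $v$. By the Gaussian identity \eqref{gm:eq:gaussian-identity},
\[
\eta_b*_v(\nabla_v f+vf)=\nabla_v\hat f+v\hat f+b^2\nabla_v\hat f,\qquad \hat f:=\eta_b*_vf.
\]
I therefore write $\nabla_v\hat f+v\hat f=\eta_b*_v(\nabla_vf+vf)-b^2\nabla_v\hat f$ and use the triangle inequality. The first term is controlled by \cref{gm:lem:kernel-cs} and then \eqref{gm:eq:time-integrated}, giving $(TI_{v,T}(g))^{1/2}$. The commutator term is handled by \cref{gm:lem:gaussian-derivative} in $v$, which gives at most $\sqrt{d}\,b^2/b=\sqrt d\,b$ per time slice. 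Integrating over $I_\varepsilon$ yields $\sqrt{dT}\,b_n$, which proves \eqref{gm:eq:fisher-limsup}.

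\textbf{Conclusion.} Finally, the densities relative to $\mu$ translate back via \eqref{gm:eq:functionals-lebesgue}. This gives $J_\varepsilon(g^n,W^n)=\norm{F^n}_{\varrho^n}^2$ and $I_{v,\varepsilon}(g^n)=\norm{\nabla_v\varrho^n+v\varrho^n}_{\varrho^n}^2$. Squaring the two bounds and letting $a_n,b_n,b_n^2/a_n\to0$ then gives \ref{gm:R5}.
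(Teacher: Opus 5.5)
Your proposal is correct and follows essentially the same route as the paper. You strip the equilibrium floor via $\varrho^n\ge(1-\delta_n)\hat\varrho$, remove the $v$-, $x$-, and time-mollifications with \cref{gm:lem:kernel-cs} and \eqref{gm:eq:time-integrated}, and control the force commutator by \eqref{gm:eq:force-commutator-action}, the transport correction by \cref{gm:prop:transport-action}, and the Gaussian commutator $b_n^2\nabla_v\hat\varrho$ in the Fisher part by \cref{gm:lem:gaussian-derivative}, exactly as the paper does, with the same constants and the same final passage to \ref{gm:R5} via \eqref{gm:eq:functionals-lebesgue}.
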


\begin{proof}
For convenience, we abbreviate $(\tau_n,a_n,b_n,\delta_n)$ by $(\tau,a,b,\delta)$.

\medskip
\noindent \emph{Action limsup bound.} Recalling the construction \eqref{gm:eq:final-approximant}, we have
\begin{equation*}
    \frac{\abs{F^n}^2}{\varrho^n}  \le(1-\delta) \frac{\abs{\hat F+b^2\nabla_x\hat\varrho}^2}{\hat\varrho},
\end{equation*}
and the triangle inequality gives
\begin{equation}\label{gm:eq:action-triangle}
    \norm{F^n}_{\varrho^n} \le \norm{\hat F}_{\hat\varrho} + b^2\norm{\nabla_x\hat\varrho}_{\hat\varrho}.
\end{equation}
By the convolution estimate \eqref{gm:eq:kernel-jensen} in $v$, there holds
\begin{equation*}
\begin{aligned}
    \norm{\hat F}_{\hat\varrho}
    &=\norm{\eta_b*_v\bigl(F^{(\tau,a)}+C^{(\tau,a)}\bigr)}_{\eta_b*_v\varrho^{(\tau,a)}}\\
    &\le\norm{F^{(\tau,a)}+C^{(\tau,a)}}_{\varrho^{(\tau,a)}} \le\norm{F^{(\tau,a)}}_{\varrho^{(\tau,a)}}
       +\norm{C^{(\tau,a)}}_{\varrho^{(\tau,a)}}.
\end{aligned}
\end{equation*}
Applying the same estimate in $x$ and then using
\eqref{gm:eq:time-integrated} yields
\begin{equation}\label{gm:eq:action-main-current}
    \norm{F^{(\tau,a)}}_{\varrho^{(\tau,a)}} \le \norm{F^{(\tau)}}_{\varrho^{(\tau)}} \le \left(\int_0^T\!\int
    \frac{\abs{F_t(x,v)}^2}{\varrho_t(x,v)}\ud x\ud v\ud t\right)^{1/2} = \bigl(T\,J_T(g,W)\bigr)^{1/2}.
\end{equation}
We now integrate \eqref{gm:eq:force-commutator-action} over $I_\varepsilon$, and then the identity \eqref{gm:eq:time-moments} gives
\begin{equation*}
\begin{aligned}
    \norm{C^{(\tau,a)}}_{\varrho^{(\tau,a)}}^2  &\le C(d,C_U)a^2\int_{I_\varepsilon}\!\int \bigl(1+\abs{\nabla U(x)}\bigr)^2 \varrho^{(\tau)}_t(x,v)\ud x\ud v\ud t\\
    &\le C(d,C_U)a^2M_{\nabla U}.
\end{aligned}
\end{equation*}
Finally, integrating \eqref{gm:eq:transport-action} in time gives
\begin{equation*}
    b^2\norm{\nabla_x\hat\varrho}_{\hat\varrho}
    \le\sqrt{d\abs{I_\varepsilon}}\,\frac{b^2}{a}
    \le\sqrt{dT}\,\frac{b^2}{a}.
\end{equation*}
Substitution into \eqref{gm:eq:action-triangle} proves \eqref{gm:eq:action-limsup}.
Keeping the time variable fixed in the same estimates, and using
\eqref{gm:eq:time-jensen}, \eqref{gm:eq:force-commutator-action}, and
\eqref{gm:eq:transport-action},
also gives, for every $t\in I_{\varepsilon/2}$,
\begin{align}
    \left(\int\frac{\abs{F^n_t}^2}{\varrho^n_t}\dz\right)^{1/2}
    &\le
    \left[\Bigl(\zeta_\tau* \int
        \frac{\abs{F_\cdot}^2}{\varrho_\cdot}\dz\Bigr)(t)\right]^{1/2}
    \notag\\
    &\quad+C(d,C_U)a
    \left[\Bigl(\zeta_\tau*\textstyle\int
        (1+\abs{\nabla U})^2\varrho_\cdot\dz\Bigr)(t)\right]^{1/2}
    +\sqrt d\,\frac{b^2}{a}.
    \label{gm:eq:slice-action-bound}
\end{align}
The right-hand side is locally bounded in $t$ because the two functions
being convolved belong to $L^1(0,T)$ by \ref{gm:P3} and \ref{gm:Q3}.

\medskip
\noindent \emph{Fisher information limsup bound.}
Noting $\nabla_v \varrho_\infty + v \varrho_\infty = 0$, it holds that
\begin{equation*}
    \nabla_v\varrho^n+v\varrho^n=(1-\delta)(\nabla_v\hat\varrho+v\hat\varrho),
    \qquad
    \frac{\abs{\nabla_v\varrho^n+v\varrho^n}^2}{\varrho^n}
    \le(1-\delta)\frac{\abs{\nabla_v\hat\varrho+v\hat\varrho}^2}{\hat\varrho}.
\end{equation*}
Thus
\begin{equation}\label{gm:eq:fisher-floor}
    \norm{\nabla_v\varrho^n+v\varrho^n}_{\varrho^n}\le\norm{\nabla_v\hat\varrho+v\hat\varrho}_{\hat\varrho}.
\end{equation}
The Gaussian identity \eqref{gm:eq:gaussian-identity} gives
\begin{equation*}
    \eta_b*_v\bigl(v\varrho^{(\tau,a)}\bigr)
    =v\hat\varrho+b^2\nabla_v\hat\varrho,
\end{equation*}
and hence
\begin{equation}\label{gm:eq:Gv-decomposition}
    \nabla_v\hat\varrho+v\hat\varrho
    =\eta_b*_v\bigl(\nabla_v\varrho^{(\tau,a)}+v\varrho^{(\tau,a)}\bigr)-b^2\nabla_v\hat\varrho.
\end{equation}
Using the triangle inequality, the convolution estimate \eqref{gm:eq:kernel-jensen} in $v$ and $x$, the commutation of $\nabla_v+v$ with $x$-convolution, and \eqref{gm:eq:time-integrated}, we obtain
\begin{equation*}
\begin{aligned}
    \norm{\eta_b*_v\bigl(\nabla_v\varrho^{(\tau,a)}+v\varrho^{(\tau,a)}\bigr)}_{\hat\varrho}
    &\le\norm{\nabla_v\varrho^{(\tau,a)}+v\varrho^{(\tau,a)}}_{\varrho^{(\tau,a)}}\\
    &\le\norm{\nabla_v\varrho^{(\tau)}+v\varrho^{(\tau)}}_{\varrho^{(\tau)}}
    \le\bigl(T\,I_{v,T}(g)\bigr)^{1/2}.
\end{aligned}
\end{equation*}
Moreover, similarly to \cref{gm:prop:transport-action}, by \cref{gm:lem:gaussian-derivative} in $v$ and integrating over $(t,x)\in I_\varepsilon\times\RR^d$, we have
\begin{equation*}
b^2\norm{\nabla_v\hat\varrho}_{\hat\varrho}    \le b\sqrt{d\abs{I_\varepsilon}} \le b\sqrt{dT}.
\end{equation*}
The corresponding fixed-time estimate of the Fisher information is
\begin{align}
    \left(\int
    \frac{\abs{\nabla_v\varrho^n_t+v\varrho^n_t}^2}{\varrho^n_t}
    \dz\right)^{1/2}
    &\le
    \left[\Bigl(\zeta_\tau* \int
    \frac{\abs{\nabla_v\varrho_\cdot+v\varrho_\cdot}^2}
         {\varrho_\cdot}\dz\Bigr)(t)\right]^{1/2}
    +\sqrt d\,b,
    \label{gm:eq:slice-fisher-bound}
\end{align}
for every $t\in I_{\varepsilon/2}$.
Its right-hand side is locally bounded in $t$ by \ref{gm:P2}.
Combining these estimates with \eqref{gm:eq:fisher-floor} and
\eqref{gm:eq:Gv-decomposition} proves \eqref{gm:eq:fisher-limsup}. The
limsup bounds follow from
$a_n\to0$ and from the identities
\begin{equation*}
J_\varepsilon(g^n,W^n)=\norm{F^n}_{\varrho^n}^2, \qquad
    I_{v,\varepsilon}(g^n)=\norm{\nabla_v\varrho^n+v\varrho^n}_{\varrho^n}^2,
\end{equation*}
by \eqref{gm:eq:functionals-lebesgue}. The proof is complete.
\end{proof}

\begin{proposition}[$L^1$ convergence]\label{gm:prop:L1}
Let $\tau_n,a_n,b_n,\delta_n\to0$, with $0<\tau_n<\varepsilon/2$, $0<a_n,b_n\le1$, and $0<\delta_n<1$. Then
\begin{equation*}
    \varrho^n\to \varrho
    \qquad\text{in }
    L^1\bigl(I_\varepsilon\times\RR^{2d};\ud t\ud x\ud v\bigr),
\end{equation*}
or, equivalently,
\begin{equation*}
    g^n\to g \qquad\text{in }  L^1\bigl(I_\varepsilon\times\RR^{2d};\ud t\ud\mu\bigr).
\end{equation*}
\end{proposition}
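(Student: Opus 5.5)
The plan is to split the error by the triangle inequality along the four approximation steps. In Lebesgue variables,
\begin{equation*}
\norm{\varrho^n-\varrho}_{L^1(I_\varepsilon\times\RR^{2d})}
\le \delta_n\norm{\hat\varrho-\varrho_\infty}_{L^1}
+\norm{\eta_{b_n}*_v\eta_{a_n}*_x(\varrho^{(\tau_n)}-\varrho)}_{L^1}
+\norm{\eta_{b_n}*_v\eta_{a_n}*_x\varrho-\varrho}_{L^1},
\end{equation*}
with all norms taken over $I_\varepsilon\times\RR^{2d}$. The equivalence with the statement for $g^n$ is immediate, since $\int\abs{g^n_t-g_t}\ud\mu=\int\abs{\varrho^n_t-\varrho_t}\dz$ by \eqref{gm:eq:dictionary}.

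The first term is at most $2\delta_n\abs{I_\varepsilon}\to0$, because both $\hat\varrho_t$ and $\varrho_\infty$ have unit mass. For the second term, Young's inequality shows that Gaussian convolution in $(x,v)$ is an $L^1$ contraction, so it is bounded by $\norm{\varrho^{(\tau_n)}-\varrho}_{L^1(I_\varepsilon\times\RR^{2d})}$. Since $\tau_n<\varepsilon/2$, the time mollification on $I_\varepsilon$ only sees $\varrho$ on $(0,T)$, where $\varrho\in L^1((0,T)\times\RR^{2d})$ (each slice has unit mass). Regard $\varrho$ as an $L^1(\RR^{2d})$-valued function of time extended by zero. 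Continuity of translations in $L^1(\RR;L^1(\RR^{2d}))$ then gives $\int\zeta_{\tau}(s)\norm{\varrho_{\cdot-s}-\varrho_\cdot}_{L^1}\ud s\to0$, so $\varrho^{(\tau_n)}\to\varrho$ in $L^1(I_\varepsilon\times\RR^{2d})$.

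For the third term, I would treat $K_{a_n,b_n}(x,v)=\eta_{a_n}(x)\eta_{b_n}(v)$ as an approximate identity on $\RR^{2d}$ with mass one, concentrating at the origin as $a_n,b_n\to0$. For each fixed $t$, the standard fact $\norm{K_{a,b}*f-f}_{L^1}\le\int K_{a,b}(z)\norm{f(\cdot-z)-f}_{L^1}\ud z\to0$ gives slice-wise convergence, with bound $\norm{K*\varrho_t-\varrho_t}_{L^1}\le2$. Dominated convergence in $t$ over the bounded interval $I_\varepsilon$ then concludes.

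I do not expect a real obstacle. The only point needing care is to apply the mollifier convergence to the time-mollified rather than the raw density only after the contraction step, so that no uniformity in $\tau_n$ is needed. The separation above arranges exactly that, since the third term involves $\varrho$ itself. No moment assumptions \ref{gm:Q1}--\ref{gm:Q3} are used, consistent with \cref{gm:rem:QQ}.
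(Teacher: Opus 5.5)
Your proposal is correct and follows essentially the paper's argument. You use the same triangle inequality, which separates the equilibrium-floor error $2\delta_n\abs{I_\varepsilon}$, the time-mollification error passed through the $L^1$-contractive Gaussian convolution, and a pure mollifier error on $\varrho$ itself. The only difference is cosmetic: you treat $\eta_{a_n}(x)\eta_{b_n}(v)$ as a single approximate identity on $\RR^{2d}$ (justified slice-wise plus dominated convergence in $t$), whereas the paper splits it into separate $x$- and $v$-convolution errors.
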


\begin{proof}
Denote by $\norm{\cdot}_1$ the $L^1$-norm on
$I_\varepsilon\times\RR^{2d}$ and extend $\varrho$ by zero outside $(0,T)$ in the
time variable. The standard properties of approximations to the identity give
\begin{equation*}
    \norm{\varrho^{(\tau)}-\varrho}_1\to 0,
    \qquad
    \norm{\eta_a*_x\varrho-\varrho}_1\to 0,
    \qquad
    \norm{\eta_b*_v\varrho-\varrho}_1\to 0
\end{equation*}
as $\tau,a,b\to0$. Since convolution with the Gaussian kernel is an $L^1$-contraction, the mollified density
\begin{equation*}
    \hat\varrho^n:=\eta_{b_n}*_v\eta_{a_n}*_x\varrho^{(\tau_n)}
\end{equation*}
satisfies, by the triangle inequality,
\begin{equation*}
\begin{aligned}
    \norm{\hat\varrho^n-\varrho}_1
    &\le\norm{\varrho^{(\tau_n)}-\varrho}_1
      +\norm{\eta_{a_n}*_x\varrho-\varrho}_1
      +\norm{\eta_{b_n}*_v\varrho-\varrho}_1
    \to 0.
\end{aligned}
\end{equation*}
Finally, both $\hat\varrho^n_t$ and $\varrho_\infty$ have unit mass, so
\begin{equation*}
    \norm{\varrho^n-\hat\varrho^n}_1
    =\delta_n\norm{\varrho_\infty-\hat\varrho^n}_1
    \le2\delta_n\abs{I_\varepsilon}
    \le2\delta_nT
    \to 0.
\end{equation*}
Thus, $\varrho^n\to \varrho$ in $L^1$. The equivalent statement for $g^n$ follows from
$\varrho^n=g^n\varrho_\infty$ and $\varrho=g\varrho_\infty$.
\end{proof}

\subsection{Verification of the corrector inequality}\label{gm:sec:verification}
We again fix $n$ and abbreviate $(\tau_n,a_n,b_n,\delta_n)$ by
$(\tau,a,b,\delta)$, where $0<\tau<\varepsilon/2$, $0<a,b\le1$, and
$0<\delta<1$. We retain the superscript $n$ on all quantities associated
with the approximant and write
$g^n=\varrho^n/\varrho_\infty$ and $W^n=F^n/\varrho_\infty$. Unless stated
otherwise, the estimates below hold on the larger window
$I_{\varepsilon/2}$ and are locally uniform in time there; the argument for
the corrector inequality \ref{gm:R3} is restricted to $I_\varepsilon$.

We first express the marginal quantities associated with $\varrho^n$ in
Lebesgue variables. For each fixed time slice, define
\begin{equation}\label{gm:eq:lebesgue-marginals}
    \check\varrho^n(x):=\int \varrho^n(x,v)\ud v,
    \qquad
    \check\jmath^n(x):=\int v\,\varrho^n(x,v)\ud v,
    \qquad
    S_L^n(x):=\int v\otimes v\,\varrho^n(x,v)\ud v.
\end{equation}
Let $\varrho_{\infty,x}:=Z_x^{-1}\e^{-U}$ be the Lebesgue density of $\mux$. The corresponding relative marginals and moments are
\begin{equation}\label{gm:eq:relative-marginals}
    q^n=\frac{\check\varrho^n}{\varrho_{\infty,x}},\qquad
    j^n=\frac{\check\jmath^n}{\varrho_{\infty,x}},\qquad
    \Piv\bigl(v\otimes v\,g^n\bigr)=\frac{S_L^n}{\varrho_{\infty,x}},\qquad
    m^n=\frac{j^n}{q^n}=\frac{\check\jmath^n}{\check\varrho^n}.
\end{equation}
The centered stress tensor is $\Theta^n=\Theta_L^n/\varrho_{\infty,x}$, where
\begin{equation}\label{gm:eq:stress-lebesgue}
    \Theta_L^n:=S_L^n-\frac{\check\jmath^n\otimes\check\jmath^n}{\check\varrho^n}
    -\check\varrho^n I_d
    =\int\bigl[(v-m^n)\otimes(v-m^n)-I_d\bigr]\varrho^n\ud v .
\end{equation}
We will repeatedly use the identity
\begin{equation}\label{gm:eq:adjoint-divergence}
    \nabla_x^{*}F
    =-\divg F+\nabla U\cdot F
    =-\frac{\divg(F\varrho_{\infty,x})}{\varrho_{\infty,x}}.
\end{equation}
In particular, $\nabla_x^{*}\Theta^n=-\divg(\Theta_L^n)/\varrho_{\infty,x}$, and
\begin{equation}\label{gm:eq:weighted-change}
    \int\frac{\abs{\nabla_x^{*}\Theta^n}^2}{q^n}\ud\mux
    =\int\frac{\abs{\divg\Theta_L^n}^2}{\check\varrho^n}\ud x,
    \qquad
    \int\frac{\abs{\nabla_xq^n}^2}{q^n}\ud\mux
    =\int\frac{\abs{\nabla\check\varrho^n+\check\varrho^n\nabla U}^2}{\check\varrho^n}\ud x .
\end{equation}
Finally, we shall repeatedly use the fact that whenever a slice quantity $\psi(t)$ is bounded by $(\zeta_\tau*\phi)(t)$ with some $\phi\in L^1(0,T)$, that bound is continuous in $t$ and therefore bounded on compact subsets of $I_{\varepsilon/2}$.

\subsubsection{Moment bounds}

We start with some useful consequences of the assumed moment bounds in \ref{gm:Q1}--\ref{gm:Q3} on the approximation sequences constructed here. For the original path \eqref{gm:eq:dictionary}, define its zeroth and first velocity moments in Lebesgue variables by
\begin{equation*}
    \bar\varrho_t(x):=\int \varrho_t(x,v)\ud v,
    \qquad
    \bar\jmath_t(x):=\int v\,\varrho_t(x,v)\ud v.
\end{equation*}

\begin{lemma}[Corollary of \ref{gm:Q1}]\label{gm:lem:fiber-transfer}
Set
\begin{equation*}
    \widetilde I
    :=\left(\frac{\varepsilon}{2}-\tau,
    T-\frac{\varepsilon}{2}+\tau\right)\subset(0,T),
\end{equation*}
and let $M_{\widetilde I}$ be the corresponding constant in \ref{gm:Q1}.
Then, for every $(t,x)\in I_{\varepsilon/2}\times\RR^d$,
\begin{equation}\label{gm:eq:fiber-transfer}
    \abs{\check\jmath^n_t(x)}\le M_{\widetilde I}\,\check\varrho^n_t(x),
    \qquad\text{equivalently}\qquad
    \abs{m^n_t(x)}=\frac{\abs{j^n_t(x)}}{q^n_t(x)}\le M_{\widetilde I}.
\end{equation}
\end{lemma}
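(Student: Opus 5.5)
The bound will be propagated through each of the four construction steps in turn: time mollification, $x$-convolution, $v$-convolution, and the convex combination with equilibrium. In each step we track the Lebesgue moments $\check\varrho$ and $\check\jmath$. The invariant to preserve is the pointwise vector inequality $\abs{\check\jmath_t(x)}\le M\,\check\varrho_t(x)$ with the single constant $M=M_{\widetilde I}$. Since $q$ and $j$ are obtained from $\check\varrho$ and $\check\jmath$ by dividing by the same factor $\varrho_{\infty,x}$, the relative form $\abs{m^n_t}\le M_{\widetilde I}$ follows immediately.

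\emph{Original path.} Assumption \ref{gm:Q1} on the compact interval $\overline{\widetilde I}$ gives $\abs{j_r(x)}\le M_{\widetilde I}\,q_r(x)$ for a.e.\ $(r,x)\in\widetilde I\times\RR^d$. Multiplying by $\varrho_{\infty,x}$ yields $\abs{\bar\jmath_r(x)}\le M_{\widetilde I}\,\bar\varrho_r(x)$. One small point: \ref{gm:Q1} is stated for compact intervals, so we apply it to the closure of $\widetilde I$. This closure lies inside $(0,T)$ because $\tau<\varepsilon/2$.

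\emph{Time mollification.} For $t\in I_{\varepsilon/2}$, the kernel $\zeta_\tau(t-\cdot)$ is supported in $(t-\tau,t+\tau)\subset\widetilde I$. Fubini gives $\check\jmath^{(\tau)}_t=\int\zeta_\tau(t-r)\bar\jmath_r\ud r$; the integrability needed for Fubini comes from \ref{gm:Q2}. Minkowski's inequality and the nonnegativity of $\zeta_\tau$ then give
\begin{equation*}
\abs{\check\jmath^{(\tau)}_t(x)}\le\int\zeta_\tau(t-r)\abs{\bar\jmath_r(x)}\ud r\le M_{\widetilde I}\,\check\varrho^{(\tau)}_t(x)
\end{equation*}
for a.e.\ $x$.

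\emph{Convolution in $x$.} The velocity moments commute with $\eta_a*_x$. Positivity of the kernel transfers the inequality in the same way, and the result now holds for every $x$, since both sides are continuous.

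\emph{Convolution in $v$.} This is the only step with a genuine computation. By Fubini, the zeroth moment is unchanged: $\int\eta_b*_v\varrho\ud v=\int\varrho\ud v$. The first moment is also unchanged, because $\eta_b$ is centered:
\begin{equation*}
\int v\,(\eta_b*_v\varrho)(x,v)\ud v=\iint (w+(v-w))\eta_b(v-w)\varrho(x,w)\ud w\ud v=\int w\,\varrho(x,w)\ud w.
\end{equation*}
The step therefore preserves both sides exactly. The integrability required is a first velocity moment, which is supplied by \ref{gm:Q2}.

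\emph{Convex combination.} The equilibrium $\varrho_\infty$ has zero first velocity moment and positive zeroth moment. Hence
\begin{equation*}
\check\jmath^n=(1-\delta)\hat{\check\jmath},\qquad \check\varrho^n=(1-\delta)\hat{\check\varrho}+\delta\varrho_{\infty,x}\ge(1-\delta)\hat{\check\varrho},
\end{equation*}
so the bound is preserved, and even improved.

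The main point requiring care is the support bookkeeping in the time step, together with the a.e.\ versus everywhere issue; once the $x$-convolution has produced continuous moments, the inequality holds pointwise everywhere.
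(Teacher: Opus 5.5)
Your proposal is correct and follows essentially the same route as the paper: you propagate $\abs{\check\jmath}\le M_{\widetilde I}\,\check\varrho$ through the time and $x$-mollifications by positivity of the kernels (using that the $\tau$-neighborhood of $I_{\varepsilon/2}$ lies in $\widetilde I$), you note that the centered $v$-convolution preserves the zeroth and first velocity moments, and you observe that the equilibrium floor adds mass but no current. Your additional bookkeeping makes explicit two points the paper leaves implicit: you apply (Q1) on the compact closure of $\widetilde I$, which lies in $(0,T)$ because $\tau<\varepsilon/2$, and you pass from a.e.\ $x$ to every $x$ after the $x$-convolution.
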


\begin{proof}
Since $q_t=\bar\varrho_t/\varrho_{\infty,x}$ and $j_t=\bar\jmath_t/\varrho_{\infty,x}$, the common factor $\varrho_{\infty,x}$ cancels in \ref{gm:Q1}. Hence
\begin{equation}\label{gm:eq:original-fiber-bound}
    \abs{\bar\jmath_t(x)}
    \le M_{\widetilde I}\,\bar\varrho_t(x)
\end{equation}
for \textup{a.e.}\ $(t,x)\in\widetilde I\times\RR^d$.

\medskip
\noindent \emph{Step 1. Time mollification and $x$-convolution.}
Let $\bar\varrho^{(\tau,a)}$ and $\bar\jmath^{(\tau,a)}$ denote the zeroth and first velocity moments after these two steps:
\begin{equation*}
    \bar\varrho^{(\tau,a)}
    =\zeta_\tau*_t\eta_a*_x\bar\varrho,
    \qquad
    \bar\jmath^{(\tau,a)}
    =\zeta_\tau*_t\eta_a*_x\bar\jmath.
\end{equation*}
Since the convolution kernel is nonnegative, and the $\tau$-neighborhood of
$I_{\varepsilon/2}$ is contained in $\widetilde I$, \eqref{gm:eq:original-fiber-bound} gives, for every
$(t,x)\in I_{\varepsilon/2}\times\RR^d$,
\begin{equation*}
    \abs{\bar\jmath^{(\tau,a)}}
    \le\zeta_\tau*_t\eta_a*_x\abs{\bar\jmath}
    \le M_{\widetilde I}\,\bar\varrho^{(\tau,a)}.
\end{equation*}

\medskip
\noindent \emph{Step 2. Velocity mollification.}
For any nonnegative integrable $f$, the centered Gaussian convolution
preserves the zeroth and first moments:
\begin{equation*}
    \int(\eta_b*_vf)(v)\ud v=\int f(w)\ud w,
    \qquad
    \int v\,(\eta_b*_vf)(v)\ud v=\int w\,f(w)\ud w
\end{equation*}
because
\begin{equation*}
    \int\eta_b(v-w)\ud v=1,
    \qquad
    \int v\,\eta_b(v-w)\ud v=w.
\end{equation*}
Consequently, if $\hat\varrho=\eta_b*_v\varrho^{(\tau,a)}$ and we set
\begin{equation*}
    \widehat{\bar\varrho}(t,x):=\int\hat\varrho_t(x,v)\ud v,
    \qquad
    \widehat{\bar\jmath}(t,x):=\int v\,\hat\varrho_t(x,v)\ud v,
\end{equation*}
then $\widehat{\bar\varrho}=\bar\varrho^{(\tau,a)}$ and $\widehat{\bar\jmath}=\bar\jmath^{(\tau,a)}$, and hence
\begin{equation}\label{gm:eq:hat-fiber-bound}
    \abs{\widehat{\bar\jmath}}
    \le M_{\widetilde I}\,\widehat{\bar\varrho}.
\end{equation}

\medskip
\noindent \emph{Step 3. Equilibrium floor.}
Since the equilibrium density has spatial marginal
$\varrho_{\infty,x}$ and vanishing first velocity moment, the final
convex combination gives
\begin{equation*}
    \check\varrho^n
    =(1-\delta)\widehat{\bar\varrho}+\delta\varrho_{\infty,x},
    \qquad
    \check\jmath^n=(1-\delta)\widehat{\bar\jmath}.
\end{equation*}
Combining this with \eqref{gm:eq:hat-fiber-bound}, we obtain
\begin{equation*}
    \abs{\check\jmath^n}
    \le(1-\delta)M_{\widetilde I}\widehat{\bar\varrho}
    \le M_{\widetilde I}\check\varrho^n.
\end{equation*}
Finally, \eqref{gm:eq:relative-marginals} gives
$m^n=\check\jmath^n/\check\varrho^n=j^n/q^n$, proving the equivalent conditional-mean bound in \eqref{gm:eq:fiber-transfer}.
\end{proof}

\begin{lemma}[Integrated fourth moments]\label{gm:lem:fourth-transfer}
Under \ref{gm:Q2}, for every compact $K\subset I_{\varepsilon/2}$,
there exists $C_K<\infty$ such that
\begin{equation}\label{gm:eq:fourth-transfer}
    \sup_{t\in K}\int(1+\abs v^4)\,\varrho^n_t(x,v)\ud x\ud v
    \le C_K.
\end{equation}
\end{lemma}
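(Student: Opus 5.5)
The plan is to track the moment $\int(1+\abs v^4)\varrho_t\,\dz$ through the four construction steps, exactly as \cref{gm:lem:fiber-transfer} tracked the first moment. At each step we show it is bounded by an explicit quantity that is locally bounded on $I_{\varepsilon/2}$. Throughout, set $\Psi(r):=\int(1+\abs v^4)\varrho_r\,\dz$. By \ref{gm:Q2}, $\Psi\in L^1(0,T)$, after extending $\varrho$ by zero outside $(0,T)$.

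\emph{Time mollification.} The weight $1+\abs v^4$ is nonnegative and independent of time, so the moment identity \eqref{gm:eq:time-moments} gives
\begin{equation*}
\int(1+\abs v^4)\varrho^{(\tau)}_t\,\dz=(\zeta_\tau*\Psi)(t).
\end{equation*}
For fixed $\tau$, the right-hand side is continuous in $t$ on $I_{\varepsilon/2}$ and hence bounded on every compact $K$.

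\emph{Convolution in $x$.} The weight does not depend on $x$, and $x$-convolution preserves integrals over $\RR^d_x$ for each fixed $v$. Hence the moment of $\varrho^{(\tau,a)}_t$ is unchanged.

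\emph{Convolution in $v$.} This is the only step needing a small computation, and it is the place I expect any difficulty. Substituting $v=w+z$, where $z$ is distributed with density $\eta_b$, and using $\abs{w+z}^4\le 8(\abs w^4+\abs z^4)$, we get
\begin{equation*}
\int\abs v^4\,(\eta_b*_vf)(v)\ud v\le 8\int\abs w^4 f(w)\ud w+8\Bigl(\int\abs z^4\eta_b(z)\ud z\Bigr)\int f(w)\ud w.
\end{equation*}
The Gaussian fourth moment is at most $C_d b^4\le C_d$, since $b\le1$. Integrating in $x$ then yields
\begin{equation*}
\int(1+\abs v^4)\hat\varrho_t\,\dz\le C_d\int(1+\abs v^4)\varrho^{(\tau,a)}_t\,\dz.
\end{equation*}
An exact expansion of $\abs{w+z}^4$ would also work, using that the odd Gaussian moments vanish, but the crude bound suffices.

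\emph{Equilibrium floor.} By convexity,
\begin{equation*}
\int(1+\abs v^4)\varrho^n_t\,\dz\le(1-\delta)\int(1+\abs v^4)\hat\varrho_t\,\dz+\delta\int(1+\abs v^4)\,\kv(\ud v),
\end{equation*}
and the last integral is a finite Gaussian moment.

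Combining the four steps gives
\begin{equation*}
\sup_{t\in K}\int(1+\abs v^4)\varrho^n_t\,\dz\le C_d\sup_{t\in K}(\zeta_\tau*\Psi)(t)+C_d=:C_K.
\end{equation*}
This constant is finite because $\Psi\in L^1$ and $\zeta_\tau*\Psi$ is continuous. It depends on $n$ through $\tau_n$, which is acceptable because the lemma fixes $n$.

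There is no real obstacle here. The one point to check is that the $\tau$-neighbourhood of $K$ stays inside $(0,T)$, which holds since $\tau<\varepsilon/2$; this is needed for $\zeta_\tau*\Psi$ to use only the values of $\Psi$ on $(0,T)$ and for the zero extension to be harmless.
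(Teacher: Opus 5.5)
Your proposal is correct and follows the paper's proof essentially step by step: the time-mollification moment identity \eqref{gm:eq:time-moments} combined with \ref{gm:Q2}, invariance of velocity moments under $x$-convolution, the bound $\abs{w+z}^4\le 8(\abs w^4+\abs z^4)$ for the $v$-convolution, and the finite Gaussian moment contributed by the equilibrium floor. The only cosmetic difference is that the paper keeps the additive term $8c_d b^4$ explicit, whereas you absorb it into a multiplicative constant using $b\le 1$ and unit mass.
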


\begin{proof}
By \ref{gm:Q2} and \eqref{gm:eq:time-moments},
\begin{equation*}
    t\longmapsto
    \int(1+\abs v^4)\varrho^{(\tau)}_t(x,v)\ud x\ud v
\end{equation*}
is bounded by the $\zeta_\tau$-average of an $L^1(0,T)$ function and is therefore locally bounded. The $x$-convolution preserves velocity moments, while for the $v$-convolution, the inequality $\abs{w+z}^4\le8(\abs w^4+\abs z^4)$ gives
\begin{equation*}
    \int(1+\abs v^4)\hat\varrho_t(x,v)\ud x\ud v
    \le8\int(1+\abs w^4)\varrho^{(\tau)}_t(x,w)\ud x\ud w
    +8c_db^4,
\end{equation*}
which is also locally bounded, where $c_d:=\int\abs z^4\eta_1(z)\ud z$. Finally, the convex combination with equilibrium adds the finite Gaussian moment $\delta\int(1+\abs v^4)\varrho_\infty\ud x\ud v$, proving the claim.
\end{proof}

\begin{lemma}[Stress energy bound]\label{gm:lem:stress-L2}
For every $t\in I_{\varepsilon/2}$,
\begin{equation}\label{gm:eq:stress-L2}
    \int\frac{\abs{\Theta^n_t}^2}{q^n_t}\ud\mux
    =\int\frac{\abs{\Theta^n_{L,t}}^2}{\check\varrho^n_t}\ud x
    \le C_d\int(1+\abs v^4)\varrho^n_t(x,v)\ud x\ud v.
\end{equation}
In particular, the stress energy is locally bounded in time.
\end{lemma}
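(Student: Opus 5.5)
The equality is a change of variables. From \eqref{gm:eq:relative-marginals}, $\Theta^n=\Theta^n_L/\varrho_{\infty,x}$ and $q^n=\check\varrho^n/\varrho_{\infty,x}$, while $\ud\mux=\varrho_{\infty,x}\ud x$. Hence
\begin{equation*}
\frac{\abs{\Theta^n}^2}{q^n}\,\varrho_{\infty,x}=\frac{\abs{\Theta^n_L}^2}{\check\varrho^n}
\end{equation*}
pointwise. Here $\check\varrho^n\ge\delta\varrho_{\infty,x}>0$, so there is no division by zero.

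For the inequality, I work fiberwise at a fixed $(t,x)$ using the representation \eqref{gm:eq:stress-lebesgue}:
\begin{equation*}
\Theta^n_L(x)=\int\bigl[(v-m^n)\otimes(v-m^n)-I_d\bigr]\varrho^n(x,v)\ud v .
\end{equation*}
I apply Cauchy--Schwarz (equivalently Jensen) with respect to the measure $\varrho^n(x,v)\ud v$, whose total mass is $\check\varrho^n(x)$. This gives
\begin{equation*}
\abs{\Theta^n_L(x)}^2\le\check\varrho^n(x)\int\bigl\lvert(v-m^n)\otimes(v-m^n)-I_d\bigr\rvert^2\varrho^n(x,v)\ud v .
\end{equation*}
Next I bound the integrand. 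The Frobenius norm satisfies
\begin{equation*}
\bigl\lvert(v-m^n)\otimes(v-m^n)-I_d\bigr\rvert^2\le 2\abs{v-m^n}^4+2d .
\end{equation*}
Since $m^n(x)$ is the conditional mean, I cannot bound $\abs{v-m^n}^4\le 8(\abs v^4+\abs{m^n}^4)$ naively and expect a uniform constant unless $\abs{m^n}^4$ is controlled. I would therefore use the centering. The fourth central moment is controlled by the raw fourth moment: with $\mathbb E$ denoting the fiber average under $\varrho^n(x,\cdot)/\check\varrho^n(x)$, Jensen gives $\abs{m^n}^4=\abs{\mathbb E V}^4\le\mathbb E\abs V^4$. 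Hence
\begin{equation*}
\mathbb E\abs{V-m^n}^4\le 8\,\mathbb E\abs V^4+8\abs{m^n}^4\le 16\,\mathbb E\abs V^4 .
\end{equation*}
Multiplying back by $\check\varrho^n$ yields
\begin{equation*}
\int\abs{v-m^n}^4\varrho^n\ud v\le 16\int\abs v^4\varrho^n\ud v .
\end{equation*}

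Dividing by $\check\varrho^n(x)$ and integrating in $x$ then gives \eqref{gm:eq:stress-L2}, with $C_d=32+2d$ (any dimensional constant of this form suffices). The local-in-time boundedness follows immediately from \cref{gm:lem:fourth-transfer}. The only mildly delicate point is the centering step, namely using Jensen to absorb $\abs{m^n}^4$. This avoids appealing to the bound $\abs{m^n}\le M_{\widetilde I}$ from \cref{gm:lem:fiber-transfer}, which would otherwise introduce a constant depending on the time window.
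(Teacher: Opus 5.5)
Your proof is correct and follows the paper's argument: the same change of variables for the identity, Jensen/Cauchy--Schwarz on the fiber, the split $\abs{v-m^n}^4\le 8(\abs v^4+\abs{m^n}^4)$ with Jensen absorbing $\abs{m^n}^4$, and the same constant $C_d=32+2d$. The only cosmetic difference is that the paper first writes $\Theta^n_L=\check\varrho^n(\Sigma^n-I_d)$ and splits off the $I_d$ term before applying Jensen to $\Sigma^n$, whereas you apply Cauchy--Schwarz to the whole centered integrand; the paper's proof also avoids the bound $\abs{m^n}\le M_{\widetilde I}$, so that part of your argument is not a departure from it.
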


\begin{proof}
The identity follows directly from
$\Theta^n=\Theta_L^n/\varrho_{\infty,x}$,
$q^n=\check\varrho^n/\varrho_{\infty,x}$, and
$\ud\mux=\varrho_{\infty,x}\ud x$. Let
\begin{equation*}
    \Sigma^n(x):=\frac{1}{\check\varrho^n(x)}
    \int(v-m^n(x))\otimes(v-m^n(x))\varrho^n(x,v)\ud v
\end{equation*}
be the conditional covariance, so that
$\Theta_L^n=\check\varrho^n(\Sigma^n-I_d)$. Then
\begin{equation*}
    \frac{\abs{\Theta_L^n}^2}{\check\varrho^n}
    \le2\check\varrho^n\,\abs{\Sigma^n}^2+2d\check\varrho^n.
\end{equation*}
Jensen's inequality gives
\begin{equation*}
    \check\varrho^n\,\abs{\Sigma^n}^2
    \le\int\abs{v-m^n}^4\varrho^n\ud v.
\end{equation*}
Moreover, $\abs{v-m^n}^4\le8(\abs v^4+\abs{m^n}^4)$, while by Jensen's inequality again, we have
$\abs{m^n}^4\check\varrho^n\le\int\abs v^4\varrho^n\ud v$. Integration in $x$ proves
\eqref{gm:eq:stress-L2}, for instance with $C_d=32+2d$.
\end{proof}

\begin{lemma}[Slice moments]\label{gm:lem:slice-moments}
For every compact $K\subset I_{\varepsilon/2}$,
\begin{equation*}
    \sup_{t\in K}\int
    \bigl(1+\abs x^2+U(x)+\abs{\nabla U(x)}^2\bigr)
    \varrho^n_t(x,v)\ud x\ud v<\infty.
\end{equation*}
\end{lemma}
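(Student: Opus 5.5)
The plan is to follow $\varrho^n$ through the four construction steps (time mollification, $x$-convolution, $v$-convolution, equilibrium floor) and, at each step, reduce the weighted moment to a time-averaged moment of the original path. Such an average is locally bounded whenever the original moment is in $L^1(0,T)$, by \eqref{gm:eq:time-moments} and the remark at the start of \cref{gm:sec:verification}. Since the weight $\phi(x)=1+\abs x^2+U(x)+\abs{\nabla U(x)}^2$ depends only on $x$, two of the steps are harmless. The $v$-convolution preserves $\int\phi(x)(\cdot)\ud v$ exactly. The floor adds only $\delta\int\phi\,\ud\mux$, which is finite by \cref{gm:lem:exp-integrability}: (ii) controls $U$ (take $\lambda$ small and use $U\le \lambda^{-1}\e^{\lambda U}$), (iii) controls $\abs{\nabla U}^2$, and (iv) controls $\abs x^2$. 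So the whole task is the $x$-convolution step. There I must bound $\int (\eta_a*\phi)(y)\,\varrho^{(\tau)}_t(y,v)\ud y\ud v$, with $\eta_a*\phi(y)=\int\eta_a(z)\phi(y+z)\ud z$, by a multiple of $\int \tilde\phi\,\varrho^{(\tau)}_t$ for an integrable moment $\tilde\phi$ of the original path.

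I will treat the four terms in turn, using \cref{gm:lem:tame-growth} and $a\le1$. The constant term is trivial. For the quadratic term, $\abs{y+z}^2\le 2\abs y^2+2\abs z^2$, and Gaussian moments of $\eta_a$ are bounded. For the gradient term, \eqref{gm:eq:gronwall-gradU} gives $\abs{\nabla U(y+z)}^2\le(1+\abs{\nabla U(y)})^2\e^{2C_U\abs z}$, and $\int\eta_a\e^{2C_U\abs z}\ud z\le C(d,C_U)$; the result is controlled by \ref{gm:Q3}. For the potential term, \eqref{gm:eq:U-growth} gives $U(y+z)\le U(y)+(1+\abs{\nabla U(y)})\abs z\e^{C_U\abs z}$. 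After integrating against $\eta_a$, this is bounded by $U(y)+C(1+\abs{\nabla U(y)})$, and the second piece is again handled by \ref{gm:Q3}.

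It remains to check that the original path has time-integrable moments $\int_0^T\int(\abs x^2+U)g_t\ud\mu\ud t<\infty$. For $U$ (and hence $\abs x$), I apply the entropy variational inequality \eqref{eq:variational} with $\phi=\lambda U$ and $\lambda<1$: it gives $\lambda\int U g_t\ud\mu\le\Ent_\mu(g_t)+\log\int\e^{\lambda U}\ud\mux$, and the time integral is finite by \ref{gm:P1} and \cref{gm:lem:exp-integrability}(ii). For $\abs x^2$, I use $\phi=s\abs x^2$ with $s<\rho/2$ together with (iv). Combining these bounds via \eqref{gm:eq:time-moments}, each moment of $\varrho^{(\tau)}_t$ is a $\zeta_\tau$-average of an $L^1(0,T)$ function, hence bounded on compact $K\subset I_{\varepsilon/2}$. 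The steps above then transfer the bound to $\varrho^n_t$.

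The main obstacle I expect is the $x$-convolution of $U$ and $\abs{\nabla U}^2$, since the tame growth allows exponential growth in $\abs z$. It is resolved only because Gaussian tails dominate $\e^{C\abs z}$; the remaining estimates are routine.
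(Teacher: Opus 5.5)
Your proof is correct and follows the paper's argument essentially step for step. Like the paper, you get time-integrable $\abs x^2$ and $U$ moments from the Gibbs variational inequality combined with \cref{gm:lem:exp-integrability}(ii),(iv) and \ref{gm:P1}, take the gradient moment from \ref{gm:Q3}, and pass through the $\zeta_\tau$-averages, the three tame-growth bounds for the $x$-convolution, moment preservation under the $v$-convolution, and \cref{gm:lem:exp-integrability}(ii)--(iv) for the equilibrium floor. The only difference is cosmetic: you use a general $\lambda<1$ where the paper takes $\lambda=1/2$.
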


\begin{proof}
Choose $0<s_0<\rho/2$. The exponential-integrability bounds in
\cref{gm:lem:exp-integrability}(ii),(iv) allow us to apply Gibbs variational inequality \eqref{eq:variational} separately with
\begin{equation*}
    \phi=s_0\abs{x}^2
    \qquad\text{and}\qquad
    \phi=\frac12U,
\end{equation*}
and with $g=g_t$ in both cases.
This gives
\begin{equation*}
    \int\bigl(\abs x^2+U(x)\bigr)g_t\ud\mu
    \le C\bigl(\Ent_\mu(g_t)+1\bigr)
\end{equation*}
for \textup{a.e.}\ $t$. The right-hand side is integrable in time by \ref{gm:P1},
while \ref{gm:Q3} gives the time integrability of
$\int(1+\abs{\nabla U})^2g_t\ud\mu$. Hence time mollification turns all
these moments into locally bounded $\zeta_\tau$-averages, by
\eqref{gm:eq:time-moments}. The $v$-convolution leaves spatial moments
unchanged.
For the $x$-convolution, \cref{gm:lem:tame-growth} gives
\begin{align*}
    \int\abs x^2\,\eta_a(x-y)\ud x&\le2\abs y^2+2da^2,\\
    \int U(x)\,\eta_a(x-y)\ud x&\le U(y)+\bigl(1+\abs{\nabla U(y)}\bigr)\int\abs z\e^{C_U\abs z}\eta_a(z)\ud z,\\
    \int\abs{\nabla U(x)}^2\eta_a(x-y)\ud x&\le\bigl(1+\abs{\nabla U(y)}\bigr)^2\int\e^{2C_U\abs z}\eta_a(z)\ud z,
\end{align*}
and all Gaussian factors on the right are finite for fixed $a\le1$.
Finally, the corresponding equilibrium moments from the equilibrium floor are finite by \cref{gm:lem:exp-integrability}(ii)--(iv). The proof is complete.
\end{proof}

\subsubsection{Weighted derivative bounds}

\begin{lemma}[Weighted spatial Fisher bounds]\label{gm:lem:weighted-fisher}
For every compact $K\subset I_{\varepsilon/2}$, there exists
$C'_K<\infty$ such that, for every $t\in K$,
\begin{align}
    \int\frac{\abs{\nabla_x\check\varrho^n_t}^2}{\check\varrho^n_t}\ud x
    &\le(1-\delta)\frac d{a^2}
    +\delta\int\abs{\nabla U}^2\ud\mux<\infty,
    \label{gm:eq:marg-fisher}\\
    \int(1+\abs v^4)
    \frac{\abs{\nabla_x\varrho^n_t(x,v)}^2}{\varrho^n_t(x,v)}\ud x\ud v
    &\le(1-\delta)\frac d{a^2}\,C'_K
    +\delta\,c_{4,d}\int\abs{\nabla U}^2\ud\mux<\infty,
    \label{gm:eq:weighted-fisher}
\end{align}
where $c_{4,d}:=\int(1+\abs v^4)\kv(\ud v)$.
\end{lemma}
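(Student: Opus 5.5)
The plan is to use the structure $\varrho^n=(1-\delta)\hat\varrho+\delta\varrho_\infty$ together with joint convexity of the Fisher-information integrand $(f,G)\mapsto\abs{G}^2/f$. Both \eqref{gm:eq:marg-fisher} and \eqref{gm:eq:weighted-fisher} should then split into a $\hat\varrho$-contribution with weight $(1-\delta)$ and an equilibrium contribution with weight $\delta$.

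\emph{Marginal bound \eqref{gm:eq:marg-fisher}.} The marginal satisfies $\check\varrho^n=(1-\delta)\check{\hat\varrho}+\delta\varrho_{\infty,x}$, where $\check{\hat\varrho}=\eta_a*_x\bar\varrho^{(\tau)}$ because $v$-convolution preserves the zeroth moment. By joint convexity,
\begin{equation*}
\frac{\abs{\nabla\check\varrho^n}^2}{\check\varrho^n}\le(1-\delta)\frac{\abs{\nabla\check{\hat\varrho}}^2}{\check{\hat\varrho}}+\delta\frac{\abs{\nabla\varrho_{\infty,x}}^2}{\varrho_{\infty,x}}.
\end{equation*}
The first term integrates to at most $d/a^2$ by \cref{gm:lem:gaussian-derivative}, since $\bar\varrho^{(\tau)}_t$ has unit mass. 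The second term integrates to $\int\abs{\nabla U}^2\ud\mux$, which is finite by \cref{gm:lem:exp-integrability}(iii).

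\emph{Weighted bound \eqref{gm:eq:weighted-fisher}.} The same convexity applies pointwise in $(x,v)$, with the nonnegative weight $1+\abs v^4$ carried along. The equilibrium part equals $\int(1+\abs v^4)\abs{\nabla U}^2\varrho_\infty\dz=c_{4,d}\int\abs{\nabla U}^2\ud\mux$ by the product structure. For the $\hat\varrho$ part, write $\hat\varrho_t=\eta_a*_x f_t$ with $f_t:=\eta_b*_v\varrho^{(\tau)}_t$, and apply \eqref{gm:eq:gaussian-derivative} at fixed $v$:
\begin{equation*}
\frac{\abs{\nabla_x\hat\varrho_t(x,v)}^2}{\hat\varrho_t(x,v)}\le a^{-4}\int\abs{x-y}^2\eta_a(x-y)f_t(y,v)\ud y.
\end{equation*}
Multiply by $1+\abs v^4$ and integrate first in $x$, which produces the factor $da^2$, and then in $(y,v)$. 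This gives the bound $(d/a^2)\int(1+\abs v^4)f_t\dz$. The velocity fourth moment of $f_t=\eta_b*_v\varrho^{(\tau)}_t$ is controlled exactly as in the proof of \cref{gm:lem:fourth-transfer}: by $8\int(1+\abs w^4)\varrho^{(\tau)}_t+8c_db^4$. By \ref{gm:Q2} and \eqref{gm:eq:time-moments}, this is a $\zeta_\tau$-average of an $L^1(0,T)$ function, hence continuous and bounded on compact $K$. Call that bound $C'_K$.

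The only step needing care is the pointwise justification of joint convexity where $\hat\varrho$ might vanish. In fact $\hat\varrho>0$ everywhere, being a Gaussian convolution of a unit-mass density, so no division issues arise. The remaining work is bookkeeping to confirm that $C'_K$ is independent of $t\in K$, which the continuity of the $\zeta_\tau$-average supplies.
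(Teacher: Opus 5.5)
Your proposal is correct and follows the paper's proof essentially step for step. The ingredients match: joint convexity of $(f,G)\mapsto\abs G^2/f$ across the equilibrium floor, and the pointwise Gaussian-derivative bound \eqref{gm:eq:gaussian-derivative} for $\hat\varrho=\eta_a*_x(\eta_b*_v\varrho^{(\tau)})$ with the weight $1+\abs v^4$ carried along. As in the paper, you then use the $v$-convolution fourth-moment estimate from \cref{gm:lem:fourth-transfer} to obtain $C'_K$. The only difference is cosmetic: you prove the marginal bound first, whereas the paper obtains it afterwards by the same argument applied to $\check\varrho^n=(1-\delta)\eta_a*_x\bar\varrho^{(\tau)}+\delta\varrho_{\infty,x}$.
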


\begin{proof}
The joint convexity of the map $(f,F)\mapsto\abs F^2/f$ and the decomposition
$\varrho^n=(1-\delta)\hat\varrho+\delta\varrho_\infty$ give, pointwise,
\begin{equation*}
    \frac{\abs{\nabla_x\varrho^n}^2}{\varrho^n}
    \le(1-\delta)\frac{\abs{\nabla_x\hat\varrho}^2}{\hat\varrho}
    +\delta\frac{\abs{\nabla_x\varrho_\infty}^2}{\varrho_\infty}
    =(1-\delta)\frac{\abs{\nabla_x\hat\varrho}^2}{\hat\varrho}
    +\delta\,\abs{\nabla U}^2\varrho_\infty.
\end{equation*}
Set $\tilde\varrho:=\eta_b*_v\varrho^{(\tau)}$, so that
$\hat\varrho=\eta_a*_x\tilde\varrho$. By \cref{gm:lem:gaussian-derivative},
\begin{equation*}
    \frac{\abs{\nabla_x\hat\varrho_t(x,v)}^2}{\hat\varrho_t(x,v)}
    \le\frac1{a^4}\int\abs{x-y}^2\eta_a(x-y)
    \tilde\varrho_t(y,v)\ud y.
\end{equation*}
Multiplying by $1+\abs v^4$ and integrating in $(x,v)$ gives
\begin{equation*}
    \int(1+\abs v^4)\frac{\abs{\nabla_x\hat\varrho_t}^2}{\hat\varrho_t}  \ud x\ud v \le\frac d{a^2}\int(1+\abs v^4)\tilde\varrho_t\ud x\ud v.
\end{equation*}
The $v$-convolution estimate from the proof of \cref{gm:lem:fourth-transfer} gives
\begin{equation*}
    \sup_{t\in K}\int(1+\abs v^4)\tilde\varrho_t\ud x\ud v
    \le C'_K<\infty.
\end{equation*}
The equilibrium floor contribution is
\begin{equation*}
    \int(1+\abs v^4)\abs{\nabla U(x)}^2\varrho_\infty(x,v)
    \ud x\ud v
    =c_{4,d}\int\abs{\nabla U}^2\ud\mux<\infty
\end{equation*}
by \cref{gm:lem:exp-integrability}(iii), proving \eqref{gm:eq:weighted-fisher}. Applying the same argument to
\begin{equation*}
    \check\varrho^n=(1-\delta)\eta_a*_x\bar \varrho^{(\tau)}
    +\delta\varrho_{\infty,x},
    \qquad
    \bar \varrho^{(\tau)}(x):=\int \varrho^{(\tau)}(x,v)\ud v,
\end{equation*}
and using $\abs{\nabla\varrho_{\infty,x}}^2/\varrho_{\infty,x} =\abs{\nabla U}^2\varrho_{\infty,x}$
proves \eqref{gm:eq:marg-fisher}.
\end{proof}

\subsubsection{Marginal floor, entropy, and current bounds}

\begin{proposition}[Marginal bounds]\label{gm:prop:A1}
For every $t\in I_{\varepsilon/2}$, $q^n_t$ is a probability density with
respect to $\mux$ and satisfies
\begin{equation*}
    q^n_t\ge\delta,
    \qquad
    \int\frac{\abs{j^n_t}^2}{q^n_t}\ud\mux\le M_{\widetilde I}^2.
\end{equation*}
Moreover, $\Ent_{\mux}(q^n_t)<\infty$, locally uniformly in $t$.
\end{proposition}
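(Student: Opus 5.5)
We argue slice by slice for $t\in I_{\varepsilon/2}$, using the Lebesgue-variable description \eqref{gm:eq:relative-marginals} and the decomposition from Step~4 of the construction. There we have
\begin{equation*}
\check\varrho^n_t=(1-\delta)\widehat{\bar\varrho}_t+\delta\varrho_{\infty,x},
\qquad
\check\jmath^n_t=(1-\delta)\widehat{\bar\jmath}_t,
\end{equation*}
as recorded in Step~3 of the proof of \cref{gm:lem:fiber-transfer}.

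\emph{Normalization and floor.} Since $\hat\varrho_t$ has unit Lebesgue mass, so does $\widehat{\bar\varrho}_t$, and therefore
\begin{equation*}
\int q^n_t\ud\mux=\int\check\varrho^n_t\ud x=1.
\end{equation*}
Dividing the decomposition by $\varrho_{\infty,x}>0$ gives
\begin{equation*}
q^n_t=(1-\delta)\frac{\widehat{\bar\varrho}_t}{\varrho_{\infty,x}}+\delta\ge\delta,
\end{equation*}
because $\widehat{\bar\varrho}_t\ge0$.

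\emph{Current bound.} By \cref{gm:lem:fiber-transfer}, $\abs{j^n_t}\le M_{\widetilde I}q^n_t$ pointwise. Hence
\begin{equation*}
\frac{\abs{j^n_t}^2}{q^n_t}\le M_{\widetilde I}^2q^n_t,
\end{equation*}
and integrating against $\mux$ with the unit mass of $q^n_t$ yields the claimed bound $M_{\widetilde I}^2$.

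\emph{Finite entropy.} The data processing inequality (marginalization in $v$ does not increase relative entropy) gives
\begin{equation*}
\Ent_{\mux}(q^n_t)\le\Ent_\mu(g^n_t).
\end{equation*}
By convexity of $s\mapsto s\log s$ and $\Ent_\mu(1)=0$, we get $\Ent_\mu(g^n_t)\le(1-\delta)\Ent_\mu(\hat g_t)$ with $\hat g_t=\hat\varrho_t/\varrho_\infty$. The remaining task is to bound $\Ent_\mu(\hat g_t)$ locally uniformly in $t$, and this is the only nontrivial step.

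We write
\begin{equation*}
\Ent_\mu(\hat g_t)=\int\hat\varrho_t\log\hat\varrho_t\ud x\ud v+\int\hat\varrho_t\Bigl(U+\frac{\abs v^2}{2}\Bigr)\ud x\ud v+\log Z_\infty.
\end{equation*}
\begin{itemize}
\item \emph{Potential and kinetic terms.} These are controlled, locally uniformly on compacts $K\subset I_{\varepsilon/2}$, by \cref{gm:lem:slice-moments} (the $U$ moment) and \cref{gm:lem:fourth-transfer} (the $\abs v^2$ moment).
\item \emph{Lebesgue entropy term.} Gaussian convolution gives $\hat\varrho_t\le\norm{K_{a,b}}_\infty$, so $\int\hat\varrho_t\log\hat\varrho_t\le\log\norm{K_{a,b}}_\infty$ is finite for fixed $(a,b)$.
\end{itemize}
Only upper bounds are needed here, since $\Ent_\mu\ge0$. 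Alternatively, one can bound directly $\int\hat\varrho_t\log\hat\varrho_t\le\int\varrho^{(\tau)}_t\log\varrho^{(\tau)}_t$, which follows from convexity of convolution, and then control the latter by \eqref{gm:eq:time-entropy} together with the moments; but the $L^\infty$ route is simpler.

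Putting the steps together, $\Ent_{\mux}(q^n_t)\le(1-\delta)\sup_{t\in K}\Ent_\mu(\hat g_t)<\infty$ for every compact $K\subset I_{\varepsilon/2}$, which proves the local uniform finiteness.
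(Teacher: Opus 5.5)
Your proof is correct. The normalization, the floor, and the current bound are handled as in the paper; the current bound is $\int\abs{m^n_t}^2q^n_t\ud\mux\le M_{\widetilde I}^2$ via \cref{gm:lem:fiber-transfer}. For the entropy, however, you take a genuinely different route.

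The paper stays on the spatial marginal. It writes $\Ent_{\mux}(q^n_t)=\int\check\varrho^n\log\check\varrho^n\ud x+\int U\check\varrho^n\ud x+\log Z_x$ and bounds the positive part of the Lebesgue entropy by $\log_+\norm{\check\varrho^n}_{L^\infty}$, using Young's inequality together with $\norm{\varrho_{\infty,x}}_{L^\infty}\le Z_x^{-1}$ for the floor. It then controls the negative part by splitting $\{\check\varrho^n<1\}$ at the threshold $\e^{-1-\abs x^2}$ and invoking the $\abs x^2$ slice moment. This proves that every term of the splitting is finite, using only spatial moments.

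You instead lift to phase space by data processing and strip off the equilibrium floor by convexity of $s\log s$. You then use only the pointwise bound $\hat\varrho_t\le\norm{K_{a,b}}_\infty$. Since $\Ent_\mu\ge0$, an upper bound is all that is needed, so the negative part of the Lebesgue entropy never has to be examined. This is shorter. It also gives the stronger conclusion that $\Ent_\mu(g^n_t)$ itself is locally bounded, which is what \cref{gm:lem:entropy-identity} actually uses; the paper recovers that bound there from the marginal bound via the Gaussian LSI and the Fisher slice bound.

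The price is that you also need the second velocity moment of $\hat\varrho_t$. You take it from \cref{gm:lem:fourth-transfer}, and hence from \ref{gm:Q2}. This is harmless inside \cref{gm:thm:recovery}. It would, however, blur the bookkeeping of \cref{gm:rem:QQ}, where the entropy part of this proposition, and thus \ref{gm:R2}, relies only on \ref{gm:P1} and \ref{gm:Q3}. You can avoid this by getting the $\abs v^2$ moment directly from \ref{gm:P1} through \eqref{eq:variational} with $\phi=\abs v^2/4$. That is exactly how \cref{gm:lem:slice-moments} handles the $\abs x^2$ and $U$ moments.
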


\begin{proof}
We suppress the time index $t$ throughout
proof for simplicity. First, by  \cref{gm:lem:fiber-transfer}, we have
\begin{equation*}
    \int\frac{\abs{j^n}^2}{q^n}\ud\mux
    =\int\abs{m^n}^2q^n\ud\mux
    \le M_{\widetilde I}^2.
\end{equation*}
For the marginal entropy boundedness, by  $q^n=\check\varrho^n/\varrho_{\infty,x}$ and $\ud\mux=\varrho_{\infty,x}\ud x$, we compute
\begin{equation} \label{auxeq_ent}
    \Ent_{\mux}(q^n) = \int\check\varrho^n\log\check\varrho^n\ud x + \int U\check\varrho^n\ud x+\log Z_x.
\end{equation}
The potential term $\int U\check\varrho^n\ud x$ is locally bounded in time by \cref{gm:lem:slice-moments}. It remains to show that
$\check\varrho^n\log\check\varrho^n\in L^1(\ud x)$, locally uniformly in
time. We first control its positive part. Setting $\bar\varrho^{(\tau)}(x)
 :=\int\varrho^{(\tau)}(x,v)\ud v$, the  $v$ convolution preserves the spatial marginal, so the construction gives
\begin{equation*}
    \check\varrho^n
    =(1-\delta)\eta_a*_x\bar\varrho^{(\tau)}
    +\delta\varrho_{\infty,x}.
\end{equation*}
Young's convolution inequality yields
\begin{equation*}
\begin{aligned}
    \norm{\check\varrho^n}_{L^\infty}
    &\le(1-\delta)
      \norm{\eta_a}_{L^\infty}
      \norm{\bar\varrho^{(\tau)}}_{L^1}      +\delta\norm{\varrho_{\infty,x}}_{L^\infty}\\
    &=(1-\delta)(2\pi a^2)^{-d/2}    +\delta\norm{\varrho_{\infty,x}}_{L^\infty}<\infty,
\end{aligned}
\end{equation*}
where $\norm{\varrho_{\infty,x}}_{L^\infty}\le Z_x^{-1}$. It follows that
\begin{equation*}
\int\check\varrho^n(\log\check\varrho^n)_+\ud x \le\log_+\norm{\check\varrho^n}_{L^\infty}<\infty.
\end{equation*}
For the negative part, decompose $\{\check\varrho^n<1\}$ as
\begin{equation*}
    \{\check\varrho^n<\e^{-1-\abs x^2}\}
    \quad\text{and}\quad
    \{\e^{-1-\abs x^2}\le\check\varrho^n<1\}.
\end{equation*}
On the first set, monotonicity of $s\mapsto s\abs{\log s}$ on
$(0,\e^{-1})$ gives
\begin{equation*}
    \check\varrho^n\abs{\log\check\varrho^n}
    \le\e^{-1-\abs x^2}(1+\abs x^2),
\end{equation*}
which is integrable. On the second set,
$\abs{\log\check\varrho^n}\le1+\abs x^2$, so
\begin{equation*}
\int_{\{\e^{-1-\abs x^2}\le\check\varrho^n<1\}}\check\varrho^n(\log\check\varrho^n)_-\ud x \le \int_{\{\e^{-1-\abs x^2}\le\check\varrho^n<1\}}(1+\abs x^2)\check\varrho^n\ud x,
\end{equation*}
which is locally bounded in time by \cref{gm:lem:slice-moments}. Thus
$\int\check\varrho^n\log\check\varrho^n\ud x$ is finite and locally
bounded in time. The identity \eqref{auxeq_ent} now proves the claim.
\end{proof}

\subsubsection{Moment equations}

\begin{lemma}[Zeroth and first moment equations]\label{gm:lem:moment-equations}
On $I_{\varepsilon/2}\times\RR^d$, the following identities hold pointwise:
\begin{equation}\label{gm:eq:zeroth-moment}  \partial_t\check\varrho^n+\divg\check\jmath^n=0,
    \qquad\text{equivalently}\qquad
    \partial_tq^n_t-\nabla_x^{*}j^n_t=0,
\end{equation}
and
\begin{equation}\label{gm:eq:first-moment}
    \partial_t\check\jmath^n
    =-\divg S_L^n-\nabla U\,\check\varrho^n+\check F^n,
    \qquad
    \check F^n_t(x):=\int F^n_t(x,v)\ud v.
\end{equation}
\end{lemma}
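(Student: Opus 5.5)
The plan is to integrate the pointwise equation \eqref{gm:eq:final-equation} against the velocity weights $1$ and $v$, and then justify that the velocity boundary terms vanish. By \cref{gm:lem:smoothness}, $\varrho^n\in C^1(I_{\varepsilon/2};C^\infty)$ and $F^n\in C(I_{\varepsilon/2};C^\infty)$, so the equation
\begin{equation*}
\partial_t\varrho^n+v\cdot\nabla_x\varrho^n+\divg_v\bigl(-\nabla U\,\varrho^n+F^n\bigr)=0
\end{equation*}
holds pointwise. Multiplying by a velocity cutoff $\chi_R(v)$, or by $v\chi_R(v)$, and integrating in $v$ then gives the moment identities up to error terms of the form $\int(\nabla U\,\varrho^n-F^n)\cdot\nabla\chi_R\ud v$ and $\int v\otimes(\nabla U\,\varrho^n-F^n)\nabla\chi_R\ud v$.

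\emph{Zeroth moment.} With weight $1$, the transport term gives $\divg_x\int v\,\varrho^n\ud v=\divg\check\jmath^n$, and the velocity divergence contributes only the cutoff error. \emph{First moment.} With weight $v$, the transport term gives $\divg S_L^n$. Integrating by parts in $v$, the velocity-divergence term yields $-\int(-\nabla U\,\varrho^n+F^n)\ud v=\nabla U\,\check\varrho^n-\check F^n$. Moving these to the other side gives \eqref{gm:eq:first-moment}. The equivalent relative form of \eqref{gm:eq:zeroth-moment} follows from \eqref{gm:eq:adjoint-divergence}: dividing by $\varrho_{\infty,x}$ turns $\divg\check\jmath^n$ into $-\nabla_x^*j^n$.

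The main obstacle is justifying that $\partial_t$ and $\divg_x$ commute with $\int\cdot\ud v$ pointwise in $(t,x)$, and that the cutoff errors vanish. This needs local-in-$(t,x)$ integrability in $v$, uniform on compacts, of
\begin{equation*}
(1+\abs v^2)\,\abs{\partial_t\varrho^n},\qquad (1+\abs v^2)\,\abs{\nabla_x\varrho^n},\qquad (1+\abs v)\,\abs{F^n}.
\end{equation*}
For this I would use the explicit Gaussian structure $\varrho^n=(1-\delta)\eta_a*_x\eta_b*_v\varrho^{(\tau)}+\delta\varrho_\infty$. Derivatives in $x$ and $t$ fall on the kernel $\eta_a$ or on $\zeta_\tau'$. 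In $v$, for fixed $x$, polynomial weights are controlled by the moments of $\varrho^{(\tau)}$ from \cref{gm:lem:fourth-transfer}, since $\eta_a(x-y)\le(2\pi a^2)^{-d/2}$, which makes everything locally bounded in $x$. The same argument applies to $\hat F$, written as in the proof of \cref{gm:lem:smoothness}. The terms $\nabla U\,\hat\varrho$ are handled because $U$ is smooth at fixed $x$. The remaining term $\eta_b*_v\eta_a*_x(F^{(\tau)}-\nabla U\,\varrho^{(\tau)})$ has its first $v$-moment controlled, by Cauchy--Schwarz, through $\int\abs{F}^2/\varrho$, \ref{gm:Q2}, and \ref{gm:Q3}. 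The term $b^2\nabla_x\hat\varrho$ is treated like $\nabla_x\hat\varrho$. The equilibrium part $\delta\varrho_\infty$ is Gaussian in $v$ and trivial.

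With these bounds, dominated convergence lets $R\to\infty$, since $\abs{\nabla\chi_R}\le C/R$. It also allows differentiation under the integral sign, using continuity in $t$ of the moment integrals from the $C^1$-in-time $L^1$ regularity of $\varrho^{(\tau)}$. This yields the identities pointwise on $I_{\varepsilon/2}\times\RR^d$.
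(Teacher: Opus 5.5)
Your proposal is correct, and its skeleton is the paper's. You test the approximant equation with $\chi_R(v)$ and $v\chi_R(v)$, integrate by parts in $v$, and remove the cutoff using integrability of $(1+\abs v)$ times the total flux $-\nabla U\,\varrho^n+F^n$.

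Where you differ is the functional setting. The paper works with the truncated moments in $\mathcal D'(I_{\varepsilon/2}\times\RR^d)$. It shows that they converge in $L^1(K\times\RR^d)$ and that the cutoff errors vanish in that norm, using only integrated bounds it has already established: \cref{gm:lem:fourth-transfer}, \cref{gm:lem:slice-moments}, and the slice action bound \eqref{gm:eq:slice-action-bound}. Since distributional derivatives pass to $L^1$ limits automatically, it never needs velocity-weighted control of $\partial_t\varrho^n$ or $\nabla_x\varrho^n$. The price is that the argument, as written, yields the identities only in $\mathcal D'$. The word ``pointwise'' then rests on the continuity of $\check\varrho^n$, $\check\jmath^n$, $\divg S_L^n$ and $\check F^n$, which is true but not spelled out in that proof.

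You instead work at fixed $(t,x)$. You dominate the $v$-integrals through the sup bounds on $\eta_a$, $\nabla\eta_a$ and $\zeta_\tau'$, together with \ref{gm:Q2}, \ref{gm:Q3} and the finite action, and then differentiate under the integral sign. This gives the pointwise identities and the $C^1$ regularity of the moments in one stroke. The cost is reliance on the explicit mollifier structure, with constants depending on $(a,\tau)$, which is harmless for fixed $n$.

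One small precision: the weighted bound on $\partial_t\varrho^n$ comes from $\abs{\partial_t\varrho^{(\tau)}_t}\le\norm{\zeta_\tau'}_\infty\int_0^T\varrho_r\ud r$ and \ref{gm:Q2} directly. It does not come from \cref{gm:lem:fourth-transfer}, which only controls $\varrho^n$ itself.
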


\begin{proof}
Let $\chi\in C_c^\infty(\RR^d)$ satisfy $0\le\chi\le1$ and $\chi=1$ on the unit ball, and set $\chi_R(v):=\chi(v/R)$. Define the truncated moments:
\begin{equation*}
    \check\varrho_R^n:=\int\chi_R\varrho^n\ud v, \qquad
    \check\jmath_R^n:=\int  v\chi_R\varrho^n\ud v,
    \qquad  S_{L,R}^n:=\int v\otimes v\chi_R\varrho^n\ud v.
\end{equation*}
By the fourth moment bound in \cref{gm:lem:fourth-transfer}, for every compact $K\subset I_{\varepsilon/2}$, we have
\begin{equation*}
    \sup_{t\in K}\int(1+\abs v^2)\varrho_t^n\ud x\ud v<\infty,
\end{equation*}
and hence dominated convergence gives
\begin{equation}\label{gm:eq:truncated-moment-convergence}
    \check\varrho_R^n\to\check\varrho^n,
    \qquad
    \check\jmath_R^n\to\check\jmath^n,
    \qquad
    S_{L,R}^n\to S_L^n
\end{equation}
in $L^1(K\times\RR^d)$ as $R\to\infty$.

Set
\begin{equation*}
    F^{n,\mathrm{tot}}:=-\nabla U\,\varrho^n+F^n,
\end{equation*}
for the total velocity flux. We first establish its integrability needed for the velocity cutoff limit below. By Cauchy--Schwarz inequality, \cref{gm:lem:fourth-transfer,gm:lem:slice-moments} give
\begin{equation*}
\begin{aligned}
    \int_K\!\int(1+\abs v)\abs{\nabla U(x)}\varrho^n
    \ud x\ud v\ud t \le
    \left(\int_K\!\int(1+\abs v)^2\varrho^n
    \ud x\ud v\ud t\right)^{1/2}
    \left(\int_K\!\int\abs{\nabla U(x)}^2\varrho^n
    \ud x\ud v\ud t\right)^{1/2}<\infty.
\end{aligned}
\end{equation*}
Moreover, \eqref{gm:eq:slice-action-bound} implies
\begin{equation*}
    \sup_{t\in K}\int\frac{\abs{F^n_t}^2}{\varrho^n_t}\ud x\ud v<\infty.
\end{equation*}
Together with \cref{gm:lem:fourth-transfer}, it follows that
\begin{equation*}
\begin{aligned}
    \int_K\!\!\int(1+\abs v)\abs{F^n}\ud x\ud v\ud t
    &\le
    \left(\int_K\!\!\int(1+\abs v)^2\varrho^n\ud x\ud v\ud t\right)^{1/2}
    \left(\int_K\!\!\int\frac{\abs{F^n}^2}{\varrho^n}
    \ud x\ud v\ud t\right)^{1/2}<\infty.
\end{aligned}
\end{equation*}
Therefore
\begin{equation} \label{auxeq_intef}
    (1+\abs v)F^{n,\mathrm{tot}}\in
    L^1\bigl(K\times\RR^{2d}\bigr)
    \qquad\text{for every }K\subset I_{\varepsilon/2}.
\end{equation}

\medskip
\noindent \emph{Zeroth moment.}
Integrating \eqref{gm:eq:final-equation} against $\chi_R$ in the velocity
variable gives, in $\mathcal D'(I_{\varepsilon/2}\times\RR^d)$,
\begin{equation*}
    \partial_t\check\varrho_R^n+\divg\check\jmath_R^n
    =\frac1R\int (\nabla\chi)(v/R)\cdot F^{n,\mathrm{tot}}\ud v.
\end{equation*}
The $L^1(K\times\RR^d)$-norm of its right-hand side is bounded by
$R^{-1}\norm{\nabla\chi}_{L^\infty}\int_K\!\int\abs{F^{n,\mathrm{tot}}} \ud v \ud t$ and hence tends to zero as $R \to \infty$. Passing to the limit with \eqref{gm:eq:truncated-moment-convergence} yields
\begin{equation} \label{auxeq:zero}
    \partial_t\check\varrho^n+\divg\check\jmath^n=0.
\end{equation}

\medskip
\noindent \emph{First moment.}
Multiplying \eqref{gm:eq:final-equation} by $v_i\chi_R$ and integrating
in $v$, we use
\begin{equation*}
    \int v_i\chi_R\divg_vF^{n,\mathrm{tot}}\ud v
    =-\int\chi_R(F^{n,\mathrm{tot}})_i\ud v
      -\frac1R\int v_i(\nabla\chi)(v/R)\cdot F^{n,\mathrm{tot}}\ud v.
\end{equation*}
Thus, in vector form,
\begin{equation*}
    \partial_t\check\jmath_R^n+\divg S_{L,R}^n
    =\int\chi_RF^{n,\mathrm{tot}}\ud v
      +\frac1R\int v(\nabla\chi)(v/R)\cdot F^{n,\mathrm{tot}}\ud v.
\end{equation*}
The main term $\int\chi_RF^{n,\mathrm{tot}}\ud v$ converges to
$\int F^{n,\mathrm{tot}}\ud v$ in $L^1(K\times\RR^d)$. The
$L^1(K\times\RR^d)$-norm of the second cutoff error term is bounded by
\begin{equation*}
    \frac{\norm{\nabla\chi}_{L^\infty}}{R}
    \int_K\!\int\abs v\,\abs{F^{n,\mathrm{tot}}}\ud x\ud v\ud t,
\end{equation*}
which tends to zero by the weighted integrability \eqref{auxeq_intef} of
$F^{n,\mathrm{tot}}$. Therefore, we have \eqref{gm:eq:first-moment}:
\begin{equation*}
    \partial_t\check\jmath^n+\divg S_L^n
    =\int F^{n,\mathrm{tot}}\ud v
    =-\nabla U\,\check\varrho^n+\check F^n.
\end{equation*}
Finally, by \eqref{gm:eq:relative-marginals} and \eqref{gm:eq:adjoint-divergence}, as well as \eqref{auxeq:zero}, we have
\begin{equation*}
    \partial_t\check\varrho^n+\divg\check\jmath^n
    =\varrho_{\infty,x}
    \bigl(\partial_tq_t^n-\nabla_x^*j_t^n\bigr),
\end{equation*}
which proves the equivalent relative form in \eqref{gm:eq:zeroth-moment}.
\end{proof}

\subsubsection{Velocity and acceleration bounds}

We now verify the assumptions for \cref{lem:wasserstein-acceleration}.

\begin{proposition}[Velocity and acceleration bounds]\label{gm:prop:A2} The pair $(q^n_t,j^n_t)_{t\in I_{\varepsilon/2}}$ is a regular density-current pair in the sense of \cref{def:regular-pair}. In particular, $m^n_t=j^n_t/q^n_t$ is smooth and satisfies
\begin{equation*}
    \sup_{t\in K}\norm{m^n_t}_{L^\infty(\RR^d)}\le M_{\widetilde I},
    \qquad
    \sup_{t\in K}\int\abs{a^n_t}^2q^n_t\ud\mux<\infty
\end{equation*}
for every compact $K\subset I_{\varepsilon/2}$, where
\begin{equation*}
    a^n_t:=\partial_tm^n_t+(m^n_t\cdot\nabla_x)m^n_t.
\end{equation*}
\end{proposition}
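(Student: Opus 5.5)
The plan is to check the two conditions of \cref{def:regular-pair} one at a time. Most of the work has already been done in the preceding lemmas, and the only genuinely new estimate is the $L^2(q^n_t\mux)$ bound on the acceleration $a^n_t$.

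\emph{Regularity and (A1).} For smoothness, I would use the structure $\varrho^n=(1-\delta)\eta_b*_v\eta_a*_x\varrho^{(\tau)}+\delta\varrho_\infty$. Moments in $v$ of $\eta_b*_v f$ are polynomial combinations of moments of $f$ with Gaussian derivatives, so $x$-derivatives of $\check\varrho^n$ and $\check\jmath^n$ are bounded by $\|\partial^\alpha\eta_a\|_\infty$ times the second velocity moment of $\varrho^{(\tau)}_t$ (or of $\partial_t\varrho^{(\tau)}_t$). These are finite and continuous in $t$ by \ref{gm:Q2} and \eqref{gm:eq:time-moments}, exactly as in \cref{gm:lem:smoothness}. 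This gives $q^n,j^n\in C^1(I_{\varepsilon/2};C^\infty)$. For (A1), \cref{gm:prop:A1} already supplies unit mass, the floor $q^n_t\ge\delta$ (so the infimum over $K\times\RR^d$ is positive), finite marginal entropy, and $\int|j^n_t|^2/q^n_t\ud\mux\le M_{\widetilde I}^2$. The continuity equation in (A2) holds pointwise by \eqref{gm:eq:zeroth-moment}, and the bound $\sup_{t\in K}\|m^n_t\|_\infty\le M_{\widetilde I}$ is exactly \cref{gm:lem:fiber-transfer}.

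\emph{Acceleration identity.} Next I would rewrite $q^n a^n$ in Lebesgue variables. By the computation after \cref{lem:wasserstein-acceleration}, and using \eqref{gm:eq:adjoint-divergence}, we have $\varrho_{\infty,x}q^n a^n=\partial_t\check\jmath^n+\divg(\check\jmath^n\otimes\check\jmath^n/\check\varrho^n)$. Substituting the first-moment equation \eqref{gm:eq:first-moment} and the decomposition $S_L^n=\Theta_L^n+\check\jmath^n\otimes\check\jmath^n/\check\varrho^n+\check\varrho^n I_d$ from \eqref{gm:eq:stress-lebesgue} gives
\begin{equation*}
    \check\varrho^n a^n=-\divg\Theta_L^n-\bigl(\nabla\check\varrho^n+\check\varrho^n\nabla U\bigr)+\check F^n .
\end{equation*}
Hence $\int|a^n_t|^2q^n_t\ud\mux=\int|\check\varrho^n a^n|^2/\check\varrho^n\ud x$ is at most $3$ times the sum of three weighted energies. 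The second energy is bounded by \eqref{gm:eq:marg-fisher} together with the $\abs{\nabla U}^2$-moment from \cref{gm:lem:slice-moments}. The third satisfies $\int|\check F^n|^2/\check\varrho^n\ud x\le\int|F^n_t|^2/\varrho^n_t\ud x\ud v$ by Cauchy--Schwarz along the fiber (as in \cref{gm:lem:kernel-cs}), and this is locally bounded by \eqref{gm:eq:slice-action-bound}.

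\emph{Main obstacle: the stress divergence.} The step I expect to be hardest is the weighted bound on $\int|\divg\Theta_L^n|^2/\check\varrho^n\ud x$. I would not expand $\Theta_L^n$ through the conditional covariance. Instead I would write
\begin{equation*}
    \divg\Theta_L^n=\divg S_L^n-\divg\bigl(\check\varrho^n m^n\otimes m^n\bigr)-\nabla\check\varrho^n
\end{equation*}
and bound each piece as follows.
\begin{itemize}
    \item For the first piece, Cauchy--Schwarz in $v$ gives $|\divg S_L^n|^2\le\check\varrho^n\int|v|^4|\nabla_x\varrho^n|^2/\varrho^n\ud v$, which is controlled by \eqref{gm:eq:weighted-fisher}.
    \item For the second piece, I expand $\divg(\check\varrho^n m^n\otimes m^n)$ into terms of the form $(\nabla\check\varrho^n)|m^n|^2$ and $\check\varrho^n(\nabla m^n)m^n$. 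With $\nabla m^n=(\nabla\check\jmath^n-m^n\otimes\nabla\check\varrho^n)/\check\varrho^n$ and $|\nabla\check\jmath^n|^2\le\check\varrho^n\int|v|^2|\nabla_x\varrho^n|^2/\varrho^n\ud v$, the bound $|m^n|\le M_{\widetilde I}$ reduces everything again to \eqref{gm:eq:weighted-fisher} and \eqref{gm:eq:marg-fisher}.
    \item The third piece is controlled directly by \eqref{gm:eq:marg-fisher}.
\end{itemize}
All of these bounds are uniform over compact $K\subset I_{\varepsilon/2}$, which yields the claimed $\sup_{t\in K}\int|a^n_t|^2q^n_t\ud\mux<\infty$. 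The remaining care point is justifying that $x$-derivatives pass under the $v$-integrals, and I would handle it with the fourth-moment bound of \cref{gm:lem:fourth-transfer}.
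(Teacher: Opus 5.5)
Your proposal is correct, and apart from one step it follows the paper's proof. As in the paper, (A1), the pointwise continuity equation and the bound $\abs{m^n_t}\le M_{\widetilde I}$ come from \cref{gm:prop:A1}, \cref{gm:lem:moment-equations} and \cref{gm:lem:fiber-transfer}. The acceleration is rewritten as $\check\varrho^n a^n=-\divg\Theta_L^n-\nabla\check\varrho^n-\check\varrho^n\nabla U+\check F^n$, and the marginal-Fisher, $\abs{\nabla U}^2$ and action terms are bounded exactly as in the paper.

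The one real difference is the stress divergence. The paper differentiates the centered form $\Theta_L^n=\int[(v-m^n)\otimes(v-m^n)-I_d]\varrho^n\ud v$ and uses $\int(v-m^n)\varrho^n\ud v=0$ to eliminate every term containing $\nabla m^n$. This gives $\divg\Theta_L^n=\int[(v-m^n)\otimes(v-m^n)-I_d]\nabla_x\varrho^n\ud v$, and a single Cauchy--Schwarz step then yields \eqref{gm:eq:divTheta-bound}. Your splitting $\divg S_L^n-\divg(\check\varrho^n m^n\otimes m^n)-\nabla\check\varrho^n$ keeps those terms and must control $\check\varrho^n\nabla m^n=\nabla\check\jmath^n-m^n\otimes\nabla\check\varrho^n$ separately. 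This is valid and uses the same three inputs: $\abs{m^n}\le M_{\widetilde I}$, \eqref{gm:eq:weighted-fisher} and \eqref{gm:eq:marg-fisher}. It simply trades the centering cancellation for more bookkeeping, so the paper's route is the more economical one.

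Your explicit justification that $(q^n,j^n)$ is $C^1$ in time with values in $C^\infty$, via velocity moments of the $x$-convolution, is if anything more careful than the paper. The paper just cites \cref{gm:lem:smoothness} for this point.
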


\begin{proof}
By \cref{gm:lem:smoothness,gm:prop:A1}, the pair $(q^n,j^n)$ has the
regularity, positivity, entropy, and current-energy properties required in
\textup{(A1)} of \cref{def:regular-pair}. The pointwise continuity equation
in \textup{(A2)} follows from \cref{gm:lem:moment-equations}, while
\cref{gm:lem:fiber-transfer} gives
\begin{equation*}
    \sup_{t\in K}\norm{m^n_t}_{L^\infty(\RR^d)}
    \le M_{\widetilde I}<\infty
    \qquad\text{for every }K\subset I_{\varepsilon/2}.
\end{equation*}
It remains only to verify the acceleration bound in \textup{(A2)}.

Since $\check\varrho^n\ge\delta\varrho_{\infty,x}>0$, the field
$m^n=\check\jmath^n/\check\varrho^n$ is smooth. Using
$m^n\check\varrho^n=\check\jmath^n$ and \eqref{gm:eq:zeroth-moment}, we obtain
\begin{equation*}
\begin{aligned}
    \check\varrho^n\,a^n
    &=\partial_t\check\jmath^n-m^n\,\partial_t\check\varrho^n
      +(\check\jmath^n\cdot\nabla_x)m^n\\
    &=\partial_t\check\jmath^n+m^n\,\divg\check\jmath^n +(\check\jmath^n\cdot\nabla_x)m^n   =\partial_t\check\jmath^n+\divg(m^n\otimes\check\jmath^n).
\end{aligned}
\end{equation*}
Substituting \eqref{gm:eq:first-moment} and $m^n\otimes\check\jmath^n
 = \check\jmath^n\otimes\check\jmath^n/\check\varrho^n$ gives
\begin{equation}\label{gm:eq:acceleration-identity}
\begin{aligned}
    \check\varrho^n\,a^n
    &=-\divg\left(S_L^n-
      \frac{\check\jmath^n\otimes\check\jmath^n}{\check\varrho^n}\right)
      -\nabla U\,\check\varrho^n+\check F^n\\
    &=-\divg\Theta_L^n-\nabla\check\varrho^n
      -\nabla U\,\check\varrho^n+\check F^n,
\end{aligned}
\end{equation}
where the last equality follows from \eqref{gm:eq:stress-lebesgue}.
Consequently,
\begin{equation}\label{gm:eq:acceleration-bound}
\begin{aligned}
    \int\abs{a^n}^2q^n\ud\mux
    &=\int\abs{a^n}^2\check\varrho^n\ud x\\
    &\le4\int\frac{\abs{\divg\Theta_L^n}^2
      +\abs{\nabla\check\varrho^n}^2
      +\abs{\nabla U}^2(\check\varrho^n)^2
      +\abs{\check F^n}^2}{\check\varrho^n}\ud x.
\end{aligned}
\end{equation}
The second and third terms on the right-hand side of \eqref{gm:eq:acceleration-bound} are locally bounded by
\eqref{gm:eq:marg-fisher} and \cref{gm:lem:slice-moments}. For the fourth term, by the Cauchy--Schwarz inequality, we have
\begin{equation*}
    \abs{\check F^n}^2
    \le\check\varrho^n\int\frac{\abs{F^n}^2}{\varrho^n}\ud v,
\end{equation*}
and therefore
\begin{equation*}
    \int\frac{\abs{\check F^n}^2}{\check\varrho^n}\ud x
    \le\int\frac{\abs{F^n}^2}{\varrho^n}\ud x\ud v,
\end{equation*}
which is locally bounded in time by \eqref{gm:eq:slice-action-bound}.

It remains to control the stress-divergence term in \eqref{gm:eq:acceleration-bound}. Differentiating the
centered representation \eqref{gm:eq:stress-lebesgue} and using
$\int(v-m^n)\varrho^n\ud v=0$,  we find
\begin{equation}\label{gm:eq:centred-divergence}
    \divg\Theta_L^n
    =\int\bigl[(v-m^n)\otimes(v-m^n)-I_d\bigr]\nabla_x\varrho^n\ud v .
\end{equation}
Since $\abs{m^n}\le M_{\widetilde I}$ by \cref{gm:lem:fiber-transfer},
\begin{equation*}
    \abs{(v-m^n)\otimes(v-m^n)-I_d}^2
    \le C_d(1+M_{\widetilde I}^4)(1+\abs v^4).
\end{equation*}
It follows that
\begin{equation*}
    \abs{\divg\Theta_L^n}^2
    \le C_d(1+M_{\widetilde I}^4)\check\varrho^n
    \int(1+\abs v^4)\frac{\abs{\nabla_x\varrho^n}^2}{\varrho^n}\ud v.
\end{equation*}
After division by $\check\varrho^n$ and integration in $x$, \eqref{gm:eq:weighted-fisher} yields
\begin{equation}\label{gm:eq:divTheta-bound}
    \int\frac{\abs{\divg\Theta_L^n}^2}{\check\varrho^n}\ud x
    \le C_d(1+M_{\widetilde I}^4)  \int(1+\abs v^4)\frac{\abs{\nabla_x\varrho^n}^2}{\varrho^n} \ud x\ud v<\infty,
\end{equation}
locally uniformly in $t$. Thus, for every compact
$K\subset I_{\varepsilon/2}$, the four terms on the right-hand side of
\eqref{gm:eq:acceleration-bound} are uniformly bounded for $t\in K$, and
hence
\begin{equation*}
    \sup_{t\in K}\int\abs{a^n_t}^2q^n_t\ud\mux<\infty.
\end{equation*}
Together with the pointwise continuity equation and the locally uniform
$L^\infty$ bound on $m^n_t$ established above, this is precisely
\textup{(A2)}. Since $q^n_t\mux$ is a probability measure, the latter bound
also implies
\begin{equation*}
    \sup_{t\in K}\int\abs{m^n_t}^2q^n_t\ud\mux
    \le M_{\widetilde I}^2,
\end{equation*}
as an immediate consequence. This completes the proof.
\end{proof}

\subsubsection{The entropy dissipation identity}

\begin{lemma}[Entropy dissipation identity]\label{gm:lem:entropy-identity}
On $I_{\varepsilon/2}$, $\Ent_\mu(g^n_t)$ and
$\Iv(g^n_t)$ are locally bounded, the map
$t\mapsto\Ent_\mu(g^n_t)$ is locally absolutely continuous, and
\begin{equation}\label{gm:eq:entropy-identity}
    \frac{\ud}{\ud t}\Ent_\mu(g^n_t)
    =\int W^n_t\cdot\nabla_v\log g^n_t\ud\mu
    \qquad\text{for \textup{a.e.}\ }t\in I_{\varepsilon/2}.
\end{equation}
\end{lemma}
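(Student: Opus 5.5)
The plan is to reduce the statement to \cref{lem:entropy-chain-rule}, applied on the open interval $I=I_{\varepsilon/2}$ with $h=g^n$ and current $W^n$. That lemma delivers exactly the local absolute continuity and the identity \eqref{gm:eq:entropy-identity}, so the work lies in checking its hypotheses and in proving the local boundedness of $\Ent_\mu(g^n_t)$ and $\Iv(g^n_t)$. The structural hypotheses are already available:
\begin{itemize}
\item \cref{gm:lem:smoothness} gives $g^n\in C^1(I_{\varepsilon/2};C^\infty(\RR^{2d}))$;
\item \eqref{gm:eq:floor} gives $g^n\ge\delta>0$;
\item Step 4 gives unit mass for every $t$;
\item \eqref{gm:eq:final-graph-equation} gives the divergence equation in $\mathcal D'(I_{\varepsilon/2}\times\RR^{2d})$.
\end{itemize}

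The quantitative step is the integrability condition \eqref{eq:entropy-chain-integrability}, which I would verify through locally uniform slice bounds.

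\emph{Fisher information and action.} For the Fisher information, \eqref{gm:eq:slice-fisher-bound} bounds $\Iv(g^n_t)^{1/2}$ by $[(\zeta_\tau*\Iv(g_\cdot))(t)]^{1/2}+\sqrt d\,b$. Since $\Iv(g_\cdot)\in L^1(0,T)$ by \ref{gm:P2}, this is continuous and hence locally bounded. For the action, \eqref{gm:eq:slice-action-bound} together with \ref{gm:P3} and \ref{gm:Q3} shows that $\int\abs{W^n_t}^2/g^n_t\ud\mu=\int\abs{F^n_t}^2/\varrho^n_t\dz$ is locally bounded.

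\emph{Entropy.} I would write
\begin{equation*}
\Ent_\mu(g^n_t)=\int\varrho^n_t\log\varrho^n_t\dz+\int\bigl(U+\tfrac12\abs v^2\bigr)\varrho^n_t\dz+\log Z_\infty ,
\end{equation*}
and mimic the proof of \cref{gm:prop:A1}.
\begin{itemize}
\item The potential term is controlled by \cref{gm:lem:slice-moments}, and the kinetic term by \cref{gm:lem:fourth-transfer}.
\item For the positive part of $\varrho^n_t\log\varrho^n_t$, I would use the uniform $L^\infty$ bound $\norm{\varrho^n_t}_{L^\infty}\le(2\pi a^2)^{-d/2}(2\pi b^2)^{-d/2}+\norm{\varrho_\infty}_{L^\infty}$, which follows from Young's inequality and the unit mass of $\varrho^{(\tau)}_t$.
\item For the negative part, I would split at $\e^{-1-\abs x^2-\abs v^2}$ exactly as in \cref{gm:prop:A1}, and control it by the second moments in $x$ and $v$, which are locally bounded by \cref{gm:lem:slice-moments,gm:lem:fourth-transfer}.
\end{itemize}

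As a cruder alternative, convexity gives $\Ent_\mu(g^n_t)\le(1-\delta)\Ent_\mu(\hat g_t)$, where $\hat g=\hat\varrho/\varrho_\infty$. One might then try to bound $\Ent_\mu(\hat g_t)$ by the time-mollified entropy \eqref{gm:eq:time-entropy}. However, Gaussian convolution does not contract relative entropy with respect to $\mu$, because $\varrho_\infty$ is not convolution-invariant. So I would rely on the direct moment argument above.

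With \eqref{eq:entropy-chain-integrability} established on every compact subinterval, \cref{lem:entropy-chain-rule} applies on $I_{\varepsilon/2}$ and yields both the absolute continuity and \eqref{gm:eq:entropy-identity}.

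I expect the main obstacle to be the local boundedness of the entropy. This is the only slice quantity not already recorded in an earlier estimate, and controlling the negative part of $\varrho\log\varrho$ needs the joint $(x,v)$ second-moment bounds. The rest is bookkeeping from previously established bounds.
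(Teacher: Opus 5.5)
Your proposal is correct. Its overall structure matches the paper's proof: you reduce to \cref{lem:entropy-chain-rule} on $I_{\varepsilon/2}$, and you take the Fisher-information and action bounds from \eqref{gm:eq:slice-fisher-bound} and \eqref{gm:eq:slice-action-bound}.

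The only real difference is how you bound the entropy. You redo the argument of \cref{gm:prop:A1} on the full phase space. You write $\Ent_\mu(g^n_t)=\int\varrho^n_t\log\varrho^n_t\dz+\int(U+\tfrac12\abs v^2)\varrho^n_t\dz+\log Z_\infty$, use a joint $L^\infty$ bound for the $(x,v)$-convolution, and split at $\e^{-1-\abs x^2-\abs v^2}$. This works, since \cref{gm:lem:slice-moments,gm:lem:fourth-transfer} supply the needed $x$- and $v$-moments.

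The paper avoids this computation. It combines the chain-rule decomposition \eqref{eq:ent-decomp} with the fiberwise Gaussian LSI \eqref{eq:gausslsi} to get $\Ent_\mu(g^n_t)=\Ent_{\mux}(q^n_t)+\Entv(g^n_t)\le\Ent_{\mux}(q^n_t)+\tfrac12\Iv(g^n_t)$. Both terms on the right are already known to be locally bounded: the first by \cref{gm:prop:A1}, the second by the same Fisher slice bound you cite.

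So the entropy is not really a new slice quantity, and what you call the main obstacle is handled in one line by reusing the marginal estimate. Your route is self-contained and does not use the Gaussian LSI or the conclusion of \cref{gm:prop:A1}. The price is repeating the $\varrho\log\varrho$ integrability analysis in $2d$ dimensions.
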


\begin{proof}
For a compact interval $K\subset I_{\varepsilon/2}$, by \eqref{gm:eq:functionals-lebesgue}, the slice bounds
\eqref{gm:eq:slice-action-bound} and \eqref{gm:eq:slice-fisher-bound} give
\begin{equation}\label{gm:eq:entropy-proof-slice-bounds}
    \sup_{t\in K}\left[
        \Iv(g^n_t)
        +\int\frac{\abs{W^n_t}^2}{g^n_t}\ud\mu
    \right]<\infty.
\end{equation}
Moreover, the entropy decomposition, the marginal entropy bound in
\cref{gm:prop:A1}, and the fiberwise Gaussian LSI imply
\begin{equation*}
    \Ent_\mu(g^n_t)
    =\Ent_{\mux}(q^n_t)+\Entv(g^n_t)
    \le\Ent_{\mux}(q^n_t)+\frac12\Iv(g^n_t).
\end{equation*}
Thus both $\Ent_\mu(g^n_t)$ and $\Iv(g^n_t)$ are uniformly bounded for
$t\in K$.

By \cref{gm:lem:smoothness}, the pair $(g^n,W^n)$ satisfies the regularity for \cref{lem:entropy-chain-rule}, and the equation \eqref{gm:eq:final-graph-equation}. The local integrability assumptions of that lemma follow from \eqref{gm:eq:entropy-proof-slice-bounds} and the entropy bound above. Therefore, \cref{lem:entropy-chain-rule} gives the local absolute continuity of $t\mapsto\Ent_\mu(g^n_t)$ and the identity \eqref{gm:eq:entropy-identity}. Since $K\subset I_{\varepsilon/2}$ was arbitrary, the proof is complete.
\end{proof}

\subsubsection{The corrector inequality}

We now restrict to $I_\varepsilon$. For each fixed time slice, let $\xi_{q^n_t}:=x-T_{q^n_t}$ be the Brenier displacement from $q^n_t\mux$ to $\mux$. It is well defined because $q^n_t\mux$ has finite second moment by \cref{gm:lem:slice-moments}. Moreover, Talagrand's inequality gives
\begin{equation}\label{gm:eq:talagrand-slice}
    \int q^n_t\abs{\xi_{q^n_t}}^2\ud\mux  =W_2^2(q^n_t\mux,\mux) \le\frac2\rho\Ent_{\mux}(q^n_t)<\infty,
\end{equation}
locally uniformly in $t$ by \cref{gm:prop:A1}.

We also verify the remaining regularity needed for \cref{lem:localized-stress}. The zeroth, first, and second velocity moments of the $v$-convolution are, respectively, the corresponding moments before convolution, with the additional term $b^2\check\varrho^n I_d$ in the second moment. They are therefore Gaussian $x$-convolutions of $L^1$ moment fields and are smooth in $x$. The floor adds only smooth equilibrium moments. Thus $\check\varrho^n$, $\check\jmath^n$, and $S_L^n$ are smooth; since $\check\varrho^n\ge\delta\varrho_{\infty,x}>0$, the quotient formula \eqref{gm:eq:stress-lebesgue} gives
\begin{equation*}
    \Theta^n\in C^1(\RR^d;\RR^{d\times d})
\end{equation*}
for every time slice. Together with \cref{gm:prop:A1}, the Gaussian LSI, and \cref{gm:lem:fourth-transfer}, all assumptions of
\cref{lem:localized-stress} are satisfied. Importantly, that lemma is applied below at each fixed cutoff; no global cutoff-limit hypothesis is being assumed.

\begin{proposition}[Corrector inequality for the approximants]\label{gm:prop:corrector}
Let $\Gamma>0$ and $\gamma=\Gamma\sqrt\rho$. Then the approximant
$(g^n,W^n)$ satisfies \eqref{gm:eq:weak-corrector} in
$\mathcal D'(I_\varepsilon)$.
\end{proposition}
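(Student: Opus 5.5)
We will follow the proof of \cref{lem:forced-corrector}, with two changes: the Brenier stress estimate is replaced by the localized estimate \cref{lem:localized-stress} at a fixed cutoff, and the cutoff limit is then verified directly for the approximants. The source term is $B^n_t=\Piv W^n_t+\gamma j^n_t$.

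\emph{Step 1: Wasserstein acceleration.} By \cref{gm:prop:A2}, the pair $(q^n,j^n)$ is a regular density-current pair on $I_{\varepsilon/2}$. Hence \cref{lem:wasserstein-acceleration} gives, in $\mathcal D'(I_\varepsilon)$,
\[
\frac{\ud}{\ud t}\COT(g^n_t)\le\int\frac{\abs{j^n_t}^2}{q^n_t}\ud\mux+\int a^n_t\cdot\xi_{q^n_t}\,q^n_t\ud\mux .
\]
The acceleration identity \eqref{gm:eq:acceleration-identity}, divided by $\varrho_{\infty,x}$ and combined with \eqref{gm:eq:adjoint-divergence}, reads
\[
q^n a^n=\nabla_x^*\Theta^n-\nabla_x q^n+\check F^n/\varrho_{\infty,x}.
\]
The current $F^n$ already contains the friction $-\gamma\nabla_v\varrho^n-\gamma v\varrho^n$ only implicitly. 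So we write $\check F^n/\varrho_{\infty,x}=\Piv W^n=B^n-\gamma j^n$, which produces the terms $-\gamma\COT(g^n_t)+\int B^n_t\cdot\xi_{q^n_t}\ud\mux$. Every term is locally integrable in $t$: Cauchy--Schwarz applies using \eqref{gm:eq:talagrand-slice}, the bound \eqref{gm:eq:divTheta-bound}, \eqref{gm:eq:marg-fisher}, \cref{gm:lem:slice-moments}, and \eqref{gm:eq:slice-action-bound}.

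\emph{Step 2: Stress term at fixed cutoff.} It remains to show, for each $t$,
\[
-\int\nabla_xq^n\cdot\xi\ud\mux+\int\xi\cdot\nabla_x^*\Theta^n\ud\mux\le 4\Entv(g^n_t)-\Ent_{\mux}(q^n_t).
\]
We apply \eqref{eq:localized-stress} with $\eta=1/4$ and $\chi=\chi_R(x)=\chi(x/R)$, then let $R\to\infty$. The following convergences are needed.
\begin{itemize}
\item $\int\chi_R\nabla_xq^n\cdot\xi\to\int\nabla_xq^n\cdot\xi$: dominated convergence applies, since $\abs{\nabla q^n}\abs{\xi}\le \frac{\abs{\nabla q^n}^2}{q^n}+q^n\abs{\xi}^2$ is integrable by \eqref{gm:eq:marg-fisher}, \cref{gm:lem:slice-moments}, and Talagrand.
\item $\int\chi_Rq^n\log q^n\to\Ent_{\mux}(q^n)$: this holds exactly as in \cref{lem:brenier-stress}.
\item $\int q^n\nabla\chi_R\cdot\xi\to0$: this follows from $\abs{\nabla\chi_R}\le C/R$ and $\xi\in L^1(q^n\mux)$.
\item $\int\xi\cdot\nabla_x^*(\chi_R\Theta^n)\ud\mux$: expand it as
\[
\int\chi_R\,\xi\cdot\nabla_x^*\Theta^n\ud\mux-\int\xi\cdot\Theta^n\nabla\chi_R\ud\mux .
\]
The first piece converges because $\abs{\xi\cdot\nabla_x^*\Theta^n}\le q^n\abs\xi^2+\abs{\nabla_x^*\Theta^n}^2/q^n\in L^1(\mux)$, using \eqref{gm:eq:weighted-change} and \eqref{gm:eq:divTheta-bound}. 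The second piece is at most $\frac CR\norm{\xi}_{L^2(q^n\mux)}\bigl(\int\abs{\Theta^n}^2/q^n\ud\mux\bigr)^{1/2}\to0$ by \cref{gm:lem:stress-L2}.
\end{itemize}

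\emph{Step 3: Conclusion.} As in \cref{lem:forced-corrector}, we bound $\int\abs{j^n}^2/q^n+4\Entv\le6\Entv\le3\Iv$ via \eqref{eq:imported-size} and \eqref{eq:gausslsi}. Collecting terms gives \eqref{gm:eq:weak-corrector}.

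The main obstacle is the bookkeeping in Step 1, namely checking that the Lebesgue-variable first-moment equation \eqref{gm:eq:first-moment}, once translated, matches exactly the structure $-\nabla q+\nabla_x^*\Theta+B-\gamma j$ with $B^n=\Piv W^n+\gamma j^n$. The point to verify is that no leftover $\nabla U$ terms remain: the term $-\nabla U\check\varrho^n$ must combine with $-\nabla\check\varrho^n$ into $-\varrho_{\infty,x}\nabla_xq^n$, and the stress divergence must pick up its $\nabla U$ correction via \eqref{gm:eq:adjoint-divergence}.
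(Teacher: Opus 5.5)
Your proposal is correct and follows essentially the paper's argument. The common steps are the weak acceleration inequality, the identity $q^na^n=-\nabla_xq^n+\nabla_x^*\Theta^n+\Piv W^n$, \cref{lem:localized-stress} with $\eta=1/4$ at the cutoff $\chi_R$, removal of the cutoff, and the rewriting $\Piv W^n=B^n-\gamma j^n$. The one minor difference is that you split the pairing before cutting off, so you need $\xi\cdot\nabla_xq^n$ and $\xi\cdot\nabla_x^*\Theta^n$ to lie in $L^1(\mux)$ separately, which you correctly obtain from \eqref{gm:eq:marg-fisher}, \cref{gm:lem:slice-moments}, \eqref{gm:eq:weighted-change} and \eqref{gm:eq:divTheta-bound}; the paper instead inserts $\chi_R$ into the whole pairing $\int\xi\cdot a^nq^n\ud\mux$ and needs only $a^n\in L^2(q^n\mux)$ from \cref{gm:prop:A2}.
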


\begin{proof}
Setting
\begin{equation*}
    \widetilde B^n_t:=\Piv(W^n_t)    =\frac{\check F^n_t}{\varrho_{\infty,x}},
\end{equation*}
we have, for every $t\in I_\varepsilon$,
\begin{equation}\label{gm:eq:B-slice-bound}
    \int\frac{\abs{\widetilde B^n_t}^2}{q^n_t}\ud\mux \le\int\frac{\abs{W^n_t}^2}{g^n_t}\ud\mu.
\end{equation}
It follows from \eqref{gm:eq:talagrand-slice} that
\begin{equation}\label{gm:eq:B-xi-slice-bound}
    \Abs{\int \widetilde B^n_t\cdot\xi_{q^n_t}\ud\mux}
    \le
    \left(\int\frac{\abs{W^n_t}^2}{g^n_t}\ud\mu\right)^{1/2}
    \left(\frac{2}{\rho}\Ent_{\mux}(q^n_t)\right)^{1/2}.
\end{equation}

\medskip
\noindent \emph{Step 1: acceleration inequality.}
By \cref{gm:prop:A2}, \cref{lem:wasserstein-acceleration} applies to
$\nu^n_t=q^n_t\mux$ and gives, in $\mathcal D'(I_\varepsilon)$,
\begin{equation}\label{gm:eq:cor-step1}
    \frac{\ud}{\ud t}\,\mathcal C_{\rm OT}(g^n_t)
    \le J(g^n_t)+\int\xi_{q^n_t}\cdot a^n_tq^n_t\ud\mux,
    \qquad
    J(g^n_t):=\int\frac{\abs{j^n_t}^2}{q^n_t}\ud\mux .
\end{equation}
The two terms on the right-hand side are locally integrable in time:
$J(g^n_t)$ is locally bounded by \cref{gm:prop:A1}, while
Cauchy--Schwarz, \cref{gm:prop:A2}, and
\eqref{gm:eq:talagrand-slice} give
\begin{equation*}
    \Abs{\int\xi_{q^n_t}\cdot a^n_tq^n_t\ud\mux}
    \le    \left(\int\abs{a^n_t}^2q^n_t\ud\mux\right)^{1/2}    \left(\int\abs{\xi_{q^n_t}}^2q^n_t\ud\mux\right)^{1/2},
\end{equation*}
which is locally bounded. Moreover,
\eqref{est:corrector} and \cref{gm:lem:entropy-identity} show that
$t\mapsto\mathcal C_{\rm OT}(g^n_t)$ is locally bounded.

\medskip
\noindent
\emph{Step 2: localized estimate of the acceleration pairing.} Dividing
\eqref{gm:eq:acceleration-identity} by $\varrho_{\infty,x}$ and using
\eqref{gm:eq:adjoint-divergence} gives
\begin{equation}\label{gm:eq:cor-accel-relative}
    q^n_ta^n_t
    =-\nabla_xq^n_t+\nabla_x^{*}\Theta^n_t+\widetilde B^n_t .
\end{equation}
Since $a^n_t,\xi_{q^n_t}\in L^2(q^n_t\mux)$, we have
$\xi_{q^n_t}\cdot a^n_tq^n_t\in L^1(\mux)$. Choose
$\chi\in C_c^\infty(\RR^d)$ with
$0\le\chi\le1$, $\chi=1$ on the unit ball, and
$\abs{\nabla\chi}\le C$, and set $\chi_R(x):=\chi(x/R)$. Dominated
convergence yields
\begin{equation}\label{gm:eq:cor-dominated}
    \int\xi_{q^n_t}\cdot a^n_tq^n_t\ud\mux
    =\lim_{R\to\infty}\int\chi_R\xi_{q^n_t}\cdot a^n_tq^n_t\ud\mux .
\end{equation}
Using $\chi_R\nabla_x^*\Theta^n_t
    =\nabla_x^*(\chi_R\Theta^n_t)+\Theta^n_t\nabla\chi_R$,
\eqref{gm:eq:cor-accel-relative} gives
\begin{equation}\label{gm:eq:cor-expansion}
\begin{aligned}
    \int\chi_R\,\xi_{q^n_t}\cdot q^n_ta^n_t\ud\mux
    &=\left[
      \int\xi_{q^n_t}\cdot\nabla_x^*(\chi_R\Theta^n_t)\ud\mux
      -\int\chi_R\nabla_xq^n_t\cdot\xi_{q^n_t}\ud\mux
      \right]\\
    &\quad+\int\xi_{q^n_t}\cdot\Theta^n_t\nabla\chi_R\ud\mux
      +\int\chi_R\xi_{q^n_t}\cdot\widetilde B^n_t\ud\mux.
\end{aligned}
\end{equation}

Applying \cref{lem:localized-stress} to the first part of the right-hand side of
\eqref{gm:eq:cor-expansion}, with $\eta=1/4$, yields
\begin{equation}\label{gm:eq:cor-localized-acceleration}
\begin{aligned}
    \int\chi_R\,\xi_{q^n_t}\cdot q^n_ta^n_t\ud\mux
    &\le 4\Entv(g^n_t)
      -\int\chi_Rq^n_t\log q^n_t\ud\mux
      +\int q^n_t\nabla\chi_R\cdot\xi_{q^n_t}\ud\mux\\
    &\quad+\int\xi_{q^n_t}\cdot\Theta^n_t\nabla\chi_R\ud\mux
      +\int\chi_R\xi_{q^n_t}\cdot\widetilde B^n_t\ud\mux.
\end{aligned}
\end{equation}

\medskip
\noindent
\emph{Step 3: removal of the cutoff.}
The source term in \eqref{gm:eq:cor-localized-acceleration} converges by \eqref{gm:eq:B-xi-slice-bound} and dominated
convergence:
\begin{equation*}
    \int\chi_R\xi_{q^n_t}\cdot\widetilde B^n_t\ud\mux
    \to \int\xi_{q^n_t}\cdot\widetilde B^n_t\ud\mux.
\end{equation*}
Moreover, by \eqref{gm:eq:talagrand-slice} and \cref{gm:lem:stress-L2},
\begin{equation*}
\Abs{\int\xi_{q^n_t}\cdot\Theta^n_t\,\nabla\chi_R\ud\mux}
    \le\frac CR
    \left(\int q^n_t\abs{\xi_{q^n_t}}^2\ud\mux\right)^{1/2}
    \left(\int\frac{\abs{\Theta^n_t}^2}{q^n_t}\ud\mux\right)^{1/2}
    \to 0.
\end{equation*}
Similarly, using $\int q^n_t\ud\mux=1$,
\begin{equation*}
    \Abs{\int q^n_t\nabla\chi_R\cdot\xi_{q^n_t}\ud\mux}
    \le\frac CR\left(\int q^n_t\abs{\xi_{q^n_t}}^2\ud\mux\right)^{1/2}
    \to 0.
\end{equation*}
Moreover, the proof of \cref{gm:prop:A1} shows that
$q^n_t\abs{\log q^n_t}\in L^1(\mux)$. Hence
\begin{equation*}
    \int\chi_Rq^n_t\log q^n_t\ud\mux
    \to \Ent_{\mux}(q^n_t).
\end{equation*}
Letting $R\to\infty$ in \eqref{gm:eq:cor-localized-acceleration} and using
\eqref{gm:eq:cor-dominated} therefore gives
\begin{equation} \label{auxeq:slicebound}
    \int\xi_{q^n_t}\cdot a^n_tq^n_t\ud\mux
    \le4\,\Entv(g^n_t)-\Ent_{\mux}(q^n_t)
      +\int\xi_{q^n_t}\cdot\widetilde B^n_t\ud\mux .
\end{equation}

\medskip
\noindent
\emph{Step 4: distributional assembly.}
The entropy terms in \eqref{auxeq:slicebound} are locally bounded by \cref{gm:prop:A1,gm:lem:entropy-identity}. Its source pairing is locally
bounded by \eqref{gm:eq:B-xi-slice-bound}, \eqref{gm:eq:slice-action-bound},
and \cref{gm:prop:A1}. We may therefore substitute this inequality into
\eqref{gm:eq:cor-step1}. By
\eqref{eq:imported-size} and the Gaussian LSI \eqref{eq:gausslsi},
\begin{equation*}
    J(g^n_t)+4\Entv(g^n_t)
    \le6\Entv(g^n_t)
    \le3\Iv(g^n_t).
\end{equation*}
It follows from \eqref{gm:eq:cor-step1} that, in $\mathcal D'(I_\varepsilon)$,
\begin{equation} \label{auxeq:corrector}
    \frac{\ud}{\ud t}\,\mathcal C_{\rm OT}(g^n_t)
    \le-\Ent_{\mux}(q^n_t)+3\Iv(g^n_t)
      +\int\widetilde B^n_t\cdot\xi_{q^n_t}\ud\mux.
\end{equation}
Finally, recall from \ref{gm:R3} that
\begin{equation*}
    B^n_t:=\Piv(W^n_t)+\gamma j^n_t
    =\widetilde B^n_t+\gamma j^n_t
\end{equation*}
and that $\mathcal C_{\rm OT}(g^n_t)=\int j^n_t\cdot\xi_{q^n_t}\ud\mux$.
Therefore,
\begin{equation*}
    \int\widetilde B^n_t\cdot\xi_{q^n_t}\ud\mux
    =\int B^n_t\cdot\xi_{q^n_t}\ud\mux
      -\gamma\mathcal C_{\rm OT}(g^n_t),
\end{equation*}
and the above inequality \eqref{auxeq:corrector} is precisely \eqref{gm:eq:weak-corrector}.
\end{proof}

We are now ready to finish the proof of \cref{gm:thm:recovery}.

\begin{proof}[Proof of \cref{gm:thm:recovery}]
We choose sequences $\tau_n,a_n,\delta_n\downarrow0$ such that
$0<\tau_n<\varepsilon/2$ and $0<a_n,\delta_n<1$, and set $b_n:=a_n$ such that $b_n^2/a_n = a_n \to 0$. Let $(g^n,W^n)$ be the density-current pairs constructed in \cref{gm:ssec:floor} with parameters $(\tau_n,a_n,b_n,\delta_n)$. By \cref{gm:lem:smoothness}, these pairs have the regularity stated in \ref{gm:R1}.  The divergence equation \eqref{gm:eq:final-graph-equation} holds pointwise on $I_{\varepsilon/2}$ by \cref{gm:lem:smoothness}, and hence in particular on $I_\varepsilon$. The local absolute continuity and entropy dissipation identity, as well as the local boundedness of the entropy and velocity Fisher information, follow from \cref{gm:lem:entropy-identity}. This proves \ref{gm:R2}. The controlled corrector inequality \ref{gm:R3} is precisely \cref{gm:prop:corrector}.  Finally, \cref{gm:prop:L1} gives the convergence in \ref{gm:R4}. Since $a_n,b_n\to0$ and $b_n^2/a_n\to0$, \cref{gm:prop:limsup} gives both limsup bounds in \ref{gm:R5}. This completes the proof.
\end{proof}

\begin{remark}\label{gm:rem:hypotheses}
We explain here the roles of the additional assumptions \ref{gm:Q1}--\ref{gm:Q3} in the recovery theorem \cref{gm:thm:recovery}. Condition \ref{gm:Q1} provides, through \cref{gm:lem:fiber-transfer}, a pointwise bound on the conditional mean $m$ that is locally uniform in time. This verifies the bounded-velocity hypothesis of \cref{lem:wasserstein-acceleration} and controls the $\abs{m}^4$ term in \eqref{gm:eq:divTheta-bound}. Condition \ref{gm:Q2} is used to establish the $(1+\abs{v}^4)$-weighted spatial Fisher estimate \eqref{gm:eq:weighted-fisher}. Through \eqref{gm:eq:divTheta-bound}, this
controls the stress-divergence term in the acceleration estimate \eqref{gm:eq:acceleration-bound} and yields the required $L^2(q\mux)$ bound on the acceleration. The same fourth-moment assumption also gives the stress bound \eqref{gm:eq:stress-L2}, which is used to remove the spatial cutoff in the proof of \cref{gm:prop:corrector}. Therefore, \ref{gm:Q1} and \ref{gm:Q2} enter only the verification of the controlled corrector inequality \ref{gm:R3}. Condition \ref{gm:Q3} ensures the integrability needed for the current regularity in \ref{gm:R1}. It also controls the force-commutator term in \eqref{gm:eq:force-commutator-action}, and hence both the local action bound \eqref{gm:eq:slice-action-bound} used with \cref{lem:entropy-chain-rule} to prove \ref{gm:R2} and the action limsup bound in \ref{gm:R5}. Finally, it controls the $\abs{\nabla U}^2$ term in the acceleration estimate \eqref{gm:eq:acceleration-bound} used for \ref{gm:R3}.

Moreover, condition \ref{gm:Q3} is implied by \ref{gm:P1} under a stronger gradient bound on the potential:
\begin{equation} \label{eq:gradidentpotential}
    \abs{\nabla U}^2\le C_0(1+U).
\end{equation}
Indeed, choose $\lambda>0$ such that $\lambda C_0<1$. By \cref{gm:lem:exp-integrability}(ii),
\begin{equation*}
    \int \e^{\lambda\abs{\nabla U}^2}\ud\mux
    \le
    \e^{\lambda C_0}
    \int \e^{\lambda C_0U}\ud\mux
    <\infty.
\end{equation*}
Applying the entropy variational inequality \eqref{eq:variational} with $\phi=\lambda\abs{\nabla U}^2$ and the density $g_t$ gives, for \textup{a.e.}\ $t$,
\begin{equation*}
    \int\abs{\nabla U}^2g_t\ud\mu
    \le
    \frac{1}{\lambda}
    \left(
        \Ent_\mu(g_t)
        +
        \log\int\e^{\lambda\abs{\nabla U}^2}\ud\mux
    \right).
\end{equation*}
Integrating in time and using \ref{gm:P1}, together with
\begin{equation*}
    (1+\abs{\nabla U})^2
    \le 2(1+\abs{\nabla U}^2),
\end{equation*}
yields \ref{gm:Q3}. For higher-order polynomial potentials in the H\'erau--Nier class \cites{HerauNier2004,Lu2026}, $\abs{\nabla U}^2$ generally grows faster than $U$, so the preceding gradient condition \eqref{eq:gradidentpotential} fails and \ref{gm:P1} does not yield \ref{gm:Q3}.

Thus, conditions \ref{gm:Q1}--\ref{gm:Q3} are technical requirements of the present recovery argument rather than intrinsic components of the finite-action cost. Whether they can be weakened or removed from the finite-action space-time LSI remains open. One possible route to weakening \ref{gm:Q2} is to replace the Brenier corrector by an entropically regularized transport corrector. The additional smoothness of the latter may allow the acceleration pairing to be controlled under a weighted $L^1$ assumption. Making this approach rigorous would require a localized stress estimate uniform in the regularization parameter, together with a justified passage to the
zero-regularization limit. Such an argument could potentially remove \ref{gm:Q2}, and possibly also
\ref{gm:Q1}, from \cref{gm:thm:recovery}.
\end{remark}

\end{document}